\newenvironment{Proof of}[1]{\textbf{Proof #1.}}{$\qquad \blacksquare$\par}
\DeclareMathOperator{\Aut}{Aut}
\DeclareMathOperator{\clsp}{\overline{span}}
\DeclareMathOperator{\spane}{span}
\DeclareMathOperator{\Inv}{Inv}
\DeclareMathOperator{\Erg}{Erg}
\DeclareMathOperator{\ess}{ess}
\newcommand{\B}{\mathcal B}
\newcommand{\LL}{\mathcal L}
\newcommand{\RR}{\mathcal R}
\newcommand{\M}{\mathcal M}
\newcommand{\TT}{\mathcal T}
\newcommand{\OO}{\mathcal{O}}
\newcommand{\al}{\alpha}
\newcommand{\K}{\mathcal{K}}
\newcommand{\G}{\mathcal G}
\newcommand{\C}{\mathbb C}
\newcommand{\R}{\mathbb R}
\newcommand{\Z}{\mathbb Z}
\newcommand{\N}{\mathbb N}
\newcommand{\T}{\mathbb T}
\newcommand{\supp}{\textrm{supp}\,}
\newtheorem{thm}{Theorem}[section]
\newtheorem{lem}[thm]{Lemma}
\newtheorem{prop}[thm]{Proposition}
\newtheorem{cor}[thm]{Corollary}
\theoremstyle{definition}
\newtheorem{defn}[thm]{Definition}
\newtheorem{ex}[thm]{Example}
\newtheorem{rem}[thm]{Remark}
\title[Spectrum of weighted isometries]{Spectrum 
of weighted isometries: $C^*$-algebras, transfer operators and topological pressure}
\author{Krzysztof Bardadyn}
\email{kbardadyn@math.uwb.edu.pl}
 \address{Faculty of Mathematics\\
   University  of Bia\l ystok\\
   ul.\@ K.~Cio\l kowskiego 1M\\
   15-245 Bia\l ystok\\
   Poland} 
\author{Bartosz Kosma Kwa\'sniewski}
\email{bartoszk@math.uwb.edu.pl}
 \address{Faculty of Mathematics\\
   University  of Bia\l ystok\\
   ul.\@ K.~Cio\l kowskiego 1M\\
   15-245 Bia\l ystok\\
   Poland}
\subjclass[2010]{47A10, 37A55}
\keywords{spectrum, weighted composition operator, $C^*$-algebra, transfer operator, topological pressure}
\thanks{We  thank Andrei Lebedev for numerous discussions on variational formulas appearing in the text. This work was  supported by the National Science Centre, Poland,  grant number  2019/35/B/ST1/02684} 
\begin{document}
\begin{abstract} 
We study the spectrum of  operators $aT\in \B(H)$ on a Hilbert space $H$ where $T$ is an isometry and  $a$ belongs to a commutative $C^*$-subalgebra $C(X)\cong A\subseteq \B(H)$ 
such that the formula $L(a)=T^*aT$ defines a faithful transfer operator on $A$.   Based on the analysis of the $C^*$-algebra $C^*(A,T)$ generated by the operators 
$aT$, $a\in A$, we give  dynamical conditions implying 
that the spectrum $\sigma(aT)$ is invariant under rotation around zero, $\sigma(aT)$ coincides with the essential spectrum $\sigma_{ess}(aT)$ or that $\sigma(aT)$ is the disc $\{z\in \C: |z|\leq r(aT)\}$. 

We get the best results when the underlying mapping $\varphi:X\to X$ is expanding and open. We  prove  for any such map  and a continuous map $c:X\to [0,\infty)$  that the  spectral logarythm of a Ruelle-Perron-Frobenious operator $\mathcal{L}_cf(y)=\sum_{x\in \varphi^{-1}(y)} c(x)f(x)$ is equal to the topological pressure  $P(\ln c,\varphi)$. This extends Ruelle's classical result and
implies the variational principle for the spectral radius:
$$
r(aT)=\max_{\mu\in \Erg(X,\varphi)} \exp\left(\int_{{X}}\ln(\vert a\vert\sqrt{\varrho })\,d\mu
+\frac{h_{\varphi}(\mu)}{2} \right),
$$
where $\Erg(X,\varphi)$ is the set of ergodic Borel probability measures, $h_{\varphi}(\mu)$ is the Kol\-mo\-gorov-Sinai entropy,
and $\varrho:X\to [0,1]$ is the cocycle associated to $L$. In particular, we clarify the relationship between 
the Kolmogorov-Sinai entropy and $t$-entropy introduced by Antonevich, Bakhtin and Lebedev.

\end{abstract}
   \maketitle


 \section{Introduction }

The classical Coburn's theorem \cite{Coburn} states that every non-invertible isometry $T\in \B(H)$ on a Hilbert space $H$ generates a $C^*$-algebra $C^*(T)$ canonically 
isomorphic to the Toeplitz algebra $\mathcal{T}$. Combining this with the well known fact that   $\mathcal{T}$ is generated by the unilateral shift whose spectrum is the unit disk, we infer that 
$\sigma(T)=\{z\in \C:|z|\leq 1\}$ for every non-invertible isometry $T$. Here we  use that the spectrum of an operator in a Hilbert space depends only on the smallest unital 
$C^*$-algebra that contains it. Cuntz \cite{Cu77} proved a higher dimensional version of Coburn's  result, that has immediate spectral consequences for 
certain weighted isometries:
\begin{ex}\label{ex:Cuntz}
Consider the shift map $ \varphi (\xi_1,\xi_2,...):=(\xi_2,\xi_3,...)$  
 on 
the Cantor space $
\Sigma:=\{1,...,n\}^\N$, $n>1$, and let $\mu$ be  the Bernoulli measure - the product measure on $\Sigma$ of uniform distributions on $\{1,...,n\}$. 
Let $H=L^2_\mu(\Sigma)$. Then the composition operator $Th=h\circ \varphi$, $h\in H$, is an isometry.
Let $A\cong C(\Sigma)$ be the algebra of multiplication operators by continuous functions: $(ah)(x)=a(x)h(x)$.  
Then the $C^*$-algebra generated by weighted composition operators $aT$, $a\in A$, is isomorphic to the Cuntz algebra $\mathcal{O}_n$:
$$C^*(aT:a\in A)=C^*(A,T)\cong \mathcal{O}_n.
$$
Indeed,  the operators  $s_i:=\sqrt{n}\cdot \mathds{1}_{\{(i,\xi_2,\xi_3,...)\}}T$, $i=1,...,n$, are  isometries such that $\sum_{i=1}^n s_i s_i^*=1$
and $C^*(A,T)=C^*(s_1,...,s_n)$ (e.g. we have $T=n^{-1/2}\sum_{i=1}^n s_i$). Cuntz proved in \cite{Cu77} that every $C^*$-algebra generated by $n$ isometries with range projections summing up  to $1$ 
is simple and isomorphic to $\mathcal{O}_n$. 
Since $C^*(A,T)\cong \mathcal{O}_n$ is simple and infinite it can not contain any compact operators. Thus the spectrum $\sigma(aT)$, for any $a\in A$,
coincides with the \emph{essential spectrum} $\sigma_{ess}(aT):=\{\lambda\in \C: aT -\lambda 1 \text{ is not Fredholm}\}$. 
Moreover, $\mathcal{O}_n$
is equipped with a gauge circle action that scales $T$ and fixes elements in $A$.   
This implies that $\sigma(aT)$ has a circular symmetry. 
As we explain in the paper, by an analysis of Riesz projectors and using the fact that $A\subseteq \mathcal{O}_n$ is a Cartan $C^*$-subalgebra \cite{Re}, one can prove that
$\sigma(aT)$ is connected and hence a disk. In fact we have
$$
\sigma(aT)=\sigma_{ess}(aT)=\left\{z\in \C:|z|\leq \max_{\mu\in \Erg(\Sigma,\varphi)} \exp \left(
\int_{\Sigma}\ln\frac{\vert a\vert}{\sqrt{n}}\,d\mu
 + \frac{h_{\varphi}(\mu)}{2} \right)\right\}
$$
where the maximum is taken over the ergodic  Borel probability measures and $h_{\varphi}(\mu)$ is the Kol\-mo\-gorov-Sinai entropy. The formula for  $r(aT)$ follows from Ruelle's thermodynamical formalism that relates the 
spectral radius of the Ruelle-Perron-Frobenious operator with topological pressure \cite{Ruelle0}, \cite{Walt1}, \cite{LatStep}, \cite{Ruelle}.
\end{ex}

The aim of the present paper is to develop a general theory that  describes
the spectrum of \emph{weighted isometries} $aT$, $a\in A\subseteq B(H)$, by the analysis similar to that sketched in Example \ref{ex:Cuntz}.
In the case that $T$ is a \emph{unitary} such an analysis was carried out successfully in \cite{O'Donov}, \cite{Lebedev}, 
\cite{Anton_Lebed}. More specifically, 
let $T\in B(H)$ be a unitary on a Hilbert space $H$, and let   $A\subseteq  \B(H)$ be a unital commutative $C^*$-subalgebra such that 
\begin{equation}\label{eq:automorphism_relations}
TAT^*=A.
\end{equation}
Then $\alpha(a):=TaT^*$, $a\in A$, 
is an automorphism of $A$, and identifying $A\cong C(X)$ with the algebra of continuous functions on the Gelfand spectrum $X$ of $A$, $\alpha$ is a composition operator  with a homeomorphism $\varphi:X\to X$. This motivated the authors of \cite{Anton_Lebed} to call operators of the form
$aT$, $a\in A$, \emph{abstract weighted shifts} associated with the automorphism $\alpha:A\to A$.  By \cite[Theorem 1.2.1]{O'Donov}, if $\varphi:X\to X$ is \emph{topologically free},
i.e. the set of periodic points has empty interior, then  $C^*(A,T)\cong A\rtimes_\alpha \Z$ where 
$A\rtimes_\alpha \Z$ is the crossed product of $A$ by $\Z$ action $\{\alpha^{n}\}_{n\in \Z}$. This  is a strong tool
that allows us to study the spectrum $\sigma(aT)$. Combining it with the  variational formula for the spectral radius 
\begin{equation}\label{eq:spectral_radius_invertible}
r(aT)=\max_{\mu\in \Erg(X,\varphi)} \, \exp \int_{X} \ln|a|\, d\mu, 
\end{equation}  
proved  independently by  Kitover \cite{Kitover} and Lebedev \cite{Lebedev}, see also 
\cite{kwa-leb}, one can obtain the following description of $\sigma(aT)$:

\begin{thm}[\cite{Lebedev}, \cite{Anton_Lebed}]\label{thm:lebedev}
Let $aT$, $a\in A\cong C(X)$, be abstract weighted shifts associated with an automorphism $\alpha:A\to A$. 
Suppose that the dual homeomorphism $\varphi:X\to X$ is topologically free. 
For every $a\in A$ the spectrum $\sigma(aT)$ is a union of annuli $\bigcup_{r \in R}\{z\in \C: r_{-}\leq |z| \leq  r_+\}$ and if $a$ is not invertible also a 
disk $\{z\in\C: |z|\leq r_0\}$. Moreover, all Riesz projectors for $\sigma(aT)$ belong to $A$ and induce  a decomposition  $X=X_0\cup \bigcup_{r\in R} X_{r}$  into disjoint closed $\varphi$-invariant sets such that 
$$
r_{-}=\min_{\mu\in \Erg(X_r,\varphi)} \, \exp \int_{X_r} \ln|a|\, d\mu, 
\qquad
r_+=\max_{\mu\in \Erg(X_r,\varphi)} \, \exp \int_{X_r} \ln|a|\, d\mu, 
 $$
for  $r \in R$,
and $r_0=\max_{\mu\in \Inv(X_0,\varphi)}  \exp\left(\int_{{X_0}}\ln\vert a\vert\,d\mu
\right), 
$ if $X_0$ is non-empty (that is, if $a$ is not invertible). 
So if $X$ does not contain non-trivial clopen $\varphi$-invariant sets, then  $\sigma(aT)$ is connected. 
If $A$ does not contain non-zero compact operators, which is automatic if $X$ has no isolated points, then $\sigma(aT)=\sigma_{ess}(aT)$.
\end{thm}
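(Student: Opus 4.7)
The plan is to exploit the identification $C^*(A,T)\cong A\rtimes_\alpha \Z$ furnished by topological freeness and O'Donovan's theorem. The crossed product carries a canonical gauge action $\gamma:\T\to \Aut(A\rtimes_\alpha \Z)$, $\gamma_z(aT^n)=z^n aT^n$, whose fixed-point algebra is $A$ and which supplies a faithful conditional expectation $E_0(\cdot)=\int_\T \gamma_z(\cdot)\,dz$ onto $A$. Since each $\gamma_z$ is an isometric automorphism, it preserves spectrum, so $\sigma(aT)=\sigma(z\cdot aT)=z\cdot \sigma(aT)$ for every $z\in \T$. Thus $\sigma(aT)$ is rotationally symmetric about the origin, and its connected components are closed annuli (possibly circles), together with at most one disk centered at $0$.

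The second step is to show that the Riesz projector of any such component lies in $A$. For a component $K$ one writes the Riesz projector as $E=\tfrac{1}{2\pi i}\oint_\Gamma (z-aT)^{-1}\,dz$ along a contour $\Gamma\subseteq \C$ enclosing $K$ inside the resolvent set; rotational symmetry of $\sigma(aT)$ lets one choose $\Gamma$ to be a union of circles centered at $0$, making it $\gamma$-invariant. A change of variables then gives $\gamma_w(E)=E$ for all $w\in \T$, so $E=E_0(E)\in A\cong C(X)$. Writing $E=\mathds{1}_{X_K}$ for a clopen $X_K\subseteq X$, the identity $EaT=aTE$ rearranges as $Ea=a\alpha(E)$, which for annular components---where $aT|_{EH}$ and hence $a|_{X_K}$ is invertible---forces $X_K=\varphi^{-1}(X_K)$. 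Applying the same reasoning to the complementary projector $1-E$ and iterating over all components produces the decomposition $X=X_0\cup\bigcup_{r\in R}X_r$ into disjoint closed $\varphi$-invariant subsets, and exhibits $aT$ as a direct sum of abstract weighted shifts on each piece.

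Third, I would compute the radii by applying the Kitover--Lebedev variational formula \eqref{eq:spectral_radius_invertible} on each invariant piece. On an annular piece $X_r$, direct application to $aT|_{X_r}$ gives $r_+=\max_{\mu\in \Erg(X_r,\varphi)}\exp\int_{X_r}\ln|a|\,d\mu$. For the inner radius, observe that $(aT|_{X_r})^{-1}$ is itself an abstract weighted shift associated with $\alpha^{-1}$ and weight $\alpha^{-1}(a^{-1})$; the variational formula applied to this inverse, together with $r((aT|_{X_r})^{-1})=1/r_-$, yields $r_-=\min_{\mu\in \Erg(X_r,\varphi)}\exp\int_{X_r}\ln|a|\,d\mu$. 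For the disk $X_0$ the operator is not invertible and the same formula gives $r_0$; the supremum over ergodic measures coincides with the one over invariant measures by the Krein--Milman theorem, justifying the stated formulation with $\Inv(X_0,\varphi)$.

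The two corollaries then follow easily. If $X$ has no non-trivial clopen $\varphi$-invariant subsets then the decomposition is forced to be trivial, so $\sigma(aT)$ is connected. If $A$ contains no non-zero compact operators, the gauge-invariant ideal $C^*(A,T)\cap \K(H)$ is mapped by $E_0$ into $A\cap \K(H)=\{0\}$, and faithfulness of $E_0$ on positive elements forces $C^*(A,T)\cap \K(H)=\{0\}$; hence the quotient map $\B(H)\to \B(H)/\K(H)$ is injective on $C^*(A,T)$ and $\sigma(aT)=\sigma_{ess}(aT)$. The main obstacle I anticipate is the averaging step that places the Riesz projector in $A$: it requires the gauge action on $C^*(A,T)$, which is available only thanks to the O'Donovan isomorphism---precisely the point where topological freeness of $\varphi$ enters in an essential way.
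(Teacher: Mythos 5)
Your outline is the same one this paper itself follows (the theorem is only quoted here from \cite{Lebedev}, \cite{Anton_Lebed}, but the paper's proofs of its generalizations, Theorem \ref{thm:main_result1} and the lemmas of Section \ref{sec:Riesz_projectors}, run exactly along your lines: O'Donovan's isomorphism, gauge action giving circular symmetry, gauge-averaging to put Riesz projectors in the fixed-point algebra $A$, then Kitover--Lebedev on each invariant piece, with $r_-$ obtained from the inverse operator). However, two steps are genuinely incomplete as written. First, the invariance of $X_K$: the relation $E a = a\alpha(E)$ says only that $a\cdot(\mathds{1}_{X_K}-\mathds{1}_{\varphi^{-1}(X_K)})=0$, so non-vanishing of $a$ on $X_K$ yields merely the inclusion $X_K\subseteq \varphi^{-1}(X_K)$ (forward invariance); nothing prevents a point $x\notin X_K$ with $\varphi(x)\in X_K$ and $a(x)=0$. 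To exclude this you need an extra argument using unitarity of $T$, e.g.\ if $V:=\varphi^{-1}(X_K)\setminus X_K\neq\emptyset$ then $\varphi(V)\subseteq X_K$ is a non-empty clopen set, any non-zero $h\in\mathds{1}_{\varphi(V)}H$ satisfies $Th\in\mathds{1}_V H$, hence $aTh=a\mathds{1}_{X_K}Th=0$, contradicting injectivity of $aT$ on $EH$; only then does $\varphi^{-1}(X_K)=X_K$ follow (and invariance of the remaining piece by complementation). This is precisely the kind of care taken in Lemma \ref{lem:Riesz_projector} and Proposition \ref{prop:Riesz_projector}, and Example \ref{ex:contrexample} shows the inclusion really can be strict once $T$ fails to be unitary, so the jump cannot be waved away.

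Second, your phrase ``iterating over all components'' hides the case of infinitely many annuli: a connected component of $\sigma(aT)$ need not be a relatively clopen subset of the spectrum, so it need not have a Riesz projector at all, and the sets $X_r$ are then only closed, not clopen. One must approximate each annulus by clopen rotation-invariant parts of the spectrum, intersect the corresponding clopen invariant sets, and obtain the formulas for $r_\pm$ and $r_0$ by weak$^*$ limits of extremizing measures --- exactly the limiting argument carried out in the proof of Theorem \ref{thm:main_result1}; the variational formula cannot be applied ``directly'' to a non-clopen $X_r$ because there is no associated invariant subspace on which $aT$ restricts to a weighted shift over $C(X_r)$. Finally, your essential-spectrum argument is flawed in detail: $\gamma_z$ is an abstract automorphism of $C^*(A,T)$, so there is no reason $E_0$ should map $C^*(A,T)\cap\K(H)$ into compact operators. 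The correct (and shorter) route is the one the paper uses in Corollary \ref{cor:Spectral_consequences}: under topological freeness every representation faithful on $A$ is faithful, so the quotient map to the Calkin algebra is injective on $C^*(A,T)$ when $A\cap\K(H)=\{0\}$, and spectral permanence gives $\sigma(aT)=\sigma_{ess}(aT)$.
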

Theorem \ref{thm:lebedev} explains  some operator theoretical phenomena observed  by a number
of authors studying various special cases of weighted composition operators, see \cite[Notes and Remarks on page 154]{Anton_Lebed}.
Similar $C^*$-algebraic arguments apply to other spectral properties of more general non-local operators and have remarkable consequences for 
functional differential equations with \emph{reversible shifts}, see for instance,   \cite{Anton_Lebed},  \cite{Anton_Belousov_Lebed2}, \cite{BFK}.   
The natural need to develop such a theory for \emph{irreversible shifts}  has so far been blocked by the lack of appropriate 
$C^*$-algebraic tools.
In the present paper we wish to provide a first meaningful step towards such a generalization.
This is non-trivial and only possible because of  recent advances in the theory of $C^*$-algebras. 
We hope this will renew interest of operator theorists in the modern $C^*$-algebra theory, as well as strengthen links between these fields  and the thermodynamical formalism.

There are essentially two ways in which one can generalize  abstract weighted shifts to the irreversible case. If $T$ is a unitary, then  \eqref{eq:automorphism_relations} is equivalent to two inclusions $TAT^*\subseteq A$ and 
$T^*AT\subseteq A$.  If $T$ is a non-unitary isometry, one can not expect \eqref{eq:automorphism_relations} to hold. 
One natural choice is to assume that $TAT^*\subseteq A$, as then $\alpha(a)=TaT^*$ defines an endomorphism of $A$ (in essence this boils down  to relations in \cite[Remark 4.6]{Anton_Lebed}). 
The corresponding analysis is carried out in \cite{kwa-phd}, \cite{kwa-leb}, and the associated $C^*$-algebras $C^*(A,T)$ 
are modeled by crossed products by endomorphisms \cite{kwa-leb0}, \cite{kwa_Exel}. 
However this approach does not cover Example \ref{ex:Cuntz} and is somewhat easier than what we need. In the present paper we consider the situation where 
 $T\in B(H)$ is an isometry   and   $A\subseteq  \B(H)$ is a unital commutative $C^*$-subalgebra such that 
\begin{itemize}
\item[(A1)] $
T^*AT\subseteq A$;
\item[(A2)] there is a unital endomorphism $\alpha:A\to A$ such that 
$
Ta=\alpha(a)T$,  for all $a\in A$.
\end{itemize}
Then $L(a):=T^*aT$, $a\in A$, defines a  transfer operator for $\alpha$ and we refer to $aT$, $a\in A$, as \emph{abstract weighted shifts associated with the transfer operators $L:A\to A$}. The endomorphism $\alpha$ of $A\cong C(X)$ induces a continuous map $\varphi:X\to X$. 
Our main aim is to generalize Theorem \ref{thm:lebedev} to this setting.  We achieve it by overcoming the following difficulties:

\textbf{1) Relations.} 
The  $C^*$-algebra $C^*(A,T)$ generated by the operators $aT$, $a\in A$, satisfying (A1), (A2),  can be modeled by the crossed product introduced by Exel 
\cite{exel3}.   Exel's crossed product depends only on the associated transfer operator $L$, see \cite{kwa_Exel}, and therefore
we denote it by $A\rtimes L$.
In general,   
  the relations defining  $A\rtimes L$ as a universal $C^*$-algebra 
are hard to describe explicitly. 
This makes both the definition and the analysis of the structure of $A\rtimes L$
non-trivial. Indeed, the definition of $A\rtimes L$ was in the air for a long time, starting from Arzumanian and Vershik
\cite{Arzu_Vershik1}, \cite{Arzu_Vershik2},  through Cuntz and Krieger \cite{CK}, 
Renault \cite{Renault}, Deaconu \cite{Deaconu}, Lebedev (unpublished notes) and culminating with  Exel \cite{exel3}, whose definition  still required some modifications, see \cite{er} and \cite{kwa_Exel}.
The most studied and best understood case is when the associated map $\varphi:X\to X$ is
a local homeomorphism and a cocycle $\varrho$ associated to $L$ attains strictly positive values. Then $A\rtimes L$ has a groupoid model 
and we say that $L$ is of \emph{finite type} (cf. Proposition \ref{prop:finite_type_characterisation} below). In the finite type case, a `Cuntz-Krieger type' relation  that defines $A\rtimes L$ was made explicit by Exel and Vershik \cite{exel_vershik}. In general, we surpass this problem by assuming 
the following natural nondegeneracy condition: 
\begin{itemize}
\item[(A3)]  $ATH$ is linearly dense in $H$. 
\end{itemize}
We verify axioms (A1), (A2), (A3) for large classes of natural examples (cf. Propositions \ref{prop:T_unitary}, \ref{prop:axiom_A1_concrete}, \ref{prop:abstract_wso_from_measure_system}). In fact, by using a Cuntz-Pimsner model for $A\rtimes L$  \cite{br}, \cite{kwa_Exel} and 
appealing to \cite{Hirshberg} we show that every faithful transfer operator is included in our setting
(see the last part of Theorem \ref{thm:gauge_uniqueness}). Applying results of \cite{CKO} we find
that 
$C^*(A,T)\cong A\rtimes L$ whenever  (A1), (A2), (A3) hold and  $\varphi:X\to X$ is topologically free (Theorem \ref{thm:isomorphism}). 
In the finite type case, using a groupoid model for $A\rtimes L$, we find  that $A\subseteq C^*(A,T)$ is a Cartan inclusion if and only if  $\varphi:X\to X$ is topologically free (Theorem \ref{thm:Cartan_crossed_products}), which slightly extends the main result of \cite{CS}.
Using a version of Ruelle's Perron-Frobenius Theorem (Theorem \ref{thm:Ruelle-Perron-Frobenious}), we show that  every finite type transfer operator $L:C(X)\to C(X)$ for  expanding  $\varphi$ with no non-wandering points has a natural representation on $L^2_{\mu}(X)$ via the composition operator $T$ satisfying (A1), (A2), (A3) (Proposition \ref{prop:cocycle_inducing_measures}). This in particular gives a natural representation of Cuntz-Krieger algebras \cite{CK} (cf. Remark \ref{rem:Cuntz-Krieger_representaion}).

\textbf{2) Spectral radius.} 
A somewhat separate problem is how to describe  spectral radii of operators $aT$, $a \in A$, associated with a  transfer operator $L$. 
It reduces to analysis of the spectral radii of  (weighted unital) transfer operators on $C(X)$, as we have $r(aT)=\sqrt{r(L|a|^2)}$. 
In the full generality we consider, these spectral radii were described by
Antonevich, Bakhtin, Lebedev \cite{t-entropy}, \cite{t-entropy2} via variational principles exploiting a new invariant they introduce - the \emph{$t$-entropy}. We push this research further when $\varphi:X\to X$ is an expanding open map. 
Then the problem reduces to investigation of 
$r(\mathcal{L}_{c})$  for a Ruelle-Perron-Frobenious operator $\mathcal{L}_cf(y)=\sum_{x\in \varphi^{-1}(y)} c(x)f(x)$ 
with an arbitrary continuous `potential' $c:X\to [0,\infty)$.  One of the fundamental principles of
thermodynamical formalism is that   $\ln r(\mathcal{L}_{c})$ 
coincides with \emph{topological pressure} $P(\ln c,\varphi)$.
A number of results in this direction are known, cf.    \cite{Bowen}, \cite{Ruelle0}, \cite{Walt1}, \cite{LatStep}, \cite{Ruelle}, \cite{Lebedev_Maslak}, \cite{Fan_Jiang}, \cite{Przytycki}. 
 However, none of  these sources considers a general case. Usually it is assumed that $\varphi$ is topologically mixing,  $c$ is  H\"older continuous and strictly positive, and the space $X$
is a finite dimensional manifold or a shift space. 
Here we prove that $\ln r(\mathcal{L}_{c})=P(\ln c,\varphi)$ for an arbitrary open expanding map $\varphi:X\to X$  on a compact metric space and an arbitrary continuous $c:X\to [0,\infty)$
(Theorem \ref{thm:Ruelle's}). This result is of independent interest. It 
reveals the relationship between $t$-entropy and Kolmogorov-Sinai entropy (Corollary \ref{cor:t-entropy vs Kolmogorov-Sinai entropy}).
It also gives efficient formulas for the spectral radius $r(aT)$ of operators associated to  expanding open maps  (Corollary \ref{cor:abstract_spectral_radius_expanding}).

\textbf{3) Riesz Projectors.} 
When $T$ is a unitary satisfying \eqref{eq:automorphism_relations}, then every Riesz projector for a part of the spectrum 
$\sigma(aT)$, $a\in A $ belongs to $A\cong C(X)$ and corresponds to a clopen set $X_0\subseteq X$, which is \emph{$\varphi$-invariant} in the sense 
that $\varphi^{-1}({X_0})={X_0}$, see \cite{Anton_Lebed}. 
This is a crucial step in the proof of Theorem \ref{thm:lebedev}. 
In our irreversible  setting this fact  fails  and the analysis of Riesz projectors is much more subtle. 
It may happen that a  Riesz projector $P$  for an abstract weighted shift $aT$, $a\in A$, associated to a transfer operator $L$, does not belong to $A$ and even if it does it may not correspond to a $\varphi$-invariant set (see Example \ref{ex:contrexample}). 
However, developing  new arguments we proved that if $a$ is (close to being) invertible and $\varphi$ is topologically free, then
$P\in A$ and it corresponds to a $\varphi$-invariant set (Proposition \ref{prop:Riesz_projector}). In general,  $P\in A'$ and hence it belongs to $A$ whenever $A\subseteq C^*(A,T)$ is a Cartan inclusion. Then either the corresponding set or its complement is forward $\varphi$-invariant (Proposition \ref{prop:Cartan_Riesz_Projections})

\textbf{Main results.} For reasons explained above we prove two versions of Theorem \ref{thm:lebedev} for operators $aT$, $a\in A$, associated with a transfer operator. 
We get  the full description of $\sigma(aT)$ in the case when  $a$ is (close to being) invertible (Theorem \ref{thm:main_result1}), 
and  for arbitrary $a\in A$ in the finite type case (Theorem \ref{thm:main_result2}). We particularly like the results for expanding maps 
(cf. Corollary \ref{cor:spectrum_expansive}).
In general,  if $\varphi:X\to X$ is topologically free, 
then the spectrum of an operator $aT$, $a\in A$, is rotation invariant and if $A\cap \K(H)=\{0\}$ then  $\sigma(aT)=\sigma_{ess}(aT)$  (Corollary \ref{cor:Spectral_consequences}).
We illustrate our results on examples coming from Cuntz-Krieger algebras (Example \ref{ex:Cuntz-Krieger algebras}), weighted composition operators on $L^2$-spaces (Corollaries \ref{cor:measure_dyna_spectrum}, \ref{cor:abstract_wso_from_measure_system1})
and weighted shifts on directed trees (Example \ref{ex:directed_trees}). 

The paper is organized as follows. In Section \ref{sec:preliminaries} we discuss some basic facts concerning transfer operators 
and the thermodynamical formalism. Section \ref{sec:abstract_WSO} introduces abstract weighted shifts and discusses model examples. 
Variational formulas for spectral radii are studied in Section \ref{sec:Spectral radius}.  
The structure of the associated $C^*$-algebras and spectral consequences of topological freeness are presented in Section \ref{sec:C*-algebras}. 
Conditions under which Riesz projectors for $aT$, $a\in A$,  belong to $A$ are given in Section \ref{sec:Riesz_projectors}. 
Finally, in Section \ref{sec:spectrum} we present the results that describe the whole spectrum of $aT$ and give estimates to 
a number of its connected components.
\section{Preliminaries}\label{sec:preliminaries}
\subsection{Thermodynamical formalism for expanding open maps}
Throughout this paper $\varphi:X\to X$ is a continuous map on a compact Hausdorff space $X$. 
Recall that $X$ is metrizable if and only if $X$ is second countable if and only if 
the $C^*$-algebra $C(X)$ of continuous complex valued functions is separable. 
If $d$ is a metric on $X$, then for any  continuous real valued function $b\in C(X,\R)$ the \emph{topological pressure} of $(X,\varphi)$ with potential $b$ can be defined  by  the formula
\begin{equation}\label{eq:topological_pressure}
P(\varphi, b) :=  \lim_{\varepsilon \to 0} \limsup_{n\to\infty}\sup_{E\subseteq X \text{ is }
\atop \varepsilon\text{-separated  in }d_n} \frac{1}{n} \ln\sum_{y\in E}\exp\left({\sum_{i=0}^{n-1} b(\varphi^i (y))}\right),
\end{equation}
where $d_n(x,y)=\max\limits_{i=0,...,n}d(\varphi^{i}(x),\varphi^{i}(y))$ is  $n$-Bowen's metric.
It is crucial that $P(\varphi, b)$ can be also expressed in terms of the following variational principle:
$$
P(\varphi, b)=\sup_{\mu\in \Inv(X,\varphi)} \left(\int_{{X}}b\,d\mu +h_{\varphi}(\mu)\right),
$$
where  $\Inv(X,\varphi)$ is the set of $\varphi$-invariant Borel  probability measures on $X$ and 
$h_{\varphi}(\mu)$ is the Kolmogorov-Sinai entropy, cf. \cite{Walt2}, \cite{Przytycki}.
A measure $\mu\in \Inv(X,\varphi)$ is called an \emph{equilibrium} state if 
$P(\varphi, b)=\int_{{X}}b\,d\mu +h_{\varphi}(\mu)$. If the entropy map $\Inv(X,\varphi) \ni \mu \mapsto h_{\varphi}(\mu) $ is upper-semicontinuous,
then $P(\varphi, b)=\max_{\mu\in \Inv(X,\varphi)} \left(\int_{{X}}b\,d\mu +h_{\varphi}(\mu)\right)$. 
If  the set of equilibrium measures is non-empty it contains an ergodic measure (whenever $P(\varphi, b)< \infty$) and for a dense subset of functions in $b\in C(X, \R)$ the equilibrium measure is unique, see \cite[9.13 and 9.15.1]{Walt2}. 
The study of unique equilibrium measures was initiated by the theory 
of thermodynamical formalism that works for expanding maps:
\begin{defn}
A continuous map $\varphi:X\to X$ is   \emph{expanding} (expands small distances or is locally expanding) if $X$ is compact and
 can be equipped with  a metric $d$ for which
 there are $\varepsilon >0$  and $\theta >1$ such that $d(\varphi(x),\varphi(y))\geq \theta d(x,y)$ whenever  $d(x, y)<\varepsilon$.
\end{defn}
Being expanding does not depend on the choice of a metric (compatible with the topology). By \cite{Reddy},
$\varphi:X\to X$  is expanding if and only if $\varphi:X\to X$ is \emph{positively expansive}, i.e. for some metric $d$
 there is $\delta >0$  such that for any two points $x\neq y$  we have $d(\varphi^n(x),\varphi^n(y))> \delta $ for some $n\in \N$.
By Schwartzman's theorem, see  \cite{MS}, \cite{KR}, 
a homeomorphism   $\varphi:X\to X$ is expanding (positively expansive) if and only if $X$ is finite. Thus being expanding is a notion designed for 
irreversible dynamics. 
The literature is abundant in the results concerning expansive homeomorphisms, but it is well known that (one-sided) versions of such facts hold for expanding (positively  expansive) maps, cf. \cite[Remark on page 145]{Walt2}, \cite[page 58]{KR}. 
A counterpart of  \cite[Theorem 5.24]{Walt2}, \cite[Corollary 3.3]{KR}   states that any expansive map $\varphi:X\to X$ is a factor of a one-sided subshift:  there
is a closed  subset $Z$ of $\Sigma_{n}=\{1,...,n\}^\N$ with $\sigma(Z)=Z$, where $\sigma:\Sigma_{n}\to \Sigma_{n}$ is one-sided shift, and a continuous surjective map 
$\pi:Z\to X$ such that $\pi \circ \sigma=\sigma\circ \pi$. Moreover, the one-sided subshifts $(Z,\sigma)$, as above, can be characterised as 
 expanding maps on zero-dimensional spaces. 
By \cite[Theorem 3.5.6]{Przytycki}, for expanding maps  the entropy map $\Inv(X,\varphi) \ni \mu \mapsto h_{\varphi}(\mu) $ is upper-semicontinuous
and therefore equilibrium (ergodic) measures for expanding maps always exist.

An expanding map is necessarily locally injective and hence if it is open, it is a \emph{local homeomorphism}.
An open expanding map $\varphi:X\to X$ is \emph{topologically mixing} if and only if for every non-empty open set $U$ there is $n\geq 1$ such that $\varphi^n(U)=X$,
 cf. \cite{Przytycki}. Usually Gibbs measures are considered for topologically mixing maps, cf. \cite{Bowen}, \cite{Ruelle0},  \cite{Walt1}, \cite{Fan_Jiang},
However, as shown in \cite{Przytycki} the theory works well under a slightly weaker assumption that 
$\varphi:X\to X$ is \emph{topologically transitive}, i.e. for any  non-empty open sets $U,V\subseteq X$ there is $n\in \N$ such that
$\varphi^n(U)\cap V\neq \emptyset$. If $\varphi$ is open expanding, this is equivalent to 
existence of $N\in \N$ such that $\varphi^N$ is topologically mixing.
\begin{defn}\label{defn:Gibbs_distribution}
Suppose that $(X,d)$ is a metric compact space. We say that $\mu\in \Inv(X,\varphi)$ is a \emph{Gibbs distribution} for a  continuous map $\varphi:X\to X$  with  potential $b\in C(X,\R)$ 
if for sufficiently small $\varepsilon>0$ there is $P\in \R$,  and $C\geq 1$ such that for all $x\in X$ and $n\geq 1$ we have
$$
C^{-1} \leq \frac{\mu(B_n(x,\varepsilon))}{\exp({\sum_{i=0}^{n-1} b(\varphi^{i}(x)) - nP}) } \leq C
$$
where $B_n(x,r)$ is the open ball in $n$-Bowen metric $d_n(x,y)=\max\limits_{i=0,...,n}d(\varphi^{i}(x),\varphi^{i}(y))$.
\end{defn} 
 \begin{thm}[Existence and uniqueness of Gibbs measure]\label{thm:existence_of_Gibbs_measures}
Let $\varphi:X\to X$ be an open expanding map and let  $b\in C(X,\R)$  be H\"older continuous.  There is a unique Gibbs distribution 
$\mu_{\varphi,b}\in \Inv(X,\varphi)$ for $\varphi$ and $b$ and the constant $P$ in 
Definition \ref{defn:Gibbs_distribution} is the topological pressure  $P(\varphi, b)$. Moreover, $\mu_{\varphi,b}$ is the unique equilibrium state for $\varphi$ and $b$, and 
for every $y\in X$ we have
$$
P=P(\varphi, b)=\int_{{X}}b\,d\mu_{\varphi,b} +h(\mu_{\varphi,b})=\lim_{n\to\infty}  \frac{1}{n} \ln\sum_{x\in \varphi^{-1}(y)}\exp\left({\sum_{i=0}^{n-1} b(\varphi^i (x))}\right).
$$
\end{thm}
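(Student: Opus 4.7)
The strategy is to construct $\mu_{\varphi,b}$ via the Ruelle-Perron-Frobenius transfer operator
$$
\mathcal{L}_b f(y):=\sum_{x\in\varphi^{-1}(y)}e^{b(x)}f(x),\qquad f\in C(X),
$$
and to read off all asserted properties from a spectral gap of $\mathcal{L}_b$ on H\"older functions. The bedrock estimate is bounded distortion: combining the expansion rate $\theta>1$ with the H\"older exponent $\alpha$ of $b$, a geometric sum in $\theta^{-\alpha}$ yields a constant $K<\infty$ such that $\bigl|\sum_{i=0}^{n-1}b(\varphi^i y)-\sum_{i=0}^{n-1}b(\varphi^i y')\bigr|\le K\varepsilon^{\alpha}$ whenever $y,y'\in B_n(x,\varepsilon)$ and $\varepsilon$ is smaller than the injectivity radius; because $\varphi$ is open and expanding, such a Bowen ball is exactly the image of a fixed small ball around $\varphi^n(x)$ under the unique inverse branch of $\varphi^n$ through $x$.

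Next I would invoke a Ruelle type theorem (such as Theorem \ref{thm:Ruelle-Perron-Frobenious} of the paper) to extract the leading eigenvalue $\lambda=r(\mathcal{L}_b)>0$, a strictly positive continuous eigenfunction $h$ with $\mathcal{L}_b h=\lambda h$, and a Borel probability measure $\nu$ with $\mathcal{L}_b^{*}\nu=\lambda\nu$, normalized by $\int h\,d\nu=1$. Then $\mu_{\varphi,b}:=h\nu$ is $\varphi$-invariant by the identity $\mathcal{L}_b((f\circ\varphi)g)=f\,\mathcal{L}_b g$, and the Gibbs inequality drops out of distortion: for small $\varepsilon$,
$$
\mu_{\varphi,b}(B_n(x,\varepsilon))=\int_{B_n(x,\varepsilon)}h\,d\nu\;\asymp\;h(x)\,\lambda^{-n}\,\exp\!\Bigl(\textstyle\sum_{i<n}b(\varphi^i x)\Bigr)\,\nu\bigl(\varphi^n B_n(x,\varepsilon)\bigr),
$$
and the last factor is pinched between two positive constants independent of $n,x$ because $\varphi^n B_n(x,\varepsilon)$ contains a ball of radius $\varepsilon$. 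This identifies the Gibbs constant as $P=\ln\lambda$.

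For the equilibrium characterization, uniqueness, and the pointwise formula, I would combine the same distortion estimate with the Shannon-McMillan-Breiman theorem: for any ergodic equilibrium measure $\mu'$, the local entropy $-\tfrac{1}{n}\ln\mu'(B_n(x,\varepsilon))$ converges $\mu'$-a.e.\ to $h_\varphi(\mu')$, while the Gibbs estimate for $\mu_{\varphi,b}$ computes the same limit as $\ln\lambda-\int b\,d\mu'$; matching these and applying the variational principle forces $h_\varphi(\mu')+\int b\,d\mu'=\ln\lambda=P(\varphi,b)$ and pins the Radon-Nikodym derivative $d\mu'/d\mu_{\varphi,b}$ to a constant, hence $\mu'=\mu_{\varphi,b}$; the same comparison gives uniqueness of the Gibbs measure. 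The pointwise limit at any $y\in X$ then follows from spectral gap of the normalized operator $\widetilde{\mathcal{L}}f:=\lambda^{-1}h^{-1}\mathcal{L}_b(hf)$, which yields $\lambda^{-n}\mathcal{L}_b^n\mathbf{1}(y)\to h(y)\int\mathbf{1}\,d\nu$; taking logarithms and dividing by $n$ produces the asserted expression for $P$.

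The main obstacle is establishing the spectral gap of $\mathcal{L}_b$ without assuming topological transitivity. The classical Ionescu-Tulcea-Marinescu argument applied to the H\"older norm gives quasi-compactness, but without transitivity the leading eigenvalue may carry multiplicity, so $h$ and $\nu$ need not be unique a priori. I would bypass this by decomposing $X$ into finitely many disjoint closed $\varphi$-invariant pieces on each of which some iterate $\varphi^N$ is topologically mixing, running the construction componentwise, and then using the fact that the Gibbs definition demands a \emph{single} constant $P$ valid for \emph{all} $x\in X$: this forces the Gibbs measure to be supported on the unique component that realizes the supremum in the variational principle, which both explains and delivers the global uniqueness asserted in the statement.
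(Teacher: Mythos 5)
The paper's own ``proof'' of this theorem is nothing but a combination of citations to Przytycki--Urba\'nski, so your proposal is doing something genuinely different: it reconstructs the underlying machinery (bounded distortion, the eigenmeasure $\nu$ and eigenfunction $h$ of $\mathcal{L}_b$, the Gibbs estimate from $\varphi^nB_n(x,\varepsilon)=B(\varphi^n(x),\varepsilon)$, and Brin--Katok/SMB plus the variational principle for uniqueness of the equilibrium state). For a \emph{topologically transitive} open expanding map this outline is the standard argument and is sound, with two small caveats: quoting Theorem \ref{thm:Ruelle-Perron-Frobenious} would be circular inside this paper (its proof in turn appeals to Theorem \ref{thm:existence_of_Gibbs_measures}), so you must use the classical Ruelle--Walters statement or your own quasi-compactness argument; and in the transitive but non-mixing case $\mathcal{L}_b$ has no spectral gap (the peripheral spectrum is a finite cyclic group times $\lambda$), so the pointwise formula should be extracted from the two-sided bounds $\lambda^{-n}\mathcal{L}_b^n\mathbf{1}\asymp 1$ rather than from convergence of $\lambda^{-n}\mathcal{L}_b^n\mathbf{1}$.

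The genuine gap is your final paragraph. First, the decomposition of $X$ into finitely many clopen invariant pieces with mixing iterates (Theorem \ref{thm:spectral decomposition}) is only available when $\Omega(\varphi)=X$. Second, and fatally, a measure supported on a single component $X_{j_0}$ can never be a Gibbs distribution in the sense of Definition \ref{defn:Gibbs_distribution}: the lower bound $C^{-1}\leq \mu(B_n(x,\varepsilon))\exp(nP-\sum_{i=0}^{n-1}b(\varphi^i(x)))$ forces $\mu(B_n(x,\varepsilon))>0$ for \emph{every} $x\in X$, i.e.\ $\supp\mu=X$, so your ``Gibbs measure supported on the component realizing the supremum'' violates the very definition at all points of the other components. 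No repair is possible, because without transitivity the statement itself fails: if $X$ splits into invariant pieces with distinct pressures (or if $\Omega(\varphi)\neq X$), no Gibbs distribution exists at all, while if two pieces have the same maximal pressure, every nontrivial convex combination of their Gibbs states is again Gibbs, destroying uniqueness. The theorem is really a statement about topologically transitive open expanding maps --- that is the hypothesis in the Przytycki--Urba\'nski results the paper combines, and the only way it is used later (Theorem \ref{thm:Ruelle-Perron-Frobenious} and Corollary \ref{cor:strictly_positive_formuals} apply it to transitive restrictions). So the correct move is to add transitivity to the hypotheses, after which your first three paragraphs carry the proof, rather than to try to remove it by the proposed decomposition argument.
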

\begin{proof}
Combine \cite[Propositions 3.4.3, 4.4.1,  4.1.5 and Theorems  4.3.2,   4.6.2]{Przytycki}.
\end{proof}
 Gibbs distributions $\mu_{\varphi,b}$ are usually constructed by means of  Ruelle's Perron-Frobenius Theorem. More specifically, if  $\varphi:X\to X$ is a local homeomorphism, then for every continuous
 $c:X\to [0,+\infty)$ the formula 
\begin{equation}\label{eq:Ruelle-Perron_Frobenious}
\mathcal{L}_cf(y)=\sum_{x\in \varphi^{-1}(y)} c(x)f(x)
\end{equation}
defines a positive linear operator  $\mathcal{L}_c:C(X)\to C(X)$, which is called the \emph{Ruelle-Perron-Frobenius transfer operator} associated to $\varphi$ and $c$.
By positivity the dual operator $\mathcal{L}_c^*:C(X)^*\to C(X)^*$ restricts to an affine map on the cone of  finite Borel measures. 
The following theorem was originally proved  for one-sided shifts by Ruelle. Then it was generalized by a number authors to  mixing expanding maps \cite{Bowen}, \cite{Walt1}, \cite{Ruelle0}, \cite{Fan_Jiang}.  We generalize it a bit further, to topologically transitive expanding maps:
\begin{thm}[Ruelle's Perron-Frobenius Theorem]\label{thm:Ruelle-Perron-Frobenious}
Let $\varphi:X\to X$ be an open expanding and topologically transitive map on a compact metric space $X$. 
Suppose that $b\in C(X,\R)$ is H{\"o}lder continuous. Put $c:=\exp(b)$ and let $\rho:=r(\LL_{c})$ be the spectral radius of the associated
 Ruelle-Perron-Frobenius transfer operator $\mathcal{L}_{c}:C(X)\to C(X)$. 
\begin{enumerate}
\item \label{enu:Ruelle-Perron-Frobenious1}There is a unique  Borel probability measure $\nu$ on $X$  such that $\LL_{c}^* \nu =\rho \nu$.
\item \label{enu:Ruelle-Perron-Frobenious2} $\rho$ is a strictly positive  eigenvalue of  $\LL_{c}$   with a unique strictly positive eigenfunction $h\in C(X)^+$
 satisfying $\nu(h)=1$. 


\end{enumerate}
With $h$ and $\nu$ as above, $h\nu(a):=\nu(ah)$, $a\in C(X)$, is the Gibbs measure $\mu_{\varphi,b}$ for $\varphi$ and $b$. Moreover, we 
have $P(\varphi,b)=\ln r(\LL_{c})$.
\end{thm}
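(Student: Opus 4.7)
The plan is to reduce the topologically transitive case to the topologically mixing case, where this theorem is classical and already packaged in Theorem \ref{thm:existence_of_Gibbs_measures}. Since $\varphi$ is open expanding and topologically transitive, there is $N\in\N$ for which $\varphi^N$ is topologically mixing, as noted in the preliminaries above. I will apply the mixing version of the theorem to the pair $(\varphi^N, S_Nb)$, where $S_Nb:=\sum_{i=0}^{N-1}b\circ\varphi^i$ is again H\"older continuous (because $b$ is and $\varphi$ is Lipschitz for an expanding metric), and then transfer the data back to $\varphi$ and $\mathcal{L}_c$.

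First I would note that $\mathcal{L}_c^N=\mathcal{L}_{c^{(N)}}$ with $c^{(N)}(x)=\prod_{i=0}^{N-1}c(\varphi^i(x))=\exp S_Nb(x)$, and that $r(\mathcal{L}_c^N)=\rho^N$, where $\rho:=r(\mathcal{L}_c)$. The classical mixing Ruelle--Perron--Frobenius theorem then supplies a unique probability measure $\nu_N$ and a unique strictly positive continuous $h_N$ with $\nu_N(h_N)=1$, satisfying
\[
\mathcal{L}_c^{N*}\nu_N=\rho^N\nu_N,\qquad \mathcal{L}_c^N h_N=\rho^N h_N,
\]
such that $h_N\nu_N$ is the Gibbs measure for $(\varphi^N, S_Nb)$.

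To descend to $\mathcal{L}_c$, I would use that $\mathcal{L}_c^*$ commutes with $\mathcal{L}_c^{N*}$. Hence $\mathcal{L}_c^*\nu_N$ is again a nonzero nonnegative $\mathcal{L}_c^{N*}$-eigenmeasure at eigenvalue $\rho^N$, so by uniqueness it equals $\lambda\nu_N$ for some $\lambda>0$; iterating $N$ times yields $\lambda^N=\rho^N$, and positivity forces $\lambda=\rho$. Therefore $\nu:=\nu_N$ satisfies $\mathcal{L}_c^*\nu=\rho\nu$, and the identical argument applied to $\mathcal{L}_c h_N$ gives $\mathcal{L}_c h=\rho h$ with $h:=h_N$. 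Uniqueness of $\nu$ (resp.\ $h$) among probability eigenmeasures (resp.\ strictly positive continuous eigenfunctions) of $\mathcal{L}_c$ at eigenvalue $\rho$ is inherited, because any such candidate is automatically a $\mathcal{L}_c^N$-eigendatum at $\rho^N$.

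It remains to identify $h\nu$ with $\mu_{\varphi,b}$ and to deduce the pressure identity. Since $S_{Nn}b=S_n(S_Nb)$ and, for $\varepsilon$ below the expansivity constant, Bowen balls of $\varphi$ and of $\varphi^N$ are comparable up to uniform constants (the intermediate iterates being controlled by expansion), the Gibbs inequality satisfied by $h_N\nu_N$ for $(\varphi^N, S_Nb)$ transfers to the Gibbs inequality for $h\nu$ with respect to $(\varphi,b)$, first along the subsequence $\{Nn\}$ and then for arbitrary $n$ by absorbing at most $N-1$ bounded factors $c(\varphi^j(\cdot))$ into the constant. Consequently $h\nu=\mu_{\varphi,b}$, and combining the general identity $P(\varphi^N, S_Nb)=NP(\varphi,b)$ with the pressure equality $P(\varphi^N, S_Nb)=\ln r(\mathcal{L}_c^N)=N\ln\rho$ from Theorem \ref{thm:existence_of_Gibbs_measures} yields $P(\varphi,b)=\ln r(\mathcal{L}_c)$. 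The main obstacle is pinning down that the scalar $\lambda$ produced in the descent step is exactly $\rho$ rather than another positive $N$-th root of $\rho^N$; this hinges on the uniqueness of nonnegative eigendata in the mixing case together with $r(\mathcal{L}_c^N)=r(\mathcal{L}_c)^N$.
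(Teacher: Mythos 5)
Your reduction stands or falls on the claim that some power $\varphi^N$ of a topologically transitive open expanding map is topologically mixing on all of $X$, and that claim is false whenever the cyclic period in the spectral decomposition exceeds one. By Theorem \ref{thm:spectral decomposition}, a transitive open expanding map splits $X$ into $k$ disjoint clopen sets $X^1,\dots,X^k$ cyclically permuted by $\varphi$, and only the restrictions of $\varphi^k$ to the pieces are mixing; if $k\geq 2$, $U\subseteq X^1$ and $V\subseteq X^2$ are non-empty open sets, then $\varphi^{Nm}(U)$ is contained in a single piece whose index rotates with $m$, so $\varphi^{Nm}(U)\cap V=\emptyset$ for infinitely many $m$ and no power of $\varphi$ is mixing on $X$ (an irreducible two-block Markov shift of period two is a concrete example). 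The sentence from the preliminaries you lean on is inaccurate on this point; note that the paper itself, in step 1) of the proof of Theorem \ref{thm:Ruelle's}, only uses mixing of $\varphi^k$ \emph{restricted to the pieces}. Consequently the classical mixing Ruelle--Perron--Frobenius theorem cannot be applied to $(\varphi^N,S_Nb)$ on $X$: already for $N=k$ the operator $\mathcal{L}_c^{N}=\mathcal{L}_{c^{(N)}}$ leaves each $C(X^j)$ invariant, its probability eigenmeasures at $\rho^N$ form a simplex with one free weight per piece, and the positive continuous eigenfunction is only determined up to a separate positive constant on each piece. This breaks exactly the step where you invoke uniqueness of $\nu_N$ to get $\mathcal{L}_c^*\nu_N=\lambda\nu_N$: in fact $\mathcal{L}_c^*$ carries a measure supported on $X^{j+1}$ to one supported on $X^j$, so the natural per-piece eigenmeasures of $\mathcal{L}_c^{N*}$ are not eigenmeasures of $\mathcal{L}_c^*$ at all. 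To repair the argument you would have to construct the cyclically intertwined family $\nu_1,\dots,\nu_k$ with $\mathcal{L}_c^*\nu_{j+1}=\rho\nu_j$ (and similarly for the $h_j$), pass to the equal-weight average, and prove by a periodic Perron--Frobenius argument that this particular combination is the unique invariant direction; none of this is in your proposal, and the same gap propagates to your identification of $h\nu$ with $\mu_{\varphi,b}$ and to the pressure identity, both of which you route through the alleged mixing of $\varphi^N$.

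For comparison, the paper avoids any reduction to the mixing case: it obtains an eigenmeasure by Schauder--Tychonoff, quotes results of \cite{Przytycki} valid for topologically transitive open expanding maps to show that every positive eigenvalue of $\mathcal{L}_c^*$ equals $e^{P(\varphi,b)}$ and every eigenmeasure is equivalent to the Gibbs measure, uses ergodicity of the Gibbs measure to force the Radon--Nikodym derivative between two eigenmeasures to be constant (uniqueness of $\nu$), and takes $h$ and the identification $h\nu=\mu_{\varphi,b}$ from the corresponding results in \cite{Przytycki}, deferring $P(\varphi,b)=\ln r(\mathcal{L}_c)$ to Theorem \ref{thm:Ruelle's}. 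The auxiliary ingredients of your plan (that $\mathcal{L}_c^N=\mathcal{L}_{c^{(N)}}$, the descent of eigendata from $\mathcal{L}_c^N$ to $\mathcal{L}_c$ once uniqueness is available, $P(\varphi^N,S_Nb)=NP(\varphi,b)$, and the comparability of Bowen balls for $\varphi$ and $\varphi^N$ --- though the latter comes from uniform continuity of $\varphi,\dots,\varphi^{N-1}$ rather than from expansion, which only gives lower bounds on distances) are fine in themselves, but as written they only reprove the theorem in the case where $\varphi$ is already mixing.
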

\begin{proof} For brevity put $\LL:=\LL_{c}$ and let $\M(X)$ be the set of Borel probability measures.

\ref{enu:Ruelle-Perron-Frobenious1}. 
Existence of $\nu\in \M(X)$  with $\LL^*(\nu)=c\nu$ for some $c>0$ follows from the  Schauder-Tychonoff fixed point theorem, see \cite[Theorem 5.2.8]{Przytycki}.  Suppose that $\mu,\eta\in \M(X)$ are such that $\LL^*(\mu)=c_1\mu$  and
 $\LL^*(\eta)=c_2\eta$ for some $c_1,c_2>0$. By \cite[Propositions 5.2.11, 5.1.14]{Przytycki},  
$c_1=c_2=e^{P(\varphi,b)}$ and the measures $\mu$
and $\eta$ are equivalent to the Gibbs measure $\mu_{\varphi,b}$. We also have $P(\varphi,b)=\ln r(\LL)=\ln \rho$, which under present assumptions could be readily proved using the formula in  Theorem \ref{thm:existence_of_Gibbs_measures}.  We will show it without any assumption on $b$ in Theorem \ref{thm:Ruelle's} below.
 Hence $\LL^*(\mu)=\rho\mu$,
 $\LL^*(\eta)=\rho\eta$  and  there is an $\eta$-integrable function $h$ such that $\mu=h \eta$.
 For every $a\in C(X)$ we have
$\LL(a)h=\LL(a (h\circ \varphi))$ and therefore  $\LL^*(h \eta)=(h\circ \varphi)\LL^*(\eta)=\rho(h\circ \varphi)\eta$. 
Using this we get
$$
h\eta=\mu=\rho^{-1}\LL^*(\mu)=\rho^{-1}\LL^*(h \eta)=(h\circ \varphi)\eta.
$$
This implies that $h=(h\circ \varphi)$ $\eta$-almost everywhere. But since $\eta$ is equivalent to 
$\mu_{\varphi,b}$, which is $\varphi$-ergodic, cf. \cite[Corollary 5.2.13]{Przytycki},
it follows that $h$ is constant $\eta$-almost everywhere. Thus $\eta=\mu$. 

\ref{enu:Ruelle-Perron-Frobenious2}.
Existence of $h\in C(X)^+$ with $\nu(h)=1$ and $\LL(h)=\rho h$ follows from \cite[Proposition 5.3.1]{Przytycki}.
Then $h\nu=\mu_{\varphi,b}$ is the Gibbs measure by \cite[Theorem 5.3.2]{Przytycki}. Thus if $g\in C(X)^+$ is 
 another function with properties of $h$, then  $h\nu=g\nu$, that is $g=h$, $\nu$-almost everywhere, which implies that $g=h$ because   $g$ and $h$ are continuous and $\supp\nu=X$.
 \end{proof}
We explain how the above results can be applied in  more general situations  using a  spectral decomposition.
We say that a set $U\subseteq X$ is \emph{forward $\varphi$-invariant} if $\varphi(U)\subseteq U$ and it is  \emph{$\varphi$-invariant} if 
$\varphi^{-1}(U)=U$.
A point $x\in X$ is \emph{non-wandering} if for every open
neighborhood $V$  of $x$  we have $V\cap \varphi^n(V)\neq \emptyset$ for
some $n\in \N$.  The set of non-wandering points is denoted by $\Omega(\varphi)$. It is  closed, forward $\varphi$-invariant and $\supp \mu\subseteq \Omega(\varphi)$ for every $\mu\in\Inv(X,\varphi)$.
If $\varphi$ is open expanding, then $\Omega(\varphi)$ is equal to the closure of the set of all periodic points
 \cite[Proposition 4.3.6]{Przytycki}.
By  \cite[Corollary 4.2.4]{Przytycki}, the following is a special case of \cite[Theorem 4.3.8]{Przytycki}:
\begin{thm}[Spectral decomposition]\label{thm:spectral decomposition} 
Suppose that $\varphi:X\to X$ is an open expanding map with $\Omega(\varphi)=X$.
Then $X=\bigsqcup_{j=1}^N X_j$ is a union of finitely many $\varphi$-invariant disjoint clopen  sets $X_j$, $j=1,..., N$, such that
for each $j$, the map $\varphi|_{X_j}:X_j\to X_j$ is topologically transitive. Moreover, each 
 $X_j$ is the union of $k(j)$ disjoint clopen sets $X_j^k$  which are cyclically
permuted by $\varphi$ and such that $\varphi^{k(j)}|_{X_j^k}:X_j^k\to X_j^k$ is topologically mixing, for all $j$ and $k$.
\end{thm}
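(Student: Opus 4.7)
The plan is to adapt the classical Smale-style spectral decomposition by leveraging two features specific to open expanding maps with $\Omega(\varphi)=X$: contracting inverse branches for all iterates of $\varphi$, and density of periodic points (noted in the paragraph preceding the theorem). I introduce the transitivity equivalence relation $\sim$ on $X$ defined by $x\sim y$ iff for every pair of open neighbourhoods $U\ni x$, $V\ni y$ there exist $m,n\in\N$ with $\varphi^m(U)\cap V\neq\emptyset$ and $\varphi^n(V)\cap U\neq\emptyset$. Reflexivity is exactly the nonwandering hypothesis, symmetry is tautological, and transitivity follows from the fact that $\varphi$ is an open map (so intermediate connections can be chained through open sets $\varphi^m(U)\cap W$).

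Next I would show each class $[x]$ is clopen. Choose $\varepsilon$ smaller than the expansion threshold, so every ball $B(z,\varepsilon)$ carries contracting inverse branches of $\varphi^n$ for all $n$. Together with density of periodic points in $X=\Omega(\varphi)$, this transfers a recurrence connection at $x$ to a connection at any $y$ sufficiently close to $x$, so $B(x,\varepsilon)\subseteq[x]$; the complement of $[x]$ is a union of other classes, hence also open by the same argument, so $[x]$ is clopen. Compactness then forces the partition to be finite: $X=\bigsqcup_{j=1}^N X_j$. Each $X_j$ is $\varphi$-invariant in both directions (openness of $\varphi$ together with symmetry of $\sim$), and $\varphi|_{X_j}$ is topologically transitive by the very definition of $\sim$. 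For the cyclic refinement I fix a single component $(Y,\psi)=(X_j,\varphi|_{X_j})$ and let $k=k(j)$ be the greatest common divisor of the periods of all periodic points in $Y$; for $0\leq i<k$ define $Y_i$ as the closure of the set of points reachable from a fixed $p\in Y$ of period $k$ at times $\equiv i\pmod{k}$. Expansion and openness imply each $Y_i$ is clopen, the $Y_i$ are pairwise disjoint, $\psi$ permutes them cyclically, and $Y=\bigsqcup_i Y_i$; topological mixing of $\psi^k|_{Y_i}$ then follows from the gcd equalling $k$ combined with topological transitivity of $\psi^k|_{Y_i}$ inherited from $\psi$.

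The main obstacle is proving that equivalence classes are open: for a general continuous map on a compact space this step fails, and without it one recovers neither finiteness nor the clopen structure. The crucial ingredient is that expansion provides contracting inverse branches while openness ensures $\varphi^n$ maps open sets to open sets, so a single recurrence connection propagates into a full neighbourhood connection. A secondary subtlety in the mixing step is arranging that $\psi(Y_i)=Y_{i+1\bmod k}$ holds exactly rather than only up to a boundary; again, expansion eliminates the boundary pathologies that would otherwise appear. The entire argument is packaged cleanly in \cite[Theorem 4.3.8]{Przytycki}, which is exactly the reference the proof above invokes.
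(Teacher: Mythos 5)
Your proposal is correct and lands exactly where the paper does: the paper's entire proof of this theorem is the one-line citation that, by \cite[Corollary 4.2.4]{Przytycki}, the statement is a special case of \cite[Theorem 4.3.8]{Przytycki} --- the same reference your argument ultimately invokes. The outline you give (transitivity relation, clopen classes via expansion and openness, cyclic refinement by periods) is the standard argument packaged in that theorem, so no further comparison is needed.
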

\begin{ex}\label{ex:TMS} Let $\mathbb{A}=[A(i,j)]_{i,j=1}^n$ be a matrix of zeros and ones, with no zero rows.
The \emph{topological Markov shift} 
with  states $\{1,..,n\}$ and the transition matrix $\mathbb{A}$ is the map 
$\sigma_{\mathbb{A}}:\Sigma_{\mathbb{A}}\to \Sigma_{\mathbb{A}}$ where
$$
\Sigma_{\mathbb{A}}:=\{(\xi_1,\xi_2,...)\in \{1,...,n\}^\N: A(\xi_i,\xi_{i+1})=1\}, \quad \sigma_{\mathbb{A}} (\xi_1,\xi_2,\xi_3,...):=(\xi_2,\xi_3,...).
$$
Here $\Sigma_{\mathbb{A}}$ is a metrizable compact space equipped with 
 the product topology inherited from $\{1,...,n\}^\N$, and $\sigma_{\mathbb{A}}$
is an expanding local homeomorphism.  The above data can also be viewed as a directed
graph with  vertices $\{1,...,n\}$ and  edges $\{(i,j): A(i,j) = 1\}$. Then $\Sigma_{\mathbb{A}}$
is the space of infinite paths in this graph. The number of paths from $i$ to $j$ of length $n$ is $A(i,j)^{(n)}$ where  $\mathbb{A}^n=[A(i,j)^{(n)}]_{i,j=1}^n$ is the $n$-th power of 
the matrix 
$\mathbb{A}$. We write $i\to j$ if 
there is a path from $i$ to $j$, that is $A(i,j)^{(n)}>0$ for some $n$.
A state $i$ is called \emph{essential}  if $i\to j$ implies $j\to i$ for every  state $j$. We  have 
$
\Omega(\sigma_{\mathbb{A}})=\{(\xi_1,\xi_2,...)\in \Sigma_{\mathbb{A}}: \xi_i\text{ is essential for every }i=1,2,...\}.
$
Hence $\Omega(\sigma_{\mathbb{A}})=\Sigma_{\mathbb{A}}$ if and only if all states  are essential.
Also  $\sigma_{\mathbb{A}}$ is topologically transitive if and only if $\mathbb{A}$ is \emph{irreducible} (i.e. for every $i$ and $j$ there is $n$ 
such that $A(i,j)^{(n)}>0$) if and only if $i\leftrightarrow j$ for all states $i$ and $j$.
The map  $\sigma_{\mathbb{A}}$ is topologically mixing if and only if $\mathbb{A}$ is \emph{aperiodic}, that is there is $n\in \N$ such that all entries in  $\mathbb{A}^n$
are non-zero.   
If $\Omega(\sigma_{\mathbb{A}})=\Sigma_{\mathbb{A}}$, then $i\leftrightarrow j$ is an equivalence relation 
and  thus   $\{1,...,n\}$ decomposes into disjoint equivalence classes.
 After possible relabeling, these classes are  $\{k_{j}+1,...,k_{j+1}\}$, $j=1,...,N$, where  $k_1=0<k_2<...<k_{N+1}=n$. Then $\mathbb{A}=\text{diag}(\mathbb{A}_1,...,\mathbb{A}_N)$
where $\mathbb{A}_j=[A(i,l)]_{i,l=k_{j}+1}^{k_{j+1}}$ for $j=1,...,n$ and 
$
\Sigma_{\mathbb{A}}=\bigsqcup_{j=1}^N \Sigma_{\mathbb{A}_j}
$
is the decomposition from Theorem \ref{thm:spectral decomposition} applied to $\sigma_{\mathbb{A}}$.
\end{ex}

\subsection{Endomorphisms, transfer operators and local homeomorphisms}
In this paper $A$ will always stand for a\emph{ unital commutative $C^*$-algebra}. So 
 by the Gelfand-Naimark theorem we may identify $A\cong C(X)$ with the algebra of continuous functions on a compact Hausdorff space $X$. By a unital \emph{endomorphism} of $A$ we mean a $*$-preserving endomorphism $\alpha:A\to A$ such that $\alpha(1)=1$. 
 Unital endomorphisms $\alpha:C(X)\to C(X)$  are in the one-to-one correspondence with continuous mappings $\varphi:X\to X$ given by
$$
\alpha(a)(x)=a(\varphi(x)), \qquad a\in C(X),\,\, x\in X,
$$
see, for instance, \cite[Proposition 1.1]{kwa-leb}. If $\alpha$ and $\varphi$ are in this correspondence, then $\alpha$ is a monomorphism ($\alpha$ is injective) if and only if $\varphi$ is surjective.
 Transfer operators on  $C^*$-algebras were introduced in \cite{exel3}.
In this paper we will consider  transfer operators that preserve the unit.
\begin{defn} A  \emph{(unital) transfer operator}  on a $C^*$-algebra $A$ is  a unit preserving  positive linear map $L:A\to A$ for which there is an endomorphism 
$\alpha:A\to A$ such that
\begin{equation}\label{eq:transfer_operator_relation}
L(\alpha(a)b)=a L(b), \qquad a\in A. 
\end{equation}
Then we also say that $L$ is a transfer operator for $\alpha$. 
\end{defn}
\begin{rem}
Since we assume that $L(1)=1$, relation \eqref{eq:transfer_operator_relation} implies that $L$ is a left inverse to $\alpha$. 
Hence if an endomorphism  admits a transfer operator, then it is necessarily a monomorphism and therefore the dual map 
$\varphi:X\to X$ is surjective.
\end{rem}
\begin{rem}\label{rem:on_faithful_transfers}
As a rule there are many different transfer operators 
for a given endomorphism $\alpha:A\to A$. However, if a transfer operator $L:A\to A$ is \emph{faithful}, 
i.e. it is injective on the  positive elements, 
then there is exactly one endomorphism $\alpha$ satisfying \eqref{eq:transfer_operator_relation}, see \cite[Proposition 4.18]{kwa_Exel}. 
\end{rem}
Transfer operators  for a  monomorphism $\alpha:C(X)\to C(X)$, with the dual map $\varphi:X\to X$,
are in a one-to-one correspondence with  
continuous maps $X\ni x \longmapsto \mu_x \in \M(X)$, where $\supp\mu_x\subseteq \varphi^{-1}(x)$ for every $x\in X$,
and $\M(X)$ is the space of Borel probability measures on $X$ equipped with the weak$^*$ topology.
Every transfer operator for $\alpha$ is of the form 
\begin{equation}\label{equ:transfer_operator_form}
L(a)(y)=\int_{\varphi^{-1}(x)} a(y) d\mu_x(y), \qquad a\in C(X).
\end{equation}
 When $\varphi:X\to X$ is a local homeomorphism, then every transfer operator for $\alpha$ is of the form
\begin{equation}\label{equ:transfer_operator_form2}
L(a)(y)=\sum_{x\in\varphi^{-1}(y)}\varrho(x)a(x), \qquad a\in C(X)
\end{equation}
where $\varrho:X\to [0,1]$  is a continuous function such that 
$\sum_{x\in\varphi^{-1}(y)}\varrho(x)=1$ for all $y\in X$. 
We call a map $\varrho$ with the above properties  a \emph{cocycle} for the map $\varphi:X\to X$. 
 For details concerning the above discussion see, for instance, \cite{kwa3}.
\begin{ex}\label{ex:Ruelle's_operators}
 Ruelle's transfer operator $\mathcal{L}_{c}f(y)=\sum_{x\in \varphi^{-1}(y)}c(x)f(x)
$ associated to a local homeomorphism $\varphi:X\to X$ and $c \geq 0$  in general fails to be unital.
Nevertheless, if $c>0$, then $\mathcal{L}_{c}(1)>0$ is invertible and therefore 
$L:=(\mathcal{L}_{c}(1))^{-1}\mathcal{L}_{c}$ is a (unital) transfer operator with the cocycle 
$
\varrho(x)=c(x)(\sum_{x'\in \varphi^{-1}(\varphi(x))}c(x'))^{-1}  >0
$.
\end{ex}
We end this section by `an algebraic characterization' of transfer operators  from  Example \ref{ex:Ruelle's_operators}.
 Let $B\subseteq A$ be a unital $C^*$-subalgebra of the $C^*$-algebra $A$, so that $B$ contains the unit of $A$. 
A \emph{conditional expectation} from $A$ onto $B$ is a linear contractive projection $E:A\to B\subseteq A$ onto $B$.
  By  Tomiyama's theorem  this is equivalent to saying that $E:A\to B\subseteq A$ is a unit preserving completely positive linear map such that 
	$
E(ba)=b E(a)$ for all $a\in A$ and $b\in B$. We have the following easy fact, see \cite{exel3}, \cite{kwa3}.
\begin{lem}\label{lem:expectations_vs_transfer_operators}
Let $\alpha:A\to A$ be a  monomorphism. We have a one-to-one correspondence between conditional expectations 
$E:A\to \alpha(A)\subseteq A$  and transfer operators $L:A\to A$ for $\alpha$. It is given by 
relations
$
E=\alpha\circ L$ and  $L(a)= \alpha^{-1}(E(a))$ for $ a\in A$.
\end{lem}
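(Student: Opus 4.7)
The plan is to verify the correspondence in both directions and check that the two assignments are mutually inverse. Since $\alpha$ is a monomorphism, it induces a $*$-isomorphism $\alpha:A\to \alpha(A)$, so $\alpha^{-1}:\alpha(A)\to A$ is a well-defined $*$-homomorphism. This is what makes both formulas make sense, and it will be used repeatedly without comment.

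First I would take a transfer operator $L:A\to A$ for $\alpha$ and show that $E:=\alpha\circ L$ is a conditional expectation onto $\alpha(A)$. The image of $E$ lies in $\alpha(A)$ by construction; $E$ is positive (hence completely positive, as $A$ is commutative) because $\alpha$ and $L$ are both positive; $E$ is unital since $E(1)=\alpha(L(1))=\alpha(1)=1$. To verify the projection property $E|_{\alpha(A)}=\mathrm{id}$, I would use the transfer identity with $b=1$: for every $a\in A$,
\[
E(\alpha(a))=\alpha(L(\alpha(a)\cdot 1))=\alpha(aL(1))=\alpha(a).
\]
Equivalently, applying Tomiyama's theorem, the $\alpha(A)$-bimodule property follows from the transfer identity: for $a,b\in A$,
\[
E(\alpha(a)b)=\alpha(L(\alpha(a)b))=\alpha(aL(b))=\alpha(a)\alpha(L(b))=\alpha(a)E(b).
\]

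Conversely, given a conditional expectation $E:A\to \alpha(A)$, I would set $L:=\alpha^{-1}\circ E$ and check that $L$ is a transfer operator for $\alpha$. Positivity and unitality of $L$ follow from the same properties of $E$ together with the fact that $\alpha^{-1}$ is a unital $*$-isomorphism from $\alpha(A)$ onto $A$. For the defining identity, I would apply the $\alpha(A)$-bimodule property of $E$:
\[
L(\alpha(a)b)=\alpha^{-1}\bigl(E(\alpha(a)b)\bigr)=\alpha^{-1}\bigl(\alpha(a)E(b)\bigr)=a\,\alpha^{-1}(E(b))=aL(b).
\]

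Finally, I would check that the two constructions invert each other. Starting from a transfer operator $L$, forming $E=\alpha\circ L$ and then $\alpha^{-1}\circ E=\alpha^{-1}\circ\alpha\circ L=L$, since $\alpha^{-1}\alpha=\mathrm{id}_A$. Starting from a conditional expectation $E:A\to\alpha(A)$, forming $L=\alpha^{-1}\circ E$ and then $\alpha\circ L=\alpha\circ\alpha^{-1}\circ E=E$, since $\alpha\alpha^{-1}$ is the identity on $\alpha(A)$ and $E$ takes values there. There is no real obstacle here; the only point requiring care is keeping track of where each map is defined and ensuring that the range of $E$ lies in the domain on which $\alpha^{-1}$ makes sense, which is precisely the content of $E$ being a conditional expectation \emph{onto} $\alpha(A)$.
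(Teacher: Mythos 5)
Your proof is correct, and it is the standard verification: the paper itself states this lemma without proof (as an ``easy fact'' with references to Exel and to \cite{kwa3}), and your argument is exactly the routine check one would supply — the transfer identity with $b=1$ gives the projection property, the bimodule property of $E$ gives the transfer identity back, and injectivity of $\alpha$ makes the two assignments mutually inverse. No gaps; the only implicit points (unitality of $\alpha$, so that $1\in\alpha(A)$, and Tomiyama's characterization of conditional expectations, which the paper records just before the lemma) are handled correctly.
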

A conditional expectation $E:A\to B\subseteq A$ onto $B$ is said to be of \emph{finite type} if there is a quasi-basis for $E$, i.e. there exists  a finite set $\{u_1, . . . , u_N\}\subseteq A$ such that
$
a =\sum_{i=1}^N  u_i E(u_i^*a)$,  for all $a\in A$,
see \cite{Watatani}.
\begin{defn}[cf. \cite{exel4} Definition 8.1] Let  $\alpha:A\to A$ be a monomorphism.
 We say   that $\alpha$ is of \emph{finite-type} if there is a conditional expectation of finite type onto the range $\alpha(A)$ of $\alpha$. 
Equivalently, in view of Lemma \ref{lem:expectations_vs_transfer_operators}, $\alpha$ is of finite type if and only if there is a transfer operator $L:A\to A$ and elements
$\{u_1, . . . , u_N\}\subseteq A$  such that
$$
a =\sum_{i=1}^N  u_i \alpha\big( L(u_i^*a)\big),\quad \textrm{ for all }a\in A.
$$   
In this case we will also say that $L$ is a  \emph{transfer operator  is of finite type}.
\end{defn}

\begin{prop}\label{prop:finite_type_characterisation}
Let $\alpha:C(X)\to C(X)$ be a unital monomorphism and let $\varphi:X\to X$ the associated continuous surjective map. Then
$
\alpha$  is of finite-type if and only if  $\varphi$ is a local homeomorphism.
Moreover, a transfer operator $L:C(X)\to C(X)$  for $\alpha$ is of finite type if and only if  it is given by \eqref{equ:transfer_operator_form2} 
where  the cocycle $\varrho:X\to [0,1]$ attains strictly positive values. Then the associated quasi-basis can be defined by
\begin{equation}\label{equ:quasi_basis_form}
u_i(x)=\sqrt{\frac{v_i(x)}{\varrho(x)}}, \qquad i=1,...,n
\end{equation}
where   $\{v_i\}_{i=1}^n\subseteq C(X)$ is a  partition of unity  subordinated to an open cover $\{U_i\}_{i=1}^n$ of $X$, with $\varphi:U_i\to \varphi(U_i)$ is a homeomorphism for each 
$i=1,...,n$. 
\end{prop}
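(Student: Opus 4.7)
Assume $\varphi$ is a local homeomorphism and $L$ is given by \eqref{equ:transfer_operator_form2} with a cocycle $\varrho > 0$. Since $X$ is compact, fix a finite open cover $\{U_i\}_{i=1}^n$ on which $\varphi$ restricts to homeomorphisms, and a subordinate partition of unity $\{v_i\}$. Strict positivity of $\varrho$ makes $u_i := \sqrt{v_i/\varrho}$ lie in $C(X)$. I verify the quasi-basis identity pointwise: for $a \in C(X)$ and $x \in X$,
$$
\sum_{i=1}^n u_i(x)\,\alpha\bigl(L(u_i^* a)\bigr)(x) = \sum_{i=1}^n u_i(x) \sum_{z \in \varphi^{-1}(\varphi(x))} \varrho(z)\, u_i(z)\, a(z).
$$
The product $u_i(x)u_i(z)$ vanishes unless $x, z \in U_i$, and then injectivity of $\varphi|_{U_i}$ forces $z = x$, so the right-hand side collapses to $\sum_i u_i(x)^2 \varrho(x)\, a(x) = \sum_i v_i(x)\, a(x) = a(x)$. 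Thus $L$ is of finite type with the claimed quasi-basis. A strictly positive cocycle always exists when $\varphi$ is a local homeomorphism, for instance $\varrho(x) = |\varphi^{-1}(\varphi(x))|^{-1}$ is continuous because the fiber-count is locally constant, so $\alpha$ itself is of finite type.

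\textbf{Necessity---form of $L$.} Conversely, let $L$ be a finite-type transfer operator with quasi-basis $\{u_i\}_{i=1}^N$, and write $L(a)(y) = \int_X a\, d\mu_y$ as in \eqref{equ:transfer_operator_form}, so $\supp \mu_y \subseteq \varphi^{-1}(y)$. The quasi-basis identity evaluated at $x$ reads
$$
a(x) = \int_X K(x,z)\,a(z)\,d\mu_{\varphi(x)}(z), \qquad K(x,z) := \sum_{i=1}^N u_i(x)\,\overline{u_i(z)},
$$
exhibiting $a \mapsto a(x)$ as a factor of the $N$-dimensional functional $a \mapsto \bigl(L(u_i^* a)(\varphi(x))\bigr)_{i}$. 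By Urysohn, $C(X) \to \C^k$, $a \mapsto (a(x_j))_j$, is surjective for distinct $x_1, \ldots, x_k \in \varphi^{-1}(y)$, forcing $k \leq N$. Hence every fiber is finite, and $\supp \mu_y \subseteq \varphi^{-1}(y)$ forces $\mu_y = \sum_j c_j(y)\,\delta_{x_j}$. Substituting back and using Urysohn functions $a$ with $a(x_i) = \delta_{ij}$ yields $\delta_{ij} = K(x_i,x_j)\,c_j(y)$, so $c_j(y) = 1/K(x_j,x_j) > 0$. Setting $\varrho(x) := \mu_{\varphi(x)}(\{x\})$ shows $L$ has the form \eqref{equ:transfer_operator_form2} with strictly positive cocycle.

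\textbf{Necessity---$\varphi$ is a local homeomorphism.} It remains to prove $\varphi$ is a local homeomorphism. Fix $x_0 \in X$, let $y_0 := \varphi(x_0)$ and $\varphi^{-1}(y_0) = \{x_0, x_2, \ldots, x_k\}$, and choose pairwise disjoint open neighborhoods $U_j \ni x_j$ with $\overline{U_j} \cap \varphi^{-1}(y_0) = \{x_j\}$. By compactness of $X \setminus \bigcup_j U_j$, there is an open $V \ni y_0$ with $\varphi^{-1}(V) \subseteq \bigcup_j U_j$. The map $y \mapsto \mu_y$ is weak${}^*$-continuous (equivalent to continuity of $L$), so the Portmanteau theorem applied to the open $U_1$ and closed $\overline{U_1}$ yields $\mu_y(U_1) \to \mu_{y_0}(U_1) = \varrho(x_0)$ as $y \to y_0$. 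Now $\mu_y(U_1) = \sum_{x \in \varphi^{-1}(y) \cap U_1} \varrho(x)$, and shrinking $U_1$ so that $\varrho \geq \varrho(x_0) - \varepsilon$ on $U_1$ gives $|\varphi^{-1}(y) \cap U_1| \leq \mu_y(U_1)/(\varrho(x_0) - \varepsilon)$. For small $\varepsilon$ and $y$ sufficiently close to $y_0$, the right-hand side is less than $2$, while $\mu_y(U_1) > 0$ forces $|\varphi^{-1}(y) \cap U_1| \geq 1$. Hence $|\varphi^{-1}(y) \cap U_1| = 1$ for $y$ near $y_0$, so after shrinking $V$, the map $\varphi|_{U_1 \cap \varphi^{-1}(V)}$ is a continuous open bijection onto $V$, i.e.\ a homeomorphism. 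The delicate point is this final step: both openness and local injectivity of $\varphi$ must be extracted from purely measure-theoretic data, and strict positivity of $\varrho$ is essential (without it, fiber-counts can jump, as in $z \mapsto z^2$, and $\varphi$ may fail to be locally injective).
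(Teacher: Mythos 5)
Your sufficiency argument and your derivation of the form of $L$ are correct and essentially the paper's: the fiber bound $|\varphi^{-1}(y)|\leq N$ is the same linear-algebra observation (the paper phrases it as a Kronecker--Capelli-type matrix argument), and evaluating the quasi-basis identity on Urysohn functions supported at a single point of the fiber gives, exactly as in the paper, $1=\varrho(x)\sum_{i=1}^N|u_i(x)|^2$, whence $\varrho>0$ (and, a point you should state, $\varrho=1/\sum_i|u_i|^2$ is continuous, as required for a cocycle). For the local homeomorphism part you take a genuinely different route: the paper first gets openness of $\varphi$ from $\supp\mu_y=\varphi^{-1}(y)$ (citing \cite{kwa_Exel}) and then local injectivity from continuity of the integer-valued function $y\mapsto L(1/\varrho)(y)=|\varphi^{-1}(y)|$ plus a disjoint-neighborhood argument, whereas you use weak$^*$-continuity of $y\mapsto\mu_y$ and Portmanteau to show that every $y$ near $y_0$ has exactly one preimage in $U_1$. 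That count is correctly established.

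The gap is your final sentence: from $|\varphi^{-1}(y)\cap U_1|=1$ for $y$ near $y_0$ you declare $\varphi|_{U_1\cap\varphi^{-1}(V)}$ to be a continuous \emph{open} bijection onto $V$, ``i.e.\ a homeomorphism.'' The openness (equivalently, continuity of the local inverse) is precisely what separates local injectivity from being a local homeomorphism, and it is asserted, not proved; a continuous bijection between open subsets of a compact Hausdorff space need not be open (invariance of domain is not available here), and you yourself flag this as ``the delicate point'' without supplying the argument. It is exactly the step the paper covers by proving $\varphi$ is open from $\supp\mu_y=\varphi^{-1}(y)$. The fix is short and uses tools you already have: either run the liminf half of Portmanteau at an arbitrary point $x'$ of $U_1\cap\varphi^{-1}(V)$ with an arbitrarily small open $O\ni x'$ (so $\mu_y(O)\geq$ a positive amount for $y$ near $\varphi(x')$, hence $\varphi(O)$ contains a neighborhood of $\varphi(x')$, which shows $\varphi$ is open and then your continuous open bijection is a homeomorphism), or strengthen your count to $|\varphi^{-1}(y)\cap\overline{U_1}|=1$ via the closed-set half of Portmanteau (after shrinking so that $\varrho\geq\varrho(x_0)-\varepsilon$ on $\overline{U_1}$) and deduce continuity of the map $y\mapsto$ (unique preimage in $U_1$) by a compactness/subnet argument. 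Without one of these additions the proof only yields that $\varphi$ is locally injective with locally constant fiber count, not that it is a local homeomorphism.
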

\begin{proof}
Assume $\varphi$ is a local homeomorphism. Putting $\varrho(x):=|\varphi^{-1}(x)|^{-1}>0$ we get a continuous cocycle for $\varphi$.
 Moreover, for any continuous cocycle $\varrho:X\to (0,1]$, the transfer operator given by \eqref{equ:transfer_operator_form2} 
is of finite type. Indeed,  for any $a\in C(X)$ and $\{u_i\}_{i=1}^n$ given by \eqref{equ:quasi_basis_form} we have
\begin{align*}
\Big[\sum_{i=1}^N  u_i \alpha\big( L(u_i^*a)\big)\Big] (x)&=\sum_{i=1}^N  u_i(x)  \sum_{\varphi(y)=\varphi(x)}\overline{u_i(y)} \varrho(y)a(y)
\\
&=\sum_{i=1}^N  |u_i(x)|^2 \varrho(x)a(x)=\sum_{i=1}^N  v_i(x) a(x)=a(x).
\end{align*}
 Hence $\alpha$ is of finite type, cf. \cite[Proposition 8.2]{exel_vershik}.

Now let $L$ be any transfer operator  for $\alpha$ which is  of finite type.   Fix a corresponding quasi-basis $\{u_n\}_{n=1}^N\subseteq A$. Then  $L$ is given by
 \eqref{equ:transfer_operator_form} and we have 
\begin{equation}\label{equation for contradiction with Kronecker Capelli}
a(x)=\sum_{i=1}^{N}  u_i(x) \int_{\varphi^{-1}(\varphi(x))} \overline{u_i(y)}a(y) d\mu_{\varphi(x)} (y) \quad\textrm{for every } a\in C(X) \textrm{ and }x\in X.
\end{equation}
This forces  $\varphi$ to be  at most  $N$-to-one map. Indeed, suppose on the contrary that there are $N+1$ different points $x_1,...,x_{N+1}\subseteq \varphi^{-1}(\varphi(x))$ for a certain $x\in X$. Then putting $c_{j,i}:=u_i(x_j)$ we get a  matrix $C=[c_{j,i}]_{j=1,i=1}^{N+1,N}$. Its range is at most $N$-dimensional. Hence there is a vector 
$y=[y_j]_{j=1}^{N+1}$ such that for every vector $\lambda=[\lambda_i]_{i=1}^{N}$ we have $C \lambda \neq y$. This contradicts \eqref{equation for contradiction with Kronecker Capelli} by taking $a \in C(X)$ such that $a(x_j)=y_j$ for all $j=1,...,N+1$, and putting $\lambda_i:=\int_{\varphi^{-1}(\varphi(x))} \overline{u_i(y)}a(y) d\mu_{\varphi(x)} (y)$, for $i=1,...,N$.

  We  define $\varrho(x):=\mu_{\varphi(x)}(\{x\})$, $x\in X$.
Since $\varphi$ is at most $N$-to-one,  \eqref{equation for contradiction with Kronecker Capelli} gives
$$
a(x)=\sum_{\varphi(x)=\varphi(y)}a(y)\varrho(y)\sum_{i=1}^{N}  u_i(x)\overline{u_i(y)} \quad\textrm{for every } a\in C(X) \textrm{ and }x\in X.
$$
Plugging into this equation   $a\in C(X)$ such that $a(x)=1$ and $a(\varphi^{-1}(\varphi(x))\setminus\{x\} )=0$  we get 
$1=\varrho(x)\sum_{i=1}^N  |u_i(x)|^2$, for every  $x\in X$. 
This implies that $\varrho$ is strictly positive and continuous. 
In particular,   $\supp\mu_y=\varphi^{-1}(y)$ for every $y\in X$, and therefore $\varphi$ is an open map, see, for instance,  \cite[Lemma 3.28]{kwa_Exel}. 
Thus it suffices to show that every point $x\in X$ has an open neighborhood on which $\varphi$ is injective. 
Note that $X\in y\mapsto L(1/\varrho)(y)= |\varphi^{-1}(y)|$ is   continuous. Hence there is a neighborhood $V$ of 
$\varphi(x)$ such that each point in $V$ has exactly $M\leq N $ elements in the  pre-image. 
Let $\varphi^{-1}(\varphi(x))=\{ x_1, ...,x_M\}$ and take disjoint open sets $\{U_{i}\}_{i=1}^N$ 
such that $x_i \in U_i\subseteq \varphi^{-1}(V)$, $i=1,...,M$. Putting $V':=\bigcap_{i=1}^M \varphi(U_i)$ 
and $U_i':=U_i\cap \varphi^{-1}(V')$, for $i=1,..., M$, we find that $\varphi$ restricted to each $U_i'$ is injective.
Indeed, for every $x' \in U_i'$, the $M$-element set $\varphi^{-1}(\varphi(x'))$ intersects each of  disjoint sets $U_j'$, $j=1,...,M$.
Accordingly, $x$ belongs to some $U_i'$, and hence $\varphi$ is locally injective.
\end{proof}
\begin{cor}\label{cor:transfers_of_finite_type} If  $L:C(X)\to C(X)$  is a transfer operator of finite type, then it is faithful, there is a unique endomorphism $\alpha:C(X)\to C(X)$ such that
$L$ is a transfer operator for $\alpha$ and the map $\varphi:X\to X$ dual to $\alpha$ is a surjective local homeomorphism.
\end{cor}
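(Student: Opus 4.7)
The plan is to deduce everything directly from Proposition \ref{prop:finite_type_characterisation} together with the two preceding Remarks.

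By the definition of being of finite type, there exists some endomorphism $\alpha:C(X)\to C(X)$ for which $L$ is a transfer operator. The Remark following the definition of a (unital) transfer operator shows that $\alpha$ must then be a monomorphism, so the dual continuous map $\varphi:X\to X$ is surjective. Now I apply the second half of Proposition \ref{prop:finite_type_characterisation} to this choice of $\alpha$: since $L$ is of finite type for $\alpha$, the proposition tells us simultaneously that $\varphi$ is a local homeomorphism and that $L$ is given by \eqref{equ:transfer_operator_form2} with a strictly positive continuous cocycle $\varrho:X\to(0,1]$.

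Next I verify faithfulness. Take $a\in C(X)$ with $a\geq 0$ and $L(a)=0$. For every $y\in X$ we then have
\[
0 = L(a)(y) = \sum_{x\in\varphi^{-1}(y)} \varrho(x)\,a(x),
\]
and since all summands are nonnegative with $\varrho(x)>0$, we conclude $a(x)=0$ for every $x\in\varphi^{-1}(y)$. Because $\varphi$ is surjective, every point of $X$ is of this form, so $a\equiv 0$. Hence $L$ is injective on positive elements, i.e.\ faithful.

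Finally, uniqueness of $\alpha$ is immediate from Remark \ref{rem:on_faithful_transfers}: a faithful transfer operator determines its endomorphism uniquely via \eqref{eq:transfer_operator_relation}. There is essentially no obstacle here; the only point that requires a moment's care is recognizing that Proposition \ref{prop:finite_type_characterisation} is formulated for a fixed $\alpha$, so one first has to produce some $\alpha$ (from the definition of finite type) before invoking the proposition, after which faithfulness retroactively guarantees that this $\alpha$ was the only possible choice.
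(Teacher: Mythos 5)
Your proposal is correct and follows essentially the same route as the paper, whose proof simply says to combine Proposition \ref{prop:finite_type_characterisation} with Remark \ref{rem:on_faithful_transfers}; you have merely made explicit the intermediate step (strict positivity of the cocycle $\varrho$ in \eqref{equ:transfer_operator_form2} forces faithfulness) that the paper leaves implicit, and your handling of the order of quantifiers (first produce some $\alpha$ from the definition, then get uniqueness retroactively) is exactly the intended reading.
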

\begin{proof}
Combine Proposition \ref{prop:finite_type_characterisation} and Remark \ref{rem:on_faithful_transfers}.
\end{proof}
\section{Abstract weighted shift operators}\label{sec:abstract_WSO}

\subsection{The axioms}
We denote by $\B(H)$ the $C^*$-algebra of all bounded operators on a Hilbert space $H$.
Throughout this subsection we assume that $A\subseteq \B(H)$ is a  unital commutative $C^*$-subalgebra and  $T\in \B(H)$ is an isometry.
We will consider the following conditions that $A$ and $T$ may satisfy:
\begin{itemize}
\item[(A1)] $
T^*AT\subseteq A$;
\item[(A2)] there is a unital endomorphism $\alpha:A\to A$ such that 
$
Ta=\alpha(a)T$,  for all $a\in A$;
\item[(A3)] $\overline{ATH}=H$, i.e. the closed linear span of elements of $ATH$ is equal to $H$.
\end{itemize}
Axioms (A1) and (A2)  appear, for instance, in \cite{Lebedev_Maslak},  \cite{t-entropy},
and are sufficient for the study of  the spectral radius $r(aT)$, $a\in A$.
The third  axiom (A3)  will be crucial in the analysis of the  $C^*$-algebra $C^*(A,T)$ and spectra of
the operators $aT$, $a\in A$. 
\begin{prop}\label{prop:faithfulness_of_transfers}  (A1) and (A2) are satisfied if 
and only if  the formula $L(a):=T^*aT$ defines a transfer operator $L:A\to A$ for a unital endomorphism  of $A$. 
If in addition condition (A3) holds, then $L$ is  faithful and the endomorphism in question  is  unique.
\end{prop}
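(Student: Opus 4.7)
My plan is to dispatch the equivalence as two routine implications, then handle the faithfulness claim by a short density argument. For the forward implication, I assume (A1) and (A2) and set $L(a):=T^*aT$. By (A1), $L$ maps $A$ into $A$; it is linear and positive since $L(a^*a)=(aT)^*(aT)\geq 0$, and unital because $T^*T=1$. To verify the transfer-operator identity I take adjoints in (A2), using that $\alpha$ is $*$-preserving, to obtain $aT^*=T^*\alpha(a)$ for every $a\in A$. Then $L(\alpha(a)b)=T^*\alpha(a)bT=aT^*bT=aL(b)$, so $L$ is a transfer operator for the endomorphism $\alpha$.

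The reverse implication is where the only real work lies. Assuming $L(a)=T^*aT$ is a transfer operator for some unital endomorphism $\alpha:A\to A$, axiom (A1) is immediate. For (A2), I plug $b=1$ into $L(\alpha(a)b)=aL(b)$ to get $T^*\alpha(a)T=a$ for every $a\in A$, and note that $T^*T=L(1)=1$. I then expand the positive element
\[
(\alpha(a)T-Ta)^*(\alpha(a)T-Ta)=T^*\alpha(a^*a)T-T^*\alpha(a^*)Ta-a^*T^*\alpha(a)T+a^*T^*Ta.
\]
Each of the four terms equals $a^*a$, so the sum vanishes; hence $Ta=\alpha(a)T$ and (A2) holds. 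This is the step I want to flag as the main obstacle, since at first sight it is not obvious how to extract the intertwining relation from the one-sided identity $T^*\alpha(a)T=a$ without some extra density assumption; the trick is to let the positive-square expansion plus the isometry property of $T$ do the cancellation for us.

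For the last clause, suppose (A3) also holds and $L(a^*a)=0$ for some $a\in A$. Then $(aT)^*(aT)=0$, so $aT=0$. For any $b\in A$ and $h\in H$, commutativity of $A$ gives $a(bTh)=b(aTh)=0$, so $a$ annihilates $ATH$; by (A3) it then annihilates $H$, whence $a=0$. Thus $L$ is faithful, and the uniqueness of $\alpha$ follows at once from Remark \ref{rem:on_faithful_transfers}.
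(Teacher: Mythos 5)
Your proposal is correct and follows essentially the same route as the paper: the forward direction is the same routine verification, the faithfulness argument via $aT=0$ and density of $ATH$ is identical, and uniqueness of $\alpha$ is delegated to the same fact (Remark \ref{rem:on_faithful_transfers}, which is the paper's citation of \cite[Proposition 4.18]{kwa_Exel}). For the converse, the paper also shows $\|Ta-\alpha(a)T\|=0$ via the $C^*$-equality and the transfer identity; your four-term expansion of $(\alpha(a)T-Ta)^*(\alpha(a)T-Ta)$, using $T^*\alpha(c)T=cL(1)=c$ and $T^*T=1$, is just that computation written out explicitly, so the step you flag as the main obstacle is handled exactly as in the paper.
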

\begin{proof}

Condition (A1) is equivalent to  that the formula $L(a):=T^*aT$, $a\in A$, gives a well defined map  $L:A\to A$. 
Assume (A1).  Then $L:A\to A$ is  positive (because $L(a^*a)=(aT)^*aT \geq 0$) and unit preserving (because $T$ is an isometry). 
Condition (A2) implies that 
$L(b\alpha(a))=T^*b\alpha(a)T= T^*bTa=L(b)a$, for all $a,b\in A$, and hence $L$ is a  transfer operator.
Conversely, assume that $L$ is a transfer operator for an endomorphism $\alpha:A\to A$. Then using the $C^*$-equality
and the  transfer identity \eqref{eq:transfer_operator_relation}, for every $a\in A$, one gets
$
\|Ta-\alpha(a)T \|^2=\|(Ta-\alpha(a)T)(Ta-\alpha(a)T)^* \|=0,
$
which is (A2), cf. \cite[Proposition 4.3]{kwa_Exel}.

Now assume (A1), (A2), (A3). 
Let $a\in A$ be such that $L(a^*a)=0$.  Then $\|aT\|^2=\|T^*a^*aT\|=\|L(a^*a)\|=0$, that is $aT=0$. Since $A$ is commutative this implies that 
for every $b\in A$ and $h\in H$ we have $a (bTh)=b (aT)h=0$. By (A3) elements $bTh$ are linearly dense in $H$. Hence $a=0$.
This proves faithfulness of $L$. Uniqueness of $\alpha$ follows now from \cite[Proposition 4.17]{kwa_Exel}.
\end{proof}

\begin{defn}\label{AWSO}
Let $A\subseteq \B(H)$ be a unital $C^*$-subalgebra and let $T\in \B(H)$ be an isometry satisfying (A1) and (A2). 
We call operators of the form $aT$, $a\in A$,  \emph{abstract weighted shift operators}
 associated with the transfer operator $L:A\to A$ where $L(a):=T^*aT$, $a\in A$. If in addition (A3) holds, we say that the weighted shift operators 
$aT$, $a\in A$, are \emph{well presented}.
\end{defn}
We will be mostly interested in the case when $T$ is not invertible, as otherwise
we land in a situation described in \cite{Anton_Lebed}:
\begin{prop}\label{prop:T_unitary} Assume  (A1) and (A2) where $A$ is a commutative $C^*$-algebra. The following are equivalent:
\begin{enumerate}
\item \label{enu:invertible0} $T$ is a unitary;
\item \label{enu:invertible1} $aT$ is invertible for some $a\in A$;
\item \label{enu:invertible2} $\alpha:A\to A$  is an automorphism and (A3) holds;
\item \label{enu:invertible3}  $T$ is a unitary and $T^*AT=A$, i.e. $aT$, $a\in A$, are abstract weighted shifts sense of \cite[3.1]{Anton_Lebed}.
\end{enumerate}
Accordingly, if $T$ is not invertible, then all operators $aT$, $a\in A$, are not invertible.
\end{prop}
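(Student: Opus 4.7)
The plan is to establish the four-way equivalence via the two bridges (i)$\Leftrightarrow$(ii) and (i)$\Leftrightarrow$(iii)$\Leftrightarrow$(iv), from which the final assertion drops out as the contrapositive of (ii)$\Rightarrow$(i). Among the implications, (i)$\Rightarrow$(ii) (take $a=1$) and (iv)$\Rightarrow$(i) are trivial.

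For (i)$\Rightarrow$(iii) and (i)$\Rightarrow$(iv), assume $T$ is unitary so $T^{-1}=T^*$. Then (A2) gives $\alpha(a)=TaT^*$ on $A$, making $\alpha$ a unital $*$-endomorphism. For any $b\in A$ the element $L(b)=T^*bT$ belongs to $A$ by (A1), and
$\alpha(L(b))=TT^*\,b\,TT^*=b$,
so $\alpha$ is surjective, hence an automorphism with $L=\alpha^{-1}$. Consequently $T^*AT=L(A)=A$, which is (iv), and $\overline{ATH}=\overline{AH}=H$ since $1\in A$, which is (A3); so (iii) also holds.

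For (iii)$\Rightarrow$(i), the key point is that the surjectivity of $\alpha$ allows any $a\in A$ to be absorbed into $T$ from the opposite side: writing $a=\alpha(a')$ with $a'\in A$, axiom (A2) gives $aT=\alpha(a')T=Ta'$, whence $aTH=Ta'H\subseteq TH$. Therefore $ATH\subseteq TH$, and since $T$ is an isometry the subspace $TH$ is closed, so (A3) forces $H=\overline{ATH}\subseteq TH$. Thus $T$ is surjective, hence unitary.

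The main step is (ii)$\Rightarrow$(i). Let $S\in\B(H)$ be the two-sided inverse of $aT$. Left-multiplying $aTS=1$ by $T^*$ yields $L(a)\,S=T^*$; since $T^*$ is surjective (because $T^*T=1$), so is $L(a)$. But $L(a)$ is normal as an element of the commutative algebra $A$, and a surjective normal operator on a Hilbert space is automatically invertible, since $\ker L(a)=\ker L(a)^*=(\mathrm{ran}\,L(a))^{\perp}=\{0\}$. Hence $L(a)^{-1}\in A$ and $S=L(a)^{-1}T^*$. Substituting back and using (A2) to move $L(a)^{-1}$ across $T$ gives
$$
1=aT\,L(a)^{-1}\,T^* = a\,\alpha\!\left(L(a)^{-1}\right)TT^*.
$$
Setting $b:=a\,\alpha(L(a)^{-1})\in A$ and $P:=TT^*$ this reads $bP=1$; applying both sides to any $h\in\ker P$ gives $h=bPh=0$, so $P$ is a projection with trivial kernel, forcing $P=1$, i.e.\ $T$ is unitary. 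The conceptual obstacle here is that invertibility of the composite $aT$ is a priori a weaker condition than invertibility of $T$; the decisive trick is to use the transfer identity to rewrite the formal inverse as $L(a)^{-1}T^*$ and then employ (A2) to produce a relation of the shape $bTT^*=1$ with $b\in A$, from which the triviality of the range projection of $T$ is immediate.
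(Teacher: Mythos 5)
Your proof is correct, and for the two nontrivial implications it takes a genuinely different, more self-contained route than the paper. For (ii)$\Rightarrow$(i) the paper invokes a forward reference to Lemma \ref{lem:Riesz_projector}\ref{enu:Riesz_projection2} (the Riesz-projector analysis, applied after rescaling) to conclude that $a$, and hence $T$, is invertible; you instead compute directly: from $aTS=1$ you get $L(a)S=T^*$, deduce that $L(a)\in A$ is a surjective normal operator and hence invertible (with $L(a)^{-1}\in A$ by inverse-closedness of the unital $C^*$-subalgebra), and then the transfer/commutation identities give $a\,\alpha(L(a)^{-1})\,TT^*=1$, forcing the range projection $TT^*$ to be $1$. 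For (iii)$\Rightarrow$(i) the paper argues that $L$ is an automorphism, cites \cite[Lemma 1.2]{kwa-leb0} for the equivalence of multiplicativity of $L$ with $TT^*\in A'$, and then uses $ATH=TT^*AH$; you instead use surjectivity of $\alpha$ together with (A2) to absorb $A$ into $T$ ($\alpha(a')T=Ta'$), so $ATH\subseteq TH$ and (A3) plus closedness of $TH$ give surjectivity of $T$. Your argument avoids both the forward reference and the external citation, at the cost of a somewhat longer computation in (ii)$\Rightarrow$(i); the paper's version is shorter because the Riesz-projector lemma is machinery it develops anyway. One tiny point worth making explicit in your write-up: in (i)$\Rightarrow$(iii) injectivity of $\alpha$ is immediate from $\alpha(a)=TaT^*$ with $T$ unitary (or from $L\circ\alpha=\mathrm{id}$), so "surjective, hence an automorphism" should be phrased with that remark included.
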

\begin{proof} Implications \ref{enu:invertible2}$\Leftarrow$\ref{enu:invertible3}$\Rightarrow$\ref{enu:invertible0}$\Rightarrow$\ref{enu:invertible1} are obvious. 
If $aT$ is invertible, for some $a\in A$, then  it follows from Lemma \ref{lem:Riesz_projector}\ref{enu:Riesz_projection2} (which we will prove below) that  $a$ is  invertible.
Hence also $T$ is invertible and therefore  \ref{enu:invertible0} and \ref{enu:invertible1} are equivalent. 

\ref{enu:invertible2}$\Rightarrow$\ref{enu:invertible0}.
If $\alpha$ is an automorphism, then $L$ is an automorphism, as a right inverse to $\alpha$. 
Multiplicativity of $L$ is equivalent to  $TT^*\in A'$, see \cite[Lemma 1.2]{kwa-leb0}. 
Hence  we get
$ATH=ATT^*H=TT^*AH$, which by (A3) implies that $T$ is onto, and therefore a unitary.

\ref{enu:invertible0}$\Rightarrow$\ref{enu:invertible3}. If $T$ is  a unitary, then (A2) implies that $\alpha(a)=TaT^*$, for $a\in A$.
Hence  $\alpha$ and $L$ are mutually inverse authomorphisms of $A$. In particular,  $T^*AT=A$.
\end{proof}

 When $L$ is of finite type, then condition (A3) can be expressed in terms of a single algebraic relation that was first discovered in \cite{exel_vershik}:
 \begin{lem}\label{lem:A3_characterisation_for_finite_type}
Assume (A1), (A2) and suppose that the associated transfer operator $L:A\to A$ is of finite type. The following conditions are equivalent:
\begin{enumerate}
\item\label{enu:A3_characterisation_for_finite_type1} condition (A3) holds;
\item\label{enu:A3_characterisation_for_finite_type2} we have $\sum_{i=1}^N u_iTT^*u_i^*=1$ for every quasi-basis $\lbrace u_i\rbrace_{i=1}^N\subseteq A$ for $L$;
\item\label{enu:A3_characterisation_for_finite_type3} we have $\sum_{i=1}^N u_iTT^*u_i^*=1$ for some quasi-basis $\lbrace u_i\rbrace_{i=1}^N\subseteq A$ for $L$.
\end{enumerate}
\end{lem}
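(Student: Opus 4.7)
The plan is to reduce all three conditions to the behaviour on $H$ of the bounded self-adjoint operator
$$P := \sum_{i=1}^{N} u_i TT^* u_i^*$$
attached to a quasi-basis $\{u_1,\ldots,u_N\} \subseteq A$ for $L$. The crux is a one-line computation that uses (A2) to pull $T$ through $\alpha$: since $\alpha\bigl(L(u_i^* a)\bigr)\,T = T\,L(u_i^* a) = TT^* u_i^* a T$, multiplying the quasi-basis identity $a = \sum_i u_i\,\alpha\bigl(L(u_i^* a)\bigr)$ on the right by $T$ gives
$$aT \;=\; \sum_{i=1}^{N} u_i TT^* u_i^*\, aT \;=\; P\,aT \qquad\text{for every } a \in A.$$
Hence $(1-P)$ annihilates every vector of the form $aTh$; by linearity and continuity it vanishes on $\overline{ATH}$.

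With this identity in hand, the implication (i)$\Rightarrow$(ii) is immediate: if $\overline{ATH}=H$ then $1-P=0$ on $H$ for every choice of quasi-basis, i.e.\ $P=1$. The implication (ii)$\Rightarrow$(iii) is trivial, since Proposition \ref{prop:finite_type_characterisation} guarantees the existence of at least one quasi-basis for any transfer operator of finite type (so the set of quasi-bases is not vacuous).

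For the converse (iii)$\Rightarrow$(i), suppose $P=1$ for some quasi-basis. Then every $h \in H$ satisfies
$$h \;=\; Ph \;=\; \sum_{i=1}^{N} (u_i T)\bigl(T^* u_i^*\,h\bigr),$$
which is a finite linear combination of vectors from $ATH$. Therefore $\spane(ATH)=H$, which is strictly stronger than (A3). I do not expect any real obstacle here: once (A2) has been used to reorganise the quasi-basis identity as $aT = P\,aT$, all three implications boil down to this single computation together with a one-line density argument.
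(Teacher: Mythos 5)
Your proposal is correct and follows essentially the same route as the paper: the identity $\bigl(\sum_i u_iTT^*u_i^*\bigr)aTh = \sum_i u_iTL(u_i^*a)h = \sum_i u_i\alpha(L(u_i^*a))Th = aTh$ is exactly the paper's computation (you merely phrase it as $aT = P\,aT$), and both the density argument for (i)$\Rightarrow$(ii) and the expansion $h = \sum_i u_iTT^*u_i^*h \in \spane(ATH)$ for (iii)$\Rightarrow$(i) coincide with the paper's proof. No gaps.
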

\begin{proof}\ref{enu:A3_characterisation_for_finite_type1}$\Rightarrow$\ref{enu:A3_characterisation_for_finite_type2}.
Let $\{u_i\}_{i=1}^N$ be a quasi-basis for $L$. For any $a\in A$ and $h\in H$  we have 
$$
\left(\sum_{i=1}^N u_iTT^*u_i^* \right)aTh=\sum_{i=1}^Nu_iTL(u_i^*a)h=\sum_{i=1}^Nu_i\alpha(L(u_i^*a))Th=aTh.
$$
Since elements  $aTh$ are linearly dense in $H$, this implies $\sum_{i=1}^N u_iTT^*u_i^*=1$.

Implication \ref{enu:A3_characterisation_for_finite_type2}$\Rightarrow$\ref{enu:A3_characterisation_for_finite_type3} is clear. 
To show \ref{enu:A3_characterisation_for_finite_type3}$\Rightarrow$\ref{enu:A3_characterisation_for_finite_type1},
suppose that $\{u_i\}_{i=1}^N$ is a quasi-basis for $L$ such that $\sum_{i=1}^N u_iTT^*u_i^*=1$. Then for any $h\in H$ we have
$
h=\sum_{i=1}^N u_iTT^*u_i^*h \in \overline{ATH}
$, that is  $\overline{ATH}=H$.
\end{proof}

In this paper we  consider the situation where $A\cong C(X)$ is commutative, $\alpha$ is given by a continuous surjective map $\varphi:X\to X$, 
and $L$ is given by a continuous family of measures $\{\mu_x\}_{x\in X}$ with $\supp \mu_x \subseteq \varphi^{-1}(x)$ for all $x\in X$. Thus we associate to abstract weighted shifts the triple $(X,\varphi, \{\mu_x\}_{x\in X})$.
In finite type case, $\varphi$ is a local homeomorphism and  $\{\mu_x\}_{x\in X}$ is equivalent to a continuous cocycle map $\varrho:X\to [0,1]$. 

\subsection{Weighted shifts  associated with  topological dynamical systems}
Suppose now that we have a Borel probability measure $\mu\in  \M(X)$   on a compact Hausdorff space $X$ and that $\mu$ has 
 full support, that is $\supp\mu= X$.
Then putting $H:=L^2_\mu(X)$ the $C^*$-algebra of continuous functions $C(X)$ is naturally isomorphic to the $C^*$-algebra $A\subseteq \B(H)$ of operators of multiplication:
$
(ah)(x)=a(x)h(x)$,  $h\in L^2_\mu(X)$, $a\in C(X)$.
Therefore in the sequel we will write $A=C(X)$. Let $\varphi:X\to X$ be a  continuous map that preserves $\mu$. 
Then $\varphi$ is necessarily surjective
   and the composition operator  $T:L^2_\mu(X)\to L^2_\mu(X)$:
\begin{equation}\label{eq:concrete_composition_operator}
(Th)(x)=h(\varphi(x)), \qquad  h\in L^2_\mu(X),
\end{equation}
 is an isometry. We have  a unital monomorphism  $\alpha:A\to A$, 
$\alpha(a):=a\circ \varphi$. Clearly, $T$ and $\alpha$ satisfy axiom (A2). 
Note that $TH=\{ f\circ\varphi: f\in L^2_{\mu}(X)\}$ contains 
the constant functions. So $ATH$ contains the subspace $C(X)\subseteq L^2_{\mu}(X)$ which is dense  in $H=L^2_{\mu}(X)$. 
Hence axiom (A3) holds. However, axiom (A1) may fail  (see Example \ref{ex:not_continous_dissintegration} below). More specifically, suppose that  \emph{$\{\mu_x\}_{x\in X}\subseteq  \M(X)$ is a  disintegration of 
$\mu$ along the fibers of $\varphi$}, i.e. each $\mu_x$ is  supported on $\varphi^{-1}(x)$ 
and $\int_X h(y) d\mu(y)=\int_X \Big(\int_{\varphi^{-1}(y)} h(x) d\mu_y(x)\Big) d\mu(y)$  
 for every  $h\in L^1_\mu(X)$.\footnote{It is well known that such an disintegration exists  when \(X\) is second countable. In our set up  its existence follows from axiom (A3) without any separability assumptions, see Proposition \ref{prop:axiom_A1_concrete}} Then 
\begin{equation}\label{eq:concrete_adjoint_operator}
T^*h(y)=\int_{\varphi^{-1}(y)} h(x) d\mu_y(x), \qquad  h\in L^2_\mu(X),
\end{equation}
and   (A1) holds if and only if $\{\mu_x\}_{x\in X}$ is \emph{continuous} in the sense that the map $X\ni x \mapsto \mu_x \in \M(X)$ 
is continuous. We have a number of characterisations of this continuity:
\begin{prop}\label{prop:axiom_A1_concrete}
Let  $\varphi:X\to X$ be a continuous map for which there exists $\mu\in \Inv(X,\varphi)$  with $\supp \mu= X$.
For the $C^*$-algebra $C(X)\cong A\subseteq \B(L^2_\mu(X) ) $ and the isometry $T\in \B(L^2_\mu(X) )$ as above, axioms (A2) and (A3) always hold. Moreover, the following statements are equivalent:
\begin{enumerate}
\item\label{enu:axiom_A1_concrete1} axiom (A1) holds; 
\item\label{enu:axiom_A1_concrete2} $L^*(\mu)=\mu$ for some transfer operator $L:C(X)\to C(X)$ for $\alpha(a)=a\circ \varphi$;
\item\label{enu:axiom_A1_concrete3} there is a continuous $\{\mu_x\}_{x\in X}$  disintegration of  $\mu$
along the fibers of $\varphi$;
\item\label{enu:axiom_A1_concrete4} the co-isometry $T^*$ preserves the subspace $C(X)\subseteq L^2_\mu(X)$;
\item\label{enu:axiom_A1_concrete5} the projection $TT^*$ preserves the subspace $C(X)\subseteq L^2_\mu(X)$.
\end{enumerate}
 If in addition $\varphi$ is a local homeomorphism, 
and $\{U_i\}_{i=1}^n$ is an open cover of $X$ such that $\varphi:U_i\to \varphi(U_i)$ is injective, for $i=1,...,n$,  then the above conditions are equivalent to
\begin{enumerate}  
\item[(vi)] there is a continuous function $\varrho:X\to [0,1]$ such that 
$\varrho(x)=  \frac{d (\mu \circ \varphi|_{U_i}^{-1})}{d\mu}(\varphi(x))$ 
for $\mu$-almost all $x\in U_i$, and all $i=1,...,n$.
\end{enumerate}
If (vi) holds, then the formula $\mu_{\varphi(x)}(\{x\})=\varrho(x)$ determines the continuous disintegration $\{\mu_x\}_{x\in X}$,
$L$ is given by \eqref{equ:transfer_operator_form2}
and $T^*h(y)=\sum_{x\in \varphi^{-1}(y)}\varrho(x) h(x)$,   $h\in L^2_\mu(X)$.
\end{prop}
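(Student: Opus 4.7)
The plan is to begin by verifying that (A2) and (A3) hold automatically in this setting: (A2) is the pointwise identity $(Tah)(x)=a(\varphi(x))h(\varphi(x))=(\alpha(a)Th)(x)$, and (A3) follows because $ATH\supseteq\{a\cdot 1:a\in A\}=C(X)$, which is norm-dense in $L^2_\mu(X)$. The pivot of the subsequent argument is the identity $T^*a=L(a)$ in $L^2_\mu(X)$, where $L(a)\in L^\infty_\mu$ is the essentially bounded multiplier defined by $T^*aT=m_{L(a)}$. Such a multiplier exists since $A$ is maximal abelian in $\mathcal{B}(L^2_\mu(X))$, and the formula is obtained by applying $T^*aT$ to the constant vector $1$ and using $T\cdot 1=1$.

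Using this identity, (i) $\Leftrightarrow$ (iv) is immediate: $L(a)\in A=C(X)$ precisely means the multiplier $L(a)$ has a continuous representative, which by the identity is the same as $T^*a$ having one. For (i) $\Rightarrow$ (ii), I would check that $L:A\to A$ is a well-defined unital positive map, verify the transfer identity $L(\alpha(a)b)=aL(b)$ from (A2), and deduce $\mu$-invariance via $\int L(a)\,d\mu=\langle T^*a,1\rangle=\langle a,T1\rangle=\int a\,d\mu$. Conversely, for (ii) $\Rightarrow$ (i) the $\varphi$-invariance of $\mu$ combined with the transfer identity yields
\[
\langle T^*aTh,g\rangle=\int a\,(h\circ\varphi)\overline{(g\circ\varphi)}\,d\mu=\int L(a)h\bar g\,d\mu,
\]
so $T^*aT=m_{L(a)}\in A$. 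The equivalence (ii) $\Leftrightarrow$ (iii) is a direct translation via the one-to-one correspondence from Section~\ref{sec:preliminaries} between transfer operators for $\alpha$ and continuous families $\{\mu_y\}$ with $\supp\mu_y\subseteq\varphi^{-1}(y)$, under which $L^*\mu=\mu$ is exactly the disintegration condition.

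For (iv) $\Leftrightarrow$ (v), the direction (iv) $\Rightarrow$ (v) is trivial since $TT^*=T\cdot T^*$ and $T$ preserves $C(X)$. The hard part will be (v) $\Rightarrow$ (iv). Given $h\in C(X)$ with $c:=TT^*h\in C(X)$, one has $c=(T^*h)\circ\varphi$ in $L^2_\mu$, equivalently $c(x)=(T^*h)(\varphi(x))$ for $\mu$-a.e.\ $x$. Since $\varphi$ is a closed continuous surjection from a compact to a Hausdorff space, it is a quotient map, so any continuous function on $X$ constant on its fibers factors continuously through $\varphi$. The crux is therefore to show that $c$ is constant on each fiber of $\varphi$; I would extract this from the $\mu$-a.e.\ identity by approximating points in a given fiber by sequences in the full-measure set where the identity holds, and combining the continuity of $c$, the full support condition $\supp\mu=X$, and $\varphi$-invariance of $\mu$ to match values along common $\varphi$-preimages. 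Once fiber-constancy is established, the resulting continuous factorization of $c$ provides the continuous representative of $T^*h$, giving (iv).

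Finally, under the local homeomorphism assumption, I would apply the correspondence from Section~\ref{sec:preliminaries}: a continuous family $\{\mu_y\}$ with support in the fibers of the local homeomorphism $\varphi$ is equivalent to a continuous cocycle $\varrho:X\to[0,1]$, via $\mu_y=\sum_{x\in\varphi^{-1}(y)}\varrho(x)\delta_x$. To identify $\varrho(x)$ with the Radon-Nikodym derivative in (vi), I would invoke the identity $\int f(g\circ\varphi)\,d\mu=\int L(f)g\,d\mu$ (a consequence of $L^*\mu=\mu$ and the transfer identity) applied with $f$ supported in $U_i$ and $g$ an indicator, yielding $(\mu\circ\varphi|_{U_i}^{-1})(V)=\int_V\varrho(\varphi|_{U_i}^{-1}(y))\,d\mu(y)$, as required. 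The closing formulas for $L$ and $T^*$ then follow directly from the cocycle description of $\mu_y$.
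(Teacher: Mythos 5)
Most of your plan is sound and in places more direct than the paper's: the pivot identity $T^*aT=m_{T^*a}$ gives (i)$\Leftrightarrow$(iv) cleanly, your (ii)$\Rightarrow$(i) computation is a nice shortcut (the paper instead closes the cycle (ii)$\Rightarrow$(iii)$\Rightarrow$(iv)$\Rightarrow$(v)$\Rightarrow$(i)), and (A2), (A3), (i)$\Rightarrow$(ii), (ii)$\Leftrightarrow$(iii), (iv)$\Rightarrow$(v) are as in the paper. Two small repairs: $C(X)$ is \emph{not} maximal abelian in $\B(L^2_\mu(X))$; what you need is that $T^*aT$ commutes with $C(X)$ (by (A2) and commutativity) and that $C(X)'=L^\infty_\mu(X)$ because $\supp\mu=X$, whence $T^*aT$ is multiplication by an $L^\infty$ function, identified with $T^*a$ by evaluating at the constant vector. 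Also, for (vi) you only indicate how to produce $\varrho$ from the continuous disintegration; the converse direction (vi)$\Rightarrow$(iii)/(v), which the paper gets by citing Przytycki, is not addressed.

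The genuine gap is (v)$\Rightarrow$(iv). The statement you isolate as the crux --- that a continuous $c$ satisfying $c=g\circ\varphi$ $\mu$-a.e.\ must be constant on the fibres of $\varphi$, using only continuity of $c$, $\supp\mu=X$ and $\varphi$-invariance of $\mu$ --- cannot be proved from those ingredients, and your approximation scheme breaks down because $g=T^*h$ is only an $L^2_\mu$-class: you cannot pass to the limit in $g(\varphi(x_n))$, and nothing guarantees that the full-measure set contains \emph{pairs} of points of the same fibre accumulating at a given pair $x_1,x_2$. In fact the claim is false as stated: take $X=\T$, $\varphi(x)=2x \bmod 1$, and $\mu=\sum_{n}2^{-n}\mu_{O_n}$ the convex combination of the uniform measures on all periodic orbits. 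Then $\mu$ is $\varphi$-invariant with full support, it is carried by the set $P$ of periodic points, and $\varphi|_P$ is a bijection (a periodic point has exactly one periodic preimage). Consequently every $h\in L^2_\mu$ equals some $g\circ\varphi$ $\mu$-a.e.; in particular $c(x)=e^{2\pi i x}$ is continuous and agrees a.e.\ with $g\circ\varphi$ for $g=c\circ(\varphi|_P)^{-1}$, yet $c$ is not constant on the fibres $\{y/2,(y+1)/2\}$. (Note that in this example $T$ is onto, so $TT^*=1$ preserves $C(X)$, while $T^*c$ has no continuous version: along the periodic points $0.\overline{0^k1}\to 0$ and $0.\overline{0^k10}\to 0$ its values tend to $-1$ and $+1$ respectively. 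So the passage from (v) back to the other conditions is exactly where all the difficulty sits, and it cannot be done fibrewise from the single a.e.\ identity.) The paper treats this implication by an entirely different, operator-algebraic argument: assuming (v), it regards $TT^*|_{C(X)}$ as a conditional expectation onto $\alpha(C(X))$, so that $TT^*a=\alpha(b)$ for some $b\in C(X)$, and then uses (A2) and density of $C(X)$ in $L^2_\mu(X)$ to get $TT^*AT\subseteq TA$ and hence $T^*AT\subseteq A$; the key point your sketch misses is precisely the need to place $TT^*(C(X))$ inside $\alpha(C(X))$, rather than deducing fibre-constancy from the a.e.\ identity alone.
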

\begin{proof}
\ref{enu:axiom_A1_concrete1}$\Rightarrow$\ref{enu:axiom_A1_concrete2}. If (A1) holds, then $L(a):=T^*aT$, $a\in A$, is a transfer operator for $\alpha$, and for any $a\in C(X)$ we have $\int_X L(a)(y) d\mu(y)=\langle 1, T^*aT1\rangle=\langle T 1, aT1\rangle= 
\int_X a(y) d\mu(y)$.  Hence $L^*(\mu)=\mu$. 

\ref{enu:axiom_A1_concrete2}$\Rightarrow$\ref{enu:axiom_A1_concrete3}. The transfer operator $L:C(X)\to C(X)$
 is of the form
\eqref{equ:transfer_operator_form} where $X\ni x \mapsto \mu_x \in \M(X)$ is continuous. 
The relation $L^*(\mu)=\mu$ means that we have
$
\int_X \Big(\int_{\varphi^{-1}(y)} a(x) d\mu_y(x)\Big) d\mu(y)= 
\int_X a(y) d\mu(y)$ for all $a\in C(X)$. Since $C(X)$ is dense in $L^1_\mu(X)$,  $\{\mu_x\}_{x\in X}$ is a disintegration of  $\mu$ 
along the fibers of $\varphi$.

\ref{enu:axiom_A1_concrete3}$\Rightarrow$\ref{enu:axiom_A1_concrete4}.  This immediately follows from \eqref{eq:concrete_adjoint_operator} and continuity of  $\{\mu_x\}_{x\in X}$.

\ref{enu:axiom_A1_concrete4}$\Rightarrow$\ref{enu:axiom_A1_concrete5}.  This is clear because $T$ preserves $C(X)\subseteq L^2_\mu(X)$.

\ref{enu:axiom_A1_concrete5}$\Rightarrow$\ref{enu:axiom_A1_concrete1}. The restriction $T:C(X)\to C(X)$ coincides with the endomorphism 
$\alpha:C(X)\to C(X)$. If the projection $TT^*$ preserves $C(X)$ it is a conditional expectation onto the range of $\alpha$. 
Hence $TT^*:C(X)\to C(X)$ is a $\alpha(C(X))$-bimodule map, and  for any $a\in C(X)$ there is $b\in C(X)$ with $TT^*a=\alpha(b)$. 
Thus for any $h\in C(X)\subseteq L^2_\mu(X)$, treating $a$ as an element of $A$, and using (A2) we get
$
TT^*aTh= \alpha(b) Th=T bh
$. 
Since  $C(X)$ is dense in $L^2_\mu(X)$, this shows that $TT^*AT\subseteq TA$.
Since $T^*T=1$, this gives $T^*AT\subseteq A$.

Thus conditions \ref{enu:axiom_A1_concrete1}-\ref{enu:axiom_A1_concrete5} are equivalent. If $\varphi$ is a local homeomorphism, 
then \ref{enu:axiom_A1_concrete5}$\Leftrightarrow$(vi)  by \cite[Proposition 4.2.1]{Przytycki}.
The remaining part is straightforward or discussed earlier.
\end{proof} 
\begin{rem}\label{rem:strong_Jacobian}
If $\varrho$  in Proposition \ref{prop:axiom_A1_concrete}(vi) exists and is strictly positive, then 
for every open $U\subseteq X$ on which
$\varphi$ is injective we have $\mu(\varphi(U))=\int_{U} \varrho(x)^{-1} d\mu(x)$. Hence $1/\varrho$ is 
a \emph{strong Jacobian} of $\varphi$  with
respect to $\mu$, see \cite[Definition 1.9.4]{Przytycki}. 
\end{rem}
\begin{ex}\label{ex:not_continous_dissintegration}
Let $X=[0,1)\,\,(mod\,\,1)$ and 
$\varphi(x)=2x   \,\,(mod\,\,1)$. For any $p_0, p_1 >0$ with $p_0+p_1=1$ define
 $\varrho=p_0 \mathds{1}_{[0,1/2)} + p_1 \mathds{1}_{[1/2,1)}$. 
 Then $\varrho>0$ is a cocycle for $\varphi$ but $\varrho$ is continuous if only if $p_0=p_1=\frac{1}{2}$.
Let $\lambda$ be the Lebesgue measure (length) on $[0,1)$ and put $\mu:= \varrho \lambda$, that is $\mu(U):=\int_U \varrho d\lambda$ for all Borel $
U \subseteq X$. Then 
 \(\mu \in \Inv(X,\varphi)\), $\supp \mu=X$, and the adjoint of the isometry $(Th)(x)=h(\varphi(x))$ is 
given by $
T^*h(y)=\sum_{x\in \varphi^{-1}(y)}\varrho(x) h(x)$,   $h\in L^2_\mu(X)$.
The corresponding disintegration $\{\mu_x\}_{x\in X}$   of  $\mu$ is given by $\mu_{x}=p_0\delta_{\frac{1}{2}x} + p_1\delta_{\frac{1}{2}(x+1)}$, $x\in X$.
The equivalent conditions in Proposition \ref{prop:axiom_A1_concrete} hold if and only if 
 $p_0=p_1=\frac{1}{2}$.
 \end{ex}
We may  reverse the above problem  and ask whether for a given dynamical system $(X,\varphi)$  and a transfer operator $L:C(X)\to C(X)$ there exists a $\varphi$-invariant measure
such that $\supp\mu= X$ and $L$ is implemented by the isometry \eqref{eq:concrete_composition_operator}.
Since $\supp \mu \subseteq \Omega(\varphi)$,  then we necessarily have   $\Omega(\varphi)=X$.
Using methods of the thermodynamical formalism we  give the following positive answer when $\varphi$ is expanding:
\begin{prop}\label{prop:cocycle_inducing_measures}
Let $L:C(X)\to C(X)$ be a transfer operator of finite type on  such that
the corresponding local homeomorphism $\varphi:X\to X$ is expanding 
and $\Omega(\varphi)=X$. Then there exists  $\mu\in \Inv(X,\varphi)$  with $\supp \mu= X$ such that 
idntyifying $C(X)$ with the algebra $A\subseteq \B(L^2_\mu(X) ) $ of multiplication operators we have
$$
L(a)=TaT^*, \qquad A=C(X),
$$
where  $T\in \B(L^2_\mu(X) )$ is the operator of composition with $\varphi$. That is, 
$aT$, $a\in A$, are well presented weighted shifts associated with  the transfer operator $L$.
If in addition $\varphi:X\to X$ is topologically transitive and the logarythm of the associated cocycle $\varrho:X\to (0,1]$ 
is H\"older continuous, then the measure $\mu$ above is unique and it is the Gibbs measure $\mu_{\varphi,\ln\varrho}$.
\end{prop}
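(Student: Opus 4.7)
The plan is to produce $\mu\in\Inv(X,\varphi)$ with $\supp\mu=X$ and $L^*\mu=\mu$, and then read off the representation from Proposition~\ref{prop:axiom_A1_concrete}. By Corollary~\ref{cor:transfers_of_finite_type} and Proposition~\ref{prop:finite_type_characterisation}, $\varphi$ is a surjective local homeomorphism and $L=\LL_\varrho$ for a continuous strictly positive cocycle $\varrho$; unitality of $L$ forces $\sum_{\varphi(x)=y}\varrho(x)=1$, so $r(L)=1$. Once such a $\mu$ is found, the transfer identity $L(\alpha(a))=aL(1)=a$ combined with $L^*\mu=\mu$ gives $\int\alpha(a)\,d\mu=\int a\,d\mu$, so $\mu\in\Inv(X,\varphi)$; full support makes the multiplication representation $C(X)\hookrightarrow\B(L^2_\mu(X))$ faithful, and Proposition~\ref{prop:axiom_A1_concrete} (its cocycle characterisation (vi)) then yields axioms (A1)--(A3) for the composition isometry $T$ together with the identity $T^*aT=L(a)$.

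To build $\mu$, apply the spectral decomposition (Theorem~\ref{thm:spectral decomposition}), available because $\Omega(\varphi)=X$, to write $X=\bigsqcup_{j=1}^N X_j$ with each $X_j$ clopen, $\varphi$-invariant, and $\varphi|_{X_j}$ topologically transitive, and further decompose each $X_j$ into clopen cyclic pieces $X_j^0,\dots,X_j^{k(j)-1}$ permuted by $\varphi$ with $\varphi^{k(j)}|_{X_j^k}$ topologically mixing. Since $X_j^0$ is clopen and $\varphi^{-k(j)}(X_j^0)=X_j^0$, the iterate $L^{k(j)}$ restricts to a unital transfer operator on $C(X_j^0)$ for $\varphi^{k(j)}|_{X_j^0}$. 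The Schauder--Tychonoff theorem (as in the proof of Theorem~\ref{thm:Ruelle-Perron-Frobenious}) yields a fixed point $\mu_j^0\in\M(X_j^0)$ of $(L^{k(j)})^*$. Set
\[
\mu_j:=\frac{1}{k(j)}\sum_{k=0}^{k(j)-1}(L^k)^*\mu_j^0,\qquad \mu:=\frac{1}{N}\sum_{j=1}^{N}\mu_j.
\]
Unitality gives $L^k(1)=1$, so each summand is a probability measure, and telescoping combined with $(L^{k(j)})^*\mu_j^0=\mu_j^0$ yields $L^*\mu=\mu$.

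The substantive step, and the main obstacle, is to show $\supp\mu_j^0=X_j^0$. From this, strict positivity of $\varrho$ gives $\{L^k f>0\}=\varphi^k(\{f>0\})$ for $f\in C(X)^+$, whence $(L^k)^*\mu_j^0$ is supported on $\varphi^{-k}(X_j^0)$; as $k$ runs through $0,\dots,k(j)-1$ this exhausts the cyclic pieces of $X_j$, so $\supp\mu_j=X_j$ and $\supp\mu=X$. To establish $\supp\mu_j^0=X_j^0$, I invoke the \emph{locally eventually onto} property of topologically mixing open distance-expanding maps (standard in thermodynamic formalism, cf.\ \cite{Przytycki}): for every non-empty open $U\subseteq X_j^0$ there exists $N\geq 1$ with $\varphi^{Nk(j)}(U)=X_j^0$. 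Choose open $V$ with $\overline V\subseteq U$, a function $f\in C(X_j^0)^+$ with $\supp f\subseteq U$ and $f\equiv 1$ on $\overline V$, and $N$ such that $\varphi^{Nk(j)}(V)=X_j^0$. For every $y\in X_j^0$ there is $x_y\in V\cap\varphi^{-Nk(j)}(y)$ and $(L^{Nk(j)}f)(y)\geq\varrho^{(Nk(j))}(x_y)>0$; by compactness $L^{Nk(j)}f\geq c>0$, and $(L^{k(j)})^*$-invariance of $\mu_j^0$ yields $\mu_j^0(U)\geq\int f\,d\mu_j^0=\int L^{Nk(j)}f\,d\mu_j^0\geq c>0$.

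For uniqueness, suppose $\varphi$ is topologically transitive and $\ln\varrho$ is H\"older continuous. Theorem~\ref{thm:Ruelle-Perron-Frobenious} applied with $b=\ln\varrho$, $c=\varrho$ gives a unique probability measure $\nu$ with $L^*\nu=\rho\nu$ and a unique continuous strictly positive $h$ with $L(h)=\rho h$, $\nu(h)=1$, satisfying $h\nu=\mu_{\varphi,\ln\varrho}$. Unitality of $L$ forces $\rho=1$, and then $h=1$ by uniqueness, so $\nu=h\nu=\mu_{\varphi,\ln\varrho}$; the measure $\mu$ constructed above must therefore coincide with $\nu$.
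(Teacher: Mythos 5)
Your proof is correct, and its skeleton is the paper's: reduce everything to producing $\mu\in\M(X)$ with $L^*\mu=\mu$ and $\supp\mu=X$ (Proposition \ref{prop:axiom_A1_concrete} then yields (A1)--(A3) and $T^*aT=L(a)$), get the fixed measure from Schauder--Tychonoff, use the spectral decomposition of Theorem \ref{thm:spectral decomposition} for the general case, and invoke Theorem \ref{thm:Ruelle-Perron-Frobenious} for uniqueness. You deviate in two places. The paper stops at the topologically transitive pieces $X_j$ and applies Schauder--Tychonoff to $L^*$ there directly, while you descend to the mixing cyclic components, fix a measure for $(L^{k(j)})^*$ on $X_j^0$ and recover $L^*$-invariance by averaging along the cycle; this costs you the telescoping step and the bookkeeping $\varphi^{-k}(X_j^0)=X_j^{k(j)-k}$, but lets you work only with mixing maps. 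For full support, the paper argues by contradiction on a transitive piece: $\mu(U)=0$ forces $\mu(\varphi(U))=0$ by the strong Jacobian of Remark \ref{rem:strong_Jacobian}, and finitely many forward images of $U$ cover the piece, contradicting $\mu(X)=1$; you instead use the locally-eventually-onto property of mixing open expanding maps to get a uniform bound $L^{Nk(j)}f\geq c>0$ and conclude $\mu_j^0(U)\geq c$ directly. Both arguments rest on the same ingredients (strict positivity of $\varrho$ and eventual surjectivity of forward images of open sets); yours is direct and quantitative, the paper's slightly shorter. One small point on uniqueness: to conclude that \emph{any} admissible $\mu$ equals $\mu_{\varphi,\ln\varrho}$ (not just the one you constructed), note explicitly that every measure implementing $L$ as in the statement satisfies $L^*\mu=\mu$ (integrate $L(a)=T^*aT$ against $1$), after which uniqueness of the fixed point of $L^*$ from Theorem \ref{thm:Ruelle-Perron-Frobenious}\ref{enu:Ruelle-Perron-Frobenious1} finishes the argument; your first paragraph contains this observation, so it only needs to be flagged at the end.
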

\begin{proof}

Since $L(1)=1$, the dual operator $L^*:C(X)^*\to  C(X)^*$ preserves 
$\M(X) \subseteq C(X)^*$ which is a compact convex set in weak $*$-topology. 
By  the Schauder-Tychonoff fixed point theorem, there is 
$\mu\in \M(X)$ such that $L^*(\mu)=\mu$. This measure is $\varphi$-invariant as $\mu(f\circ \varphi)=\mu(L(f\circ \varphi))=
\mu(f)$ for $f\in C(X)$. In view of Proposition \ref{prop:axiom_A1_concrete}, it is left to show that $\supp \mu=X$.

Let us first consider the case when $\varphi:X\to X$ is topologically transitive.  Assume on the contrary that there is non-empty open $U$ with $\mu(U)=0$. Since the corresponding cocycle $\varrho:X\to (0,1]$ is strictly positive
this implies that $\mu(\varphi(U))=0$ by Remark \ref{rem:strong_Jacobian}. By induction we get that $\varphi^n(U)=0$ for all $n>0$. However, as $\varphi^k$ for some $k$ is topologicaly mixing, we 
get $X=\bigcup_{n=0}^{N}\varphi^n(U)$ for some $N$. This leads to a contradiction:  $1=\mu(X)\leq \sum_{n=0}^N\mu(\varphi^n(U))=0$. 
Thus $\mu$ is the desired measure by Proposition \ref{prop:axiom_A1_concrete}. If in addition $\ln\varrho$ is H\"older continuous, then 
 by Theorem \ref{thm:Ruelle-Perron-Frobenious},
the Gibbs measure $\mu=\mu_{\varphi,\ln\varrho}$ is the unique probability measure for which 
$L^*(\mu)=\mu$. 
 
For general $\varphi:X\to X$ we may use the spectral decomposition  $X=\bigsqcup_{j=1}^N X_j$  described in Theorem \ref{thm:spectral decomposition}.
We get the desired measure by putting $\mu:=\frac{1}{N}\sum_{j=1}^N \mu _j$
where $\mu_j$ is the measure constructed in the first step  for the topologically transitive map $\varphi|_{X_j}$ and the transfer operator given by cocycle $\varrho|_{X_j}$. 
\end{proof} 

\begin{ex} Let $(\Sigma_{\mathbb{A}},\sigma_{\mathbb{A}})$  be the topological Markov shift
 such that  all states $\{1,...,n\}$ are essential,  
see Example \ref{ex:TMS}.
Then $\Omega(\sigma_{\mathbb{A}})=\Sigma_{\mathbb{A}}$. By Proposition \ref{prop:cocycle_inducing_measures} 
for every cocycle  $\varrho:\Sigma_{\mathbb{A}}\to (0,1]$ for $\sigma_{\mathbb{A}}$
there is  $\mu\in \Inv(\Sigma_{\mathbb{A}},\sigma_{\mathbb{A}})$ 
such that 
$\int_{\Sigma_{\mathbb{A}}} \sum_{x\in \sigma_{\mathbb{A}}^{-1}(y)}\varrho(x) a(x) d\mu (y)
=\int_{\Sigma_{\mathbb{A}}} a(y) d\mu (y)$  for all $a\in C(\Sigma_{\mathbb{A}})$.
To say more about the relationship between $\varrho$ and $\mu$, recall that elements  of $\Inv(\Sigma_{\mathbb{A}},\sigma_{\mathbb{A}})$ with full support 
are in a one-to-one correspondence with  collections of nonnegative numbers $\{p_{\xi_1,\xi_2,...\xi_k}: (\xi_1,\xi_2,...\xi_k)\in\{1,...,n\}^{k}, k\in\Z\}$ such that $\sum_{i=1}^n p_i=1$,  
 $$
p_{\xi_1,\xi_2,...\xi_{k+1}}=\sum_{i=1}^n p_{\xi_1,\xi_2,...\xi_k,i}=\sum_{i=1}^n p_{i,\xi_2,\xi_3,...\xi_{k+1}},
$$
 and $p_{\xi_1,\xi_2,...\xi_{k}}>0$ if and only if $(\xi_1,\xi_2,...\xi_{k})$ is a path in the directed graph given by $\mathbb{A}$.
This correspondence is given by the equality $p_{\xi_1,\xi_2,...\xi_k}=\mu(C_{\xi_1,\xi_2,...\xi_k})$ 
where $C_{\xi_1,\xi_2,...\xi_k}\subseteq \Sigma_{\mathbb{A}}$ is the cylinder set of  sequences that start with \(\xi_1,\xi_2,...,\xi_k\).
Suppose that  $\varrho:\Sigma_{\mathbb{A}}\to (0,1]$  depends only on a finite number of coordinates, i.e. there is $N$ such that  $\varrho(\xi_1,\xi_2,...)=
\varrho(\xi_1,\xi_2,...,\xi_{N})$. Every such function  is H\"older continuous and $\varrho$ is induced by a measure $\mu$ if and only if 
$
\varrho(\xi_1,\xi_2,...,\xi_N)=\frac{p_{\xi_1,\xi_2,...,\xi_{N+k}}}{p_{\xi_2,...,\xi_{N+k}}}$ for all paths 
$(\xi_1,\xi_2,...\xi_{N+k})$.
If  $\mathbb{A}$ is irreducible, this measure is unique and it is  the Gibbs measure $\mu_{\sigma_{\mathbb{A}},\ln\varrho}$.
For instance, if  $\mathbb{A}$ is irreducible and $N=2$, then 
$\varrho=\sum_{i,j=1}^n p_{i,j}\mathds{1}_{C_{ij}}$
where $P=[p_{i,j}]_{i,j=1}^n$ is a left stochastic matrix. 
By the classical Perron-Frobenius theorem there exists a unique probability 
distribution $p=[p_i]_{i=1}^n$ such that $Pp=p$. In this case, the Gibbs measure is the \emph{Markov distribution} 
with the transition matrix $P$ and the initial 
distribution $p$:  $
\mu_{\sigma_{\mathbb{A}},\ln\varrho}(C_{\xi_1,...,\xi_k})=p_{\xi_1\xi_2}\cdot p_{\xi_2\xi_3}\cdot ... \cdot p_{\xi_{k-1}\xi_k}\cdot p_{\xi_k}.
$ 
Alternatively, we could consider the right stochastic matrix $Q=[q_{i,j}]_{i,j=1}^n$ where $q_{i,j}:=\frac{p_{i,j} p_j}{p_i}$. 
Then $
\mu_{\sigma_{\mathbb{A}},\ln\varrho}(C_{\xi_1,...,\xi_k})= p_{\xi_1}\cdot q_{\xi_1\xi_2}\cdot q_{\xi_2\xi_3}\cdot ... \cdot q_{\xi_{k-1}\xi_k}.
$ 
\end{ex}

\subsection{Weighted shifts associated with measure dynamical systems}
\label{subsection:Weighted shifts}
 Let us now consider a  situation where $(\Omega, \Sigma, \mu)$ is a $\sigma$-finite measure space and  $\Phi:\Omega\to \Omega$
 is a measurable map, with the measurable range \(\Phi(\Omega)\).
We are interested in  weighted operators of the form
\begin{equation}\label{eq:wso_measurable_isometry}
(Th)(\omega):=\rho(\omega)h(\Phi(\omega)), \qquad  h\in L^2_\mu(\Omega),
\end{equation}
where \(\rho:\Omega\to \C\) is a \(\Sigma\)-measurable function. Recall that $\Phi$ is \emph{$\mu$-almost surjective} 
if and only if $\mu(\Omega\setminus \Phi(\Omega))=0$ and
 then the measure $\mu\circ \Phi:\Phi^{-1}(\Sigma)\to [0,\infty]$ is defined. 
We write $\mu\circ \Phi\preceq \mu$ if $\mu\circ \Phi$ is absolutely continuous with
respect to $\mu$ restricted to \(\Phi^{-1}(\Sigma)\).
If $\mu(\Phi^{-1}(U))< \infty$ for every $U\in \Sigma$ with $\mu(U)<\infty$, we say that $\Phi$ is \emph{$\mu$-proper}.
If \(\mathcal{F}\) is a $\sigma$-subalgebra of \(\Sigma\),   the \emph{conditional expectation of 
 \(\rho\) with respect to \(\mathcal{F}\)} is 
an \(\mathcal{F}\)-measurable function  \(E(\rho|\mathcal{F}):\Omega\to \C\) such that 
$\int_U \rho d\mu =\int_{U} E(\rho|\mathcal{F}) d\mu$ for all $U\in \mathcal{F}$.
If it exists then \(E(\rho|\mathcal{F})\) is unique, up to equality $\mu$-almost everywhere. 

\begin{lem}\label{lem:wso_measurable_isometry}
Formula \eqref{eq:wso_measurable_isometry} defines an isometry on $L^2_\mu(\Omega)$ if and only if 
$\int_{\Phi^{-1}(U)} |\rho|^2 \, d\mu
=
\mu(U)$ for every $U\in \Sigma$. So if this is the case, then 
$\Phi$ is $\mu$-almost surjective,  $\mu\circ \Phi\preceq \mu$ 
and $|\rho|^2$ can be considered  a generalized Radon-Nikodym derivative of $\mu\circ \Phi$ with respect to 
$\mu$. 

More specifically, if $\Phi$ is $\mu$-almost surjective,  $\mu\circ \Phi\preceq \mu$ and $\frac{d\mu\circ \Phi}{d\mu}$
exists, which is always the case when $\Phi$ is $\mu$-proper, then \eqref{eq:wso_measurable_isometry} defines an isometry
if and only if $
E(|\rho|^2|\Phi^{-1}(\Sigma)) =\frac{d\mu\circ \Phi}{d\mu}
$, so in particular, one can always put  \(\rho:=\sqrt{\frac{d\mu\circ \Phi}{d\mu}}\).
\end{lem}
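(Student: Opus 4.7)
The plan is to break the lemma into three checks: necessity of the integral identity, sufficiency of that identity, and the reformulation via Radon-Nikodym derivatives and conditional expectation.

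First I would compute the norm squared directly:
$$
\|Th\|^2=\int_\Omega |\rho(\omega)|^2\, |h(\Phi(\omega))|^2\, d\mu(\omega)=\int_\Omega |h|^2 \, d(\Phi_*(|\rho|^2\mu)),
$$
where $\Phi_*(|\rho|^2\mu)$ is the pushforward of the measure $|\rho|^2 d\mu$ under $\Phi$. So $T$ is an isometry on $L^2_\mu(\Omega)$ if and only if $\Phi_*(|\rho|^2\mu)=\mu$ as measures on $\Sigma$, which is exactly the integral identity in the statement. For necessity, I would plug in $h=\mathds{1}_U$ for $U\in\Sigma$ of finite measure; for sufficiency, I would approximate arbitrary $h\in L^2_\mu(\Omega)$ by simple functions and use monotone/dominated convergence, invoking $\sigma$-finiteness of $\mu$ to justify the extension from sets of finite measure to all of $\Sigma$.

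Having established the integral identity, the consequences are easy. Setting $U=\Omega\setminus\Phi(\Omega)\in\Sigma$ gives $\mu(\Omega\setminus\Phi(\Omega))=\int_\emptyset |\rho|^2\, d\mu=0$, so $\Phi$ is $\mu$-almost surjective, and if $V=\Phi^{-1}(U)\in\Phi^{-1}(\Sigma)$ with $\mu(V)=0$, then $\mu(U)=\int_V|\rho|^2 d\mu=0$, giving $\mu\circ\Phi\preceq\mu$. The relation $\int_{\Phi^{-1}(U)}|\rho|^2\,d\mu=\mu(U)$ then amounts to saying that $|\rho|^2$ and $\frac{d\mu\circ\Phi}{d\mu}$ have the same integrals over every $\Phi^{-1}(U)$.

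For the second part, assume $\Phi$ is $\mu$-almost surjective, $\mu\circ\Phi\preceq\mu$, and the Radon-Nikodym derivative $f:=\frac{d\mu\circ\Phi}{d\mu}$ exists as a $\Phi^{-1}(\Sigma)$-measurable function (in the $\mu$-proper case this is automatic by $\sigma$-finiteness of $\mu\circ\Phi$ on $\Phi^{-1}(\Sigma)$). Since the sets $\Phi^{-1}(U)$, $U\in\Sigma$, are exactly the elements of $\Phi^{-1}(\Sigma)$, the integral identity becomes
$$
\int_V |\rho|^2\, d\mu=\int_V f\, d\mu\qquad\text{for every } V\in \Phi^{-1}(\Sigma),
$$
which is by definition the condition $E(|\rho|^2\,|\,\Phi^{-1}(\Sigma))=f$. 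Finally, since $f$ is itself $\Phi^{-1}(\Sigma)$-measurable, the choice $\rho:=\sqrt{f}$ trivially satisfies $E(|\rho|^2|\Phi^{-1}(\Sigma))=|\rho|^2=f$.

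The main obstacle I foresee is the bookkeeping around existence of the Radon-Nikodym derivative: one has to verify that $\mu\circ\Phi$ is $\sigma$-finite on $\Phi^{-1}(\Sigma)$ so that $\frac{d\mu\circ\Phi}{d\mu}$ exists and is $\Phi^{-1}(\Sigma)$-measurable. This is where the $\mu$-properness of $\Phi$ enters (it lets one exhaust $\Omega$ by preimages of finite-$\mu$ sets, on which $\mu\circ\Phi$ is finite); without it, the derivative may fail to exist and one must assume its existence outright, as in the statement.
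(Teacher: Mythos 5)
Your proposal is correct and follows essentially the same route as the paper's proof: verify the isometry condition on indicators (equivalently, that the pushforward of $|\rho|^2\,d\mu$ under $\Phi$ is $\mu$), deduce $\mu$-almost surjectivity and $\mu\circ\Phi\preceq\mu$ by plugging in the appropriate sets, and then read off the conditional-expectation reformulation from the equality of integrals over sets in $\Phi^{-1}(\Sigma)$, with $\mu$-properness supplying the $\sigma$-finiteness needed for the Radon--Nikodym derivative. You merely spell out the extension from indicators to general $h\in L^2_\mu(\Omega)$ in more detail than the paper, which treats that step as immediate.
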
 
\begin{proof}
Plainly, $T$ is a well defined isometry if and only if for every $U\in \Sigma$ we have
$\mu(U)=\|T\mathds{1}_U\|^2= \int |\rho|^2  (\mathds{1}_U\circ \Phi) \, d\mu=\int_{\Phi^{-1}(U)} |\rho|^2 \, d\mu $. 
If this holds, then $\mu(\Omega\setminus \Phi(\Omega))=\int_{\emptyset} |\rho|^2 \, d\mu =0$, that is
$\Phi$ is $\mu$-almost surjective. Moreover, since $\int_{\Phi^{-1}(U)} |\rho|^2 \, d\mu
=
\mu(U)=(\mu\circ \Phi)(\Phi^{-1}(U))$, we see  that $\mu\circ \Phi\preceq \mu$ on $\Phi^{-1}(\Sigma)$. 
Thus if $\frac{d\mu\circ \Phi}{d\mu}$ exists  then
$|\rho|^2=\frac{d\mu\circ \Phi}{d\mu} 
$ $\mu|_{\Phi^{-1}(\Sigma)}$-almost everywhere, and hence $
E(|\rho|^2|\Phi^{-1}(\Sigma)) =\frac{d\mu\circ \Phi}{d\mu}
$. When $\Phi$ is $\mu$-proper, then $\mu\circ \Phi$ 
is $\sigma$-finite and therefore $\frac{d\mu\circ \Phi}{d\mu}$ exists.
 \end{proof}
\begin{rem}\label{rem:composition_op_measure} We recall, cf. e.g. \cite{Ridge}, that  the formula 
\begin{equation}\label{eq:wso_measurable_composition}
(T_{\Phi}h)(\omega)=h(\Phi(\omega)), \qquad  h\in L^2_\mu(\Omega),
\end{equation}
defines  the bounded operator  $T_{\Phi}:L^2_\mu(\Omega)\to L^2_\mu(\Omega)$ 
if and only if $\mu\circ \Phi^{-1}\preceq \mu$ and  the Radon-Nikodym derivative
$\frac{d(\mu\circ \Phi^{-1})}{d\mu}
$ is essentially bounded. Then  $\|T_{\Phi}\|=\|\frac{d(\mu\circ \Phi^{-1})}{d\mu}\|_{L^{\infty}_{\mu}(\Omega) }^{1/2}$.  
\end{rem}

We identify the $C^*$-algebra $A:=L^\infty_\mu(\Omega)$
of essentially bounded functions with the algebra of  operators of multiplication on $L^2_\mu(\Omega)$:
$$
(ah)(\omega)=a(\omega)h(\omega), \qquad a\in A=L^\infty_\mu(\Omega),\,\, h\in L^2_\mu(\Omega).
$$
Thus if \eqref{eq:wso_measurable_composition} defines a bounded operator, then all  weighted composition operators $aT_\Phi$, $a\in A$, are bounded.
The formula $\alpha(a)=a\circ \Phi$, $a\in A$,  
gives a well defined map $\alpha:L^\infty_\mu(\Omega)\to L^\infty_\mu(\Omega)$ if and only if  $\mu\circ \Phi^{-1}\preceq \mu$, and 
then $\alpha$ is  a unital endomorphism.
\begin{prop}\label{prop:abstract_wso_from_measure_system}
Assume that \eqref{eq:wso_measurable_isometry} defines an isometry $T\in \B(L^2_\mu(\Omega))$, 
cf. Lemma \ref{lem:wso_measurable_isometry}. Assume also that  
 $\alpha(a)=a\circ \Phi$ defines an endomorphism of $A:=L^\infty_\mu(\Omega)$, 
that is $\mu\circ \Phi^{-1}\preceq \mu$. 
Then axioms (A1), (A2) are satisfied, so $aT$, $a\in A$, are abstract weighted shift operators. 
Moreover, 
\begin{enumerate}
\item (A3) holds if and only if $\rho \neq 0$ $\mu$-almost everywhere.

\item Every weighted composition operator $aT_\Phi$, $a\in A$, is an abstract weighted composition operator
$bT$, $b\in A$, if and only if  $\ess\inf |\rho|> 0$. 
\end{enumerate}
\end{prop}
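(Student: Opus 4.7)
The proof splits into establishing (A1), (A2) and then treating (i) and (ii) separately. Axiom (A2) is immediate: for $a\in A$ and $h\in L^2_\mu(\Omega)$,
\[
(Tah)(\omega)=\rho(\omega)\,a(\Phi(\omega))\,h(\Phi(\omega))=\bigl(\alpha(a)\,Th\bigr)(\omega),
\]
so $Ta=\alpha(a)T$ regardless of the behaviour of $\rho$. To verify (A1), my plan is to use that $A=L^\infty_\mu(\Omega)$ is maximal abelian in $\B(L^2_\mu(\Omega))$ (a standard consequence of $\sigma$-finiteness). Taking adjoints in (A2) yields $bT^*=T^*\alpha(b)$ for each $b\in A$, whence
\[
b\,T^*aT=T^*\alpha(b)\,aT=T^*a\,\alpha(b)\,T=T^*aT\,b
\]
by commutativity of $A$ and a second application of (A2). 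Thus $L(a):=T^*aT$ commutes with $A$ and therefore lies in $A$, which is (A1). Combined with Proposition \ref{prop:faithfulness_of_transfers}, this identifies $L$ as the transfer operator for $\alpha$, so $aT$, $a\in A$, are abstract weighted shifts in the sense of Definition \ref{AWSO}.

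For (i), the easy direction runs as follows: if $\mu(\{\rho=0\})>0$, $\sigma$-finiteness yields $E\subseteq\{\rho=0\}$ with $0<\mu(E)<\infty$, and then $\mathds{1}_E\in L^2_\mu(\Omega)$ is a nonzero vector orthogonal to every $aTh=a\rho\,(h\circ\Phi)$, so $\overline{ATH}\neq H$. Conversely, assume $\rho\neq 0$ $\mu$-a.e. Since bounded $L^2$-functions supported in a set of finite measure are dense in $L^2_\mu(\Omega)$, it suffices to approximate one such $f$. Fixing an exhaustion $\Omega_n\uparrow\Omega$ with $\mu(\Omega_n)<\infty$ and $\supp f\subseteq \Omega_N$, I would set
\[
h_n:=\mathds{1}_{\Omega_n}\in L^2_\mu(\Omega),\qquad a_n:=(f/\rho)\,\mathds{1}_{\{|\rho|\ge 1/n\}}\in A,
\]
so that $a_nTh_n=f\cdot\mathds{1}_{\{|\rho|\ge 1/n\}\cap\,\Phi^{-1}(\Omega_n)}$. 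The set on the right increases to $\Omega$ $\mu$-a.e., so dominated convergence with dominant $|f|\in L^2_\mu(\Omega)$ gives $a_nTh_n\to f$ in $L^2_\mu(\Omega)$.

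For (ii), the key algebraic observation is that
\[
aT_\Phi=bT\quad\Longleftrightarrow\quad a=b\rho\ \ \mu\text{-a.e.,}
\]
which I would verify by testing the operator identity on $h=\mathds{1}_{\Omega_n}$, obtaining $(a-b\rho)\,\mathds{1}_{\Phi^{-1}(\Omega_n)}=0$, and letting $n\to\infty$ (using $\mu$-almost surjectivity of $\Phi$ to see $\Phi^{-1}(\Omega_n)\uparrow\Omega$ up to a null set). Granting this, $(\Leftarrow)$ is immediate: if $\ess\inf|\rho|>0$ then $1/\rho\in A$, so $b:=a/\rho\in A$ satisfies $aT_\Phi=bT$ for every $a\in A$. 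For $(\Rightarrow)$, applying the hypothesis to $a=1$ produces $b\in A$ with $1=b\rho$ a.e., hence $|\rho|\ge 1/\|b\|_\infty$ a.e.

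The only genuinely substantive step is the forward direction of (i); the rest reduces to bookkeeping. The mild obstacle there is controlling the joint variation of the weights $a_n$ and vectors $h_n$ in $L^2$-norm, which my construction circumvents by producing a single factorisation $a_nTh_n=f\,\mathds{1}_{E_n}$ with $E_n\uparrow\Omega$ $\mu$-a.e., so that one dominated convergence argument with $|f|$ as dominant closes the argument.
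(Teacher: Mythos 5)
Your proof is correct and follows essentially the same route as the paper: (A2) by direct computation, (A1) by showing $T^*aT$ commutes with $A$ and invoking maximal abelianness of $L^\infty_\mu(\Omega)$, and (i), (ii) by the same support/essential-infimum considerations. The only difference is that you spell out the approximation argument for (A3) (the paper dismisses it as "readily seen"), and your extra details check out.
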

\begin{proof} It is immediate that $Ta=\alpha(a)T$,  for all $a\in A$, which is axiom (A2). Now using (A2) for all $a,b\in A$ we have 
$$
T^*aT b =T^*a\alpha(b)T=  T^*\alpha(b)aT= b T^*aT.
$$
Thus $T^*aT\in A'=(L^{\infty}_{\mu}(\Omega))'=L^{\infty}_{\mu}(\Omega)=A$ (here we use the well known fact that 
 $L^{\infty}_{\mu}(\Omega)$ is a maximal abelian subalgebra of $\B(L^2_\mu(\Omega))$).
Hence (A1) holds. One readily sees that for $a\in A$ we have
$
(aT)(L^2_\mu(\Omega))=L^2_\mu(\{\omega\in \Omega: a(\omega)\rho(\omega) \neq 0\}).
$
Hence $\overline{ATL^2_\mu(\Omega)}=L^2_\mu(\Omega)$ if and only if  $\rho \neq 0$ $\mu$-almost everywhere. This proves (i). 
Note that   $\ess\inf |\rho| >0$ if and only if $\rho^{-1}$ exists and belongs to $L^{\infty}_{\mu}(\Omega)$.
Thus if this holds,  then for every $a\in A$, we have $b:=a \rho^{-1}\in A$ and $aT_{\Phi}=b T$. 
Conversely, if $T_\Phi=aT$ for some $a\in A$, then $a\rho=1$  $\mu$-almost everywhere. 
That is, $\rho^{-1}=a\in A=L^\infty_\mu(\Omega)$ and therefore 
$\ess\inf |\rho| >0$. This proves (ii).
\end{proof}
\begin{cor}\label{cor:abstract_wso_from_measure_system}
Suppose that $\Phi$ is $\mu$-almost surjective, $\mu$-proper,  $\mu\circ \Phi\preceq \mu$ and $\mu\circ \Phi^{-1}\preceq \mu$. 
Then $T:=\sqrt{\frac{d\mu\circ \Phi}{d\mu}}T_{\Phi}$ is an isometry,  $(L^{\infty}_{\mu}(\Omega),T)$ satisfies  (A1), (A2) and 
\begin{enumerate}
\item (A3) holds if and only if $\mu$ is forward quasi-invariant with respect to $\Phi$, in the sense 
that $\frac{d\mu\circ \Phi}{d\mu}>0$ $\mu$-almost everywhere.

\item Every weighted composition operator $aT_\Phi$, $a\in A$, is an abstract weighted composition operator
$bT$, $b\in A$, if and only if   $\mu$ is forward quasi-invariant in the strong sense that  
$(\frac{d\mu\circ \Phi}{d\mu})^{-1}\in L^{\infty}_{\mu}(\Omega)$. 
\end{enumerate}
\end{cor}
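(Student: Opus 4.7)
The plan is to reduce the corollary directly to Lemma \ref{lem:wso_measurable_isometry} and Proposition \ref{prop:abstract_wso_from_measure_system} by choosing the appropriate weight $\rho$. Since $\Phi$ is $\mu$-proper and $\mu\circ\Phi\preceq\mu$, the measure $\mu\circ\Phi$ is $\sigma$-finite on $\Phi^{-1}(\Sigma)$ and absolutely continuous with respect to $\mu|_{\Phi^{-1}(\Sigma)}$, so the Radon-Nikodym derivative $\frac{d\mu\circ\Phi}{d\mu}$ exists. Setting $\rho:=\sqrt{\frac{d\mu\circ\Phi}{d\mu}}$, the last part of Lemma \ref{lem:wso_measurable_isometry} gives at once that the operator $T=\rho\, T_\Phi$, defined by $(Th)(\omega)=\rho(\omega)h(\Phi(\omega))$, is an isometry on $L^2_\mu(\Omega)$.

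Next, the hypothesis $\mu\circ \Phi^{-1}\preceq \mu$ is exactly what guarantees that $\alpha(a):=a\circ\Phi$ is a well-defined unital endomorphism of $L^\infty_\mu(\Omega)$. Hence the assumptions of Proposition \ref{prop:abstract_wso_from_measure_system} hold, and we conclude that $(L^\infty_\mu(\Omega),T)$ satisfies axioms (A1) and (A2), i.e.\ $aT$, $a\in A$, are abstract weighted shifts.

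For part (i), Proposition \ref{prop:abstract_wso_from_measure_system}(i) says that (A3) is equivalent to $\rho\neq 0$ $\mu$-almost everywhere. Since $\rho=\sqrt{\frac{d\mu\circ\Phi}{d\mu}}$, this is the same as $\frac{d\mu\circ\Phi}{d\mu}>0$ $\mu$-a.e., which is the stated forward quasi-invariance condition.

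For part (ii), Proposition \ref{prop:abstract_wso_from_measure_system}(ii) says that every weighted composition operator $aT_\Phi$, $a\in A$, has the form $bT$ for some $b\in A$ if and only if $\ess\inf|\rho|>0$, equivalently $\rho^{-1}\in L^\infty_\mu(\Omega)$. Squaring, this is exactly $\bigl(\tfrac{d\mu\circ\Phi}{d\mu}\bigr)^{-1}\in L^\infty_\mu(\Omega)$. No steps here are substantive obstacles; the only mild point worth flagging in the write-up is justifying the existence of $\frac{d\mu\circ\Phi}{d\mu}$ from $\mu$-properness (to invoke $\sigma$-finiteness of $\mu\circ\Phi$), which was already observed in Lemma \ref{lem:wso_measurable_isometry}.
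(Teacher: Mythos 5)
Your proposal is correct and follows exactly the paper's own route: the paper proves the corollary by applying Proposition \ref{prop:abstract_wso_from_measure_system} with $\rho:=\sqrt{\frac{d\mu\circ \Phi}{d\mu}}$, whose admissibility is supplied by Lemma \ref{lem:wso_measurable_isometry}, just as you do. Your write-up merely spells out the intermediate steps (existence of the Radon--Nikodym derivative from $\mu$-properness, and the translation of $\rho\neq 0$ a.e.\ and $\ess\inf|\rho|>0$ into the stated quasi-invariance conditions) that the paper leaves implicit.
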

\begin{proof} Apply  Proposition \ref{prop:abstract_wso_from_measure_system} to \(\rho:=\sqrt{\frac{d\mu\circ \Phi}{d\mu}}\), which is allowed   by Lemma \ref{lem:wso_measurable_isometry}.
\end{proof}

Now we specialize  to the case where \(\Omega=V\) is  a countable set
and \(\mu\) is the counting measure. So the initial data is just a map \(\Phi:V\to V\). 
Then $\mu$-surjectivity is just surjectivity, and if $\frac{d\mu\circ \Phi}{d\mu}$ exists
then \(\Phi\) is finite-to-one and $\frac{d\mu\circ \Phi}{d\mu}(v)=|\Phi^{-1}(v)|$, for $v\in V$. Thus we get the following:
\begin{cor}\label{cor:weighted_shifts_discrete}
Let $\Phi:V\to V$ and $\rho:V\to \C$ be \textbf{} two maps where $V$ is a set. 
The formula 
$(Th)(v):=\rho(v)h(\Phi(v))$ defines an isometry on $H:=\ell^2(V) $ if and only if
 \(\Phi:V\to V\) is a surjective map and 
$\sum_{w\in \Phi^{-1}(v)}|\rho(w)|^2=1$ for every  $v\in V.
$
Assume $T$ is an isometry and let $A\cong\ell^\infty(V)$ be the algebra of operators of mutiplication on $H$. 
Then axioms (A1), (A2) are satisfied and hence operators of
the form $aT$, $a\in A$, are abstract weighted shift operators. Axiom (A3) holds if and only if $\rho >0$.
\end{cor}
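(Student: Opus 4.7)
The plan is to reduce everything to the measure-theoretic framework already established in Lemma \ref{lem:wso_measurable_isometry} and Proposition \ref{prop:abstract_wso_from_measure_system}, applied to the trivial measure space $(\Omega,\Sigma,\mu) = (V, 2^V, \mu_{c})$ where $\mu_c$ is counting measure. The key observation is that on a countable set equipped with counting measure every function is measurable, $L^2_{\mu_c}(V) = \ell^2(V)$, $L^\infty_{\mu_c}(V) = \ell^\infty(V)$, and the only $\mu_c$-null set is $\emptyset$. In particular, $\mu_c$-almost surjectivity of $\Phi$ is genuine surjectivity, and the absolute continuity $\mu_c \circ \Phi^{-1} \preceq \mu_c$ is automatic (since $\Phi^{-1}(\emptyset) = \emptyset$), so the hypothesis of Proposition \ref{prop:abstract_wso_from_measure_system} requiring $\alpha(a) = a\circ \Phi$ to define an endomorphism of $A$ needs no extra argument.

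For the first assertion I would apply Lemma \ref{lem:wso_measurable_isometry}: $T$ defined by $(Th)(v) = \rho(v) h(\Phi(v))$ is an isometry on $\ell^2(V)$ if and only if
$$\sum_{w\in \Phi^{-1}(U)} |\rho(w)|^2 = |U| \qquad \textrm{for every } U\subseteq V.$$
Specializing to singletons $U=\{v\}$ yields both claimed conditions: surjectivity (else $\Phi^{-1}(\{v\}) = \emptyset$ forces $0=1$) together with $\sum_{w\in \Phi^{-1}(v)} |\rho(w)|^2 = 1$. Conversely, these two conditions imply the identity for arbitrary $U$ by countable additivity, since the sets $\Phi^{-1}(\{v\})$, $v\in U$, are pairwise disjoint.

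Assuming $T$ is an isometry, axioms (A1) and (A2) are immediate from Proposition \ref{prop:abstract_wso_from_measure_system} since the required hypotheses on $\Phi$ are trivially met in the counting measure setting. For (A3), the same proposition says (A3) holds if and only if $\rho \neq 0$ $\mu_c$-almost everywhere, which on a counting-measure space means $\rho(v) \neq 0$ for every $v\in V$; since $\rho$ may be complex this is what the statement $\rho>0$ is abbreviating (equivalently $|\rho|>0$ pointwise).

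There is no real obstacle here, only a translation; the only point to be careful about is confirming that the $\sigma$-algebra, measurability, and $\sigma$-finiteness hypotheses needed to invoke Lemma \ref{lem:wso_measurable_isometry} and Proposition \ref{prop:abstract_wso_from_measure_system} are all automatic on the discrete measure space $(V, 2^V, \mu_c)$, and that the pointwise characterization $\sum_{w\in \Phi^{-1}(v)} |\rho(w)|^2 = 1$ extends to arbitrary subsets by countable additivity.
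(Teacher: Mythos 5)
Your reduction to the discrete measure space $(V,2^V,\mu_c)$, invoking Lemma \ref{lem:wso_measurable_isometry} for the isometry criterion (via singletons and additivity) and Proposition \ref{prop:abstract_wso_from_measure_system} for (A1)--(A3), is correct and is exactly the paper's own argument, which states the corollary as an immediate specialization of those two results to counting measure. Your reading of ``$\rho>0$'' as $\rho(v)\neq 0$ for all $v$ (no null sets for counting measure) likewise matches the intended application of Proposition \ref{prop:abstract_wso_from_measure_system}(i).
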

\begin{rem}\label{rem:weighted_shifts_on_trees}
Corollary \ref{cor:weighted_shifts_discrete} covers a large class of bounded weighted shifts on directed trees introduced and analyzed in \cite{Stochel}. 
\end{rem}

\section{Spectral radius}\label{sec:Spectral radius}
In this section we discuss variational formulas for the spectral radii of abstract weighted shift operators. 
The main new result is Theorem \ref{thm:Ruelle's}.
The basic fact is that our investigations can be reduced to the study of  spectral radius of weighted transfer operators:
\begin{lem}\label{lem:first_spectral_radius}
Let $A\subseteq \B(H)$ be  a unital commutative $C^*$-subalgebra and let $T\in \B(H)$ be an isometry.
Assume (A1), i.e. $T^*AT\subseteq A$. 
The spectral radius of the bounded operator $aT:H\to H$, $a\in A$, is equal to the square root of the spectral radius
of the  operator $L|a|^2:A\to A$ where $L|a|^2(b):=T^*|a|^2bT$, $b\in A$. That, is $
r(aT)=\sqrt{r(L|a|^2)}$, for $a\in A.
$
 \end{lem}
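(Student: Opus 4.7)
The plan is to reduce the computation of $r(aT) = \lim_n \|(aT)^n\|^{1/n}$ to iterates of $L|a|^2$ acting on the commutative algebra $A$. The key idea is that $(aT)^n((aT)^n)^*$ has the shape $T^{*n} x_n T^n$ for a positive element $x_n\in A$, and this can be rewritten as $(L|a|^2)^n(1)$.

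First, using axiom (A2) repeatedly, I would show by induction that
\begin{equation*}
(aT)^n = a\,\alpha(a)\,\alpha^2(a)\cdots \alpha^{n-1}(a)\, T^n,
\end{equation*}
so that, since $A$ is commutative and $\alpha$ is a $*$-endomorphism,
\begin{equation*}
((aT)^n)^*(aT)^n = T^{*n}\, x_n\, T^n, \qquad x_n := |a|^2\,\alpha(|a|^2)\cdots \alpha^{n-1}(|a|^2)\in A.
\end{equation*}
Because $L^n(b) = T^{*n} b\, T^n$ (which follows by induction from $L(b)=T^*bT$ combined with $T^*T=1$), this gives $((aT)^n)^*(aT)^n = L^n(x_n)$.

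Next I would establish the identity $L^n(x_n) = (L|a|^2)^n(1)$ by induction on $n$. The base case $n=1$ is immediate. For the inductive step, the transfer identity (A2) together with commutativity yields $L(b\,\alpha(c)) = L(b)\, c$ for $b,c\in A$. Peeling off the outermost $L$ in $L^n(x_n)$ and using this identity on $x_n = |a|^2\cdot \alpha(x_{n-1})$ gives
\begin{equation*}
L(x_n) = L(|a|^2\cdot \alpha(x_{n-1})) = L(|a|^2)\cdot x_{n-1},
\end{equation*}
so $L^n(x_n) = L^{n-1}(L(|a|^2)\cdot x_{n-1})$. Re-applying the identity inside (peeling layers of $\alpha$ off the factors of $x_{n-1}$) produces exactly the iterate $(L|a|^2)^n(1) = L(|a|^2\, L(|a|^2\, L(\cdots L(|a|^2)\cdots )))$.

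Finally, I would combine these with the standard fact that for any positive linear map $\Phi$ on a commutative unital $C^*$-algebra one has $\|\Phi^n\| = \|\Phi^n(1)\|$ (use $\|\Phi\|=\|\Phi(1)\|$ for a positive map on a unital $C^*$-algebra, and positivity of iterates). Applied to $\Phi = L|a|^2$, this gives
\begin{equation*}
\|(aT)^n\|^2 = \|((aT)^n)^*(aT)^n\| = \|L^n(x_n)\| = \|(L|a|^2)^n(1)\| = \|(L|a|^2)^n\|.
\end{equation*}
Taking $n$-th roots and passing to the limit yields $r(aT)^2 = r(L|a|^2)$, which is the claim. The only step that requires genuine care is the inductive identity $L^n(x_n) = (L|a|^2)^n(1)$; the rest is a direct application of axioms (A1), (A2) and of basic $C^*$-algebraic facts about positive maps.
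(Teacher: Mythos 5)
Your overall strategy is the same as the paper's: reduce everything to the identity $\|(aT)^n\|^2=\|(L|a|^2)^n(1)\|$ and then invoke $\|\Phi^n\|=\|\Phi^n(1)\|$ for positive maps on a commutative unital $C^*$-algebra (this last fact is exactly what the paper cites from \cite[Lemma 2.1]{kwa_Exel}). The genuine gap is your route to that identity: it uses (A2) throughout, while the lemma assumes only (A1). Under (A1) alone there is no endomorphism $\alpha$, so the factorization $(aT)^n=a\,\alpha(a)\cdots\alpha^{n-1}(a)\,T^n$, the element $x_n=|a|^2\alpha(|a|^2)\cdots\alpha^{n-1}(|a|^2)$, and the transfer identity $L(b\,\alpha(c))=L(b)\,c$ are all unavailable; as written, your argument only proves the lemma for pairs $(A,T)$ satisfying both (A1) and (A2), which is strictly weaker than the statement. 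Moreover the detour is unnecessary: the identity follows by a one-step induction using only (A1) and commutativity of $A$, namely
$((aT)^{n+1})^*(aT)^{n+1}=(aT)^*\bigl[((aT)^n)^*(aT)^n\bigr](aT)=T^*a^*\,(L|a|^2)^n(1)\,a\,T=T^*|a|^2\,(L|a|^2)^n(1)\,T=(L|a|^2)^{n+1}(1)$,
where the middle equality uses that $(L|a|^2)^n(1)\in A$ (by (A1)) commutes with $a^*$. This is precisely the computation hidden in the paper's phrase ``by the $C^*$-equality and the definition of $L|a|^2$''.

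A secondary, repairable point: even granting (A2), your inner induction is under-specified. After one peel you arrive at $L^{n-1}\bigl(L(|a|^2)\,x_{n-1}\bigr)$, which is not literally the induction hypothesis $L^{n-1}(x_{n-1})=(L|a|^2)^{n-1}(1)$, since $L^{n-1}$ is not multiplicative. The clean fix is to prove the stronger claim that $L^n(c\,x_n)=(L|a|^2)^n(c)$ for every $c\in A$: writing $x_k=|a|^2\alpha(x_{k-1})$, each peel replaces $c$ by $(L|a|^2)(c)$ via $L(c\,x_k)=L(c|a|^2)\,x_{k-1}$, and after $n$ peels one lands on $(L|a|^2)^n(1)$. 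So the inner argument can be made rigorous, but the essential defect of the proposal is the unwarranted reliance on (A2); the paper's proof needs neither $\alpha$ nor the transfer identity.
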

\begin{proof} Let   $n\in \N$. Since $(L|a|^2)^n:A\to A$ is a positive map, $\|(L|a|^2)^n\|_{\B(A)}=\|(L|a|^2)^n(1)\|_{A}$, 
see for instance \cite[Lemma 2.1]{kwa_Exel}. Thus by the $C^*$-equality and the definition of $L|a|^2$, we get 
$
\|(a T)^n\|^2=\|(a T)^{*n} (a T)^n\|=\|(L|a|^2)^n(1)\|=\|(L|a|^2)^n\|.
$
Hence $r(aT)=\lim_{n\to \infty} \|(a T)^n\|^{1/n}=\lim_{n\to \infty} \|(L|a|^2)^n\|^{1/2n}=\sqrt{r((L|a|^2)^n)}$.
\end{proof}

The spectral radius of a general weighted  transfer operator was described in \cite{t-entropy} using the following notion 
 of  $t$-entropy (here we recall 
a  definition  simplified in  \cite{t-entropy2}).
 \begin{defn}[\cite{t-entropy}, \cite{t-entropy2}] Let $L:C(X)\to C(X)$ be a (unital) transfer operator for an endomorphism  whose 
dual map is 
$\varphi:X\to X$.  The \emph{$t$-entropy of  $L$} is the functional $\tau_{L}:\Inv(X,\varphi)\to \R\cup \{-\infty\}$  whose value at $\mu\in
\Inv(X,\varphi)$ is defined by the  formula:
\begin{equation}\label{equ:t-entropy}
 \tau_{L}(\mu)  := \inf_{n\in\mathbb N}\frac{1}{n} \inf_{D\in D(X)} \sum_{g\in D}\mu(g) \ln\frac{\mu(L^ng)}{\mu(g)}
\end{equation}
where $D(X)$ is the set of all continuous partitions of unity. 
If
we have $\mu(g)= 0$ for a certain function $g\in D$, then we set the corresponding summand
in~\eqref{equ:t-entropy} to be zero. If there exists 
$g\in D(X)$ such that  $L^ng\equiv  0$ and  $\mu(g)>0$, then we set $\tau_{L}(\mu)  =
-\infty$.
\end{defn}
If \(L:C(X)\to C(X)\) is a transfer operator and $b\in C(X)^+$ is a positive function, then by \cite[Theorem 11.2]{t-entropy}
we have
$
\ln r(Lb)=\max_{\mu\in \Inv(X,\varphi)} \left(\int_{{X}}\ln\vert b\vert\,d\mu +\tau_{L}(\mu)\right)
$
where $\tau_{L}(\mu)$ is the $t$-entropy for $L$. In particular, $\ln r(L)=\max_{\mu\in \Inv(X,\varphi)}\tau_{L}(\mu)$.
\begin{thm}[Antonevich-Bakthin-Lebedev]\label{thm:Antonevich-Bakthin-Lebedev}
Let $aT$, $a\in A$, be an abstract weighted shift associated with a transfer operator $L$  (i.e. assume (A1) and (A2))
where $A\cong C(X)$. Then
$$
\ln r(aT)=\max_{\mu\in \Inv(X,\varphi)} \left(\int_{{X}}\ln\vert a\vert\,d\mu
+\frac{\tau_{L}(\mu)}{2}\right), 
$$
where $\varphi:X\to X$ is the dual map to the endomorphism in (A2) and $\tau_{L}$ is  the $t$-entropy for $L$ (we adapt the convention  that $\ln 0=-\infty$).
\end{thm}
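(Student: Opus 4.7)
The plan is to combine the two ingredients already established in the text: the identity $r(aT) = \sqrt{r(L|a|^2)}$ from Lemma \ref{lem:first_spectral_radius}, and the Antonevich--Bakhtin--Lebedev variational principle for spectral radii of weighted transfer operators recalled just before the statement, namely $\ln r(Lb) = \max_{\mu\in \Inv(X,\varphi)}(\int_X \ln b\, d\mu + \tau_L(\mu))$ for $b\in C(X)^+$ (i.e.\ \cite[Theorem 11.2]{t-entropy}). The theorem is essentially a corollary of these two results.

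First I would observe that axioms (A1) and (A2) are exactly the hypotheses of Lemma \ref{lem:first_spectral_radius}, so $r(aT)^2 = r(L|a|^2)$; taking logarithms yields
$$
\ln r(aT) = \tfrac{1}{2}\ln r(L|a|^2).
$$
Second, I would apply the cited variational formula with the admissible choice $b := |a|^2 \in C(X)^+$, obtaining
$$
\ln r(L|a|^2) = \max_{\mu\in \Inv(X,\varphi)}\left(\int_X \ln |a|^2\, d\mu + \tau_L(\mu)\right).
$$
Using $\ln |a|^2 = 2\ln |a|$ pointwise (interpreted with the stated convention $\ln 0 = -\infty$), pulling the factor of $2$ through the integral, and halving both sides gives exactly
$$
\ln r(aT) = \max_{\mu\in \Inv(X,\varphi)}\left(\int_X \ln |a|\, d\mu + \frac{\tau_L(\mu)}{2}\right),
$$
which is the claim.

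There is essentially no obstacle here; the only bookkeeping point is the $-\infty$ convention when $a$ vanishes on a set of positive $\mu$-measure. In that case the $\mu$-summand equals $-\infty$ and is harmlessly dominated in the maximum by any $\mu'$ with $\int_X \ln|a|\, d\mu' > -\infty$ (and if no such $\mu'$ exists, the equality holds trivially with both sides equal to $-\infty$, i.e.\ $r(aT) = 0$). Consequently, apart from invoking the two prior results, no additional argument is required.
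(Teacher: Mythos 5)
Your proposal is correct and matches the paper's own argument: the paper proves this exactly by combining Lemma \ref{lem:first_spectral_radius} with \cite[Theorem 11.2]{t-entropy} applied to the weight $|a|^2$. The bookkeeping with the convention $\ln 0=-\infty$ is fine as you describe it.
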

\begin{proof} Combine  Lemma \ref{lem:first_spectral_radius} and \cite[Theorem 11.2]{t-entropy}. Or apply \cite[Theorem 13.6]{t-entropy}.
\end{proof}
\begin{rem}
If $\varphi:X\to X$ is a homeomorphism, then we are in the situation of Theorem \ref{thm:lebedev},  see Proposition \ref{prop:T_unitary}.
Hence $\tau_{L}(\mu)=0$ for every $\mu\in \Inv(X,\varphi)$, by \cite[Proposition 8.3]{t-entropy}. Thus the above formula agrees 
with \eqref{eq:spectral_radius_invertible}.
\end{rem}

When $\varphi$ is an expanding open map, 
then the $t$-entropy $\tau_{L}(\mu)$ of $L$ is closely related to  
the Kolmogorov-Sinai entropy $h_{\varphi}(\mu)$. This issue was not pursued in \cite{t-entropy}, \cite{t-entropy2}.
We will now clarify this relationship  and  extend the last part of Theorem \ref{thm:Ruelle-Perron-Frobenious} to 
arbitrary expanding open maps $\varphi$ and arbitrary continuous weights $c \geq 0$.

\subsection{General Ruelle's formula for expanding local homeomorphisms}

Let us fix a local homeomorphism $\varphi:X\to X$ on a compact metric space and a continuous function  $c:X\to [0,\infty)$. 
Let $\mathcal{L}_{c}:C(X)\to C(X)$ be the  Ruelle-Perron-Frobenius transfer operator given by  \eqref{eq:Ruelle-Perron_Frobenious}.
By Theorem \ref{thm:Ruelle-Perron-Frobenious}, if $c$ is strictly positive and H\"older continuous,  and $\varphi$ is expanding and topologically transitive,
 then $\ln r(\LL_{c})=\sup_{\mu\in \Inv(X,\varphi)} \left(\int_{{X}}\ln c\,d\mu +h_{\varphi}(\mu)\right)$.
In  we now show that this formula holds for arbitrary continuous $c$ and arbitrary expanding open $\varphi$. The proof is independent of Theorem \ref{thm:Ruelle-Perron-Frobenious}.

Using the  equalities $\|\mathcal{L}_{c}^n\|=\|\mathcal{L}_{ c}^n(1)\|$, $n\in \N$, and the Gelfand formula for  spectral radius, in general we have
\begin{equation}\label{eq:spectral_radius_Ruelle-Perron_Frobenious}
\ln r(\mathcal{L}_c)=\lim_{n\to \infty} \max_{y\in X}\frac{1}{n} \ln\sum_{x\in \varphi^{-n}(y)}\prod_{i=0}^{n-1} c(\varphi^i (x)).
\end{equation}
The following lemma is a part of \cite[Theorem 12]{Lebedev_Maslak}, which is stated without a proof and therefore   we give a short argument. 
\begin{lem}\label{lem:inequality_local_homeo}
If $\varphi:X\to X$ is a local homeomorphism and $c:X\to (0,\infty)$ is strictly positive, then
\begin{equation}\label{eq:inequality_spectral_radius_Ruelle-Perron_Frobenious}
\ln r(\mathcal{L}_c) \leq P(\varphi, \ln c)=\sup_{\mu\in \Inv(X,\varphi)} \left(\int_{{X}}\ln c\,d\mu +h_{\varphi}(\mu)\right).
\end{equation}
\end{lem}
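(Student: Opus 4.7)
The plan is to obtain the inequality $\ln r(\mathcal{L}_c)\le P(\varphi,\ln c)$ by matching the sum over fibres $\varphi^{-n}(y)$ appearing in formula \eqref{eq:spectral_radius_Ruelle-Perron_Frobenious} with the sums over $\varepsilon$-separated sets in $d_n$ entering the definition \eqref{eq:topological_pressure} of $P(\varphi,\ln c)$. The variational equality on the right of \eqref{eq:inequality_spectral_radius_Ruelle-Perron_Frobenious} is then just the classical variational principle cited in Section~\ref{sec:preliminaries}, which applies here because $c>0$ ensures $\ln c\in C(X,\R)$.

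The key step, and the main obstacle, is producing a single $\varepsilon_0>0$ such that for every $n\ge 1$ and every $y\in X$ the preimage $\varphi^{-n}(y)$ is $\varepsilon_0$-separated in Bowen's metric $d_n$. To construct $\varepsilon_0$, I would use that $\varphi$ is a local homeomorphism on the compact space $X$ to choose a finite open cover $\{U_1,\dots,U_N\}$ of $X$ with each $\varphi|_{U_i}$ injective, and then apply Lebesgue's covering lemma to find $\varepsilon_0>0$ such that every subset of $X$ of diameter less than $\varepsilon_0$ lies in some $U_i$. Consequently, any distinct $z_1\ne z_2$ with $\varphi(z_1)=\varphi(z_2)$ must satisfy $d(z_1,z_2)\ge\varepsilon_0$. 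For distinct $x_1,x_2\in\varphi^{-n}(y)$, letting $k\in\{0,\dots,n-1\}$ be the largest index with $\varphi^k(x_1)\ne\varphi^k(x_2)$ forces $\varphi(\varphi^k(x_1))=\varphi(\varphi^k(x_2))$, so $d(\varphi^k(x_1),\varphi^k(x_2))\ge\varepsilon_0$ and in particular $d_n(x_1,x_2)\ge\varepsilon_0$.

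Combining this separation with $\prod_{i=0}^{n-1}c(\varphi^i(x))=\exp\!\bigl(\sum_{i=0}^{n-1}\ln c(\varphi^i(x))\bigr)$ and formula \eqref{eq:spectral_radius_Ruelle-Perron_Frobenious}, for every $\varepsilon\le\varepsilon_0$ each fibre $\varphi^{-n}(y)$ is an admissible $\varepsilon$-separated set in $d_n$, whence
\[
\max_{y\in X}\frac{1}{n}\ln\!\sum_{x\in\varphi^{-n}(y)}\prod_{i=0}^{n-1}c(\varphi^i(x))\;\le\;\sup_{E\text{ is }\varepsilon\text{-separated in }d_n}\frac{1}{n}\ln\!\sum_{x\in E}\exp\!\Big(\sum_{i=0}^{n-1}\ln c(\varphi^i(x))\Big).
\]
Passing to $\limsup_{n\to\infty}$ and then $\varepsilon\to 0$ on the right (both operations only increase the bound, and the $\varepsilon\to 0$ limit converges to $P(\varphi,\ln c)$ by definition) delivers $\ln r(\mathcal{L}_c)\le P(\varphi,\ln c)$.

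As indicated, the only non-trivial ingredient is the uniform separation constant $\varepsilon_0$; once it is in hand, everything else is an immediate comparison of definitions. Notably, this argument uses neither topological mixing nor transitivity of $\varphi$, nor any regularity of $c$ beyond continuity and strict positivity, which is consistent with the paper's broader programme of extending Ruelle's classical formula.
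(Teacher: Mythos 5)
Your proposal is correct and follows essentially the same route as the paper: both reduce the inequality to producing a single $\varepsilon>0$ such that every fibre $\varphi^{-n}(y)$ is $\varepsilon$-separated in Bowen's metric $d_n$, and then compare \eqref{eq:spectral_radius_Ruelle-Perron_Frobenious} with the separated-set definition \eqref{eq:topological_pressure} of the pressure, invoking the classical variational principle for the equality. The only (harmless) difference is how the separation constant is obtained -- you take a Lebesgue number of a finite cover by injectivity neighbourhoods, while the paper uses the minimum-gap function on the clopen set of points with non-singleton fibres -- and you additionally spell out the ``largest index $k$'' argument that the paper leaves as a straightforward check.
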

\begin{proof}
In view of formulas \eqref{eq:topological_pressure} and \eqref{eq:spectral_radius_Ruelle-Perron_Frobenious}
 it suffices to show that
 there exists $\varepsilon > 0$ such that for each $n\in \N$ and $x\in X$ 
the set $\varphi^{-n} (x)$ is  $\varepsilon$-separated
in  $n$-Bowen's metric $d_n$.
The  latter holds for every local homeomorphism.    Indeed, the set $\Delta:=\{x:|\varphi^{-1}(x)|=1\}$ is clopen in $X$. 
The function $d:X\setminus \Delta\to (0,\infty)$ given by $d(x)=\min\{d(y_1,y_2):y_1,y_2\in \varphi^{-1}(x), y_1\neq y_2\}>0$ is continuous on the compact set $X\setminus \Delta$. 
Thus 
$
\varepsilon:=\frac{1}{2}\min \left\{\min_{x\in X\setminus \Delta} d(x), d(X\setminus \Delta, \Delta)\right\}>0
$ is non-zero.
It is straightforward to check that this is the desired number. 
\end{proof}
In general, if $\varphi$ is not expanding, inequality \eqref{eq:inequality_spectral_radius_Ruelle-Perron_Frobenious}
might be sharp. Indeed, if $\varphi$ is a homeomorphism and $c\equiv 1$, then $\ln r(\mathcal{L}_c)=0$ and $P(\varphi, 0)=h(\varphi)$ may be an arbitrary non-negative number, cf. 
\cite[Section 7.3]{Walt2}. 
\begin{thm}\label{thm:Ruelle's} Suppose that $\varphi:X\to X$ is an expanding open map on a metrizable compact space $X$.
   For any continuous $c:X\to [0,\infty)$  we have
$$
\ln r(\mathcal{L}_c)=\max_{\mu\in \Inv(X,\varphi)} \left(\int_{{X}}\ln c\,d\mu +h_{\varphi}(\mu)\right)
=\max_{\mu\in \Erg(X,\varphi)} \left(\int_{{X}}\ln c\,d\mu +h_{\varphi}(\mu)\right),
$$ 
where $h_{\varphi}(\mu)$ is the Kolmogorov-Sinai entropy  ($h_{\varphi}(\mu)<\infty$ for all $\mu\in \Inv(X,\varphi)$). 
\end{thm}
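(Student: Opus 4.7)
I would prove $\ln r(\mathcal{L}_c) = P(\varphi, \ln c)$ and then derive the full statement from the variational principle, upper semicontinuity of the entropy map $\mu \mapsto h_\varphi(\mu)$ on $\Inv(X,\varphi)$ (valid for expanding $\varphi$ by \cite[Theorem 3.5.6]{Przytycki}), and ergodic decomposition; together these upgrade the $\sup$ to a $\max$ and allow restriction to ergodic measures (the affine functional $\mu \mapsto \int \ln c\, d\mu + h_\varphi(\mu)$ is upper semicontinuous on the compact convex set $\Inv(X,\varphi)$, and any invariant maximizer has an ergodic component which is also a maximizer).

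\textbf{Upper bound $\ln r(\mathcal{L}_c) \leq P(\varphi, \ln c)$.} For strictly positive continuous $c$ this is Lemma~\ref{lem:inequality_local_homeo}. For general $c \geq 0$, apply it to $c_\epsilon := c + \epsilon$ and let $\epsilon \downarrow 0$. Monotonicity of positive operators gives $r(\mathcal{L}_c) \leq r(\mathcal{L}_{c_\epsilon}) \leq \exp P(\varphi, \ln c_\epsilon)$, so it suffices to establish $P(\varphi, \ln c_\epsilon) \to P(\varphi, \ln c)$. For this, choose maximizers $\mu_\epsilon$ of the variational principle for $\ln c_\epsilon$, extract by compactness a weak-$*$ limit $\mu$, and combine upper semicontinuity of $h_\varphi$ with dominated convergence (using $\ln(c + \epsilon_0)$ as a majorant for any fixed $\epsilon_0 > 0$) to conclude $\limsup_\epsilon P(\varphi, \ln c_\epsilon) \leq \int \ln c\, d\mu + h_\varphi(\mu) \leq P(\varphi, \ln c)$.

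\textbf{Lower bound $\ln r(\mathcal{L}_c) \geq P(\varphi, \ln c)$.} I would reduce to Theorem~\ref{thm:Ruelle-Perron-Frobenious} in three stages. \emph{Stage one:} when $\varphi$ is topologically mixing and $c > 0$ strictly, both sides are 1-Lipschitz in $\ln c$ with respect to $\|\cdot\|_\infty$ (the right side by the standard pressure estimate $|P(\varphi, \ln c_1) - P(\varphi, \ln c_2)| \leq \|\ln c_1 - \ln c_2\|_\infty$, the left side since $\mathcal{L}_{c_1} \leq (c_1/c_2)_\infty \mathcal{L}_{c_2}$ as positive operators on $C(X)$). Uniform density of strictly positive H\"older functions then extends the equality from Theorem~\ref{thm:Ruelle-Perron-Frobenious} to all continuous $c > 0$. \emph{Stage two:} extend to $c \geq 0$ via $c_\epsilon = c + \epsilon \downarrow c$. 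Norm convergence $\mathcal{L}_{c_\epsilon} \to \mathcal{L}_c$ combined with $r(T) = \inf_n \|T^n\|^{1/n}$ gives $\limsup_\epsilon r(\mathcal{L}_{c_\epsilon}) \leq r(\mathcal{L}_c)$, while monotonicity gives $\liminf_\epsilon r(\mathcal{L}_{c_\epsilon}) \geq r(\mathcal{L}_c)$; coupled with $P(\varphi, \ln c_\epsilon) \to P(\varphi, \ln c)$ the equality passes to the limit. \emph{Stage three:} remove topological mixing, assuming $\Omega(\varphi) = X$, by the spectral decomposition (Theorem~\ref{thm:spectral decomposition}) $X = \bigsqcup_{j,k} X_j^k$ with $\varphi^{k(j)}|_{X_j^k}$ topologically mixing: the iterate $\mathcal{L}_c^{k(j)}$ restricts as a direct sum of Ruelle operators on the $C(X_j^k)$, and both $r$ and $P$ decompose as maxima over the pieces (using $r(T^n) = r(T)^n$ and $P(\varphi^n, \sum_{i=0}^{n-1} b \circ \varphi^i) = n P(\varphi, b)$), reducing to the mixing case already handled.

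\textbf{Main obstacle.} The most delicate step is removing the hypothesis $\Omega(\varphi) = X$. Since $\supp \mu \subseteq \Omega(\varphi)$ for every $\mu \in \Inv(X,\varphi)$, the right-hand side equals $P(\varphi|_{\Omega(\varphi)}, \ln c|_{\Omega(\varphi)})$, and the previous arguments apply on the expanding open map $\varphi|_{\Omega(\varphi)}$ (whose non-wandering set is itself). The matching identity $r(\mathcal{L}_c) = r(\widetilde{\mathcal{L}}_c)$, where $\widetilde{\mathcal{L}}_c$ acts on $C(\Omega(\varphi))$, cannot be read off from a direct restriction because $\mathcal{L}_c$ does not preserve $C(\Omega(\varphi))$. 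Instead it must be extracted from formula \eqref{eq:spectral_radius_Ruelle-Perron_Frobenious}, by a quantitative argument showing that for expanding open maps the growth of $\sum_{x \in \varphi^{-n}(y)} \prod_{i=0}^{n-1} c(\varphi^i x)$ is asymptotically dominated by the contribution of preimages lying near $\Omega(\varphi)$, so that the $\max_y$ in \eqref{eq:spectral_radius_Ruelle-Perron_Frobenious} may be effectively restricted to $y \in \Omega(\varphi)$ without changing the exponential growth rate. This is the step that carries the full weight of the expanding and open hypotheses.
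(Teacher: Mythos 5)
Most of your plan is sound, and for the core case it takes a genuinely different route from the paper's. You reduce strictly positive continuous weights to the H\"older case via the two Lipschitz estimates $|P(\varphi,\ln c_1)-P(\varphi,\ln c_2)|\le\|\ln c_1-\ln c_2\|_\infty$ and $r(\mathcal{L}_{c_1})\le e^{\|\ln c_1-\ln c_2\|_\infty}\,r(\mathcal{L}_{c_2})$, plus uniform density of Lipschitz functions; the paper instead proves $P(\varphi,\ln c)\le\ln r(\mathcal{L}_c)$ directly on topologically transitive pieces by exhibiting explicit $(d_n,\varepsilon)$-spanning sets $\varphi^{-(n+p(\varepsilon))}(\{y_1,\dots,y_k\})$ and comparing the spanning-set formula for pressure with \eqref{eq:spectral_radius_Ruelle-Perron_Frobenious}, so it needs no H\"older theory and is explicitly independent of Theorem \ref{thm:Ruelle-Perron-Frobenious}. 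If you keep your route, beware a circularity internal to the paper: the equality $P(\varphi,b)=\ln r(\mathcal{L}_c)$ asserted in Theorem \ref{thm:Ruelle-Perron-Frobenious} is itself deferred to the present theorem, so you must justify the H\"older transitive case independently (for instance via the limit formula in Theorem \ref{thm:existence_of_Gibbs_measures} together with Lemma \ref{lem:inequality_local_homeo}, or by citing the classical sources). Your handling of zeros of $c$ (monotone approximation, upper semicontinuity of the spectral radius against monotonicity, and a compactness argument for the pressures) matches the paper's last step, and your passage from transitive to mixing through the cyclic decomposition and the iterate $\mathcal{L}_c^{k(j)}$ is correct, though the paper works with transitive pieces directly and does not need it.

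The genuine gap is the step you yourself flag as the main obstacle: the identity $r(\mathcal{L}_c)=r(\widetilde{\mathcal{L}}_c)$ for the restriction to $\Omega(\varphi)$, which you propose to extract from \eqref{eq:spectral_radius_Ruelle-Perron_Frobenious} by a quantitative estimate on preimages near $\Omega(\varphi)$ but do not supply. That hard direction is in fact never needed. You only need the trivial inequality $r(\mathcal{L}_{c|_{\Omega(\varphi)}})\le r(\mathcal{L}_c)$, which holds because for $y\in\Omega(\varphi)$ the sum over $(\varphi|_{\Omega(\varphi)})^{-n}(y)$ is a subsum of the sum over $\varphi^{-n}(y)$ with nonnegative terms. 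Since every $\mu\in\Inv(X,\varphi)$ is supported in $\Omega(\varphi)$, the variational expressions over $X$ and over $\Omega(\varphi)$ coincide, and your already-established global upper bound closes the chain:
$\max_{\mu\in\Inv(X,\varphi)}\bigl(\int_X\ln c\,d\mu+h_{\varphi}(\mu)\bigr)=\max_{\mu\in\Inv(\Omega(\varphi),\varphi)}\bigl(\cdots\bigr)=\ln r(\mathcal{L}_{c|_{\Omega(\varphi)}})\le\ln r(\mathcal{L}_c)\le\max_{\mu\in\Inv(X,\varphi)}\bigl(\cdots\bigr)$,
forcing equality throughout; this is exactly how the paper finishes. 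Two small facts should still be recorded to apply the previously handled case to the restriction: $\varphi|_{\Omega(\varphi)}$ remains open (not automatic; the paper cites \cite[Lemma 3.3.10]{Przytycki}) and its nonwandering set is all of $\Omega(\varphi)$, since for open expanding maps $\Omega(\varphi)$ is the closure of the periodic points.
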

\begin{proof} Since  $\varphi$ is expanding the entropy map $\Inv(X,\varphi)\ni \mu \to h_{\varphi}(\mu)\in [0,\infty)$ is upper semi-continuous, see \cite[Theorem 3.5.6]{Przytycki}.
The map $\Inv(X,\varphi)\ni \mu \to \int_{{X}}\ln c\,d\mu$ is always upper semi-continuous, cf. the proof of
 \cite[Lemma 1.4]{Ruelle}. Therefore the supremum $\sup_{\mu\in \Inv(X,\varphi)} \left(\int_{{X}}\ln c\,d\mu +h_{\varphi}(\mu)\right)$
is in fact a maximum. In addition, using the ergodic decomposition we have 
$$
\max_{\mu\in \Inv(X,\varphi)} \left(\int_{{X}}\ln c\,d\mu +h_{\varphi}(\mu)\right)
=\max_{\mu\in \Erg(X,\varphi)} \left(\int_{{X}}\ln c\,d\mu +h_{\varphi}(\mu)\right),
$$
cf. \cite[Corollary 2.4.3]{Przytycki} or \cite[9.10.1]{Walt2}.

Our proof of the assertion goes by a consecutive relaxing of assumptions.  First we assume that $c>0$ is strictly positive.
Then, in view of Lemma \ref{lem:inequality_local_homeo}, we need to show that $P(\varphi,\ln c)\leq \ln r(\mathcal{L}_c)$.

1) Assume first that $\varphi:X\to X$ is topologically transitive. By Theorem \ref{thm:spectral decomposition}, 
$X=\bigsqcup_{j=1}^k X_j$ is a union of $k$ disjoint clopen sets $X_j$  which are cyclically
permuted by $\varphi$ and such that $\varphi^{k}:X_j\to X_j$ is topologically mixing. 
Hence by \cite[Proposition 2]{Fan_Jiang} for every $j=1,...,k$ and every $\varepsilon>0$ there is $p_j(\varepsilon)\in\N$ such that
$\varphi^{kp_j(\varepsilon)}(B(x,\varepsilon)\cap X_j))=X_j$ for every $x\in X_j$.  
Thus there is $p(\varepsilon)\geq \max_{j=1,...k}p_j(\varepsilon)$ such that $\varphi^{p(\varepsilon)}(B(x,\varepsilon)\cap X_j))=X_j$ 
for every $x\in X_j$ and $j=1,...,k$. By \cite[Proposition 1.5]{Fan_Jiang} for sufficiently small 
$\varepsilon>0$  we have $\varphi^n(B_n(x,\varepsilon))=B(\varphi^n(x),\varepsilon)$ for all $n\in \N$ and $x\in X$ 
(here $B_n(x,\varepsilon)$ is a ball in Bowen's $n$-metric). Accordingly, for all $x\in X_i$, $i=1,...,k$, and $n\in \N$ we get
$$
\varphi^{n+p(\varepsilon)}(B_n(x,\varepsilon))=\varphi^{p(\varepsilon)}(B(\varphi^n(x),\varepsilon))\supseteq X_i\,\, \text{ if }\,\,\varphi^n(x)\in X_i. 
$$ 
This implies that for any elements $y_i \in X_i$, $i=1,...,k$, the set $\varphi^{-(n+p(\varepsilon))}(\{y_1,...,y_k\})$ is $(d_n,\varepsilon)$-spanning. 
Now  recall, cf. \cite[9.3]{Walt2}, 
 that apart from \eqref{eq:topological_pressure} the topological pressure
is also given by 
$$
P(\varphi,\ln c)=\lim_{\varepsilon \to 0} \limsup_{n\to\infty}\inf_{E\subseteq X \text{ is }
\atop
(d_n,\varepsilon)\text{-spanning}} \frac{1}{n} \ln\sum_{y\in E}\prod_{i=0}^{n-1} c(\varphi^i (x)).
$$
For any finite set $E\subseteq X$ and any function $b:X\to [0,\infty)$ we have 
$$
\sum_{x\in E} b(x)\leq \sum_{x\in \varphi^{-n}(\varphi^n(E))} b(x)= \sum_{y\in \varphi^n(E)}\sum_{x\in \varphi^{-n}(y)}b(x) 
\leq |\varphi^n(E)| \sup_{y\in X}\sum_{x\in \varphi^{-n}(y)} b(x),
$$
which by taking $b(x)=\prod_{i=0}^{n-1} c(\varphi^i (x))$ gives
$$
 \frac{1}{n} \ln\sum_{x\in E}\prod_{i=0}^{n-1} c(\varphi^i (x))\leq \max_{y\in X}\frac{1}{n} \ln\sum_{x\in \varphi^{-1}(y)}\prod_{i=0}^{n-1} c(\varphi^i (x))
+ \ln|\varphi^n(E)|^{\frac{1}{n}}.
$$
Thus, in view of \eqref{eq:spectral_radius_Ruelle-Perron_Frobenious} and the above formula for $P(\varphi,\ln c)$,
  it suffices to 
show  that  for sufficiently small $\varepsilon>0 $ we have
$$\limsup_{n\to\infty}\inf_{E\subseteq X \text{ is }
\atop
(d_n,\varepsilon)\text{-spanning}}  \ln|\varphi^n(E)|^{\frac{1}{n}}=0.
$$
But taking $E:=\varphi^{-(n+p(\varepsilon))}(\{y_1,...,y_k\})$ to be  the $(d_n,\varepsilon)$-spanning  set  
constructed above we get 
$$
  \ln|\varphi^n(E)|^{\frac{1}{n}}= \ln |\varphi^n(\varphi^{-(n+p(\varepsilon))}(\{y_1,...,y_k\})) |^{\frac{1}{n}} = \ln  |\varphi^{-p(\varepsilon)}(\{y_1,...,y_k\})|^{\frac{1}{n}},
$$
which tends to zero, as $n\to \infty$. 

2) Now suppose that  the set of periodic points is dense in $X$ (equivalently $\Omega(\varphi)=X$).
By the spectral decomposition (Theorem \ref{thm:spectral decomposition}) we  have $X=\bigsqcup_{j=1}^N X_j$ where $X_j=\varphi^{-1}(X_j)$ are compact and the maps $\varphi|_{X_j}:X_j\to X_j$ are topologically transitive (and expanding open). Clearly,
  $\mathcal{L}_c=\bigoplus_{j=1}^N \mathcal{L}_{ c|_{X_j}}$ and $\Erg(X,\varphi)=\bigsqcup_{j=1}^N\Erg(X_j,\varphi|_{X_j})$.
 Hence by step 1) we get
$$
 r(\mathcal{L}_c)=\max_{j=1,...,N} r(\mathcal{L}_{c|_{X_j}})= \max_{\mu \in \Erg(X_j,\varphi) \atop j=1,...,N} e^{\int_{X_j}\ln c\,d\mu +h_{\varphi}(\mu)}
=\max_{\mu \in \Erg(X,\varphi)} e^{\int_{{X}}\ln c\,d\mu +h_{\varphi}(\mu)},
$$
which gives the assertion in this case.

3) Let  $\varphi:X\to X$ be an arbitrary expanding open map. The restriction of $\varphi$ to $\Omega(\varphi)=\overline{\text{Per}(\varphi)}$ is also expanding and it remains open 
by \cite[Lemma 3.3.10]{Przytycki}. Hence we may consider the transfer operator 
$\mathcal{L}_{c|_{\Omega(\varphi)}}:C(\Omega(\varphi))\to C(\Omega(\varphi))$ 
associated to $\varphi:\Omega(\varphi)\to \Omega(\varphi)$.
Since $\|\mathcal{L}_{c|_{\Omega(\varphi)}}^n(1)\|\leq \|\mathcal{L}_c^n(1)\|$, 
we have $r( \mathcal{L}_{c|_{\Omega(\varphi)}})\leq r( \mathcal{L}_c)$.
 Thus using step 2) and that every $\mu \in  \Inv(X,\varphi)$ is supported on $\Omega(\varphi)$  
 we get
$$
\max_{\mu\in \Inv(X,\varphi)} e^{\int_{{X}}\ln c\,d\mu +h_{\varphi}(\mu)}=\max_{\mu\in \Inv(\Omega(\varphi),\varphi)} e^{\int_{\Omega(\varphi)}\ln c\,d\mu +h_{\varphi}(\mu)}
=r( \mathcal{L}_{c|_{\Omega(\varphi)}})\leq r( \mathcal{L}_c),
$$
which is the inequality we wanted to prove.

 Now let us consider the case when $c$ is arbitrary (may have zeros). Choose  strictly positive continuous functions $c_n>0$ such that $c_n\searrow c$. 
Then $\{r(\mathcal{L}_{c_n})\}_{n=1}^\infty$ is a decreasing sequence with values not greater than $r(\mathcal{L}_c)$. 
Hence by the upper semi-continuity of spectral radius we get $r(\mathcal{L}_{c_n})\searrow r(\mathcal{L}_c)$.
Also putting $P:=\max_{\mu\in \Inv(X,\varphi)} \left(\int_{{X}}\ln c\,d\mu +h_{\varphi}(\mu)\right)$ we have  
  $\max_{\mu\in \Inv(X,\varphi)} \left(\int_{{X}}\ln c_n\,d\mu +h_{\varphi}(\mu)\right)\searrow P$. Indeed, the sets 
$A_n:=\{\mu: \int_{{X}}\ln c_n\,d\mu +h_{\varphi}(\mu) \geq P\}$ are  non-empty and descending. They are compact, as the maps  $\Inv(X,\varphi)\ni \mu \mapsto \int_{{X}}\ln c_n\,d\mu +h_{\varphi}(\mu)$
are upper semi-continuous. Hence  $\bigcap_{n=1}^\infty A_n\neq \emptyset $ and for any 
$\mu_0 \in \bigcap_{n=1}^\infty A_n$ we have  $P= \int_{{X}}\ln c\,d\mu_0 +h(\mu_0)$. Thus using step 3) we get
$$
 r(\mathcal{L}_c)= \lim_{n\to \infty } r(\mathcal{L}_{\ln c_n})=\lim_{n\to \infty}\max_{\mu\in \Inv(X,\varphi)} e^{\int_{X}\ln c_n\,d\mu +h_{\varphi}(\mu)}=\max_{\mu\in \Inv(X,\varphi)} e^{\int_{X}\ln c\,d\mu +h_{\varphi}(\mu)}.
$$
\end{proof}

\begin{cor}\label{cor:strictly_positive_formuals}
Suppose that $\varphi:X\to X$ is an  expanding open map on a metrizable compact space 
$X$ and let $c:X\to [0,\infty)$ be a continuous map such that $c|_{\Omega(\varphi)}> 0$ and   $\ln c|_{\Omega(\varphi)}$ is H\"older continuous. 
Let $\Omega(\varphi)=\bigsqcup_{j=1}^N X_j$
be the spectral decomposition for $(\Omega(\varphi),\varphi)$. Then for each pair of restrictions $\varphi|_{X_j}$ and $\ln c|_{X_j}$ there is the unique   Gibbs measure $\mu_{j}$ and we have
$$
\ln r(\mathcal{L}_c)=\max_{j=1,...,N} \int_{X_j}\ln c\,d\mu_j +h(\mu_j)=\max_{j=1,...,N} \lim_{n\to \infty} \frac{1}{n} \ln\sum_{x\in \varphi^{-n}(y_j)}\prod_{i=0}^{n-1} c(\varphi^i (x))
$$
where $y_j\in X_j$ are arbitrary points for $j=1,...,N$.  In particular, if  $\varphi:\Omega(\varphi)\to \Omega(\varphi)$ is topologically transitive, then 
there is a unique measure $\mu\in  \Inv(X,\varphi)$, the Gibbs measure for $\varphi|_{\Omega(\varphi)}$ and $\ln c|_{\Omega(\varphi)}$, for which we have
$$
\ln r(\mathcal{L}_c)= \int_{X}\ln c\,d\mu +h_{\varphi}(\mu)=\lim_{n\to \infty} \frac{1}{n} \ln\sum_{x\in \varphi^{-n}(y)}\prod_{i=0}^{n-1} c(\varphi^i (x)) \text{ for all }y\in \Omega(\varphi).
$$
\end{cor}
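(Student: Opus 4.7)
The plan is to combine Theorem~\ref{thm:Ruelle's} with the spectral decomposition of $\Omega(\varphi)$ (Theorem~\ref{thm:spectral decomposition}) and the properties of Gibbs measures on each topologically transitive component (Theorems~\ref{thm:Ruelle-Perron-Frobenious} and~\ref{thm:existence_of_Gibbs_measures}). By Theorem~\ref{thm:Ruelle's} one has $\ln r(\mathcal{L}_c)=\max_{\mu\in \Inv(X,\varphi)}\bigl(\int_X \ln c\, d\mu+h_\varphi(\mu)\bigr)$. Every such $\mu$ is concentrated on $\Omega(\varphi)$, and $\ln c$ is continuous and bounded there since $c|_{\Omega(\varphi)}>0$, so the maximum effectively ranges over $\Inv(\Omega(\varphi),\varphi|_{\Omega(\varphi)})$. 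The restriction $\varphi|_{\Omega(\varphi)}$ remains open and expanding (openness by \cite[Lemma 3.3.10]{Przytycki}, as already exploited in the proof of Theorem~\ref{thm:Ruelle's}), so Theorem~\ref{thm:spectral decomposition} yields $\Omega(\varphi)=\bigsqcup_{j=1}^{N}X_j$ into disjoint $\varphi$-invariant clopen pieces on which each $\varphi|_{X_j}$ is topologically transitive.

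Any $\mu\in \Inv(\Omega(\varphi),\varphi)$ decomposes as $\mu=\sum_{j}\lambda_j\mu_j$ with $\lambda_j=\mu(X_j)$, $\sum_j\lambda_j=1$, and $\mu_j\in \Inv(X_j,\varphi|_{X_j})$ whenever $\lambda_j>0$. Additivity of the integral and the standard affinity of Kolmogorov--Sinai entropy on disjoint invariant pieces give
$$
\int_X\ln c\, d\mu+h_\varphi(\mu)=\sum_{j=1}^N\lambda_j\Bigl(\int_{X_j}\ln c\, d\mu_j+h_{\varphi|_{X_j}}(\mu_j)\Bigr),
$$
so the outer maximum reduces to $\max_j\max_{\nu\in\Inv(X_j,\varphi|_{X_j})}\bigl(\int_{X_j}\ln c\, d\nu+h_{\varphi|_{X_j}}(\nu)\bigr)$. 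On each $X_j$ the map $\varphi|_{X_j}$ is open, expanding, topologically transitive and $\ln c|_{X_j}$ is H\"older continuous, so Theorems~\ref{thm:Ruelle-Perron-Frobenious} and~\ref{thm:existence_of_Gibbs_measures} supply a unique Gibbs measure $\mu_j$, which simultaneously is the unique equilibrium state, and
$$
P_j:=P(\varphi|_{X_j},\ln c|_{X_j})=\int_{X_j}\ln c\, d\mu_j+h_{\varphi|_{X_j}}(\mu_j)=\ln r(\mathcal{L}_{c|_{X_j}})=\lim_{n\to\infty}\frac{1}{n}\ln \mathcal{L}_{c|_{X_j}}^{n}(1)(y_j)
$$
for every $y_j\in X_j$. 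Combining these steps yields the first equality $\ln r(\mathcal{L}_c)=\max_j P_j=\max_j\bigl(\int_{X_j}\ln c\, d\mu_j+h(\mu_j)\bigr)$.

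For the second equality put $A_n(y):=\mathcal{L}_c^{n}(1)(y)=\sum_{x\in\varphi^{-n}(y)}\prod_{i=0}^{n-1}c(\varphi^i(x))$. If $y_j\in X_j$, the backward invariance of $X_j$ inside $\Omega(\varphi)$ forces all $\Omega$-preimages of $y_j$ to lie in $X_j$, so $A_n(y_j)\geq \mathcal{L}_{c|_{X_j}}^{n}(1)(y_j)$, and the previous display gives $\liminf_{n}\frac{1}{n}\ln A_n(y_j)\geq P_j$. The trivial bound $A_n(y_j)\leq \|\mathcal{L}_c^{n}(1)\|_\infty$ together with Gelfand's formula and the first equality gives $\limsup_{n}\frac{1}{n}\ln A_n(y_j)\leq \ln r(\mathcal{L}_c)=\max_l P_l$. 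Taking maximum over $j$ sandwiches $\max_j\liminf$ and $\max_j\limsup$ between $\max_j P_j$ and $\max_j P_j$, establishing the second equality; at any index $j_0$ attaining $P_{j_0}=\max_l P_l$, the limit exists and equals $\ln r(\mathcal{L}_c)$. The final topologically-transitive statement is the case $N=1$, with $\mu_1$ the unique Gibbs measure for $(\varphi|_{\Omega(\varphi)},\ln c|_{\Omega(\varphi)})$.

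The main delicate point is the second equality: the squeeze delivers the equation of maxima but does not on its own show that $\lim_n\frac{1}{n}\ln A_n(y_j)$ exists for each individual $y_j\in X_j$ outside the maximizing indices, since wandering preimages of $y_j$ in $X\setminus \Omega(\varphi)$ contribute to $A_n(y_j)$ without appearing in $\mathcal{L}_{c|_{X_j}}^{n}(1)(y_j)$. Their growth rate is a priori bounded only by $\max_l P_l$, so a finer analysis of these wandering preimage trees would be needed for termwise existence of the limit, but this is immaterial for the stated equality of maxima.
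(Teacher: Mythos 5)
Your argument is correct and follows essentially the same route as the paper: the paper's proof simply extracts $\ln r(\mathcal{L}_c)=\max_{j}P(\varphi|_{X_j},\ln c|_{X_j})$ from (the proof of) Theorem~\ref{thm:Ruelle's} and then invokes Theorem~\ref{thm:existence_of_Gibbs_measures} on each transitive piece, which is exactly what you reconstruct via the spectral decomposition, the affinity of entropy, and the Gibbs/equilibrium theory on each $X_j$. Your additional squeeze argument handling preimages of $y_j$ that leave $\Omega(\varphi)$ is a sensible refinement the paper's two-line proof glosses over (it implicitly reads the preimage sums within the subsystems), and your caveat that the per-$j$ limit need not exist at non-maximizing indices under the full-preimage reading is accurate but does not affect the stated equality of maxima.
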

\begin{proof}
By (the proof of) Theorem \ref{thm:Ruelle's} we have $\ln r(\mathcal{L}_c)=\max_{j=1,...,N} P(\varphi|_{X_j},\ln c|_{X_j})$.  
Combining this with Theorem \ref{thm:existence_of_Gibbs_measures}
we get the assertion.
\end{proof}
\begin{cor}
\label{cor:abstract_spectral_radius_expanding}
Let $aT$, $a\in A$, be  abstract weighted shifts associated with a transfer operator $L$  (i.e. assume (A1) and (A2))
where $A\cong C(X)$ is  separable. Suppose also that  the map $\varphi:X\to X$ dual to the endomorphism in (A2) is  expanding and open. Then
$$
\ln r(aT)=\max_{\mu\in \Erg(X,\varphi)} \int_{{X}}\ln\vert a\sqrt{\varrho}\vert\,d\mu
+\frac{h_{\varphi}(\mu)}{2}, \qquad a\in A,
$$
where $\varrho$ is the cocycle associated to  $L$, cf. \eqref{equ:transfer_operator_form2}.
If in addition $|a|^2\varrho |_{\Omega(\varphi)}>0$ 
and its logarithm is H\"older continuous then for each system $(X_j, \varphi|_{X_j})$ where $\Omega(\varphi)=\bigsqcup_{j=1}^N X_j$
is the spectral decomposition for $(\Omega(\varphi),\varphi)$, there is the Gibbs measure $\mu_j$ and then 
$$
\ln r(aT)=\max_{j=1,...,N} \int_{{X}}\ln\vert a\sqrt{\varrho}\vert\,d\mu_j
+\frac{h_{\varphi}(\mu_j)}{2}.
$$
In particular, if $\varphi:\Omega(\varphi)\to \Omega(\varphi)$ is also topologically transitive, then 
the Gibbs measure $\mu$ for $\varphi|_{\Omega(\varphi)}$ with potential $\ln (|a|^2\varrho )|_{\Omega(\varphi)}$ is the unique measure in $ \Inv(X,\varphi)$ for which
$
\ln r(aT)=\int_{{X}}\ln\vert a\sqrt{\varrho}\vert\,d\mu
+\frac{h_{\varphi}(\mu)}{2}.
$
\end{cor}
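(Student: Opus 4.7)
The plan is to reduce the computation of $r(aT)$ to a spectral radius of a Ruelle--Perron--Frobenius operator and then apply the main result of Section~\ref{sec:Spectral radius}, namely Theorem~\ref{thm:Ruelle's}. First, by Lemma~\ref{lem:first_spectral_radius},
$$
r(aT)=\sqrt{r(L|a|^2)},
$$
where $L|a|^2:C(X)\to C(X)$ is the positive operator $b\mapsto T^*|a|^2bT$. By the transfer identity this is $b\mapsto L(|a|^2 b)$.

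Next I would use that an expanding open map is automatically a local homeomorphism, so the discussion preceding Example~\ref{ex:Ruelle's_operators} applies: every transfer operator for $\alpha$ has the form \eqref{equ:transfer_operator_form2} with a continuous cocycle $\varrho:X\to[0,1]$. Substituting this representation gives
$$
L|a|^2(b)(y)=\sum_{x\in\varphi^{-1}(y)}\varrho(x)|a(x)|^2 b(x)=\mathcal{L}_{c}b(y),
$$
where $c:=\varrho|a|^2\in C(X,[0,\infty))$. Hence $L|a|^2=\mathcal{L}_c$ is a Ruelle--Perron--Frobenius operator associated to the open expanding map $\varphi$ and the continuous potential $c\geq 0$.

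Now I would invoke Theorem~\ref{thm:Ruelle's} directly, which is tailored to this situation (it requires neither strict positivity of $c$ nor any regularity beyond continuity):
$$
\ln r(\mathcal{L}_c)=\max_{\mu\in\Erg(X,\varphi)}\left(\int_{X}\ln(\varrho|a|^2)\,d\mu+h_{\varphi}(\mu)\right),
$$
with the convention $\ln 0=-\infty$ (so summands involving measures that charge the zero set of $\varrho|a|^2$ are $-\infty$ and do not attain the maximum). Combining with $\ln r(aT)=\tfrac12\ln r(\mathcal{L}_c)$ and the identity $\tfrac12\ln(\varrho|a|^2)=\ln|a\sqrt{\varrho}|$ yields the first displayed formula.

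For the refined second formula, I would just plug $c=\varrho|a|^2$ into Corollary~\ref{cor:strictly_positive_formuals}: the assumptions $|a|^2\varrho|_{\Omega(\varphi)}>0$ and $\ln(|a|^2\varrho)|_{\Omega(\varphi)}$ H\"older continuous are exactly the hypotheses required there, so the Gibbs measures $\mu_j$ for the components $(X_j,\varphi|_{X_j})$ of the spectral decomposition of $\Omega(\varphi)$ exist and
$$
\ln r(\mathcal{L}_c)=\max_{j=1,\ldots,N}\int_{X_j}\ln(\varrho|a|^2)\,d\mu_j+h_{\varphi}(\mu_j).
$$
Halving and rewriting the integrand gives the second formula; the topologically transitive case is just the $N=1$ instance, where uniqueness of the Gibbs measure is the uniqueness part of Theorem~\ref{thm:existence_of_Gibbs_measures} (equivalently Theorem~\ref{thm:Ruelle-Perron-Frobenious}\ref{enu:Ruelle-Perron-Frobenious1}).

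There is essentially no obstacle: once one observes that the weighted transfer operator $L|a|^2$ is literally a Ruelle operator $\mathcal{L}_{\varrho|a|^2}$, the corollary is a direct assembly of Lemma~\ref{lem:first_spectral_radius}, Theorem~\ref{thm:Ruelle's}, and Corollary~\ref{cor:strictly_positive_formuals}. The only point requiring a brief remark is that the possible vanishing of $\varrho|a|^2$ is harmless, because Theorem~\ref{thm:Ruelle's} is proved for arbitrary continuous $c\geq 0$; this is exactly why we extended Ruelle's formula beyond the strictly positive H\"older case.
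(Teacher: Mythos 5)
Your proposal is correct and follows essentially the same route as the paper: the paper's proof is exactly the combination of Lemma \ref{lem:first_spectral_radius} (giving $r(aT)=r(L|a|^2)^{1/2}=r(\mathcal{L}_{|a|^2\varrho})^{1/2}$), Theorem \ref{thm:Ruelle's} for the variational formula, and Corollary \ref{cor:strictly_positive_formuals} for the Gibbs-measure refinement and the uniqueness in the topologically transitive case. Your additional remarks (identifying $L|a|^2$ with the Ruelle operator $\mathcal{L}_{\varrho|a|^2}$ explicitly, and noting that possible zeros of $\varrho|a|^2$ are harmless because Theorem \ref{thm:Ruelle's} allows arbitrary continuous $c\geq 0$) only spell out details the paper leaves implicit.
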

\begin{proof} By Lemma \ref{lem:first_spectral_radius}, $r(aT)=r(L|a|^2)^{1/2}=  r(\mathcal{L}_{|a|^2\varrho})^{1/2}$. Hence by Theorem \ref{thm:Ruelle's},
$
\ln r(aT)={1/2}\ln r(\mathcal{L}_{|a|^2\varrho})=
\max_{\mu\in \Erg(X,\varphi)} \max_{\mu\in \Erg(X,\varphi)} \int_{{X}}\ln\vert a\sqrt{\varrho}\vert\,d\mu
+\frac{h_{\varphi}(\mu)}{2}.
$
To get  the second part of the assertion apply Corollary \ref{cor:strictly_positive_formuals}. 
\end{proof}
 The variational formula in Corollary \ref{cor:abstract_spectral_radius_expanding} could be also deduced from 
Theorem \ref{thm:Antonevich-Bakthin-Lebedev}, as we have the following 
relationship between the $t$-entropy and the Kolmogorov-Sinai entropy.
\begin{cor}
\label{cor:t-entropy vs Kolmogorov-Sinai entropy} 
Let $L:C(X)\to C(X)$ be a transfer operator for a unital injective endomorphism whose dual map  
$\varphi:X\to X$ is expanding and open. Then for each $\mu\in
\Inv(X,\varphi)$ the $t$-entropy $\tau_{L}(\mu)$ of $L$ is given by 
$$
\tau_{L}(\mu)=\int_{{X}}\ln \varrho \,d\mu +h_{\varphi}(\mu),
$$
where $\varrho$ is the cocycle associated to  $L$ and $h_{\varphi}(\mu)$  is the Kolmogorov-Sinai entropy of  $\mu$.
In particular, for every $\mu\in
\Inv(X,\varphi)$ we have 
$$
h_{\varphi}(\mu)= \inf_{b\in C(X,\R)} \ln r(L e^{b}) - \int_X\ln (e^b\varrho)\, d\mu, 
$$
where $L e^{b}:C(X)\to C(X)$ is the weighted transfer operator $Le^{b}(a):=L(e^{b}a)$, $a\in C(X)$.
\end{cor}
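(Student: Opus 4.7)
The strategy is to compare two variational formulas for the spectral radius of the weighted transfer operators $Le^b$, $b\in C(X,\R)$, and then apply the Legendre-Fenchel biconjugate theorem on the compact convex set $\Inv(X,\varphi)\subseteq C(X,\R)^*$. Since $\varphi$ is expanding and open it is a local homeomorphism, so by (the proof of) Proposition \ref{prop:finite_type_characterisation} the transfer operator $L$ has the cocycle form $L(a)(y)=\sum_{x\in\varphi^{-1}(y)}\varrho(x)a(x)$, whence $Le^b=\mathcal{L}_{\varrho e^b}$ for every $b\in C(X,\R)$. Applying Theorem \ref{thm:Ruelle's} to the continuous weight $\varrho e^b\geq 0$ on one hand, and Theorem \ref{thm:Antonevich-Bakthin-Lebedev} to $L$ with weight $e^b$ on the other, yields the two variational expressions
$$
\max_{\mu\in\Inv(X,\varphi)}\Bigl(\int b\, d\mu+\tau_L(\mu)\Bigr)=\ln r(Le^b)=\max_{\mu\in\Inv(X,\varphi)}\Bigl(\int b\, d\mu+G(\mu)\Bigr)
$$
for the same quantity, where $G(\mu):=\int\ln\varrho\, d\mu+h_\varphi(\mu)$.

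I would next invoke the biconjugate theorem: any upper semi-continuous concave functional on the compact convex set $\Inv(X,\varphi)$ is determined by its Legendre-Fenchel conjugate $b\mapsto\sup_\mu(\int b\, d\mu+\,\cdot\,)$, so it suffices to verify upper semi-continuity and concavity of both $\tau_L$ and $G$. For $G$ this is immediate: $h_\varphi$ is upper semi-continuous by \cite[Theorem 3.5.6]{Przytycki}, $\mu\mapsto\int\ln\varrho\, d\mu$ is upper semi-continuous because $\ln\varrho$ is upper semi-continuous and bounded above by $0$, and both terms are affine. For $\tau_L$ I would argue directly from \eqref{equ:t-entropy}: since $L(1)=1$ and each $D\in D(X)$ is a partition of unity, both $(\mu(g))_{g\in D}$ and $(\mu(L^ng))_{g\in D}$ are probability vectors depending affinely on $\mu$, so the inner sum $\sum_{g\in D}\mu(g)\ln(\mu(L^ng)/\mu(g))$ is the negative of a Kullback-Leibler divergence between them, hence jointly concave and jointly upper semi-continuous, which transfers to $\mu$ via the affine substitution; taking $\inf_n\inf_{D}$ preserves both properties. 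This gives $\tau_L=G$, which is the first formula.

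For the second formula, applying the biconjugate theorem now to $\tau_L$ itself yields $\tau_L(\mu)=\inf_{b\in C(X,\R)}(\ln r(Le^b)-\int b\, d\mu)$; subtracting $\int\ln\varrho\, d\mu$ from both sides and using $\tau_L(\mu)-\int\ln\varrho\, d\mu=h_\varphi(\mu)$ gives the desired identity. The main technical obstacle I anticipate is the direct verification of upper semi-continuity and concavity of $\tau_L$ from its defining formula \eqref{equ:t-entropy}; the Kullback-Leibler framing is precisely the tool that handles both cleanly, including the case in which $\varrho$ vanishes on part of $X$ (where $\ln\varrho$ is only $[-\infty,0]$-valued and $\tau_L$ may take the value $-\infty$), so no hypothesis of strict positivity of $\varrho$ is required.
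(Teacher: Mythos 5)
Your overall strategy is the same duality argument as the paper's (both identify $\int_X\ln\varrho\,d\mu+h_{\varphi}(\mu)$ with a Legendre dual of $b\mapsto\ln r(Le^{b})$ via Theorem \ref{thm:Ruelle's}), but the one step where you diverge contains a genuine gap. To apply biconjugation to $\tau_{L}$ itself — which both of your displayed conclusions require — you must know that $\tau_{L}$ is concave and upper semi-continuous on $\Inv(X,\varphi)$. Your Kullback--Leibler observation does show that each fixed term $\Phi_{n,D}(\mu)=\sum_{g\in D}\mu(g)\ln\bigl(\mu(L^{n}g)/\mu(g)\bigr)$ is concave and u.s.c.\ in $\mu$ (it is a jointly concave, u.s.c.\ function of two probability vectors composed with affine maps), and an arbitrary infimum of u.s.c.\ functions is indeed u.s.c.; but an infimum of concave functions is \emph{not} concave in general, so the claim that ``taking $\inf_{n}\inf_{D}$ preserves both properties'' fails exactly where you need it. Without concavity and upper semi-continuity of $\tau_{L}$, the equality of the two conjugates only says that $\int_X\ln\varrho\,d\mu+h_{\varphi}(\mu)$ is the u.s.c.\ concave envelope of $\tau_{L}$, i.e.\ it yields the inequality $\tau_{L}(\mu)\le\int_X\ln\varrho\,d\mu+h_{\varphi}(\mu)$ rather than the asserted identity, and the second formula (biconjugation applied to $\tau_{L}$) is likewise unjustified.

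The paper avoids this issue entirely: it never works with the defining formula \eqref{equ:t-entropy}, but instead quotes \cite[Theorem 5.6]{t-entropy}, which states that $\tau_{L}$ coincides on $\Inv(X,\varphi)$ with $-\lambda^{*}$, the Legendre dual of the convex continuous functional $\lambda(b)=\ln r(Le^{b})$ (so concavity and upper semi-continuity come for free), together with \cite[Propositions 2.2 and 4.2]{t-entropy}; Theorem \ref{thm:Ruelle's} and uniqueness of the Legendre transform then compute $-\lambda^{*}$ to be $\int_X\ln\varrho\,d\mu+h_{\varphi}(\mu)$. To repair your argument you should either import that result of Antonevich--Bakhtin--Lebedev, after which your proof essentially becomes the paper's, or genuinely prove concavity of $\tau_{L}$ from \eqref{equ:t-entropy} — e.g.\ show via the log-sum inequality that $\Phi_{n,D}$ decreases under refinement of $D$, so that $\inf_{D}\Phi_{n,D}$ is a decreasing limit of concave functions along a directed net, and then treat $\inf_{n}\frac1n$ through a subadditivity (Fekete) argument — which is a substantive piece of work your proposal does not supply. (Two cosmetic points: the variational formula you attribute to Theorem \ref{thm:Antonevich-Bakthin-Lebedev} is really \cite[Theorem 11.2]{t-entropy}, quoted just before it, and the cocycle form of $L$ for a local homeomorphism is \eqref{equ:transfer_operator_form2}; Proposition \ref{prop:finite_type_characterisation} concerns the finite-type case $\varrho>0$.)
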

\begin{proof}
By \cite[Proposition 2.2]{t-entropy} the functional $\lambda: C(X,\R) \ni b\mapsto \ln r(Le^{b})=\ln r(\mathcal{L}_{b\ln \varrho})\in \R\cup \{-\infty\}$ is convex and continuous. Hence
its Legendre transform $\lambda^*:C(X,\R)^*\to \R\cup \{+ \infty\}$, given by $\lambda (\mu^*)= \sup_{b\in C(X,\R)} \mu(b)-\lambda(b)$, 
is a  unique  convex and lower semicontinuous functional on $C(X)^*$ such that
$$
\lambda (b)= \sup_{\mu\in C(X)^*} \mu(b)-\lambda^*(b), \qquad b\in C(X,\R),
$$
cf. \cite[Proposition 3.1]{t-entropy}. The authors of \cite{t-entropy} call  $S:=-\lambda^*$  the dual entropy map for $L$. 
By  \cite[Proposition 4.2]{t-entropy} the effective domain of $\lambda^*$ is contained 
in $\Inv(X,\varphi)$. Hence $\lambda^*$ is determined by its values on $\Inv(X,\varphi)$. 
The map $\Inv(X,\varphi)\ni \mu \to \int_{{X}}\ln \varrho \,d\mu +h_{\varphi}(\mu)$  is upper semicontinuous and affine, cf. \cite[Theorem 8.1]{Walt2}. 
By Theorem \ref{thm:Ruelle's} we have $\lambda (b)= \max_{\mu\in C(X)^*} \mu(b)+ \int_{{X}}\ln \varrho \,d\mu +h_{\varphi}(\mu)$. 
Hence $S(\mu)=-\lambda^*(\mu)=\int_{{X}}\ln \varrho \,d\mu +h_{\varphi}(\mu)$.  On the other hand, we have  $\tau_{L}= -\lambda^*|_{\Inv(X,\varphi) }$ by \cite[Theorem 5.6]{t-entropy}.
This gives the assertion. 
\end{proof}
\section{$C^*$-algebras generated by abstract weighted shift operators}\label{sec:C*-algebras}

In this section we discuss conditions under which the $C^*$-algebra $C^*(A,T)=C^*(\{aT:a\in A\})$ generated by the abstract weighted shift operators $aT$, $a\in A$, is  modeled  
by a crossed product $A\rtimes L$, which is a special example of a Cuntz-Pimsner algebra.
This allows us to
employ some recent results from the general theory of $C^*$-algebras associated to $C^*$-correspondences.
In this way we get natural dynamical criteria for rotational invariance of the spectrum $\sigma(aT)$, $a\in A$, and 
for the coincidence of $\sigma(aT)$ with the essential spectrum $\sigma_{ess}(aT)$ (Corollary \ref{cor:Spectral_consequences}).
We will also 
explain when $A\subseteq C^*(A,T)$ is a Cartan $C^*$-subalgebra (Theorem \ref{thm:Cartan_crossed_products}), which will be crucial
in our study of Riesz projectors and description of spectra.

The basic algebraic structure inferred from axioms (A1), (A2) is the following: 
\begin{lem} Suppose $A\subseteq \B(H)$ is a  unital $C^*$-algebra and $T\in \B(H)$ is an  isometry satisfying (A1) and (A2). 
So that we have an endomorphism $\alpha:A\to A$ and the associated transfer operator $L:A\to A$ satisfying $L(a)=T^*aT$, $a\in A$. For  $n,m,k,l\in \N_0$ and $a,b,c,d\in A$ we have 
\begin{equation}\label{eq:commutation relations}
(aT^n T^{*m}b)\cdot  (cT^k T^{*l}d)
=\begin{cases}
aT^n T^{*m-k+l}\alpha^l(L^k(bc)) d & m\geq k,
\\
a\alpha^n(L^m(bc))T^{k-m+n} T^{*l} d & m< k.
\end{cases}
\end{equation}
In particular,
 $\spane\{aT^n T^{*m}b: a,b\in A, n,m\in \N\}$ is a dense $*$-subalgebra of $C^*(A, T)$. 
Moreover, $\spane\{aT^n T^{*n}b: a,b\in A, n\in \N\}$ is a  $*$-subalgebra of $C^*(A, T)$ and hence its closure
$$
B:=\clsp\{aT^n T^{*n}b: a,b\in A, n\in \N\}
$$
is a  $C^*$-subalgebra of $C^*(A, T)=\clsp\{aT^n T^{*m}b: a,b\in A, n,m\in \N\}$.
\end{lem}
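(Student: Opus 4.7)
The plan is to derive everything from three iterated forms of the defining axioms and then do some bookkeeping. From (A2) we have $Ta=\alpha(a)T$, which by induction gives $T^na=\alpha^n(a)T^n$ for every $n\in\N_0$ and $a\in A$. Taking adjoints (and using that $\alpha$ is $*$-preserving) yields the companion identity $aT^{*n}=T^{*n}\alpha^n(a)$. From the definition $L(a)=T^*aT$ in (A1), a short induction on $n$, using $T^{*n+1}aT^{n+1}=T^*(T^{*n}aT^n)T=T^*L^n(a)T=L^{n+1}(a)$, shows $T^{*n}aT^n=L^n(a)$ for all $n\in\N_0$ and $a\in A$.

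Next I would handle the mixed product $T^{*m}cT^k$ for $c\in A$ by splitting cases. If $m\le k$ write
\begin{equation*}
T^{*m}cT^k=(T^{*m}cT^m)T^{k-m}=L^m(c)T^{k-m},
\end{equation*}
and if $m\ge k$ write
\begin{equation*}
T^{*m}cT^k=T^{*(m-k)}(T^{*k}cT^k)=T^{*(m-k)}L^k(c).
\end{equation*}
Substituting into $(aT^nT^{*m}b)(cT^kT^{*l}d)=aT^nT^{*m}(bc)T^kT^{*l}d$ (using that $b,c\in A$ commute) and then moving the resulting element of $A$ past $T^n$ on the left via $T^n x=\alpha^n(x)T^n$, or past $T^{*l}$ on the right via $xT^{*l}=T^{*l}\alpha^l(x)$, produces exactly the two cases of formula \eqref{eq:commutation relations}.

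For the density statement, the set $\mathcal S:=\spane\{aT^nT^{*m}b:a,b\in A,\ n,m\in\N_0\}$ is closed under adjoints since $(aT^nT^{*m}b)^*=b^*T^mT^{*n}a^*\in \mathcal S$, and it is closed under multiplication by \eqref{eq:commutation relations}; taking $n=1$, $m=0$, $b=1$ shows it contains every generator $aT$ and also $A$ itself (with $n=m=0$), so $\overline{\mathcal S}=C^*(A,T)$. For the subalgebra $B$, I would apply \eqref{eq:commutation relations} with $m=n$ and $l=k$: if $n\ge k$ the product equals $aT^nT^{*n}\bigl(\alpha^k(L^k(bc))d\bigr)$, and if $n<k$ it equals $\bigl(a\alpha^n(L^n(bc))\bigr)T^kT^{*k}d$, both still of the form $eT^jT^{*j}f$ with $e,f\in A$. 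Together with invariance under $*$, this proves that $\spane\{aT^nT^{*n}b:a,b\in A,\ n\in\N_0\}$ is a $*$-subalgebra, and hence its closure $B$ is a $C^*$-subalgebra of $C^*(A,T)$. There is no real obstacle; the only place one has to be careful is matching the exponents $m-k+l$ and $k-m+n$ when conjugating by $\alpha^l$ or $\alpha^n$ in the final substitution.
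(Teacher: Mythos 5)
Your proposal is correct and follows essentially the same route as the paper: the paper simply states that \eqref{eq:commutation relations} "readily follows" from the transfer identity and (A2) and then notes $*$-invariance and multiplicative closure of the two spans, while you spell out the iterated identities $T^n a=\alpha^n(a)T^n$, $aT^{*n}=T^{*n}\alpha^n(a)$, $T^{*n}aT^n=L^n(a)$ and the case split on $T^{*m}(bc)T^k$ that make this explicit. (The parenthetical appeal to commutativity of $b,c$ is unnecessary but harmless.)
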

\begin{proof}  Relation \eqref{eq:commutation relations} readily follows from the transfer identity \eqref{eq:transfer_operator_relation} 
and the commutation relation (A2).
 By the definition 
$\spane\{aT^n T^{*m}b: a,b\in A, n,m\in \N\}$ and $\spane\{aT^n T^{*n}b: a,b\in A, n\in \N\}$ are linear spaces which are invariant under involution. By 
\eqref{eq:commutation relations} these spaces are also closed under multiplication. This implies the assertion. 
\end{proof}
\begin{defn}
We call the $C^*$-algebra $B$ defined above the \emph{core subalgebra} of $C^*(A, T)$.
\end{defn}

\subsection{The crossed product and Cuntz-Pimsner picture of $C^*(A,T)$}
In this subsection we fix a  (unital) transfer operator $L:A\to A$ on a unital commutative $C^*$-algebra $A$. 
We recall the construction of the crossed product $A\rtimes L $ of $A$ by $L$ from \cite{kwa_Exel}. We will assume that the transfer operator $L:A\to A$ is \emph{faithful}. 
By Proposition \ref{prop:faithfulness_of_transfers}, in the present paper, we are interested  only in this case. 
Then there is a unique endomorphism $\alpha:A\to A$ for which $L$ is a transfer operator. 
Also then $A\rtimes L $ coincides with the crossed product originally introduced by Exel in \cite{exel3}, see \cite[Proposition 4.9]{kwa_Exel}.

The \emph{Toeplitz algebra of $(A,L)$} is the universal $C^*$-algebra $\TT(A,L):=C^*(A,t)$ generated by the $C^*$-algebra $A$ and an element $t$ subject to relations
$$
L(a)=t^*at, \qquad a\in A. 
$$
Then $t^*t=L(1)=1$, so $t$ is an isometry, and we have  $ta=\alpha(a)t$, for $a\in A$, by \cite[Proposition 4.3]{kwa_Exel}. 
We define  a \emph{redundancy} to be a pair $(a,k)$ where $ a\in A$ and $ k\in \overline{Att^*A}$ are such that 
$$
abt=kbt, \qquad \textrm{ for all }  b\in A.
$$
Let $\RR$ be the ideal in $\TT(A,L)$ generated by the set $
\{a-k: (a,k) \textrm{ is a redundancy}\}. 
$
\begin{defn}
\emph{The crossed product}    of $A$ by the  faithful transfer operator $L$ is the  quotient $C^*$-algebra  $A\rtimes L:=\TT(A,L)/\RR$. 
Since $A\cap \RR=\{0\}$ we identify $A$ with its image in $A\rtimes L $. The image of $t$ in $A\rtimes L$ is denoted by $s$. 
Hence  $A\rtimes L =C^*(A,s)$ is generated by $A$ and the isometry $s$.
\end{defn}
\begin{lem}\label{lem:epimorphism}
Suppose that $aT\in \B(H)$, $a\in A\subseteq \B(H)$, are well presented abstract weighted shifts associated with a transfer operator $L:A\to A$.  Then there is  a $*$-epimorphism 
$$
\Psi:A\rtimes L\to C^*(A,T) \quad \text{ where 
 $\quad \Psi|_A=id$ and $\Psi(s)=T$.}
$$
\end{lem}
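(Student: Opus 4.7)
The plan is to build $\Psi$ in two steps, first from the Toeplitz algebra and then pass to the quotient defining $A\rtimes L$, using axiom (A3) precisely to kill the redundancy ideal.

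First, I would invoke the universal property of $\TT(A,L)=C^*(A,t)$. Since $A\subseteq \B(H)$ and $T\in \B(H)$ satisfy $L(a)=T^*aT$ for all $a\in A$ (this is the definition of $L$ that comes from axioms (A1)--(A2), see Proposition \ref{prop:faithfulness_of_transfers}), the pair $(A, T)$ is a representation of the Toeplitz relations. Hence the universal property yields a $*$-homomorphism
\[
\widetilde{\Psi}:\TT(A,L)\longrightarrow C^*(A,T)\subseteq \B(H), \qquad \widetilde{\Psi}|_A=\mathrm{id}, \quad \widetilde{\Psi}(t)=T.
\]
The image of $\widetilde{\Psi}$ contains $A$ and $T$, so $\widetilde{\Psi}$ is already surjective onto $C^*(A,T)$.

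Next, I would show that $\widetilde{\Psi}$ annihilates the redundancy ideal $\RR$, so that it descends to the quotient $A\rtimes L=\TT(A,L)/\RR$. It suffices to check that $\widetilde{\Psi}(a-k)=0$ for every generator $a-k$ of $\RR$, i.e., for every redundancy $(a,k)$ with $a\in A$, $k\in \overline{Att^*A}$, and $abt=kbt$ for all $b\in A$. Applying $\widetilde{\Psi}$ to this identity (which makes sense since $\widetilde{\Psi}$ is continuous and $k\in \overline{Att^*A}$ is mapped into $\overline{ATT^*A}\subseteq \B(H)$) gives $abT=\widetilde{\Psi}(k)bT$ for all $b\in A$; equivalently,
\[
\bigl(a-\widetilde{\Psi}(k)\bigr)\,bT\,h=0 \qquad \text{for all } b\in A,\ h\in H.
\]
This is exactly the place where axiom (A3) enters: the vectors $\{bTh:b\in A,\ h\in H\}$ are linearly dense in $H$, so the operator $a-\widetilde{\Psi}(k)\in \B(H)$ vanishes. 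Thus $\widetilde{\Psi}(a-k)=0$.

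It follows that $\widetilde{\Psi}$ kills every generator of $\RR$, hence $\RR\subseteq \ker \widetilde{\Psi}$, and $\widetilde{\Psi}$ factors through a $*$-homomorphism $\Psi:A\rtimes L\to C^*(A,T)$ with $\Psi|_A=\mathrm{id}$ and $\Psi(s)=T$. Surjectivity of $\Psi$ is inherited from that of $\widetilde{\Psi}$. The only real content of the argument is identifying (A3) as the precise nondegeneracy needed to translate the redundancy relations from $\TT(A,L)$ into genuine operator identities in $\B(H)$; everything else is the universal property plus a quotient.
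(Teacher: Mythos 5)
Your proposal is correct and follows essentially the same route as the paper: invoke the universal property of $\TT(A,L)$ to get $\widetilde{\Psi}$ with $\widetilde{\Psi}|_A=\mathrm{id}$, $\widetilde{\Psi}(t)=T$, then use (A3) to show every redundancy $a-k$ is annihilated, so $\RR\subseteq\ker\widetilde{\Psi}$ and $\widetilde{\Psi}$ factors through $A\rtimes L$. No gaps.
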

\begin{proof}
Universality of $\TT(A,L)$ gives a $*$-epimorphism $
\widetilde{\Psi}:\TT(A,L)\to C^*(A,T)$  where  $\widetilde{\Psi}|_A=id$ and $\widetilde{\Psi}(t)=T$.
 Axiom (A3) implies that $\RR\subseteq \ker \widetilde{\Psi}$. Indeed, if $(a,k)$ is a redundancy, then for all 
$b\in A$ and $h\in H$ we have
$
abTh = \widetilde{\Psi}(abt)h=\widetilde{\Psi}(kbt)h=\widetilde{\Psi}(k)bTh,
$
which by (A3) means that $a=\widetilde{\Psi}(k)$, that is $\widetilde{\Psi}(a-k)=0$. Thus $\widetilde{\Psi}$ factors through to 
the desired epimorphism $\Psi$.
\end{proof}
To study  faithfulness of $\Psi$ we use the isomorphism $A\rtimes L\cong \OO_{M_L}$ of the crossed product with the Cuntz-Pimsner algebra of a 
 $C^*$-correspondence $M_L$ defined as follows, see \cite{br}, \cite{kwa_Exel}. 
We  make $A$ into an inner product (right) $A$-module  by putting 
$
m\cdot a :=m\al(a)$ and $\langle m,n \rangle_{L}:=L(m^*n)$,  for all $n,m,a \in A$. 
The inner product is non-degenerate because we assumed that $L$ is  faithful.   
Completing $A$ in the norm $\|n\|_{L}=\sqrt{\|\langle n,n \rangle_{L}\|}$ we get a Hilbert $A$-module $M_{L}$.  
For each $a\in A$ we have an adjointable operator $\phi(a)\in \LL(M_L)$ where 
$
\phi(a) m:= am$, $m\in A\subseteq M_L$. 
Thus $M_{L}$ is a $C^*$-correspondence over $A$ with an injective and nondegenerate left action $\phi:A\to \LL(M_L)$. 
 \begin{thm}\label{thm:gauge_uniqueness} Suppose $L:A\to A$ is a faithful transfer operator.
 The crossed product $A\rtimes L$ is canonically isomorphic to the Cuntz-Pimsner algebra $\OO_{M_L}$.  
If  $aT\in \B(H)$, $a\in A\subseteq \B(H)$, are well presented abstract weighted shifts that  generate the transfer operator $L:A\to A$, then the $*$-epimorphism 
$
\Psi:A\rtimes L\to C^*(A,T)$ in Lemma \ref{lem:epimorphism}  restricts to an isomorphism of core subalgebras
$$
\clsp\{as^n s^{*n}b: a,b\in A, n\in \N\}\cong \clsp\{aT^n T^{*n}b: a,b\in A, n\in \N\}
$$
and the following conditions are equivalent:
\begin{enumerate}
\item\label{it:gauge_uniqueness1} $\Psi$ is an isomorphism: $A\rtimes L\cong C^*(A,T)$;
\item\label{it:gauge_uniqueness2} $C^*(A,T)$ is equipped with a circle gauge-action, i.e. there is a group homomorphism $\gamma:\T\to \Aut (C^*(A, T))$ such that $\gamma_z|_A=id$ and  $\gamma_z(T)= zT$ for $z\in \T$;
\item\label{it:gauge_uniqueness3} There is a conditional expectation $E:C^*(A,T)\to B\subseteq C^*(A,T)$  that annihilates all the operators of the form $T^mT^{*n}$ with $n\neq m$.
 Equivalently, for all finite $F\subseteq \N$ and $n_k,m_k\in \N$, $a_k,b_k\in A$ for $k\in F$ we have  $ \left\|\sum\limits_{k\in F, n_k=m_k} a_kT^{n_k}T^{*m_k}b_k\right\| \leq \left\|\sum_{k\in F} a_kT^{n_k}T^{*m_k}b_k\right\| $ . 
\end{enumerate}
Moreover, there is always a faithful representation $\Psi:A\rtimes L\to B(H)$ such that putting $T:=\Psi(s)$ and identifying $A$ with $\Psi(A)$ the axioms (A1), (A2), (A3) hold;   that is 
the operators $aT$, are well presented abstract weighted shifts associated with $L:A\to A$. 
\end{thm}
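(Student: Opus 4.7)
The identification $A\rtimes L\cong \OO_{M_L}$ is a cited result from \cite{br,kwa_Exel}; I would simply quote it, noting that faithfulness of $L$ is precisely what makes $M_L$ a nondegenerate $C^*$-correspondence with injective left action $\phi:A\to \LL(M_L)$ and that the redundancy relations defining $A\rtimes L$ translate exactly into Katsura's covariance condition on $M_L$.

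For the core-subalgebra isomorphism, I would transfer the question to the $\OO_{M_L}$-side, where the core is the fixed-point algebra of the canonical gauge action $\gamma_z(s)=zs$, $\gamma_z|_A=\mathrm{id}$. Filter it by $B_n:=\clsp\{as^ks^{*k}b:a,b\in A,\,k\leq n\}$, each being a $C^*$-algebra built from compact operators on truncated Fock-type modules $A\oplus M_L\oplus\cdots\oplus M_L^{\otimes n}$. Since $\Psi|_A=\mathrm{id}$ is injective and $\Psi(s)=T$ is an isometry with (A3) providing covariance in the target, an induction on $n$ using the ideal structure $B_{n-1}\triangleleft B_n$ with quotient isomorphic to $\K(M_L^{\otimes n})$ shows $\Psi|_{B_n}$ is injective; taking the inductive limit yields injectivity on all of $B$.

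The equivalence of \ref{it:gauge_uniqueness1}--\ref{it:gauge_uniqueness3} is the standard gauge-invariant uniqueness pattern. \ref{it:gauge_uniqueness1}$\Rightarrow$\ref{it:gauge_uniqueness2} follows by transporting the canonical gauge action through $\Psi$. \ref{it:gauge_uniqueness2}$\Rightarrow$\ref{it:gauge_uniqueness3} comes from averaging, $E(x):=\int_{\T}\gamma_z(x)\,dz$, whose range is exactly the target core by the previous step. For \ref{it:gauge_uniqueness3}$\Rightarrow$\ref{it:gauge_uniqueness1} I would intertwine $E$ with the canonical faithful gauge expectation $E_{A\rtimes L}$ on $A\rtimes L$ (which exists because $A\rtimes L\cong \OO_{M_L}$ always carries the gauge action): both $E\circ\Psi$ and $\Psi\circ E_{A\rtimes L}$ agree on generators $as^ns^{*m}b$, so if $x\in\ker\Psi$ then $E_{A\rtimes L}(x^*x)\in B\cap\ker\Psi$, which vanishes by the core isomorphism, and faithfulness of $E_{A\rtimes L}$ forces $x=0$.

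For the final existence claim, I would invoke \cite{Hirshberg} (and the Cuntz-Pimsner model just established) to produce a faithful $*$-representation of $\OO_{M_L}$ on a Hilbert space sending $s$ to an isometry $T$ and $A$ faithfully to multiplication-type operators; (A1) and (A2) come for free from the Cuntz-Pimsner relations, and (A3) follows because the covariance condition defining $\OO_{M_L}$ amounts to nondegeneracy of $ATH$. The main obstacle, in my view, is the core-subalgebra injectivity step: $M_L$ is typically not finitely generated over $A$, so the inductive argument is not purely algebraic and requires genuine $C^*$-analysis of how $\Psi$ descends to the quotients $B_n/B_{n-1}\cong\K(M_L^{\otimes n})$ and of the compacts on these (generally infinite-dimensional) tensor powers.
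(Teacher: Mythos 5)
Your overall route coincides with the paper's where it matters least (the isomorphism $A\rtimes L\cong\OO_{M_L}$ and the final existence statement are quoted, as in the paper), and your derivation of the equivalence of \ref{it:gauge_uniqueness1}--\ref{it:gauge_uniqueness3} is the standard gauge-invariant-uniqueness argument, which is fine and which the paper simply outsources to \cite[Theorem 6.4]{katsura} and \cite[Lemma 6.2]{CKO}. The genuine gap is in the step your whole argument leans on: injectivity of $\Psi$ on the core. The filtration you describe is structurally wrong. Since $A$ is unital and $A\subseteq B_0\subseteq B_{n-1}$, each $B_{n-1}$ is a \emph{unital} $C^*$-subalgebra of $B_n$, so it can never be a proper ideal of $B_n$; the actual ideal sits at the top level, namely $\clsp\{as^ns^{*n}b\}$ (the image of $\K(M_L^{\otimes n})$), with $B_n=B_{n-1}+\clsp\{as^ns^{*n}b\}$. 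Moreover, in the Cuntz--Pimsner quotient (as opposed to the Toeplitz algebra $\TT(A,L)$) the covariance relation identifies $\phi(a)$ with a compact operator on $M_L$ for every $a$ in Katsura's ideal $\phi^{-1}(\K(M_L))$ --- in the finite-type case this is all of $A$, and then $B_n$ collapses onto the top-level compacts --- so the successive levels overlap and the subquotients are not $\K(M_L^{\otimes n})$ at all. Hence the induction "on $B_{n-1}\lhd B_n$ with quotient $\K(M_L^{\otimes n})$" does not get off the ground as stated; repairing it amounts to redoing Katsura's analysis of the fixed-point algebra, which is precisely why the paper instead cites \cite[Proposition 6.3]{katsura}: every representation of $\OO_{M_L}$ that is injective on $A$ is automatically injective (isometric) on the core. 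Once that is in place, your intertwining argument $E\circ\Psi=\Psi\circ E_{A\rtimes L}$ for \ref{it:gauge_uniqueness3}$\Rightarrow$\ref{it:gauge_uniqueness1} is correct.

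A second, smaller inaccuracy: in the last part, (A3) does \emph{not} follow from the covariance condition defining $\OO_{M_L}$. For a general faithful transfer operator the covariance ideal may be small (even zero, in which case $\OO_{M_L}=\TT(A,L)$ and the Fock-type representation is faithful but fails $\overline{ATH}=H$); covariance yields (A3) only in the finite-type case (Lemma \ref{lem:A3_characterisation_for_finite_type}). The entire point of invoking \cite{Hirshberg}, as the paper does, is that it produces a faithful representation that is in addition \emph{essential}, i.e. $\overline{\Psi(M_L)H}=H$, and it is this essentialness that translates into (A3); (A1) and (A2) do come for free, as you say, since they already hold in $\TT(A,L)$.
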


\begin{proof}
The isomorphism $A\rtimes L\cong \mathcal{O}_{M_L}$ is a known fact, see  
\cite[Theorem 4.7]{kwa_Exel} or \cite[Proposition 3.10]{br}. Under this isomorphism we may identify $M_L$ with the closure of $As$ in $A\rtimes L$.
The core subalgebra of $\mathcal{O}_{M_L}$ which is generated by elements in $M_L^n M_L^{*n}$, $n\in \N$, coincides with the core subalgebra
$\clsp\{as^n s^{*n}b: a,b\in A, n\in \N\}$ of $A\rtimes L=C^*(A,s)$. 
Every representation of $\mathcal{O}_{M_L}\cong A\rtimes L$ which is faithful on $A$ is faithful on the core subalgebra, 
see, for instance, \cite[Proposition 6.3]{katsura}.
This gives the isomorphism of $\Psi$ on the core subalgebra. 
Faithfulness of a representation on the whole algebra  $\mathcal{O}_{M_L}\cong A\rtimes L$
is known to be equivalent to  existence of the canonical circle action or the canonical condition expectation onto the core subalgebra,
 cf. \cite[Theorem 6.4]{katsura}
or  \cite[Lemma 6.2]{CKO}. This translates to  equivalence of conditions \ref{it:gauge_uniqueness1}-\ref{it:gauge_uniqueness3}.

The left action on the $C^*$-correspondence $M_L$ is given by a unital monomorphism. Hence this action  is injective and nondegenerate. Thus the main result of  \cite{Hirshberg}
gives  a faithful representation $\Psi:\mathcal{O}_{M_L}\to \B(H)$ such that $\overline{\Psi(M_L)}H=H$. 
Identifying $\mathcal{O}_{M_L}$ with $A\rtimes  L$, $A$ with $\Psi(A)$, and putting  $T:=\Psi(s)$
the relation $\overline{\Psi(M_L)}H=H$ translates to (A3). Axioms (A1) and (A2) hold  because they hold in $A\rtimes L$ (and in fact in $\TT(A,L)$).
\end{proof}
\begin{cor}\label{cor:rotation_invariance} If the equivalent conditions in Theorem \ref{thm:gauge_uniqueness} hold, then for every $a\in A$,   $\sigma(aT)$ is invariant under rotations around zero.

\end{cor}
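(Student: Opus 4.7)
The plan is to exploit the gauge action directly via spectral invariance under automorphisms. Assume condition \ref{it:gauge_uniqueness2} of Theorem \ref{thm:gauge_uniqueness}, so that we have a group homomorphism $\gamma:\T\to \Aut(C^*(A,T))$ with $\gamma_z|_A=id$ and $\gamma_z(T)=zT$ for every $z\in\T$. Then for any $a\in A$ and $z\in\T$, since $\gamma_z$ fixes $A$ pointwise,
\[
\gamma_z(aT)=\gamma_z(a)\gamma_z(T)=a(zT)=z(aT).
\]

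First I would note that $1\in A\subseteq C^*(A,T)\subseteq \B(H)$, so $C^*(A,T)$ is a unital $C^*$-subalgebra of $\B(H)$ sharing the unit. By spectral permanence for $C^*$-subalgebras (invertibility in $\B(H)$ of a self-adjoint resolvent computation shows the spectrum in any intermediate unital $C^*$-algebra coincides), we have
\[
\sigma_{\B(H)}(aT)=\sigma_{C^*(A,T)}(aT).
\]
Hence it suffices to compute the spectrum inside $C^*(A,T)$. Next, any $*$-automorphism of a unital $C^*$-algebra preserves spectra of elements, so for every $z\in\T$,
\[
\sigma_{C^*(A,T)}(aT)=\sigma_{C^*(A,T)}\!\bigl(\gamma_z(aT)\bigr)=\sigma_{C^*(A,T)}(z\cdot aT)=z\,\sigma_{C^*(A,T)}(aT).
\]
Combining the two displays yields $z\,\sigma(aT)=\sigma(aT)$ for every $z\in\T$, which is precisely rotational invariance around $0$.

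There is no real obstacle here; the only point worth a sentence of justification is spectral permanence (so that we may work inside $C^*(A,T)$ rather than $\B(H)$) and the trivial fact that $\gamma_z(aT)=z\cdot aT$, which follows because $\gamma_z$ is multiplicative and fixes $A$. In particular, no use of axioms (A1)--(A3) beyond what is already absorbed into the hypothesis that the equivalent conditions of Theorem \ref{thm:gauge_uniqueness} hold is required.
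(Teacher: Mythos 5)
Your proposal is correct and follows essentially the same route as the paper: both exploit the gauge action from Theorem \ref{thm:gauge_uniqueness} to conclude that $aT$ and $z\cdot aT$ have the same spectrum (the paper phrases it as $\gamma_{\overline{z}}(aT-\lambda 1)=\overline{z}(aT-z\lambda 1)$, so $\lambda\in\sigma(aT)$ iff $z\lambda\in\sigma(aT)$, which is the same computation). Your explicit mention of spectral permanence is a point the paper leaves implicit (it is invoked once in the introduction), but it is not a different argument.
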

\begin{proof}
Using the action  $\gamma:\T\to \Aut (C^*(A, T))$ we get  for every $\lambda \in \C$ and $z\in\T$  that
$\gamma_{\overline{z}}(aT -\lambda 1)= a\overline{z}T -\lambda 1= \overline{z}(aT -z\lambda 1)$.
Hence $\lambda \in \sigma(aT)$ if and only if $z\lambda \in \sigma(aT)$.
\end{proof}
In the second part of Theorem \ref{thm:gauge_uniqueness}  we applied the main result of \cite{Hirshberg} 
on representations of Cuntz-Pimsner algebras. Applying the main result of \cite{CKO} 
we get that the equivalent conditions in Theorem \ref{thm:gauge_uniqueness} hold automatically when $\varphi$ is topologically free:
\begin{defn}\label{def:topological freeness} 
We say that a continuous map $\varphi:X\to X$ is \emph{topologically free} if  the set of periodic points $\{x\in X:  \varphi^n(x)=x \text{ for some }n>0\}$ has empty interior
in $X$. 
\end{defn}
\begin{thm}\label{thm:isomorphism}
Suppose that $aT\in \B(H)$, $a\in A\subseteq \B(H)$, are well presented abstract weighted shifts. Assume that  the associated dual map $\varphi:X\to X$ is topologically free.  Then the $*$-epimorphism in Lemma \ref{lem:epimorphism} is always an isomorphism $A\rtimes L\cong C^*(A,T)$.
In fact, every representation of $A\rtimes L$ which is injective on $A$ is faithful. 
\end{thm}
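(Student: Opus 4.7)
The plan is to reduce the theorem to the uniqueness theorem for Cuntz--Pimsner algebras of topologically free correspondences proved in \cite{CKO}, via the identification $A\rtimes L \cong \OO_{M_L}$ established in Theorem \ref{thm:gauge_uniqueness}. Once that identification is in place, the ``in fact'' clause — every representation of $A\rtimes L$ injective on $A$ is faithful — immediately implies the first clause, because the epimorphism $\Psi$ of Lemma \ref{lem:epimorphism} satisfies $\Psi|_A = \mathrm{id}_A$, which is injective, and $\Psi$ is already surjective onto $C^*(A,T)$.

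First I would translate the whole problem into the Cuntz--Pimsner picture. By Theorem \ref{thm:gauge_uniqueness} the correspondence $M_L$ has injective and nondegenerate left action $\phi:A\to \LL(M_L)$ given by $\phi(a)m = am$, right action implemented by $\alpha$, and $A\rtimes L \cong \OO_{M_L}$. A representation of $\OO_{M_L}$ injective on the coefficient algebra $A$ corresponds to a pair $(\pi, t)$ where $\pi$ is a faithful representation of $A$ and $t$ is the image of the canonical isometry; in our situation $\pi$ is the inclusion $A\hookrightarrow \B(H)$ and $t = T$.

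Next I would verify that the topological freeness of the dual map $\varphi:X\to X$ matches exactly the aperiodicity / topological freeness condition on $M_L$ required by the main result of \cite{CKO}. In our commutative setting with $A = C(X)$, the $n$-fold tensor product $M_L^{\otimes n}$ encodes the dynamics of $\varphi^n$, and the bimodule structure sees the periodic points of $\varphi^n$ as obstructions. The hypothesis that the set of periodic points of $\varphi$ has empty interior precisely rules out these obstructions for every $n\geq 1$, yielding the aperiodicity hypothesis of \cite{CKO} for $M_L$.

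Finally, the uniqueness theorem of \cite{CKO} then asserts that any representation of $\OO_{M_L} \cong A\rtimes L$ faithful on $A$ is automatically faithful on the whole algebra, which gives both conclusions of the theorem. The main obstacle is the translation step between the concrete dynamical topological freeness of $\varphi$ on $X$ and the abstract aperiodicity property of the correspondence $M_L$ used in \cite{CKO}; this is where a genuine verification must be carried out, and it relies on analyzing the iterated correspondences $M_L^{\otimes n}$ and their compatibility with the endomorphisms $\alpha^n$ dual to $\varphi^n$.
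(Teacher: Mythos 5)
Your proposal is correct and takes essentially the same route as the paper: identify $A\rtimes L$ with the Cuntz--Pimsner algebra $\OO_{M_L}$ (Theorem \ref{thm:gauge_uniqueness}), check that topological freeness of $\varphi$ yields topological freeness of the correspondence $M_L$, and then invoke the uniqueness theorem of \cite{CKO}, which gives the ``in fact'' clause and hence, since $\Psi|_A=\mathrm{id}$ is injective and $\Psi$ is surjective, the isomorphism. The one verification you flag as the main obstacle is exactly the point the paper settles by citing \cite[Example 9.14]{CKO}, so no new argument is needed there.
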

\begin{proof}
As explained in \cite[Example 9.14]{CKO} topological freeness of $\varphi$  implies topological freeness of the $C^*$-correspondence $M_L$ (in the sense of \cite[Definition 7.1]{CKO}). Hence the result follows from \cite[Theorem 1.1]{CKO}.
\end{proof}

\begin{cor}\label{cor:Spectral_consequences}
Retain the assumptions of Theorem \ref{thm:isomorphism}. Then  $\sigma(aT)$, $a\in A$, is invariant under rotations around zero.
If in addition 
$A$ contains no non-zero compact operators, which is automatic  when $X$ has no isolated points,
 then $\sigma_{ess}(aT)=\sigma(aT)$.
\end{cor}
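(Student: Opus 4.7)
The plan is to deduce both conclusions from Theorem \ref{thm:isomorphism}. For the rotation invariance, topological freeness of $\varphi$ gives the isomorphism $\Psi: A\rtimes L \to C^*(A,T)$, and the canonical gauge action on $A\rtimes L$ transports to a strongly continuous action $\gamma:\T\to \Aut(C^*(A,T))$ fixing $A$ pointwise and sending $T$ to $zT$. This verifies condition \ref{it:gauge_uniqueness2} of Theorem \ref{thm:gauge_uniqueness}, and so Corollary \ref{cor:rotation_invariance} immediately yields rotation invariance of $\sigma(aT)$ for every $a\in A$.

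For the essential spectrum equality, my plan is to show that the Calkin quotient $\pi:\B(H)\to \B(H)/\K(H)$ is injective on $C^*(A,T)$; once this is established, $\sigma(aT)=\sigma_{ess}(aT)$ follows because spectra are preserved by injective unital $*$-homomorphisms of $C^*$-algebras. Injectivity of $\pi|_{C^*(A,T)}$ amounts to $J:=C^*(A,T)\cap \K(H)=\{0\}$. Under the hypothesis $A\cap \K(H)=\{0\}$ I would deduce $J=\{0\}$ from the strong form of Theorem \ref{thm:isomorphism}: every representation of $A\rtimes L$ injective on $A$ is faithful. Equivalently, every non-zero closed two-sided ideal of $A\rtimes L\cong C^*(A,T)$ has non-zero intersection with $A$. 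Applied to $J$, this would force $J\cap A\neq\{0\}$ whenever $J\neq\{0\}$, contradicting $J\cap A\subseteq A\cap \K(H)=\{0\}$.

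Finally, the automatic part asserts that $X$ having no isolated points implies $A\cap \K(H)=\{0\}$. Here I would argue by contradiction: suppose $0\neq a\in A\cap \K(H)$; replacing $a$ by $|a|$ I may assume $a\geq 0$. Since $a$ is a non-zero positive compact operator, $\|a\|$ is an isolated point of $\sigma(a)$, so the spectral projection $p:=\chi_{\{\|a\|\}}(a)$ lies in $A$ and has finite-dimensional range. Identifying $A\cong C(X)$, $p=\chi_U$ for some non-empty clopen $U\subseteq X$, and the $*$-homomorphism $A\ni b\mapsto bp\in\B(pH)$ has image canonically isomorphic to $C(U)$ (using clopenness of $U$). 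Since $pH$ is finite-dimensional, $C(U)$ must be finite-dimensional and hence $U$ is finite. Being finite and Hausdorff, $U$ is discrete, and since $U$ is open in $X$ every point of $U$ is isolated in $X$, contradicting the hypothesis. The main obstacle I anticipate is the ideal-theoretic step in the second paragraph; the first and third steps amount to standard $C^*$-algebraic and point-set bookkeeping.
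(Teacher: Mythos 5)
Your proposal is correct and follows essentially the same route as the paper: rotation invariance via the gauge action transported through the isomorphism of Theorem \ref{thm:isomorphism} (i.e.\ Corollary \ref{cor:rotation_invariance}), and the essential-spectrum equality by showing $C^*(A,T)$ embeds in the Calkin algebra using the ``injective on $A$ implies faithful'' part of Theorem \ref{thm:isomorphism}, plus the same compact-positive-element argument for the automatic case. Your ideal-theoretic reformulation and the detailed clopen-set bookkeeping are just expanded versions of the paper's terse argument, not a different method.
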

\begin{proof} The first part is a consequence of Theorem \ref{thm:isomorphism} and Corollary \ref{cor:rotation_invariance}.
For the second part it suffices to show that whenever $A\cap \K(H)=\{0\}$ then $C^*(A,T)$ embeds 
 into the Calkin algebra $\B(H)/\K(H)$  via the quotient map $q:\B(H)\to \B(H)/\K(H)$.
 This  follows from Theorem \ref{thm:isomorphism}.  If $0\neq a\in A\cap \K(H)$,
then $a(X)=\sigma(a)$  has a non-zero isolated point $z_0$ which is an eigenvalue for $a$ with finite multiplicity.
This implies that $a^{-1}(z_0)\subseteq X$ consists of isolated points.
\end{proof}

\subsection{The crossed product and its groupoid picture in the finite type case}
We briefly discuss the case when $L:C(X)\to C(X)$ is a \emph{transfer operator of finite type}.
Then the crossed product $C(X)\rtimes L$ has a groupoid model which has been extensively studied, and 
the result of previous subsection can be recovered from this groupoid picture, cf. \cite{exel_vershik}.  
Topological freeness has an important characterisation that will be crucial in our study of Riesz projectors.
 
Recall that, by Proposition \ref{prop:finite_type_characterisation}, see also Corollary \ref{cor:transfers_of_finite_type},
the associated map $\varphi:X\to X$ is a surjective local homeomorphism, and $L$ is given by a continuous cocycle    
$\varrho:X\to (0,1]$ with strictly positive values.
Let $\{v_i\}_{i=1}^n\subseteq C(X)$ be the partition of unity  subordinated to the open cover $\{U_i\}_{i=1}^n$ of $X$,
 where $\varphi|_{U_i}$ is injective. 
Put
$u_i(x)=\sqrt{\frac{v_i(x)}{\varrho(x)}}$ for $i=1,...,n$.
Then the  \emph{crossed product} $A\rtimes L$ can be defined as the universal $C^*$-algebra generated by a copy of the $C^*$-algebra $A=C(X)$ and 
an isometry $s$ subject to relations
$$
s^*as=L(a), \text{ for all }a\in A,\quad \text{and}\quad \sum_{i=1}^n u_iss^* u_i=1.
$$
This is how Exel's crossed product is described in \cite{exel_vershik}, modulo  \cite[Corollary 7.2]{exel_vershik}. 
If  $aT\in \B(H)$, $a\in A\subseteq \B(H)$, are well presented abstract weighted shifts associated with  $L:C(X)\to C(X)$,
then the above universal description of $A\rtimes L$ and 
 Lemma \ref{lem:A3_characterisation_for_finite_type} give a canonical epimorphism $A\rtimes L \mapsto C^*(A,T)$.
In particular, 
Theorems \ref{thm:gauge_uniqueness} and  \ref{thm:isomorphism}
can be deduced  from \cite[Theorems 4.2 and 10.3]{exel_vershik} in this case. 

There is a standard groupoid $\G$  associated to 
 the local homeomorphism $\varphi:X\to X$. It is often called the \emph{Renault-Deaconu groupoid}, as  for  topological Bernoulli shifts  it was 
 studied by Renault \cite[page 140]{Renault}, and in general it was popularized by Deaconu \cite{Deaconu}. Such groupoids were also considered by  Arzumanian and Vershik \cite{Arzu_Vershik2} and a number of other authors. 
We put
$
\G:= \{(x,n,y)\in X\rtimes \Z\rtimes X: \exists_{k,l\in\N} \,\, n=k-l \textrm{ and }\varphi^k(x)=\varphi^l(y)\}
$
and equip it with algebraic operations given by
$
(x,n,y) (y,m,z):= (x,n+m,z)$,  $(x,n,y)^{-1}:=(y,-n,x).
$
We identify $X$ with the space of units of $\G$, via the map $x\mapsto (x,0,x)$, and then the range and source maps are
$
r(x,n, y) =x$, $s(x, n, y) =y$. 
The topology on $\G$  has a basis consisting of sets of the form
$
Z(U,k,l,V):=\{(x,k-l,y): x\in U,\, y\in V \textrm{ and } \varphi^k(x)=\varphi^l(y)\}
$
where $k,l \in \N$, and  $U,V\subseteq X$ are open sets such that $\varphi^{k}|_U$ and  $\varphi^{l}|_V$ are homeomorphisms with open ranges. 
This makes  $\G$  a locally compact \'etale Hausdorff groupoid. Recall that the $C^*$-algebra $C^*(\G)$ of $\G$  is defined as 
the maximal $C^*$-completion of the $*$-algebra $C_c(\G)$ of continuous and compactly supported functions on $G$ with
the standard convolution multiplication and $*$-operation:
$
(f g)(\gamma)=\sum_{\gamma_1\gamma_2=\gamma} f(\gamma_1)g(\gamma_2)$ and  $f^*(\gamma)=\overline{f(\gamma^{-1})}$ for $\gamma,\gamma_1, \gamma_2\in \G$ and $f,g\in C_c(\G)$.
One can show that we have 
$$
C^*(\G)\cong C(X)\rtimes L
$$ 
where the isomorphism is identity on $C(X)\subseteq C_c(\G)$ and maps the function $v\in C_c(\G)$ given by
$
v(x,1,\varphi(x))={\varrho(\varphi(x))}^{\frac{1}{2}}$   and  $v(x,n,y)=0$ for $n\neq 1$,
to the isometry $s\in C(X)\rtimes L$, cf. \cite[1.3.4]{Anantharaman-Delaroche} and \cite[Theorem 9.1]{exel_vershik}. Also it is well known that
$C^*(\G)$ coincides with the reduced $C^*$-algebra associated to $\G$, 
and thus we have a faithful conditional expectation from  $C(X)\rtimes L$ onto $C(X)$,
see \cite{Renault_Cuntz_like}. 
Using the above isomorphism we characterise when $C(X)$ is a maximal abelian subalgebra of $C(X)\rtimes L$, and hence a Cartan subalgebra  in the sense of \cite{Renault}. 
This slightly extends  a part of the main result of \cite{CS} proved by different methods.

\begin{thm}
\label{thm:Cartan_crossed_products}
Let $L:C(X)\to C(X)$ be a transfer operator of finite type. Then $C(X)$ is a
maximal abelian subalgebra of $C(X)\rtimes L$ if and only if the associated dual map $\varphi:X\to X$ is topologically free. 
\end{thm}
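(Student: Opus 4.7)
The plan is to work in the groupoid picture $C(X)\rtimes L\cong C^*(\G)$ already established in the text, under which $C(X)$ corresponds to $C_0(\G^{(0)})$; the question then becomes whether $C_0(\G^{(0)})$ is maximal abelian in $C^*(\G)$, which by Renault's theorem on Cartan subalgebras is equivalent to \emph{effectiveness} of the Hausdorff \'etale groupoid $\G$, meaning $\Int(\mathrm{Iso}(\G))=\G^{(0)}$. The paper has already verified the two structural facts that feed Renault's theorem: $C^*(\G)=C^*_r(\G)$ and the existence of a faithful conditional expectation $E:C^*(\G)\to C_0(\G^{(0)})$.

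First I would translate topological freeness of $\varphi$ into effectiveness of $\G$. The isotropy at $x$ consists of triples $(x,k-l,x)$ with $\varphi^k(x)=\varphi^l(x)$. A non-unit element $(x,k-l,x)$ with $k>l$ lies in $\Int(\mathrm{Iso}(\G))$ if and only if it has a basic open neighbourhood $Z(U,k,l,V)$ contained in $\mathrm{Iso}(\G)$, equivalently $\varphi^k=\varphi^l$ on some open $W\subseteq X$. If such $W$ exists, then since $\varphi$ is a local homeomorphism (hence open) the set $\varphi^l(W)$ is open, $\varphi^{k-l}$ restricts to the identity on it, and $\mathrm{Per}(\varphi)$ has non-empty interior. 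Conversely, if $\mathrm{Per}(\varphi)$ has non-empty interior, the countable union $\mathrm{Per}(\varphi)=\bigcup_{n\geq 1}\mathrm{Fix}(\varphi^n)$ of closed sets has, by Baire's theorem, some $\mathrm{Fix}(\varphi^n)$ with non-empty interior $W$, and then $Z(W,n,0,W)\subseteq\mathrm{Iso}(\G)\setminus\G^{(0)}$ is a basic open set violating effectiveness. Hence $\varphi$ topologically free $\Leftrightarrow$ $\G$ effective.

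For the ``$\Leftarrow$'' direction, assuming $\varphi$ is topologically free, $\G$ is effective, and Renault's theorem yields that $C_0(\G^{(0)})=C(X)$ is maximal abelian in $C^*_r(\G)=C^*(\G)=C(X)\rtimes L$. For ``$\Rightarrow$'', if $\varphi$ is not topologically free, the first step produces a non-empty open bisection $Z=Z(W,n,0,W)\subseteq\mathrm{Iso}(\G)\setminus\G^{(0)}$; any non-zero $f\in C_c(Z)\subseteq C_c(\G)$ commutes with every $a\in C(X)$, since on $\gamma\in Z$ one has $r(\gamma)=s(\gamma)$ and so $(af)(\gamma)=a(r(\gamma))f(\gamma)=a(s(\gamma))f(\gamma)=(fa)(\gamma)$ by the convolution formula, yet $f\notin C_0(\G^{(0)})=C(X)$. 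Thus $C(X)$ is strictly smaller than its commutant.

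The main technical content lies in the ``$\Leftarrow$'' step, i.e. Renault's theorem: unpacked in our concrete setting, one must show that any $b\in C^*(\G)$ commuting with $C(X)$ with $E(b)=0$ vanishes. The standard argument approximates $b$ by $C_c$-elements supported in open bisections off $\G^{(0)}$ and uses effectiveness together with the faithfulness of $E$ on positives to deduce $\|b\|^2=\|E(b^*b)\|=0$. Since the paper has already secured both the groupoid model and the faithful expectation, the statement reduces cleanly to the dynamical translation of effectiveness above.
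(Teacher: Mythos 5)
Your proposal is correct and follows essentially the same route as the paper: pass to the Renault--Deaconu groupoid model, identify topological freeness of $\varphi$ with effectiveness of $\G$, and conclude by the Renault-type theorem that $C_0(\G^{(0)})=C(X)$ is maximal abelian (Cartan) in $C^*(\G)=C^*_r(\G)$ iff $\G$ is effective, using the faithful conditional expectation. The only difference is that you verify the dynamical translation and the ``only if'' direction by hand where the paper simply cites \cite{Deaconu}, \cite{KwaMeyer0} and \cite[Corollary 7.6]{KwaMeyer} --- the latter chosen because $X$ is not assumed second countable, so Renault's original theorem \cite[Theorem 4.2]{Re} does not literally apply, although the $j$-map argument you sketch for the ``if'' direction works without separability.
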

\begin{proof}
Our notion of topological freeness of $\varphi$ is equivalent to the one used in  \cite{exel_vershik}, \cite{CS} (and to essential principality of $\varphi$ in \cite{Deaconu}).
The latter is known, and easily seen, to be equivalent to effectiveness (as well as to topological freeness and topological principality) of the associated groupoid $\G$, see \cite{Deaconu} or  \cite[Lemma 2.21 and Corollary 2.24]{KwaMeyer0}. 
The inclusion $C(X)\subseteq C^*(\G)$ is a Cartan subalgebra if and only if $\G$ is effective by \cite[Corollary 7.6]{KwaMeyer} (which generalizes \cite[Theorem 4.2]{Re} to non-separable case).
\end{proof}
\section{When Riesz projectors correspond to invariant sets}\label{sec:Riesz_projectors}
Let $A\subseteq \B(H)$ be a unital commutative $C^*$-subalgebra of $\B(H)$. Assume the identification $A=C(X)$. For every open set $U\subseteq X$ we denote by $\mathds{1}_{U}\in \B(H)$ the orthogonal projection 
that supports the ideal $C_0(U)\subseteq C(X)$. That is, $\mathds{1}_{U}$ is the strong limit of a an approximate unit in $C_0(U)$:
$
\mathds{1}_{U}=s\text{-}\lim \mu_{\lambda} 
$
where $\{\mu_\lambda\}_{\lambda}\in C_0(U)$ are functions such that $0\leq \mu_\lambda \leq 1$. 
One of the aims of this section is to prove the following:
\begin{prop}\label{prop:Riesz_projector} Suppose that  $aT$, $a\in A\cong C(X)$,  are well presented abstract weighted shifts,  and that the dual map $\varphi:X\to X$ is topologically free, or more generally that $C^*(A,T)$ is canonically isomorphic to $A\rtimes L$ for  the associated transfer operator $L:A\to A$. 
Assume that $a\in A=C(X)$ 
 is invertible  or more generally  $\mathds{1}_{X\setminus a^{-1}(0)}=1$.
Then every Riesz projector corresponding to a clopen part of $\sigma(aT)$
 belongs to $A=C(X)$ and  $P=\mathds{1}_{X_0}\in C(X)$  where ${X_0}\subseteq X$ is a clopen set with  $\varphi^{-1}({X_0})={X_0}$.

\end{prop}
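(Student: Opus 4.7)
The plan is to express the Riesz projector as a contour integral $P = \frac{1}{2\pi i}\oint_\Gamma(z-aT)^{-1}\,dz$ around the clopen part $\sigma_0\subseteq\sigma(aT)$. Each resolvent $(z-aT)^{-1}$ for $z$ outside $\sigma(aT)$ belongs to $C^*(A,T)$ as a norm-limit of Neumann series in $aT$, so $P\in C^*(A,T)$. Under topological freeness of $\varphi$ (or the equivalent hypothesis that $C^*(A,T)\cong A\rtimes L$), Theorem \ref{thm:isomorphism} then places $P$ inside $A\rtimes L$.

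The next step will be to show $P$ lies in the core subalgebra $B=\clsp\{as^n s^{*n}b: a,b\in A, n\in\N\}$. I would first establish the topological fact that every clopen part of a rotation-invariant compact subset of $\C$ is itself rotation-invariant: for $0\neq w\in\sigma(aT)$ the entire circle $\T w$ lies in $\sigma(aT)$ by Corollary \ref{cor:Spectral_consequences}, and being connected it must lie entirely inside $\sigma_0$ or be disjoint from it, so $\sigma_0\setminus\{0\}$ is rotation-invariant. A routine change-of-variables in the contour integral then yields $\gamma_z(P)=P$ for the gauge action of Theorem \ref{thm:gauge_uniqueness}, placing $P$ in the gauge-fixed subalgebra, which is precisely $B$.

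The hard step will be the passage from $P\in B$ to $P\in A$. My plan is to prove $P\in A'$ (the commutant of $A$ in $\B(H)$) and then invoke that $A\subseteq C^*(A,T)$ is maximal abelian under topological freeness (Theorem \ref{thm:Cartan_crossed_products} in the finite-type case; the general case will require extending the MASA argument via the Cuntz-Pimsner model of Theorem \ref{thm:gauge_uniqueness}). To establish $[P,b]=0$ for all $b\in A$, I would exploit $[P,aT]=0$ together with the identity $[b,aT]=a(b-\alpha(b))T$ to propagate the commutation from the single element $aT$ to all of $A$ on the dense subspace $ATH=H$ given by (A3). The role of the hypothesis $\mathds{1}_{X\setminus a^{-1}(0)}=1$ is crucial here: it prevents a large zero set of $a$ from absorbing the commutator, an obstruction that Example \ref{ex:contrexample} shows is genuine. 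This is the main obstacle — in contrast with the classical unitary case of Theorem \ref{thm:lebedev}, where $aT$ normalizes $A$ via $(aT)b(aT)^{-1}=\alpha(b)\in A$ and spectral projections inherit this normalization, in our irreversible setting $\alpha$ is only an endomorphism, so the commutation must be propagated by a more delicate approximation argument.

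Finally, once $P=\mathds{1}_{X_0}$ is known to lie in $A$, the $\varphi$-invariance of $X_0$ follows quickly: $[\mathds{1}_{X_0},aT]=0$ together with $Ta=\alpha(a)T$ gives $\mathds{1}_{X_0}\, aT = a\,\alpha(\mathds{1}_{X_0})\,T = a\,\mathds{1}_{\varphi^{-1}(X_0)}\,T$, so $a(\mathds{1}_{X_0}-\mathds{1}_{\varphi^{-1}(X_0)})T=0$. By Proposition \ref{prop:faithfulness_of_transfers} the transfer operator $L$ is faithful, hence $b\mapsto bT$ is injective on $A$, so $a(\mathds{1}_{X_0}-\mathds{1}_{\varphi^{-1}(X_0)})=0$ in $C(X)$. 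The hypothesis $\mathds{1}_{X\setminus a^{-1}(0)}=1$ ensures $\{a\neq 0\}$ separates points of $A$, which combined with the clopenness of $X_0$ and $\varphi^{-1}(X_0)$ forces $X_0=\varphi^{-1}(X_0)$.
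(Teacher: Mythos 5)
Your outline agrees with the paper up to the point where $P$ lands in the core subalgebra $B$ via gauge-invariance of the contour integral; that step is exactly the paper's. But the passage from there to $P\in A$ is where your plan has a genuine gap. You propose to show $P\in A'$ and then conclude $P\in A$ because $A$ is maximal abelian in $C^*(A,T)$. The paper establishes that maximal abelianness (Theorem \ref{thm:Cartan_crossed_products}) only when the transfer operator is of \emph{finite type}, via the Renault--Deaconu groupoid model, and indeed that is precisely how the separate finite-type statement, Proposition \ref{prop:Cartan_Riesz_Projections}, is proved. Proposition \ref{prop:Riesz_projector} makes no finite-type assumption, and under its weaker alternative hypothesis (only $C^*(A,T)\cong A\rtimes L$, no topological freeness) one cannot expect $A$ to be maximal abelian at all; so ``extending the MASA argument via the Cuntz--Pimsner model'' is not a step you can carry out in the stated generality, and it is not what the paper does.

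The paper's actual mechanism is different and this is also where your reading of the hypothesis $\mathds{1}_{X\setminus a^{-1}(0)}=1$ goes astray: $P\in A'$ holds with no assumption on the zero set of $a$ (Lemma \ref{lem:Riesz_projector}\ref{enu:Riesz_projection1}, proved from the dynamical description $PH=\{h:\,(aT)^nh\to 0\}$ and invariance of $\ker P$ under invertible elements of $A$), whereas the hypothesis is used to show that $T$ itself commutes with $P$ (Lemma \ref{lem:Riesz_projector}\ref{enu:Riesz_projection3}: $TH_1\subseteq H_1$ always, and $\mathds{1}_{X\setminus a^{-1}(0)}TH_2\subseteq H_2$). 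Once $TP=PT$, the paper deduces $P\in A$ from $P\in B$ without any Cartan/MASA input: approximate $P$ within $\varepsilon$ by $\sum_{k=0}^n a_kT^kT^{*k}b_k$, apply $T^*(\cdot)T$ and use $T^*PT=P$ to obtain an $\varepsilon$-approximation by a sum of strictly lower degree, and iterate down to an element of $A$. Your final step (deriving $\varphi^{-1}(X_0)=X_0$ from $a(\mathds{1}_{X_0}-\mathds{1}_{\varphi^{-1}(X_0)})T=0$, faithfulness of $L$, and density of $\{a\neq 0\}$) is fine and is a legitimate alternative to the paper's use of $L(P)=P$ and $L(1-P)=1-P$, but it only becomes available after $P\in A$ has been obtained, so the missing middle step is the real obstruction.
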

 When $T$ is unitary the above assertion
holds for every (not necessarily invertible) $a\in A$, see \cite[Theorem 7.2]{Anton_Lebed}.
But when $T$ is not invertible this assertion fails without the  assumption $\mathds{1}_{X\setminus a^{-1}(0)}=1$:
\begin{ex}\label{ex:contrexample}
Let $V=\{v_n:n\in \Z\}\sqcup \{w_{-n}:n\in \N\}$ be a set indexed by $\Z\sqcup \N$. 
Let $\Phi:V\to V$ be given by 
$
\Phi(v_n)=v_{n+1}$ and $\Phi(w_n)=\begin{cases} 
w_{n+1}  & n<-1
\\
v_0 & n=-1
\end{cases}
$.
The weighted composition operators  $(aT_{\Phi}h)(v):= a(v)h(\Phi(v))$, $a\in \ell^{\infty}(V)$, acting on $H=\ell^2(V)$ are bounded (these are weighted shifts on a directed tree \cite{Stochel}). 
They are also
 well  presented (abstract) weighted shifts
$aT$, $a\in A:=\ell^{\infty}(V)$, where $(Th)(v):=\sqrt{|\Phi^{-1}(v)|}h(\Phi(v))$, cf. Corollary \ref{cor:weighted_shifts_discrete}.
The associated dual map is topologically free, cf. 
Lemma \ref{lem:characterisation_topol_freeness} below.  Taking $a\in A$ such that $a(v_n):=2$ for $n\in \Z$,
 $a(w_{-n})=1/2$ for $n>1$ and $a(w_{-1})=0$, we find that 
$$
aT_{\Phi}=2U_\Z\oplus \frac{1}{2} T_{\N}
$$ where $U_\Z$ and $T_{\N}$  are  the classical two-sided shift and one-sided shift, respectively. Hence 
$\sigma(aT_{\Phi})= \{z\in \C: |z|\leq \frac{1}{2} \text{ or }|z|=2\}$ does not intersect $S^1$. The corresponding Riesz projector
$P$  is the operator of multiplication by $\mathds{1}_{\{w_{-n}: n>1\}}$. Hence $P\in A$. But the set $\{w_{-n}: n>1\}$ is not $\Phi$-invariant
 and therefore the corresponding set $U\subseteq X$ is not invariant under the  map $\varphi$ induced on the spectrum of $A$.
\end{ex}
Retain the assumptions of Proposition \ref{prop:Riesz_projector}.
In view of Corollary \ref{cor:rotation_invariance}, 
$\sigma(aT)$ is invariant under rotations around zero. Therefore every Riesz projector is of the form $P_1$ or $P_2-P_1$ where 
$P_1$, $P_2$ are Riesz projectors corresponding to  parts of $\sigma(aT)$ lying in disks centered at zero. 
Thus the analysis in this section boils down to considering the case when $aT$ is \emph{hyperbolic}, i.e. $\sigma(aT)\cap S^1=\emptyset$, and $P$ is the Riesz projector
corresponding to the part of $\sigma(aT)$ lying inside the disk $\{z\in \C:|z|<1\}$.

For any hyperbolic operator $b\in \B(H)$  
the formula
$$ 
P:=\frac{1}{2\pi i}\int_{S^1}(\lambda-b)^{-1}d\lambda \in \B(H),
$$
defines a Riesz projector,  cf. \cite{Riesz}.
The spaces  $H_1=PH$ and  $H_2=\ker P$ are complementary, but might not be orthogonal. They are preserved by $b$ and 
$
\sigma(b|_{H_1})\subseteq\{z\in\mathbb{C}: |z|<1\}$ and $ 
\sigma(b|_{H_2})\subseteq\{z\in\mathbb{C}: |z|>1\}.
$
Hence there is $n_0\in\N$ and 
$\varepsilon>0$ such that
\begin{equation}\label{a1}
\|b^n h_1\| < (1-\varepsilon)^n\|h_1\|\qquad \text{for all } h_1\in H_1, \,\,n>n_0,
\end{equation}
\begin{equation}\label{a2}
\|b^n h_2\| > (1+\varepsilon)^n\|h_2\|\qquad  \text{for all } h_2\in H_2, \,\,n>n_0.
\end{equation}
Every $h \in H$ has a unique decomposition $h=h_1+h_2$ where
$h_i\in H_i$, $i=1,2$. Since $\|b^n h\|=\|b^n(h_1+h_2)\| \geqslant \|b^n h_2\|-\|b^n
h_1\|,$ we see that \eqref{a1}, \eqref{a2} imply that
either $ \lim_{n\rightarrow\infty} \|b^n h\|=\infty$
or $ \lim_{n\rightarrow\infty} \|b^n h\|=0$ depending on whether  $h_2\neq 0$ or $h_2=0$.
Accordingly, 
\begin{equation}\label{a3}
PH= \{h\in  H:\lim_{n\rightarrow\infty}b^n h=0\}.
\end{equation}


\begin{lem}\label{lem:Riesz_projector} 
Assume conditions (A1), (A2) and that $a\in A \cong C(X)$ is such that $\sigma(aT)\cap S^1=\emptyset$. Let
$ P=\frac{1}{2\pi i}\int_{S^1}(\lambda-aT)^{-1}d\lambda
$ and put $H_1=PH$ and $H_2=\ker P=(1-P)H$. Then
\begin{enumerate}
\item \label{enu:Riesz_projection1} $P\in A'$, i.e. $P$  commutes with elements of $A=C(X)$;
\item\label{enu:Riesz_projection1.5}  $|a|T$ commutes with $P$;
\item\label{enu:Riesz_projection2}  $a|_{H_2}:H_2\to H_2$ is invertible;
\item\label{enu:Riesz_projection3}  $TH_1\subseteq H_1$ and $\mathds{1}_{X\setminus a^{-1}(0)}\,TH_2\subseteq H_2$.
In particular,  $P$ commutes with  $\mathds{1}_{X\setminus a^{-1}(0)}\,T$. Hence if $\mathds{1}_{X\setminus a^{-1}(0)}=1$, then $T$ commutes with $P$.
\end{enumerate}

\end{lem}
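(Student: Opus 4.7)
The plan is to establish the four parts in the order (i), (ii), then (iii)-(iv) jointly, exploiting the (generally non-orthogonal) block decomposition $H=H_1\oplus H_2$ and the characterization $H_1=\{h:(aT)^n h\to 0\}$.

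For (i), the basic computation is the identity
\[
(aT)^n(bh) \;=\; a\alpha(a)\cdots\alpha^{n-1}(a)\,T^n(bh) \;=\; \alpha^n(b)\,(aT)^n h \qquad (b\in A,\ h\in H),
\]
which comes from commutativity of $A$ and the iterated intertwiner $T^n c=\alpha^n(c)T^n$. Together with $\|\alpha^n(b)\|\leq\|b\|$ this directly gives $bH_1\subseteq H_1$. The inclusion $bH_2\subseteq H_2$ is subtler: applying $P$ to the identity and using $aT\cdot P=P\cdot aT$ yields $(aT)^n(Pbh)=P\alpha^n(b)(aT)^n h$; a careful combination of the spectral gap (decay of $(aT)^n$ on $H_1$ versus geometric growth on $H_2$) with the already-proved $A$-invariance of $H_1$ applied to $\alpha^n(b)$ forces $Pbh=0$, so $bh\in H_2$. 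This establishes $[b,P]=0$ for every $b\in A$.

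For (ii), since $P\in A'$ it commutes with the abelian bicommutant $A''\subseteq\B(H)$, in particular with the partial isometry $u\in A''$ coming from the Borel polar decomposition $a=u|a|$. Letting $p:=\mathds{1}_{X\setminus a^{-1}(0)}$ be the support projection of $|a|$, one has $u^*a=u^*u|a|=p|a|=|a|$, hence $|a|T=u^*(aT)$. For $h\in H_i$, $aTh\in H_i$ by the Riesz-projector property, and then $|a|Th=u^*(aTh)\in u^*H_i\subseteq H_i$, giving $[\,|a|T,\,P\,]=0$.

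Parts (iii) and (iv) are the technical heart, with the key step---and the main obstacle---being the injectivity of $a|_{H_2}$. Suppose $h\in H_2$ satisfies $ah=0$; invertibility of $aT|_{H_2}$ gives $k\in H_2$ with $aTk=h$, whence $a^2 Tk=ah=0$. Since $a$ acts by multiplication in the representation of $A$, $\ker(a^2)=\ker(a)=(1-p)H$, so $pTk=0$; combined with $a(1-p)=0$ this yields $aTk=apTk+a(1-p)Tk=0$, forcing $h=0$. Hence $a|_{H_2}$ is injective, equivalently $pH_2=H_2$. For $pTH_2\subseteq H_2$ (the second half of (iv)), decompose $pTh=w_1+w_2$ with $w_i\in H_i$; from $aTh=apTh\in H_2$ one extracts $a_1 w_1=0$, so $w_1\in pH_1\cap(1-p)H_1=\{0\}$, giving $pTh=w_2\in H_2$. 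This enables the factorization $aT|_{H_2}=a|_{H_2}\circ(pT|_{H_2})$ as maps $H_2\to H_2$, and surjectivity of the composite transfers to $a|_{H_2}$, completing (iii). Finally, $TH_1\subseteq H_1$ follows by block-decomposing $T$ with respect to $H_1\oplus H_2$: block-diagonality of both $a$ (from (i)) and $aT$ (from the Riesz-projector property) gives $a_2 T_{21}=0$, and invertibility of $a_2=a|_{H_2}$ yields $T_{21}=0$.
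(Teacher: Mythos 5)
The gap is in part (i), which is the load-bearing step of the whole lemma. Your argument for $bH_1\subseteq H_1$ is exactly the paper's, but for the other half, $bH_2\subseteq H_2$ (equivalently $Pb(1-P)=0$), you offer no actual proof: the displayed identity $(aT)^n(Pbh)=P\alpha^n(b)(aT)^nh$ is just the single vector $P(aT)^nbh$ written two ways, its left-hand side tends to $0$ automatically because $Pbh\in H_1$, and the right-hand side admits no useful lower bound -- the geometric growth of $(aT)^nh$ for $h\in H_2$ can perfectly well be annihilated by $P\alpha^n(b)$, so nothing in this identity ``forces $Pbh=0$.'' Note that invariance of $\ker P$ genuinely does not follow from invariance of $PH$ for a non-orthogonal decomposition, so this step needs its own idea. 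The paper supplies one that is absent from your write-up: it first treats \emph{invertible} $b\in A$, exploiting that then $b^{-1}$ also leaves $H_1$ invariant and arguing on the decomposition $bh=h_1+h_2$ that $h_1=0$, and then passes to general $b$ using that every element of a unital $C^*$-algebra is a linear combination of invertible elements. Without the reduction to invertibles (or some substitute), your part (i) is an assertion, and since (ii)--(iv) all invoke (i) (e.g.\ block-diagonality of $a$, commutation of $P$ with $A''$), the whole proposal rests on this unproved step.

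The remaining parts are correct once (i) is granted, and in places take a genuinely different and shorter route than the paper: you prove (ii) via the polar-decomposition partial isometry $u\in A''$ and the identity $A'=A'''$ instead of the paper's Tietze approximation of the phase by functions $a_k$; you get injectivity of $a|_{H_2}$ from $\ker a=(1-p)H$ and $\operatorname{ran}a\subseteq pH$ with $p=\mathds{1}_{X\setminus a^{-1}(0)}$, rather than the paper's Urysohn-function argument; and you obtain $\mathds{1}_{X\setminus a^{-1}(0)}TH_2\subseteq H_2$ in one line from the commutation of $P$ with $p\in A''$, where the paper again approximates. Your factorization $aT|_{H_2}=a|_{H_2}\circ(pT|_{H_2})$ for surjectivity and the block-matrix derivation of $TH_1\subseteq H_1$ from invertibility of $a|_{H_2}$ are sound and match the paper's logic in substance. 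So the task that remains is solely to supply a real proof that $\ker P$ is $A$-invariant.
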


\begin{proof} \ref{enu:Riesz_projection1}. By \eqref{a3},  $H_1= \{h\in  H:\lim_{n\rightarrow\infty}(aT)^n h=0\}$. For any 
$b\in A$ and $h\in
H_1$ 
$$\|(aT)^n b h\|=\|\alpha^n(b)(aT)^n h\|\leqslant \|b\|\|(aT)^n
h\|\, \mathop{\longrightarrow}_{n\rightarrow\infty}\,0.
$$
Thus $bH_1\subseteq H_1$ for all $b\in A$. Assume that $b\in A$ is invertible. For any $h\in H_2$ we have $bh=h_1 + h_2$ 
where $h_i\in H_i$, $i=1,2$. Thus $h=b^{-1}h_1+h_2$. Since $b^{-1}h_1 \in H_1$ and $h-h_2\in H_2$, we must have $h_1=0$.
Hence $bH_2\subseteq H_2$ for all invertible $b\in A$. This implies that $bH_2\subseteq H_2$ for all  $b\in A$, 
as every element in $A$ is a sum of four invertible elements. This proves that $P\in A'$.
 
\ref{enu:Riesz_projection1.5}.  Using the Tietze theorem we may find functions $a_k\in
 C(X)$, $k=1,2,...$, such that:
$|a_k(x)|\leqslant 1$  and $a_k(x)=\frac{|a(x)|}{a(x)}$  when $|a(x)|\geqslant \frac{1}{k}.
$
Then $\lim_{k\rightarrow\infty}a_k a= |a|$. Since $P$ commutes with $aT$
and $a_k\in C(X)$ we get
$$
(|a|T) P = \lim_{k\rightarrow\infty}a_k aT P=P \lim_{k\rightarrow\infty}a_k aT = P (|a|T).
$$

\ref{enu:Riesz_projection2}. Using that $aT:H_2\to H_2$ is surjective and $a$ commutes with $(1-P)$ we get
$$ 
 H_2=aTH_2=aT (1-P)H= (1-P)aT H=a(1-P)T H \subseteq aH_2.
$$ 
Hence $a:H_2\to H_2$ is surjective. To see it is injective, 
it suffices to show that $|a|:H_2\to H_2$ is injective, 
because $a^*a=|a|^2$. Suppose that $h\in H_2$ is such that $|a|h=0$. 
By the Urysohn lemma we may find functions  $\varphi_k\in C(X)$,  $k=2,3,...$, such that
$$
0\leqslant \varphi_k\leqslant 1,\qquad \varphi_k(x)=
\begin{cases}
 0, & \text{when}\,\, |a(x)|\leqslant\frac{1}{k}, \\
1,& \text{when}\,\, |a(x)|\geqslant\frac{1}{k-1}.
\end{cases}
$$
Let us put 
$
|a|_k(x):=|a(x)|$, when  $|a(x)|>\frac{1}{k}$,
and $
|a|_k(x):=\frac{1}{k}$, when  $|a(x)|\leqslant \frac{1}{k}$.
Then   $|a|_k\in C(X)$ are invertible elements in $C(X)$ such that $|a|_k\varphi_k=|a|\varphi_k$.  
Thus
$
|a|_k(\varphi_k h)=\varphi_k(|a| h)=0
$. This implies that  $\varphi_k h=0$, for all  $k=2,3,...\,$. 
Since the restriction of $aT$ to $H_2$ is invertible  there is $h_2\in H_2$ such that $aT h_2= h$. From the construction of $\varphi_k$ we have
$\lim_{k\rightarrow\infty}\varphi_k a= a.$
Accordingly, we get
$$
 h=aT h_2=\lim_{k\rightarrow\infty}  \varphi_k aT h_2=\lim_{k\rightarrow\infty} \varphi_k h=0.
$$

\ref{enu:Riesz_projection3}. Let $h\in H_1$ and let $Th=h_1+h_2$, where  $h_i\in H_i$, $i=1,2$. Then $(aT)h=ah_1+ah_2\subseteq H_1$ and hence  $ah_2=0$.
By \ref{enu:Riesz_projection2}, $a$ is invertible on $H_2$ and thus $h_2=0$. This  proves that 
$
TH_1\subseteq H_1.
$

To prove that $1_{X\setminus a^{-1}(0)}TH_2\subseteq H_2$, we need to show that $uTH_2\subseteq H_2$ for any $u\in C_0(X\setminus a^{-1}(0))\subseteq C(X)$.  However, for every $\varepsilon >0 $ the open set $\{x\in X: |u(x)|<\varepsilon\}$ contains the compact set
$a^{-1}(0)=\bigcap_{n\in \N}\{x \in X: |a(x)|<\frac{1}{k}\}$ and hence it contains all the sets $\{x \in X: |a(x)|<\frac{1}{k}\}$
for sufficiently large $k$. Thus  if $\varphi_k$ are as in the proof of item \ref{enu:Riesz_projection2}, 
then $\|\varphi_k u-u\|< \varepsilon$ for sufficiently large $k$. That is,  $\varphi_ku$ converges to $u$. 
Therefore  the proof reduces to showing
that $\varphi_kTH_2\subseteq H_2$ for sufficiently large $k$.
However, as in item \ref{enu:Riesz_projection1.5}
 we may find functions  $a_k\in
 C(X)$, for $k=1,2,...$, such that
   $a_k(x)=1/a(x)$  when 
	$|a(x)|\geqslant \frac{1}{k}.
$
Then $a_k a \varphi_k= \varphi_k$ and thus
$
\varphi_kTH_2= a_k a \varphi_kTH_2= a_k\varphi_k a TH_2\subseteq H_2.
$
\end{proof}
Combining Lemma  \ref{lem:Riesz_projector}\ref{enu:Riesz_projection1} and Theorem \ref{thm:Cartan_crossed_products} 
we prove a
 version of Proposition \ref{prop:Riesz_projector}
in the finite type case that works for arbitrary $a\in A$ but gives only forward invariant sets:
\begin{prop}\label{prop:Cartan_Riesz_Projections}
 Suppose that  $aT$, $a\in A\cong C(X)$,  are well presented abstract weighted shifts. Assume the associated 
transfer operator is of finite type and the dual map $\varphi:X\to X$ is topologically free. 
Every Riesz projector $P$ associated to $\sigma(aT)$, $a\in A$,  belongs to $A$ and hence corresponds to a clopen subset of $X$.
Moreover, if the corresponding part of $\sigma(aT)$ does not contain zero, then $P=\mathds{1}_{X_0}\in C(X)$  where $X_0\subseteq X$ is such that $\varphi(X_0)=X_0$, 
$\varphi:X_0\to X_0$ is a homeomorphism and $a$ does not vanish on $X_0$. 
\end{prop}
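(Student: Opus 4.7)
The plan is to first establish $P\in A$ for every Riesz projector $P$ associated with a clopen part $\sigma_1\subseteq\sigma(aT)$, and then, under the additional assumption $0\notin\sigma_1$, to extract the dynamical structure of the clopen set $X_0\subseteq X$ with $P=\mathds{1}_{X_0}$.

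For the inclusion $P\in A$, the holomorphic functional calculus places $P\in C^*(aT)\subseteq C^*(A,T)$. Replacing $P$ by $1-P$ if necessary, we may assume that $0\notin\sigma_1$. Corollary \ref{cor:rotation_invariance} shows that $\sigma(aT)$ is rotation-invariant, so we can isolate $\sigma_1$ between two circles $\{|z|=c\}$ and $\{|z|=C\}$ both sitting in the resolvent set and write $P=P_{<C}-P_{<c}$, where $P_{<r}=\frac{1}{2\pi i}\int_{|\lambda|=r}(\lambda-aT)^{-1}d\lambda$ is the Riesz projector for the portion of $\sigma(aT)$ inside $\{|z|<r\}$. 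The rescaling $aT\mapsto aT/r$ puts each $P_{<r}$ into the setting of Lemma \ref{lem:Riesz_projector}, whose part (i) gives $P_{<r}\in A'$, so $P\in A'\cap C^*(A,T)$. Topological freeness of $\varphi$ together with Theorem \ref{thm:isomorphism} identifies $C^*(A,T)\cong A\rtimes L$, and Theorem \ref{thm:Cartan_crossed_products} (where the finite-type hypothesis enters) identifies $A$ as a maximal abelian subalgebra of $A\rtimes L$. Therefore $P\in A$, that is, $P=\mathds{1}_{X_0}$ for some clopen $X_0\subseteq X$.

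Now assume $0\notin\sigma_1$, so $(aT)|_{PH}$ is invertible. The proof of Lemma \ref{lem:Riesz_projector}(iii), with $P$ playing the role of $1-P$ there, uses only that $a$ commutes with the projection and that $aT$ is invertible on the corresponding subspace; it therefore yields that $a|_{PH}$ is invertible in $B(PH)$, and by faithfulness of the action of $\mathds{1}_{X_0}A\cong C(X_0)$ on $PH$ this is equivalent to $a$ being nowhere zero on $X_0$. Consequently $T|_{PH}=(a|_{PH})^{-1}(aT)|_{PH}$ is a bounded bijection of $PH$, and being the restriction of an isometry it is a unitary on $PH$. This forces $T^*$ to preserve $PH$ as well, so $P$ commutes with both $T$ and $T^*$. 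Combining $TP=PT$ with the intertwining $T\mathds{1}_{X_0}=\mathds{1}_{\varphi^{-1}(X_0)}T$ of axiom (A2) gives $(\mathds{1}_{\varphi^{-1}(X_0)}-\mathds{1}_{X_0})T=0$; applying $T^*$ on the left and invoking the faithfulness of $L$ from Proposition \ref{prop:faithfulness_of_transfers} yields $\varphi^{-1}(X_0)=X_0$, and surjectivity of $\varphi$ then forces $\varphi(X_0)=X_0$. Finally, restricting the identity $TaT^*=\alpha(a)TT^*$ to $PH$, where $TT^*$ acts as $\mathds{1}$, shows that the unitary $T|_{PH}$ implements $\alpha|_{X_0}$ on $C(X_0)$; symmetrically $T|_{PH}^*$ implements $L|_{X_0}$. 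Being mutually inverse unitaries, they exhibit $\alpha|_{X_0}$ as an automorphism of $C(X_0)$ with inverse $L|_{X_0}$, so $\varphi|_{X_0}$ is a homeomorphism.

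The delicate step is the first one: proving $P\in A$ requires orchestrating three ingredients — the rotation-invariance reduction together with Lemma \ref{lem:Riesz_projector}(i) to place $P$ in the commutant $A'$, the identification $C^*(A,T)\cong A\rtimes L$ from Theorem \ref{thm:isomorphism}, and the Cartan identification from Theorem \ref{thm:Cartan_crossed_products} — with the finite-type hypothesis needed precisely for the last. Example \ref{ex:contrexample} shows that the hypothesis $0\notin\sigma_1$ in the second assertion cannot be dropped: there $P\in A$ but the associated clopen set is not even forward $\varphi$-invariant.
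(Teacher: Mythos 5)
Your first step is sound and is essentially the paper's argument: holomorphic functional calculus puts $P$ in $C^*(A,T)$, Lemma \ref{lem:Riesz_projector}(i) (after rescaling) puts the disc projectors $P_{<r}$ in $A'$, and Theorems \ref{thm:isomorphism} and \ref{thm:Cartan_crossed_products} make $A$ maximal abelian in $C^*(A,T)\cong A\rtimes L$, whence $P\in A$. One small imprecision: a clopen $\sigma_1$ with $0\notin\sigma_1$ is rotation invariant but need not lie between two circles in the resolvent set (it may consist of several radial blocks separated by pieces of $\sigma(aT)$); since disjoint compact sets of radii can interleave only finitely often, $\sigma_1$ is a finite disjoint union of such blocks and $P$ is a finite sum of differences $P_{<C}-P_{<c}$, so the conclusion $P\in A'$ survives, but the reduction should be stated this way.

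The second half contains a genuine gap. You write $T|_{PH}=(a|_{PH})^{-1}(aT)|_{PH}$ and treat it as a unitary on $PH$; this presupposes $T(PH)\subseteq PH$, which is never proved and is false in general. What your formula actually computes is the compression $PT|_{PH}=\mathds{1}_{X_0}T|_{PH}$, which is an invertible operator on $PH$ but in general not an isometry: its defect is $(\mathds{1}_{X_0}T)^*(\mathds{1}_{X_0}T)|_{PH}=\mathds{1}_{X_0}L(\mathds{1}_{X_0})|_{PH}$, which need not be $\mathds{1}_{X_0}$. Example \ref{ex:contrexample} satisfies all hypotheses of the proposition (finite type, topologically free, $\sigma_1=\{z:|z|=2\}\not\ni 0$), and there $X_0$ corresponds to $\{v_n:n\in\Z\}$ while $Te_{v_0}$ has a nonzero component along $e_{w_{-1}}\notin PH$; so $TP\neq PT$, and your derived conclusion $\varphi^{-1}(X_0)=X_0$ is false in that example, since $\varphi^{-1}(X_0)$ corresponds to $\{v_n\}\cup\{w_{-1}\}$. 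Indeed the proposition deliberately claims only forward invariance $\varphi(X_0)=X_0$, and the remark after Theorem \ref{thm:main_result2} records that backward invariance genuinely fails when $a$ has zeros; your argument ``proves'' a stronger, false statement, so everything downstream (commutation of $P$ with $T$ and $T^*$, $TT^*=1$ on $PH$, $T|_{PH}$ implementing $\alpha|_{X_0}$) collapses. The paper's route is to stay with the non-isometric compression: invertibility of $a|_{PH}$ and of $(aT)|_{PH}$ gives invertibility of $\mathds{1}_{X_0}T|_{PH}$, hence $L(\mathds{1}_{X_0})$ is bounded below on $X_0$ and vanishes off it, which yields $\varphi(X_0)=X_0$; then one renormalizes, $\tilde T:=h^{-1/2}\mathds{1}_{X_0}T$ with a strictly positive $h\in C(X_0)$ chosen so that $\tilde T^*\tilde T=\mathds{1}_{X_0}$ (here finiteness of type, i.e.\ $\varrho>0$, is used), obtaining a genuine unitary on $PH$ satisfying (A2) for $C(X_0)$, and Proposition \ref{prop:T_unitary} then gives that $\varphi:X_0\to X_0$ is a homeomorphism. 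Your proof needs to be repaired along these lines; the restriction-of-$T$ shortcut cannot work.
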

\begin{proof} 
Let $a\in A$. By Lemma \ref{lem:Riesz_projector}\ref{enu:Riesz_projection1} and Theorem \ref{thm:Cartan_crossed_products}
 every Riesz projector $P$ that corresponds to a part of $\sigma(aT)$  lying in a  disk centered at origin belongs to $A$. 
Therefore $P= \mathds{1}_{X_1}$ and $(1-P)=\mathds{1}_{X_2}$ where $X=X_1\sqcup X_2$. 
Putting $H_2=\mathds{1}_{X_2}H$, we get that $aT\mathds{1}_{X_2}=a\mathds{1}_{X_2}T$ is invertible as an element of $\B(H_2)$. 
By Lemma \ref{lem:Riesz_projector}\ref{enu:Riesz_projection2}, $a\mathds{1}_{X_2}$ is invertible in $\B(H_2)$ as well. 
Hence $\mathds{1}_{X_2}T$ is invertible in $\B(H_2)$ and therefore 
$
T^*\mathds{1}_{X_2}T=L(\mathds{1}_{X_2})
$ is invertible as an element of $C(X_2)\subseteq A=C(X_1)\oplus C(X_2)$. This means that $L(\mathds{1}_{X_2})(y)\neq 0$ if and only if $y\in X_2$, which implies that 
$\varphi(X_2)=X_2$. We claim that the local homeomorphism $\varphi:X_2\to X_2$ is in fact a homeomorphism.
Indeed, the function $h(y):=\varrho(y)|\varphi^{-1}(y)\cap X_2|$, $y\in X_2$, is continuous and strictly positive,
so it is an invertible element in $C(X_2)$. A direct computation shows that $L(h^{-1}\mathds{1}_{X_2})=\mathds{1}_{X_2}$.
The operator $\tilde{T}:=h^{-1/2}\mathds{1}_{X_2}T$ is an invertible isometry in  $\B(H_2)$, because
$
\tilde{T}^*\tilde{T}=T^* h^{-1}\mathds{1}_{X_2}T=L(h^{-1}\mathds{1}_{X_2})=\mathds{1}_{X_2}
$. Let $\tilde{\alpha}:C(X_2)\to C(X_2)$ be the endomorphism given by composition with $\varphi:X_2\to X_2$.
It may be identified with $\mathds{1}_{X_2}\alpha:C(X_2)\to C(X_2)$. Hence for every $b\in C(X_2)$, using (A2) for $(A,T)$, we get
 (A2) for $(C(X_2),\tilde{T})$.
Accordingly, $b\tilde{T}$, $b\in C(X_2)$, are abstract weighted shifts with the unitary $\tilde{T}$.
So $\varphi:X_2\to X_2$ is a homeomorphism by Proposition \ref{prop:T_unitary}.
In particular, $\tilde{T}:=\varrho^{-1/2}\mathds{1}_{X_2}T$ and $\sigma(aT|_{H_2})=\sigma(a\sqrt{\varrho}\tilde{T})$.
Therefore further decomposition of $\sigma(aT|_{H_2})$ using  Riesz projectors gives
 a decomposition of $X_2$ into clopen $\varphi$-invariant subsets by Theorem \ref{thm:lebedev}. 
This implies the assertion (as by Corollary \ref{cor:rotation_invariance}, $\sigma(aT)$  has the circular symmetry).
\end{proof}
Finally, Proposition \ref{prop:Riesz_projector} follows from  Lemma  \ref{lem:Riesz_projector}\ref{enu:Riesz_projection3} and  the  following lemma:
\begin{lem}
Assume that
(A1)-(A3) hold and that $C^*(A,T)$ is canonically isomorphic to $A\rtimes L$. Let $a\in A$. Every Riesz projector $P$ associated to $aT$
belongs to the core subalgebra $B=\clsp\{bT^{n}T^{*n}c: b,c \in A, n\in \N\}$. If in addition 
$T$ commutes with $P$, then $P\in A$
and $P=\mathds{1}_{X_0}\in C(X)$  where $\varphi^{-1}({X_0})={X_0}$.
\end{lem}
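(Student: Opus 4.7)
The plan is to handle the two halves separately.

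For the first half, I would use the gauge circle action $\gamma:\T\to \Aut(C^*(A,T))$ provided by Theorem \ref{thm:gauge_uniqueness}\ref{it:gauge_uniqueness2}--\ref{it:gauge_uniqueness3}. The core subalgebra $B$ equals the fixed-point algebra of $\gamma$ (it is the range of the conditional expectation averaging $\gamma$), so it suffices to check that $P$ is $\gamma$-invariant. By Corollary \ref{cor:rotation_invariance} the spectrum $\sigma(aT)$ is rotation-invariant; a brief topological argument (examining the image of a connected component under $|\cdot|$ and using that rotation-invariant closed subsets of any circle $\{|\lambda|=r\}$ are either empty or the whole circle) shows that every connected component of $\sigma(aT)$ is a closed annulus centred at $0$ (possibly degenerate to a disk or a single circle). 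Consequently every clopen spectral part is rotation-invariant, and I can represent $P=\frac{1}{2\pi i}\oint_\Gamma(\lambda-aT)^{-1}\,d\lambda$ with $\Gamma$ a finite disjoint union of origin-centred circles. The substitution $\mu=\bar z\lambda$ in $\gamma_z(P)=\frac{1}{2\pi i}\oint_\Gamma(\lambda-zaT)^{-1}d\lambda$ then reproduces the same integrand over $\bar z\Gamma=\Gamma$, giving $\gamma_z(P)=P$.

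For the second half, assume $TP=PT$. Combining this with $T^*T=1$ immediately yields $T^*PT=P$, so the contractive linear map $\Phi:B\to B$, $\Phi(x):=T^*xT$, fixes $P$; hence $\Phi^n(P)=P$ for every $n\in\N$. (Note $\Phi$ does preserve $B$, because gauge-invariance of $x$ is inherited by $T^*xT$ since $T$ and $T^*$ have opposite gauge degrees.) My strategy is to show that $\Phi$ lowers a natural filtration of $B$ by one level and then to approximate. Explicitly, I define $B_n:=\clsp\{aT^kT^{*k}b:a,b\in A,\,0\leq k\leq n\}$; using the relations $Ta=\alpha(a)T$, $aT^*=T^*\alpha(a)$ and $T^*aT=L(a)$, a normal-form calculation shows that each $B_n$ is a $*$-subalgebra, $B=\overline{\bigcup_nB_n}$, and
\[
\Phi(aT^kT^{*k}b)=L(a)\,T^{k-1}T^{*(k-1)}\,L(b)\quad(k\geq 1),\qquad \Phi|_A=L.
\]
Hence $\Phi(B_n)\subseteq B_{n-1}$ for $n\geq 1$, and iterating yields $\Phi^n(B_n)\subseteq A$. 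Approximating $P$ by $x_N\in B_N$ with $\|P-x_N\|<\varepsilon$, contractivity of $\Phi$ together with $\Phi^N(P)=P$ gives $\Phi^N(x_N)\in A$ and $\|P-\Phi^N(x_N)\|<\varepsilon$, so $P\in\overline A=A$. Every idempotent of $C(X)$ is an indicator of a clopen set, so $P=\mathds{1}_{X_0}$; finally (A2) rewrites $TP=PT$ as $\mathds{1}_{\varphi^{-1}(X_0)}T=\mathds{1}_{X_0}T$, and the density of $ATH$ in $H$ from (A3) forces $\mathds{1}_{\varphi^{-1}(X_0)}=\mathds{1}_{X_0}$, i.e.\ $\varphi^{-1}(X_0)=X_0$.

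The delicate step I anticipate is the filtration bookkeeping: reducing products $(aT^kT^{*k}b)(cT^jT^{*j}d)$ to the normal form $a'T^{\max(k,j)}T^{*\max(k,j)}b'$ and then verifying the displayed formula for $\Phi$ on generators. Both reduce to patient application of the commutation relations above to slide coefficients past powers of $T$ and $T^*$. Once the filtration is in place, the contractive approximation that closes the argument is immediate.
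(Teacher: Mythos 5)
Your proposal is correct and follows essentially the same route as the paper's proof: gauge-invariance of the Riesz projector, obtained by a change of variables in the contour integral, places $P$ in the fixed-point algebra $B$, and then $T^*PT=P$ together with the degree-lowering identity $T^*(aT^kT^{*k}b)T=L(a)T^{k-1}T^{*(k-1)}L(b)$ applied to approximants pushes $P$ into $A$. The only immaterial deviation is the last step, where the paper deduces $\varphi^{-1}(X_0)=X_0$ from $L(P)=P$ and $L(1-P)=1-P$, while you get $\alpha(P)=P$ directly from $(P-\alpha(P))T=0$ and (A3); both work.
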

\begin{proof} It suffices to consider the case where $\sigma(aT)\cap S^1=\emptyset$
and $ P=\frac{1}{2\pi i}\int_{S^1}(\lambda-aT)^{-1}d\lambda$.
 We exploit the  action $\gamma:\T\to \Aut (C^*(A, T))$ given by Theorem \ref{thm:gauge_uniqueness}.
 For any $z\in S^1$, changing variables in the integral, we get
$$
\gamma_z(P)=\frac{1}{2\pi i}\int_{S^1}(\lambda-zaT)^{-1}d\lambda=\frac{1}{2\pi i}\int_{S^1}z^{-1}\left(\frac{\lambda}z-aT\right)^{-1}d\lambda=P.
$$
Hence $P$ belongs to the fixed point algebra of $\gamma$, which is $B$.
Accordingly,  for every  $\varepsilon >0$ we have $\|P- \sum_{k=0}^n a_kT^{k}T^{*k}b_k\|<\varepsilon$ for some $a_k,b_k\in A$, $k=1,...,n$.
If we assume that $T$ commutes with $P$, then $T^*PT=T^* TP=P$ and therefore
$$
\| P- L(a_0b_0)-\sum_{k=1}^n L(a_k)T^{k-1}T^{*k-1}L(b_k)\|=\|T^*\left(P- \sum_{k=1}^n a_kT^{k}T^{*k}b_k\right)T\|<\varepsilon.
$$
Hence
$\|P- \sum_{k=0}^{n-1} a_k'T^{k}T^{*k}b_k'\|<\varepsilon$ for some $a_k',b_k'\in A$, $k=1,...,n-1$. Proceeding in this way, we find $b\in A$ such that 
$\|P-b\|<\varepsilon$. Thus $P\in A$. Therefore  $P=\mathds{1}_{X_1}\in C(X)$  where ${X_1}\subseteq X$ is  clopen. Put $X_2=X\setminus X_1$. As $T$ commutes with $P$ we have
$L(P)=P$, and this 
implies that $\varphi(X_1)\subseteq X_1$. Since $L(1-P)=1-P$ we also get $\varphi(X_2)\subseteq X_2$. Hence $\varphi^{-1}({X_1})={X_1}$ and $\varphi^{-1}({X_2})={X_2}$.
\end{proof}

\section{The spectrum}\label{sec:spectrum}

Combining  previous results we prove two theorems that describe the spectrum of abstract weighted shift operators $aT$, $a\in A$.
The first one holds when  $a$ is (close to being) invertible and describes $\sigma(aT)$ in terms of a decomposition of $(X,\varphi)$ into closed $\varphi$-invariant sets.
The second one holds for arbitrary $a$  but we need to assume that transfer operator is of finite type, and then the relationship between decomposition of  $\sigma(aT)$ 	and $(X,\varphi)$ is slightly more subtle.
 We end this section with applications to weighted composition operators on $L^2_\mu(\Omega)$.

\begin{thm}[Invertible weight case]\label{thm:main_result1}
 Suppose that  $aT \in \B(H)$, $a\in A\cong C(X)$,  are well presented abstract weighted shifts, i.e. $T$  is an isometry and axioms (A1)-(A3) are satisfied. Assume that the corresponding  map $\varphi:X\to X$  is topologically free.  
Assume also that $a\in A$ is invertible or more generally that $\mathds{1}_{X\setminus a^{-1}(0)}=1$.
Then 
$$
\sigma(aT)=\{z\in\C: |z|\leq r_0\}\cup \bigcup_{r \in R}\{z\in \C: r_{-}\leq |z| \leq  r_+\}
$$
where $ r_0< r_{-}\leq r_{+}$ for  $r\in R$, and there is a 
to  a decomposition  $X=X_0\cup \bigcup_{r\in R} X_{r}$  of $X$ into disjoint  $\varphi$-invariant 
closed sets  and then
$$
r_{-}=\min_{\mu\in \Erg(X_r,\varphi)} \, \exp \int_{X_r} \ln|a|\, d\mu, 
\qquad
r_+=\max_{\mu\in \Erg(X_r,\varphi)} \, \exp \int_{X_r} \ln|a|\, d\mu, 
 $$
for  $r \in R$,
and $r_0=\max_{\mu\in \Inv(X_0,\varphi)}  \exp\left(\int_{{X_0}}\ln\vert a\vert\,d\mu
+\frac{\tau_{L}(\mu)}{2}\right), 
$ where  $\tau_{L}$ is the $t$-entropy for the associated transfer operator $L$. Moreover, $a^{-1}(0)\subseteq X_0$ and for  every $r\in R$, $\varphi:X_r\to X_r$ is a homeomorphism.
 In particular, if $X$ does not contain a non-empty clopen  $\varphi$-invariant set on which $\varphi$ is a homeomorphism and $a$ is non-zero, then  
$$
\sigma(aT)=\left\{z\in \C:|z|\leq \max_{\mu\in \Inv(X,\varphi)}  \exp\left(\int_{{X}}\ln\vert a\vert\,d\mu
+\frac{\tau_{L}(\mu)}{2}\right)\right\}.
$$
\end{thm}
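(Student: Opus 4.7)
The plan is to exploit rotational symmetry of $\sigma(aT)$ to produce a family of Riesz projections lying inside $A$, use Proposition \ref{prop:Riesz_projector} to obtain a decomposition of $X$ into $\varphi$-invariant pieces, and then treat annular pieces via the unitary case (Theorem \ref{thm:lebedev}) and the central piece via Antonevich--Bakhtin--Lebedev (Theorem \ref{thm:Antonevich-Bakthin-Lebedev}).

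First I would set up the radial picture. Topological freeness forces $C^*(A,T)\cong A\rtimes L$ (Theorem \ref{thm:isomorphism}), hence by Corollary \ref{cor:rotation_invariance} we have $\sigma(aT)=\{re^{i\theta}:r\in K\}$ for a compact $K\subseteq[0,r(aT)]$. Let $K_0=[0,r_0]$ be the connected component of $K$ containing $0$ and list the other components as $\{K_r=[r_-,r_+]\}_{r\in R}$ with $r_0<r_-\leq r_+$. For each $\rho\in(0,r(aT))\setminus K$ the Riesz projection $P_\rho=\frac{1}{2\pi i}\int_{|z|=\rho}(z-aT)^{-1}\,dz$ is defined, and by Proposition \ref{prop:Riesz_projector} (applicable because $\mathds{1}_{X\setminus a^{-1}(0)}=1$) we have $P_\rho=\mathds{1}_{X_\rho}$ for a clopen $\varphi$-invariant $X_\rho\subseteq X$, with $\rho\mapsto X_\rho$ monotone increasing. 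For each isolated annular component $K_r$, picking $\rho^\pm\notin K$ with $(\rho^-,\rho^+)\cap K=K_r$, set $X_r:=X_{\rho^+}\setminus X_{\rho^-}$; for non-isolated components one uses monotone strong limits of the $P_\rho$ inside $A$. Either way we obtain a closed $\varphi$-invariant $X_r$ with associated projection $P_r=\mathds{1}_{X_r}\in A$ and $\sigma(aT|_{P_rH})=\{r_-\leq|z|\leq r_+\}$. Set $X_0:=X\setminus\bigcup_{r\in R}X_r$ and $P_0:=\mathds{1}_{X_0}$, so that $\sigma(aT|_{P_0H})\subseteq\{|z|\leq r_0\}$.

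On each annular piece $H_r:=P_rH$ the operator $aT|_{H_r}$ is invertible, so by Lemma \ref{lem:Riesz_projector}\ref{enu:Riesz_projection2} $a|_{H_r}$ is invertible; consequently $T|_{H_r}$ is an invertible isometry, hence unitary. The triple $(C(X_r),T|_{H_r})$ inherits (A1), (A2), (A3) (using $P_r\in A$, the commutation $TP_r=P_rT$ from Lemma \ref{lem:Riesz_projector}\ref{enu:Riesz_projection3}, and density of $ATH_r$ in $H_r$), and $\varphi|_{X_r}$ inherits topological freeness because $X_r$ is clopen $\varphi$-invariant. Proposition \ref{prop:T_unitary} then forces $\varphi|_{X_r}$ to be a homeomorphism, and Theorem \ref{thm:lebedev} applied to $aT|_{H_r}$ (whose spectrum is the single annulus $[r_-,r_+]$) yields the stated min/max formulas for $r_\pm$ over $\Erg(X_r,\varphi)$. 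Non-vanishing of $a$ on each $X_r$ forces $a^{-1}(0)\subseteq X_0$. For the central piece $(C(X_0),T|_{H_0})$ satisfies (A1), (A2), so Theorem \ref{thm:Antonevich-Bakthin-Lebedev} yields the $t$-entropy formula for $r_0=r(aT|_{H_0})$; rotational invariance together with connectedness of $K_0=[0,r_0]$ then forces $\sigma(aT|_{H_0})=\{|z|\leq r_0\}$. The final sentence of the theorem is now immediate: if no non-empty clopen $\varphi$-invariant $Y\subseteq X$ simultaneously supports a homeomorphism of $\varphi$ and non-vanishing of $a$, then every candidate $X_r$ must be empty, so $R=\emptyset$ and $X_0=X$.

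The main obstacle will be the construction of $X_r$ for annular components $K_r\subseteq K$ that are not two-sided isolated in $K$: only one-sided separating radii $\rho\notin K$ may exist, and one must verify that the appropriate monotone strong limits of the Riesz projections $P_\rho$ still lie inside $A$ (this is where closedness of $A$ under SOT-limits of bounded monotone self-adjoint nets is used, together with Proposition \ref{prop:Riesz_projector}) and yield a spectral subspace whose spectrum equals exactly the annulus $[r_-,r_+]$. A secondary delicate point, already addressed above, is the inheritance of axioms (A1)--(A3) by the restricted systems $(C(X_r),T|_{H_r})$ and $(C(X_0),T|_{H_0})$, which is forced by $P_r\in A$ and the commutation $TP_r=P_rT$ from Lemma \ref{lem:Riesz_projector}.
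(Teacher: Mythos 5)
Your outline for the case when the relevant spectral components are isolated (in particular the two--component case) matches the paper's argument: rotational invariance via Theorem \ref{thm:isomorphism} and Corollary \ref{cor:rotation_invariance}, Riesz projectors in $A$ corresponding to clopen $\varphi$-invariant sets via Proposition \ref{prop:Riesz_projector}, unitarity of $T$ on the annular pieces via Lemma \ref{lem:Riesz_projector} and Proposition \ref{prop:T_unitary}, then Theorem \ref{thm:lebedev} for $r_\pm$ and Theorem \ref{thm:Antonevich-Bakthin-Lebedev} for $r_0$. However, your treatment of the components of $|\sigma(aT)|$ that are not two-sided isolated contains a genuine gap. You propose to take monotone strong limits of the projections $P_\rho$ and assert that these limits again lie in $A$, invoking ``closedness of $A$ under SOT-limits of bounded monotone self-adjoint nets.'' This property fails for a general unital commutative $C^*$-subalgebra $A\cong C(X)\subseteq\B(H)$: such closedness is a von Neumann algebra property, and in the basic examples of the paper (e.g.\ $A=C(X)$ acting on $L^2_\mu(X)$) a monotone SOT-limit of projections $\mathds{1}_{X_\rho}\in C(X)$ is typically the projection onto an $L^2$-subspace supported on a set that is only closed, not clopen, hence it does not belong to $C(X)$. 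Consequently you cannot claim $P_r=\mathds{1}_{X_r}\in A$ for the limit sets, you cannot restrict the system $(C(X_r),T|_{P_rH})$ so as to inherit (A1)--(A3), and you cannot apply Theorem \ref{thm:lebedev} (or, for the central piece, Theorem \ref{thm:Antonevich-Bakthin-Lebedev}) directly to these restrictions; the same problem affects your claim that $\sigma(aT|_{P_rH})$ is exactly the annulus $\{r_-\leq|z|\leq r_+\}$.

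The paper circumvents exactly this difficulty differently: for each annular component it chooses radii $r_n^-\nearrow r_-$, $r_n^+\searrow r_+$ outside $|\sigma(aT)|$, obtains from Proposition \ref{prop:Riesz_projector} clopen $\varphi$-invariant sets $X_{r_n^-,r_n^+}$ (on which $\varphi$ is a homeomorphism and $a$ does not vanish), and defines $X_r:=\bigcap_n X_{r_n^-,r_n^+}$, which is merely a closed $\varphi$-invariant set; no claim is made that the associated projection lies in $A$ or that the restricted spectrum equals the annulus. The variational formulas for $r_\pm$ (and similarly for $r_0$ on $X_0=\bigcap_n X_{0,r_n}$) are then proved by choosing, for each $n$, extremizing measures on the clopen approximants --- where formula \eqref{eq:spectral_radius_invertible}, respectively Theorem \ref{thm:Antonevich-Bakthin-Lebedev}, does apply --- and passing to weak$^*$ limit points, which are automatically supported in the intersection set; a squeezing argument between $r_n^\pm$ and $r_\pm$ then yields the stated min/max formulas over $\Erg(X_r,\varphi)$ and $\Inv(X_0,\varphi)$. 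To repair your proof you would need to replace the monotone-limit-of-projections step by such a measure-theoretic limiting argument (or restrict attention to the case where all components of the spectrum are isolated, which is not guaranteed).
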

\begin{proof}
 That $\sigma(aT)$ is invariant under rotations over zero follows from 
Corollary \ref{cor:Spectral_consequences}. Hence  $\sigma(aT)$ consists of  a disk $\{z\in\C: |z|\leq r_0\}$ and possibly  a number of annuli $\bigcup_{r \in R}\{z\in \C: r_{-}\leq |z| \leq  r_+\}$.
We may assume that the disk is non-empty, because  otherwise we find ourselves in the situation of Theorem \ref{thm:lebedev},  see Proposition \ref{prop:T_unitary}. 
For simplicity suppose first that $\sigma(aT)$ has exactly two components, so that $
\sigma(aT)=\{z\in\C: |z|\leq r_0\}\cup \{z\in \C: r_{-}\leq |z| \leq  r_+\}
$ where $0\leq r < r_-\leq r_+$.
By Proposition \ref{prop:Riesz_projector}, we have a decomposition $X=X_0\sqcup X_{r}$
into clopen $\varphi$-invariant sets where $\mathds{1}_{X_{0}}$ is the 
Riesz projector for $aT$ corresponding to $\{z\in\C: |z|\leq r_0\}\subseteq \sigma(aT)$.
Put $H_0:=PH$ and $H_{r}=(1-P)H$.  By Lemma \ref{lem:Riesz_projector}, and the assumption that $\mathds{1}_{X\setminus a^{-1}(0)}=1$,
 these spaces are invariant under $T$.  The isometry $T:H_r\to H_r$ is a unitary because both $a|_{H_r}$ and $aT|_{H_{r}}$ are invertible, see Lemma \ref{lem:Riesz_projector}.
In particular, the restrictions $a_0T|_{H_0}$, $a_0\in C(X_0)$, and $a_rT|_{H_r}$, $a_r\in C(X_r)$
are well presented abstract weighted shifts acting on $H_0$ and $H_{r}$, respectively. The associated transfer
operators are restrictions of $L$ to   $C(X_0)$ and $C(X_r)$.
In fact, by Proposition \ref{prop:T_unitary}, $\varphi:X_r\to X_r$ is a homeomorphism and  $L:C(X_r)\to C(X_r)$
is the automorphism given by the composition with  $\varphi^{-1}:X_r\to X_r$.
Now, the formula for $r_0$ follows from Theorem \ref{thm:Antonevich-Bakthin-Lebedev} and formulas for
$r_{-}$ and $r_{+}$ follow from \eqref{eq:spectral_radius_invertible}, because $r_+=r(aT|_{H_r})$ 
and $r_-=r((aT|_{H_r})^{-1})^{-1}=r(a^{-1}\circ \varphi^{-1}T|_{H_r}^{-1})$ and both $cT|_{H_r}$, $c\in C(X_r)$,
 and $cT|_{H_r}^{-1}$, $c\in C(X_r)$, are well presented abstract weighted shifts acting on $H_{r}$ associated with $(X_r, \varphi)$ and $(X_r, \varphi^{-1})$ respectively.

Now let us consider the general case. Let $|\sigma(aT)|:=\{|z|: z\in \sigma(aT)\}=[0, r_0] \cup \bigcup_{r\in R} [r_-,r+]$.
For each $r\in R$ we may find sequences $\{r^-_n\}_{n=1}^\infty$, $\{r^+_n\}_{n=1}^\infty$ lying outside $|\sigma(aT)|$
 such that $r_n^-\nearrow r_-$ and $r_n^+\searrow r_+$. 
Let  $P_{r_n^-,r_n^+}$ be the Riesz projector corresponding to the part of spectrum of $aT$ lying in $\{z\in \C:r_n^- < |z|<r_n^+\}$.
As above, using Proposition \ref{prop:Riesz_projector}, we see that $P_{r_n^-,r_n^+}=\mathds{1}_{X_{r_n^-,r_n^+}}\in C(X)$
for a certain non-empty clopen   $\varphi$-invariant set $X_{r_n^-,r_n^+}$ such that $\varphi:X_{r_n^-,r_n^+}\to X_{r_n^-,r_n^+}$ is a homeomorphism
and $a|_{X_{r_n^-,r_n^+}}\neq 0$.
We put 
 $$
 X_{r}:=\bigcap_{n\in\N} X_{r_n^-,r_n^+}. 
 $$ 
This is a non-empty closed and  $\varphi$-invariant set, which does not 
depend on the choice of the sequences $\{r^-_n\}_{n=1}^\infty$, $\{r^+_n\}_{n=1}^\infty$.
Moreover, $\varphi:X_r\to X_r$ is a homeomorphism. 
Similarly, we define $X_0=\bigcap_{n\in\N} X_{0,r_n}$ where $\{r_n\}_{n=1}^\infty$
is a sequence lying outside  $|\sigma(aT)|$ such that $r_n \searrow r_0$  and 
$\mathds{1}_{X_{0,r_n}}\in C(X)$ is the Riesz projector corresponding to the part of $\sigma(aT)$ lying in $\{z\in \C:|z|< r_n\}$, $n\in \N$.
Again $X_0$ is closed non-empty and $\varphi$-invariant. 

Clearly, the sets $X_r$, $r\in R\cup\{0\}$,  form a decomposition of $X$. To see that $r_0$ is given by
the corresponding variational formula, for each $n\in \N$ choose  $\mu_n\in \Inv(X_{0,r_n},\varphi)$ that maximizes $\max_{\mu\in \Inv(X_{0,r_n},\varphi)}  \exp\left(\int_{{X}_{0,r_n}}\ln\vert a\vert\,d\mu
+\frac{\tau_{L}(\mu)}{2}\right)$. Then 
$$ 
r_0\leq \exp\left(\int_{{X}_{0,r_n}}\ln\vert a\vert\,d\mu_n
+\frac{\tau_{L}(\mu_n)}{2}\right)<r_n.
$$
Let $\mu_{0}\in \Inv(X_{0,r_n},\varphi)$ be a weak$^*$ limit point   of $\{\mu_n\}_{n\in\N}$.
Then the support of $\mu_{0}$ is contained in $X_{0}=\bigcap_{n\in\N} X_{0,r_n}$. 
This and the above inequalities imply that  
$r_0=\exp\left(\int_{{X}_{0}}\ln\vert a\vert\,d\mu_{0}\right)
=\max_{\mu\in \Inv(X_{0},\varphi)}  \exp\left(\int_{{X}_{0}}\ln\vert a\vert\,d\mu\right)$. 
Similarly we get the desired formulas for $r\in R$. Indeed, for each   $n\in \N$ there are  measures $\mu_n^-,\mu_n^+\in \Erg(X_{r_n^-,r_n^+},\varphi) $
such that 
$$
r_{n}^-<  \exp \int_{X_{r_n^-,r_n^+}} \ln|a(x)|\, d\mu_{n}^- \leq r_{-} \leq r_{+} \leq \exp \int_{X_{r_n^-,r_n^+}} \ln|a(x)|\, d\mu_{n}^+ < r_{n}^+.  
 $$
Taking $\mu_0^-$, $\mu_0^+\in  \Inv(X,\varphi)$ to be weak$^*$ limit points of the sequences  $\{\mu_n^-\}_{n\in\N}$, $\{\mu_n^+\}_{n\in\N}$, we see that supports of $\mu_0^-$ and $\mu_0^+$ are contained in  $X_r=\bigcap_{n\in\mathbb N} X_{r_n^-,r_n^+}$, and {\small
$$
r_{-} =\exp \int_{X_{r}} \ln|a(x)|\, d\mu_0^-=\min_{\mu\in \Inv(X_{r},\varphi)}  \exp\int_{{X}_{r}}\ln\vert a\vert\,d\mu=
\min_{\mu\in \Erg(X_{r},\varphi)}  \exp\int_{{X}_{r}}\ln\vert a\vert\,d\mu,
$$
$$
r_+=\exp \int_{X_{r}} \ln|a(x)|\, d\mu_0^+=\max_{\mu\in \Inv(X_{r},\varphi)}  \exp\int_{{X}_{r}}\ln\vert a\vert\,d\mu=
\max_{\mu\in \Erg(X_{r},\varphi)}  \exp\int_{{X}_{r}}\ln\vert a\vert\,d\mu.
$$}
\end{proof}

\begin{thm}[Finite type case]\label{thm:main_result2}
 Suppose that  $aT \in \B(H)$, $a\in A\cong C(X)$,  are well presented abstract weighted shifts such that the associated transfer
operator is of finite type, so that $L(a)(y)=\sum_{x\in\varphi^{-1}(y)}\varrho(x)a(x)$, where $\varrho>0$. Assume  the corresponding  map $\varphi:X\to X$  is topologically free.  
For every $a\in A$ we have
$$
\sigma(aT)=\{z\in\C: |z|\leq r_0\}\cup \bigcup_{r \in R}\{z\in \C: r_{-}\leq |z| \leq  r_+\}
$$
where  $r_0< r_{-}\leq r_{+}$ for  $r\in R$ and Riesz projectors induce  a decomposition  $X=X_0\cup \bigcup_{r\in R} X_{r}$  into disjoint
closed sets where
for each $r\in R$,  $\varphi|_{X_r}:X_r\to X_r$ is a homeomorphism, $a$ does not vanish on $X_r$ and
$$
r_{-}=\min_{\mu\in \Erg(X_r,\varphi)} \, \exp \int_{X_r} \ln(\vert a\vert\sqrt{\varrho })\, d\mu, 
\quad
r_+=\max_{\mu\in \Erg(X_r,\varphi)} \, \exp \int_{X_r} \ln(\vert a\vert\sqrt{\varrho })\, d\mu.
 $$
In particular, if $X$ does not contain a non-empty clopen forward $\varphi$-invariant set  on which $\varphi$ is a homeomorphism and $a$ is non-zero, then  
$$\sigma(aT)=\left\{z\in \C:|z|\leq \max_{\mu\in \Inv(X,\varphi)}  \exp\left(\int_{{X}}\ln\vert a\vert\,d\mu
+\frac{\tau_{L}(\mu)}{2}\right)\right\}.
$$
\end{thm}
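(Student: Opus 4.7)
The plan is to closely parallel the proof of Theorem \ref{thm:main_result1}, with the key substitution that Proposition \ref{prop:Riesz_projector} (which needed $\mathds{1}_{X\setminus a^{-1}(0)}=1$) is replaced by Proposition \ref{prop:Cartan_Riesz_Projections}. Since $L$ is of finite type and $\varphi$ is topologically free, Theorem \ref{thm:Cartan_crossed_products} together with Theorem \ref{thm:isomorphism} gives that $A\subseteq C^*(A,T)\cong A\rtimes L$ is a Cartan inclusion, so every Riesz projector for $aT$ already lies in $A$ with no hypothesis on $a$. By Corollary \ref{cor:Spectral_consequences}, $\sigma(aT)$ is invariant under rotation around zero, hence it decomposes as a (possibly empty) closed disk $\{|z|\leq r_0\}$ together with a family of annuli $\{r_-\leq |z|\leq r_+\}$, $r\in R$.

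Fix an annulus $r\in R$. I would choose sequences $r^-_n\nearrow r_-$ and $r^+_n\searrow r_+$ lying outside $|\sigma(aT)|$ and consider the Riesz projectors $P_n$ for the parts of $\sigma(aT)$ inside $\{r^-_n<|z|<r^+_n\}$. By Proposition \ref{prop:Cartan_Riesz_Projections} each $P_n=\mathds{1}_{X_n}$ for a clopen set $X_n\subseteq X$ with $\varphi(X_n)=X_n$, $\varphi|_{X_n}$ a homeomorphism, and $a$ nowhere zero on $X_n$. Set $X_r:=\bigcap_n X_n$; this is a closed $\varphi$-invariant set with the same three properties. As shown inside the proof of Proposition \ref{prop:Cartan_Riesz_Projections}, $\tilde T:=\varrho^{-1/2}\mathds{1}_{X_r}T$ is a unitary on $H_r:=\mathds{1}_{X_r}H$ and $aT|_{H_r}=(a\sqrt{\varrho})\tilde T$, so by Proposition \ref{prop:T_unitary} we are in the setting of Theorem \ref{thm:lebedev} for the system $(X_r,\varphi|_{X_r})$ with weight $a\sqrt{\varrho}$. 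Applying \eqref{eq:spectral_radius_invertible} to $aT|_{H_r}$ and to its inverse yields the stated variational formulas for $r_+$ and $r_-$.

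Setting $X_0:=X\setminus\bigsqcup_{r\in R}X_r$ then produces the asserted decomposition (the $X_r$ are pairwise disjoint since they arise from Riesz projectors of disjoint spectral components). The ``In particular'' clause follows at once: if no non-empty clopen forward $\varphi$-invariant subset on which $\varphi$ is a homeomorphism and $a$ is nowhere zero exists, then $R=\emptyset$ (any such $X_r$ would be of this form), so $\sigma(aT)=\{|z|\leq r(aT)\}$ with $r(aT)$ supplied by Theorem \ref{thm:Antonevich-Bakthin-Lebedev}.

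The main obstacle is handling the case where $R$ is countably infinite and the annuli accumulate, which, as in the proof of Theorem \ref{thm:main_result1}, requires a weak$^*$-compactness argument: for each $n$ one selects optimizing ergodic measures $\mu_n^{\pm}\in\Erg(X_n,\varphi)$ whose integrals $\int_{X_n}\ln(|a|\sqrt{\varrho})\,d\mu_n^{\pm}$ are sandwiched between $\ln r^{\mp}_n$ and $\ln r_{\mp}$, and one checks that any weak$^*$-limit point $\mu_0^{\pm}$ is supported on $\bigcap_n X_n=X_r$, giving the extremal values. A secondary technical point is transferring Theorem \ref{thm:lebedev} (stated for weighted shifts associated to an automorphism) to our restricted system; this is exactly the identification $\tilde T=\varrho^{-1/2}\mathds{1}_{X_r}T$ already carried out inside the proof of Proposition \ref{prop:Cartan_Riesz_Projections}.
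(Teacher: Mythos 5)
Your proposal is correct and follows essentially the same route as the paper: the paper's proof simply reruns the argument of Theorem \ref{thm:main_result1} with Proposition \ref{prop:Cartan_Riesz_Projections} replacing Proposition \ref{prop:Riesz_projector}, obtaining the formulas for $r_{\pm}$ exactly as you do, by viewing $aT$ on a Riesz-projector subspace $\mathds{1}_{X_r}H$ as $(a\sqrt{\varrho})U$ with $U=\varrho^{-1/2}T$ unitary, invoking Theorem \ref{thm:lebedev}/\eqref{eq:spectral_radius_invertible}, and using the weak$^*$-limit argument for accumulating annuli. Just take care to apply the unitary identification on the clopen Riesz-projector sets $X_n$ (passing to the closed intersection only through the limiting measures), since $\mathds{1}_{X_r}$ for the merely closed set $X_r$ is not a Riesz projector, and define $X_0$ via the disk Riesz projectors as in the proof of Theorem \ref{thm:main_result1} so that it is closed.
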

\begin{proof}
The proof goes along the same lines as the proof of Theorem \ref{thm:main_result1}, where instead of Proposition \ref{prop:Riesz_projector} we use Proposition \ref{prop:Cartan_Riesz_Projections}. 
In particular, in the proof of Proposition \ref{prop:Cartan_Riesz_Projections} we have seen  that if $X_r\subseteq X$ is a clopen set such that $\mathds{1}_{X_{r}}$ is a  Riesz projector corresponding to a part $D$ of
$\sigma(aT)$ not containing zero, then the operators $b\varrho^{-1/2}T$, $b\in C(X_r)\subseteq A$, can be treated as abstract weighted shifts acting on $H_r:=\mathds{1}_{X_{r}}H$ where 
  $U:=\varrho^{-1/2}T|_{H_r}$ is a unitary. Thus the minimal annulus containing $D=\sigma(aT)\cap D=\sigma(a\varrho^{1/2}U)$ lies between the circles or
radii $
r_{-}=\min_{\mu\in \Erg(X_r,\varphi)} \, \exp \int_{X_r} \ln(\vert a\vert\sqrt{\varrho })\, d\mu, 
$
$
r_+=\max_{\mu\in \Erg(X_r,\varphi)} \, \exp \int_{X_r} \ln(\vert a\vert\sqrt{\varrho })\, d\mu.
 $
\end{proof}
\begin{rem}
The sets $X_r$ in the assertion of Theorem \ref{thm:main_result2} are forward $\varphi$-invariant. If 
it happens that they are backward $\varphi$-invariant, then $\varrho|_{X_r}\equiv 1$ and the  formulas for $r_{-}$
and $r_{+}$ above are the same as in Theorem  \ref{thm:main_result1}. Example \ref{ex:contrexample} shows that 
if $\mathds{1}_{X\setminus a^{-1}(0)}\neq1$, then sets $X_r$ may fail to be backward $\varphi$-invariant.
\end{rem}

\begin{rem} In Theorem \ref{thm:main_result1} and \ref{thm:main_result2} the set $\{z\in\C: |z|\leq r_0\}$ is empty (equivalently $r_0 <0$) if and only if both $a$ 
and $T$ are invertible if and only if $a$ is non-zero everywhere and $\varphi$ is a homeomorphism, cf. Proposition \ref{prop:T_unitary}.  In this case, Theorem \ref{thm:lebedev} also applies.
\end{rem}
\begin{lem}\label{lem:freeness_expansiveness} 
An expanding map $\varphi:X\to X$ on a compact metrizable space $X$ is topologically free if and only if $X$ has no isolated points that are periodic. 
\end{lem}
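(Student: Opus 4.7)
The strategy is to verify the two implications separately. The \emph{only if} direction is immediate: if $x_0\in X$ is an isolated point that is periodic, then $\{x_0\}$ is a non-empty open subset of $X$ consisting entirely of periodic points, so the periodic set has non-empty interior and $\varphi$ fails to be topologically free.

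For the \emph{if} direction I would argue by contradiction: suppose $X$ has no isolated periodic points, yet there is a non-empty open set $U\subseteq X$ all of whose points are periodic. The crucial step is to exploit expansiveness to show that for each $n\in\N$ the set $\mathrm{Fix}(\varphi^n):=\{x\in X:\varphi^n(x)=x\}$ is discrete in $X$, and therefore finite by compactness. With $\varepsilon>0$ and $\theta>1$ as in the definition of an expanding map, I would show by a short induction that if $x,y\in\mathrm{Fix}(\varphi^n)$ satisfy $0<d(x,y)<\varepsilon\theta^{-n}$, then $d(\varphi^i(x),\varphi^i(y))\geq \theta^i d(x,y)$ for $i=0,1,\dots,n$, each intermediate distance remaining below $\varepsilon$. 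Taking $i=n$ and using $\varphi^n(x)=x$, $\varphi^n(y)=y$ gives $d(x,y)\geq\theta^n d(x,y)$, a contradiction. Hence any two distinct points of $\mathrm{Fix}(\varphi^n)$ lie at distance at least $\varepsilon\theta^{-n}$.

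With discreteness established, write $U=\bigcup_{n\geq 1}\bigl(\mathrm{Fix}(\varphi^n)\cap U\bigr)$; each summand is closed in $U$. Applying the Baire category theorem to the locally compact Hausdorff space $U$, at least one $\mathrm{Fix}(\varphi^n)\cap U$ has non-empty interior in $U$, yielding a non-empty open $V\subseteq X$ contained in the finite set $\mathrm{Fix}(\varphi^n)$. In a Hausdorff space, a non-empty open set contained in a finite set must consist of isolated points of $X$, so $V$ supplies an isolated periodic point — contradicting the standing hypothesis. The main technical obstacle is the discreteness claim for $\mathrm{Fix}(\varphi^n)$; the care needed is precisely in iterating the local expansion $n$ times, which is why the threshold is $\varepsilon\theta^{-n}$ rather than a uniform constant.
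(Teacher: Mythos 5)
Your overall strategy (finiteness of each $\mathrm{Fix}(\varphi^n)$ plus a Baire category argument on $U$) is viable and genuinely different from the paper's, but the key technical step as you state it has a gap. You claim that if $x,y\in\mathrm{Fix}(\varphi^n)$ satisfy $0<d(x,y)<\varepsilon\theta^{-n}$, then the induction $d(\varphi^i(x),\varphi^i(y))\geq\theta^i d(x,y)$ runs up to $i=n$ with ``each intermediate distance remaining below $\varepsilon$.'' Nothing justifies that parenthetical: the expanding condition gives only a \emph{lower} bound $d(\varphi(u),\varphi(v))\geq\theta d(u,v)$ when $d(u,v)<\varepsilon$, and no upper bound at all, so already $d(\varphi(x),\varphi(y))$ may exceed $\varepsilon$, at which point the inductive hypothesis is lost. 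Moreover, once an orbit pair is $\varepsilon$-apart the map may contract it back below $\varepsilon$ (expanding maps are not injective at large scales — think of the doubling map), so you cannot restart the induction either. Consequently the claimed $\varepsilon\theta^{-n}$-separation of $\mathrm{Fix}(\varphi^n)$ in the metric $d$ is not established by this argument (and it is not clear it holds in general); choosing the threshold $\varepsilon\theta^{-n}$ smaller does not help, because the problem is the absence of an upper bound (e.g.\ a Lipschitz constant, which a continuous $\varphi$ need not have), not the size of the starting distance.

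The gap is repairable, and then your route goes through: for distinct $x,y\in\mathrm{Fix}(\varphi^n)$ the orbit distances $d_i=d(\varphi^i(x),\varphi^i(y))$ are periodic in $i$ with period $n$ and all positive; if all of $d_0,\dots,d_{n-1}$ were $<\varepsilon$, then by periodicity $d_i<\varepsilon$ for all $i\geq 0$, and iterating the expansion bound gives $d_i\geq\theta^i d_0\to\infty$, a contradiction. Hence any two distinct points of $\mathrm{Fix}(\varphi^n)$ are $\varepsilon$-separated in the Bowen metric $d_n$, so $\mathrm{Fix}(\varphi^n)$ is finite by compactness — which is all your Baire argument needs (the sets $\mathrm{Fix}(\varphi^n)\cap U$ are closed in $U$, one has interior, and a non-empty open subset of a finite set consists of isolated periodic points). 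Note that this corrected argument differs from the paper's proof: there, one takes an open $V$ with $\overline V$ inside a set of $n$-periodic points, observes that $K=\bigcup_{k=0}^{n-1}\varphi^k(\overline V)$ is a compact forward-invariant set on which $\varphi$ is an expansive homeomorphism, and invokes Schwartzman's theorem to conclude $K$ is finite; your approach replaces Schwartzman's theorem by the elementary separation-plus-compactness count of periodic points, at the cost of making the Baire step explicit (which the paper in effect also uses when it passes to an open set of points of a single period).
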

\begin{proof} If there is a periodic isolated point, then its orbit is a clopen set showing that $\varphi:X\to X$ is not topologically free. 
Conversely, suppose  that $\varphi$ is not topologically free. Then there is a non-empty open set $U\subseteq X$
such that every point $x\in U$ is of period $n>0$ (i.e. $\varphi^n(x)=x$ and $\varphi^k(x)\neq x$ for $k=1,...,n-1$). 
Take any non-empty open set $V$ such that 
$\overline{V} \subseteq U$.  Then $K=\bigcup_{k=0}^{n-1} \varphi^k(\overline{V})$ 
is forward $\varphi$-invariant and $\varphi:K\to K$ is an expansive homeomorphism.  Hence $K$ is finite by  Schwartzman's theorem.
Thus $V$ is open and finite, and therefore it consists of isolated points.
\end{proof}
\begin{cor}\label{cor:spectrum_expansive}
Suppose that  $aT \in \B(H)$, $a\in A\cong C(X)$,  are well presented abstract weighted shifts such that the associated transfer
operator $L:A\to A$ is of finite type. If    the corresponding  map $\varphi:X\to X$
is expanding and $X$ has no isolated points, then 
 $$\sigma(aT)=\sigma_{ess}(aT)=\left\{z\in \C:|z|\leq \max_{\mu\in \Erg(X,\varphi)} \exp\left(\int_{{X}}\ln(\vert a\vert\sqrt{\varrho })\,d\mu
+\frac{h_{\varphi}(\mu)}{2} \right)\right\}
.$$
\end{cor}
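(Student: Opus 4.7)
The plan is to reduce the statement to Theorem~\ref{thm:main_result2} combined with Corollary~\ref{cor:t-entropy vs Kolmogorov-Sinai entropy} and Corollary~\ref{cor:Spectral_consequences}. First I would verify the hypotheses of Theorem~\ref{thm:main_result2}: since $L$ is of finite type, Proposition~\ref{prop:finite_type_characterisation} gives that $\varphi:X\to X$ is an open (surjective) local homeomorphism with strictly positive cocycle $\varrho$. Since $\varphi$ is expanding and $X$ has no isolated points (in particular, no isolated periodic points), Lemma~\ref{lem:freeness_expansiveness} guarantees that $\varphi$ is topologically free, so Theorem~\ref{thm:main_result2} applies.

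Next I would verify the ``in particular'' hypothesis of Theorem~\ref{thm:main_result2}, namely that $X$ contains no non-empty clopen forward $\varphi$-invariant set on which $\varphi$ is a homeomorphism and $a$ is non-zero. Suppose for contradiction that $K\subseteq X$ were such a set. Then $\varphi|_K:K\to K$ would be a homeomorphism of a compact space on which $\varphi$ expands small distances in the restricted metric. By Schwartzman's theorem (as recalled just before Theorem~\ref{thm:spectral decomposition}), this forces $K$ to be finite; but then each point of $K$ is isolated in $X$ (since $K$ is clopen), contradicting the assumption that $X$ has no isolated points. Consequently, Theorem~\ref{thm:main_result2} gives
$$
\sigma(aT)=\Big\{z\in \C:|z|\leq r_0\Big\}, \qquad r_0=\max_{\mu\in \Inv(X,\varphi)}\exp\!\Big(\int_X \ln|a|\, d\mu + \tfrac{\tau_L(\mu)}{2}\Big).
$$

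To rewrite $r_0$ in the stated form I would invoke Corollary~\ref{cor:t-entropy vs Kolmogorov-Sinai entropy}, whose hypotheses (an expanding, open dual map) are satisfied. This gives $\tau_L(\mu)=\int_X \ln\varrho\, d\mu + h_\varphi(\mu)$ for every $\mu\in \Inv(X,\varphi)$, so
$$
r_0=\max_{\mu\in \Inv(X,\varphi)}\exp\!\Big(\int_X \ln(|a|\sqrt{\varrho})\, d\mu + \tfrac{h_\varphi(\mu)}{2}\Big).
$$
The passage from $\Inv(X,\varphi)$ to $\Erg(X,\varphi)$ proceeds exactly as in the proof of Theorem~\ref{thm:Ruelle's}: the map $\mu\mapsto \int_X \ln(|a|\sqrt{\varrho})\, d\mu + h_\varphi(\mu)/2$ is affine and (using upper semicontinuity of $h_\varphi$ for expanding maps, Theorem~3.5.6 of~\cite{Przytycki}) upper semicontinuous, so the maximum is attained and the ergodic decomposition identifies the two suprema.

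Finally, the equality $\sigma(aT)=\sigma_{ess}(aT)$ follows from Corollary~\ref{cor:Spectral_consequences}: topological freeness of $\varphi$ has already been established, and the hypothesis that $X$ has no isolated points ensures $A\cap \K(H)=\{0\}$. I do not anticipate any serious obstacle; the only delicate point is the exclusion of clopen forward-invariant sets on which $\varphi$ is a homeomorphism, but this is handled cleanly by Schwartzman's theorem together with the no-isolated-points assumption.
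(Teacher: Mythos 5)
Your proposal is correct and follows essentially the same route as the paper: Lemma~\ref{lem:freeness_expansiveness} for topological freeness, the last part of Theorem~\ref{thm:main_result2} (with the Schwartzman argument you spell out, which the paper leaves implicit) to get the disk, and Corollary~\ref{cor:Spectral_consequences} for $\sigma(aT)=\sigma_{ess}(aT)$. The only cosmetic difference is that the paper quotes Corollary~\ref{cor:abstract_spectral_radius_expanding} for the radius, whereas you recover the same formula from the $t$-entropy expression via Corollary~\ref{cor:t-entropy vs Kolmogorov-Sinai entropy}; both rest on Theorem~\ref{thm:Ruelle's}.
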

\begin{proof}
By Lemma \ref{lem:freeness_expansiveness}  the last part of Theorem \ref{thm:main_result2}, $\sigma(aT)$ is a disk.  By Corollary \ref{cor:Spectral_consequences}, $\sigma(aT)=\sigma_{ess}(aT)$,
  and the formula for $r(aT)$ is given in Corollary \ref{cor:abstract_spectral_radius_expanding}.
\end{proof}

\begin{ex}[Cuntz-Krieger algebras]\label{ex:Cuntz-Krieger algebras}
Let $(\Sigma_{\mathbb{A}}, \sigma_{\mathbb{A}})$ be the one-sided topological Markov shift
 associated to a matrix $\mathbb{A}=[A(i,j)]_{i,j=1}^n$, $n\geq 2$, of zeros and ones, with no zero rows and no zero columns,
see Example \ref{ex:TMS}. The Cuntz-Krieger algebra $\mathcal{O}_{\mathbb{A}}$ associated to $\mathbb{A}$, see \cite{CK}, 
is defined as the universal $C^*$-algebra generated
by a family $\{S_i: i=1,...,n\}$ of partial isometries subject to the following relations
\begin{equation}\label{Cuntz-Krieger algebras}
  S_i^*S_i =\sum_{j=1}^n A(i,j) S_jS_j^*,  \qquad S_i^* S_k=
  \delta_{i,k} S_i^*S_i, \qquad i,k=1 ,...,n,
\end{equation} 
where  $\delta_{i,j}$ is
the Kronecker symbol. The algebra $\mathcal{O}_{\mathbb{A}}$ may be viewed as the crossed product
of $C(\Sigma_{\mathbb{A}})$ by a transfer operator for   $\sigma_{\mathbb{A}}$. More specifically, for
 every  multiindex $\mu = (i_1,\dots,i_k)$, with $i_j \in 1,...,n$,
we  write
\,$S_\mu = S_{i_1}S_{i_2}\dotsm S_{i_k}$.
Then the map $S_\mu S^*_\mu\mapsto  \mathds{1}_{C_{\mu}}$ extends to the isomorphism
$$
A:=\clsp\{S_\mu S^*_\mu :\mu = (i_1,\dots,i_k), \, k=1,\dotsc\}\cong C(\Sigma_{\mathbb{A}}),
$$
given by $S_\mu S^*_\mu\mapsto  \mathds{1}_{C_{\mu}}$.
Also putting $n_j:=\sum_{i=1}^n A(i,j)$ and $P_j:=S_jS_j^*$,
$j=1,...,n$, we have a canonical isometry given by 
$$
T:=\sum_{i,j=1}^n\frac{1}{ \sqrt{n_j}}\,\, S_i P_j,
$$
see \cite{exel3}, \cite{bialowieza}. Then $L(a)=TaT^*$, $a\in A$, where $L(a)(y)=\frac{1}{|\sigma_{\mathbb{A}}^{-1}(y)|}\sum_{x\in \sigma_{\mathbb{A}}^{-1}(y)}a(x)$ is the classical transfer operator
 on  $A\cong C(\Sigma_{\mathbb{A}})$.
The isomorphism $A\cong C(\Sigma_{\mathbb{A}})$ extends uniquely to 
an isomorphism $\mathcal{O}_{\mathbb{A}}\cong C(\Sigma_{\mathbb{A}})\rtimes L$ which  sends $T$ to
the universal isometry in $C(\Sigma_{\mathbb{A}})\rtimes L$.
In particular,  by Theorem \ref{thm:gauge_uniqueness} we may always represent $\mathcal{O}_{\mathbb{A}}$ on a Hilbert space $H$, so
that the operators $aT\in \B(H)$, $a\in A\subseteq \B(H)$, are well presented weighted shifts.
 By Corollaries  \ref{cor:rotation_invariance} and \ref{cor:abstract_spectral_radius_expanding}, for  every $a\in A$, $\sigma(aT)$ has the circular symmetry and 
$$
\ln r(aT)=\max_{\mu\in \Erg(\Sigma_{\mathbb{A}},\sigma_{\mathbb{A}})} \int_{\Sigma_{\mathbb{A}}}\ln\vert a\vert\,d\mu
- \frac{1}{2}\ln \prod_{j=1}^n n_j ^{\mu(C_j)}  + \frac{h_{\sigma_{\mathbb{A}}}(\mu)}{2} 
$$
where $C_j$ is the cylinder set of elements that start with $j$.
In other words, putting $|\sigma_{\mathbb{A}}^{-1}|(y):=|\sigma_{\mathbb{A}}^{-1}(y)|$ we get that the spectral logarithm of $aT$ 
is the square root of the
topological pressure for $\sigma_{\mathbb{A}}$ with the potential $|a|^2/|\sigma_{\mathbb{A}}^{-1}|$:
$
\ln r(aT)= \sqrt{P\left(|a|^2/|\sigma_{\mathbb{A}}^{-1}|,\sigma_{\mathbb{A}}\right)}.
$
The $t$-entropy for the canonical transfer operator $L$ is given by   
$
\tau_{L}(\mu)=h_{\varphi}(\mu) - \ln \prod_{j=1}^n n_j ^{\mu(C_j)},
$
see Corollary \ref{cor:t-entropy vs Kolmogorov-Sinai entropy}.  
If $a\in \text{span}\{S_\mu S^*_\mu :\mu = (i_1,\dots,i_k),  k=1,\dotsc\}$ is invertible, then $\ln (|a||\sigma_{\mathbb{A}}^{-1}|^{1/2})$ is H\"older continuous.
Thus if   $\mathbb{A}=\text{diag}(\mathbb{A}_1,...,\mathbb{A}_N)$ is the decomposition into irreducible matrices as in Example \ref{ex:TMS}, then
denoting by $\mu_{j}$ the Gibbs measure for $(\Sigma_{\mathbb{A}_j}, \sigma_{\mathbb{A}_j})$ with potential $\ln (|a||\sigma_{\mathbb{A}_j}^{-1}|^{1/2})$  we have
$$
\ln r(aT)=\max\limits_{j=1,...,N} \int_{\Sigma_{\mathbb{A}_j}}\ln (|a||\sigma_{\mathbb{A}_j}^{-1}|^{1/2})\,d\mu_j +\frac{h_{ \sigma_{\mathbb{A}_j}}(\mu_j)}{2}
,
$$  cf. Corollary \ref{cor:abstract_spectral_radius_expanding}.
In general, the shift map  $\sigma_{\mathbb{A}}$ is topologically free if and only if 
the space $\Sigma_{\mathbb{A}}$ has no isolated points. This is called \emph{condition (I)} in \cite{CK}.
Let us assume it. Then by \cite[Theorem 2.13]{CK} every family $\{S_i: i=1,...,n\}$ of non-zero operators satisfying
\eqref{Cuntz-Krieger algebras} generates an isomorphic copy of  $\mathcal{O}_{\mathbb{A}}$  (this result can be
deduced from Theorem \ref{thm:isomorphism}). By Corollary \ref{cor:spectrum_expansive}, 
under  condition (I), we have 
$
\sigma(aT)=\sigma_{ess}(aT)=\{z\in \C:|z|\leq r(aT)\}.
$
If all entries in $\mathbb{A}$ are ones then $\mathcal{O}_{\mathbb{A}}$ is the Cuntz algebra $\mathcal{O}_{n}$
and the above description gives Example \ref{ex:Cuntz} as a special case. 
The above considerations can  also be generalized to graph $C^*$-algebras, cf. \cite{kwa_inter}, \cite{kwa_Exel}.
\end{ex}
\begin{rem}\label{rem:Cuntz-Krieger_representaion}
If all states $\{1,...,n\}$ are essential (equivalentely $\Omega(\sigma_{\mathbb{A}})=\Sigma_{\mathbb{A}}$), then  Proposition \ref{prop:cocycle_inducing_measures}
gives a representation of  $\mathcal{O}_{\mathbb{A}}$ on a Hilbert space $L^2_\mu(\Sigma_{\mathbb{A}})$ that sends $C(\Sigma_{\mathbb{A}})$ to multiplication operators
and $T$ to the operator of composition with  $\sigma_{\mathbb{A}}$.
This representation is faithful if and only if $\sigma_{\mathbb{A}}$ is topologically free (condition (I) holds).
\end{rem}
\subsection{Weighted shifts associated with measure dynamical system} 
Suppose we are in the situation of subsection \ref{subsection:Weighted shifts}. Hence
$T$ is an isometry on $H=L^2_\mu(\Omega)$ given by \eqref{eq:wso_measurable_isometry}
where $\Phi:\Omega\to \Omega$ is a measurable map with $\mu\circ \Phi^{-1}\preceq \mu$ and the weight \(\rho:\Omega\to \C\) is a \(\Sigma\)-measurable function. 
Let $A\cong L^\infty_\mu(\Omega)$ be the algebra of   operators of multiplication. Then 
$T$ generates the transfer operator $L:A\to A$, $L(a):=T^*aT$, $a\in A$, see Proposition \ref{prop:abstract_wso_from_measure_system}.
The corresponding endomorphism $\alpha:A\to A$ is given by composition with $\Phi$.

By the Gelfand-Naimark theorem, $A\cong C(X)$ for a compact Hausdorff space $X$. 
Since $A=L^\infty_\mu(\Omega)$
is a von Neumann algebra, $X$ is  a Stonean space. Thus it corresponds via the
Stone duality to the complete Boolean algebra of projections in $L^\infty_\mu(\Omega)$. This in turn
corresponds to measurable sets in $\Sigma$ modulo the equivalence relation: $A\sim B \Longleftrightarrow \mu(A\triangle B)=0$.
The map $\Phi:\Omega\to \Omega$ induces a continuous surjective map $\varphi:X\to X$ such that the endomorphism 
$\alpha:C(X)\to C(X)$ is given by composition with $\varphi$.
We characterise  topological freeness of $\varphi$ via  freeness of $\alpha$ in the sense of \cite{Arveson} which translates to a condition for $\Phi$ that
we call \emph{metrical freeness}:
\begin{lem}\label{lem:characterisation_topol_freeness} 
Retain the above assumptions and notation. Let $\Sigma_+$ be the family of measurable sets with  positive measure.
The following conditions are equivalent:
\begin{enumerate}
\item\label{enu:characterisation_topol_freeness1} $\varphi$ is topologically free, that is for each $n>1$ 
the set $\{x\in X:\varphi^n(x)=x\}$ has  empty interior. 

\item\label{enu:characterisation_topol_freeness2} $\alpha$ acts freely \cite{Arveson}, that is for any non-zero projection $p\in A$ with $p \leq \alpha^n(p)$ and $n>1$ there is a  non-zero projection $q\leq p$ 
such that $ \alpha^n(q)\bot q$.
\item\label{enu:characterisation_topol_freeness3}  $\Phi$  is metrically free, that is for every $P\in \Sigma_+$ and $n>1$ with   $\mu(P\setminus \Phi^{-n}(P))=0$   there is $Q\in \Sigma_+$ such that 
$Q\subseteq P$ and $\mu(Q\cap \Phi^{-n}(Q))=0$. 

\end{enumerate}
\end{lem}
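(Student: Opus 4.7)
The plan is to observe that all three conditions are formal translations of one another, with the Boolean algebra of projections of $A=L^\infty_\mu(\Omega)$ providing the bridge. Since $A$ is a von Neumann algebra, its projection lattice is a complete Boolean algebra canonically isomorphic (by Stone duality) both to the Boolean algebra of clopen subsets of the Stonean space $X$ and to $\Sigma/\!\sim_\mu$ (measurable sets modulo null sets). Under these identifications a projection $p\in A$ corresponds simultaneously to a clopen $U\subseteq X$ and to a class $[P]\in \Sigma/\!\sim_\mu$, with $p\neq 0$ iff $U\neq\emptyset$ iff $P\in\Sigma_+$. Since $\alpha(a)=a\circ\varphi = a\circ\Phi$, one computes $\alpha^n(\chi_P)=\chi_{\Phi^{-n}(P)}$ and $\alpha^n(\chi_U)=\chi_{\varphi^{-n}(U)}$, so the relations $p\leq \alpha^n(p)$ and $q\,\alpha^n(q)=0$ translate literally to $U\subseteq\varphi^{-n}(U)$, $V\cap\varphi^{-n}(V)=\emptyset$ on the $X$-side, and to $\mu(P\setminus\Phi^{-n}(P))=0$, $\mu(Q\cap\Phi^{-n}(Q))=0$ on the $\Omega$-side. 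Hence the equivalence \ref{enu:characterisation_topol_freeness2}$\Leftrightarrow$\ref{enu:characterisation_topol_freeness3} is a direct dictionary check.

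For \ref{enu:characterisation_topol_freeness1}$\Rightarrow$\ref{enu:characterisation_topol_freeness2} I will start with a non-zero projection $p=\chi_U$ (with $U\subseteq X$ clopen) satisfying $U\subseteq\varphi^{-n}(U)$. Topological freeness of $\varphi$ guarantees that $U\setminus\mathrm{Fix}(\varphi^n)$ is non-empty, so there exists $x\in U$ with $\varphi^n(x)\neq x$. I pick disjoint open neighbourhoods $W_1\ni x$ and $W_2\ni\varphi^n(x)$ and shrink $W_1$ (using continuity of $\varphi^n$) to arrange $\varphi^n(W_1)\subseteq W_2$. Using that $X$ is Stonean and that clopen sets form a basis, I select a non-empty clopen $V$ with $x\in V\subseteq W_1\cap U$; then $V\cap\varphi^{-n}(V)\subseteq V\cap\varphi^{-n}(W_2)=\emptyset$ because $\varphi^n(V)\subseteq W_2$ is disjoint from $V$. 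Setting $q:=\chi_V\leq p$ gives a non-zero projection with $q\,\alpha^n(q)=0$, which is Arveson freeness.

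For \ref{enu:characterisation_topol_freeness2}$\Rightarrow$\ref{enu:characterisation_topol_freeness1} I argue contrapositively: if some $\mathrm{Fix}(\varphi^n)$ has non-empty interior for some $n>1$, pick a non-empty clopen $U\subseteq\mathrm{int}\,\mathrm{Fix}(\varphi^n)$ (possible because $X$ is Stonean) and set $p=\chi_U$. Then $\varphi^n$ fixes every point of $U$, so $U\subseteq\varphi^{-n}(U)$, i.e.\ $p\leq\alpha^n(p)$; yet for every non-zero sub-projection $q=\chi_V$, $V\neq\emptyset$ is again pointwise fixed by $\varphi^n$, so $V\subseteq V\cap\varphi^{-n}(V)$ and $q\,\alpha^n(q)=q\neq 0$, violating \ref{enu:characterisation_topol_freeness2}. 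Finally, since $\mathrm{Fix}(\varphi)\subseteq\mathrm{Fix}(\varphi^2)$, the a priori weaker requirement ``empty interior for each $n>1$'' is equivalent to the definition of topological freeness given in Definition~\ref{def:topological freeness}, so nothing more is needed on that side.

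The main obstacle I anticipate is bookkeeping around the Stonean/Boolean dictionary rather than any real dynamical content; in particular one must be careful that the non-zero projection $q\leq p$ produced in the $\ref{enu:characterisation_topol_freeness1}\Rightarrow\ref{enu:characterisation_topol_freeness2}$ step is indeed supported inside $U$ (hence $q\leq p$ genuinely), which is why the clopen basis of the Stonean space $X$ is used, and that $\alpha^n$ acts on the projection lattice by pull-back $\varphi^{-n}$ rather than push-forward (this is dictated by the formula $\alpha(a)=a\circ\varphi$, and matches the $\Phi^{-n}$ appearing on the measure-space side).
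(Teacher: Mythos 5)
Your proposal follows essentially the same route as the paper's proof: the equivalence of (ii) and (iii) is the same Boolean-algebra dictionary between projections in $L^\infty_\mu(\Omega)$, clopen subsets of the Stonean spectrum $X$, and classes in $\Sigma/\!\sim$, and both implications between (i) and (ii) use the same clopen-set constructions, including the contrapositive argument for (ii)$\Rightarrow$(i). There is, however, one local slip in your (i)$\Rightarrow$(ii) step: the chain $V\cap\varphi^{-n}(V)\subseteq V\cap\varphi^{-n}(W_2)=\emptyset$ is false as written, because $\varphi^n(V)\subseteq W_2$ says precisely that $V\subseteq\varphi^{-n}(W_2)$, so $V\cap\varphi^{-n}(W_2)=V\neq\emptyset$. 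The intended (and correct) argument is the one the paper uses: from $\varphi^n(V)\subseteq W_2$ and $V\subseteq W_1$ with $W_1\cap W_2=\emptyset$ one gets $\varphi^n(V)\cap V=\emptyset$, and this already forces $\varphi^{-n}(V)\cap V=\emptyset$ (if $y\in V\cap\varphi^{-n}(V)$ then $\varphi^n(y)\in\varphi^n(V)\cap V$, a contradiction). With this one-line repair, your choice of the projection supported on $V$ as $q\leq p$ with $q\,\alpha^n(q)=0$ goes through exactly as in the paper; the remaining parts, including your remark reconciling the per-$n$ formulation with the paper's definition of topological freeness, are fine.
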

\begin{proof} \ref{enu:characterisation_topol_freeness1}$\Rightarrow$\ref{enu:characterisation_topol_freeness2}.
Let $p\in A$ be a non-zero projection  with $p \leq \alpha^n(p)$. 
Then $p=\mathds{1}_{V}\in C(X)$ for a non-empty clopen $V\subseteq X$ with $V\subseteq \varphi^{-n}(V)$.
By  topological freeness of $\varphi$ there is a point $x\in V$ with $\varphi^{n}(x)\neq x$.
Since $X$ is a Stone space we may find a clopen set $U\subseteq V$ such that  $x\in U$
and $U \cap  \varphi^n(U) = \emptyset$. This implies that $\varphi^{-n}(U)\cap U=\emptyset$. Hence for $q:=\mathds{1}_{U}$ we get $q\leq p$ and $\alpha^n(q) q =0$.

\ref{enu:characterisation_topol_freeness2}$\Rightarrow$\ref{enu:characterisation_topol_freeness1}.
Suppose that $\{x\in X:\varphi^n(x)=x\}$ has  a non-empty interior. Then there is a non-empty clopen 
set $V\subseteq \{x\in X:\varphi^n(x)=x\}$. Moreover, for every non-empty clopen subset $U$ of $V$
we have  $\varphi^{-n}(U)\cap U=U\neq \emptyset$. Hence $p:=\mathds{1}_{V}$ is a non-zero projection in $A$ 
such that for every non-zero projection $q\leq p$ 
we have $ \alpha^n(q)q\neq 0$.

Equivalence 
\ref{enu:characterisation_topol_freeness2}$\Leftrightarrow$\ref{enu:characterisation_topol_freeness3},
in view of definition of $\alpha$,  is straightforward.
\end{proof}
We further extend the terminology by saying that a set $\Omega_0\in \Sigma$ is \emph{$\mu$-almost $\Phi$-invariant} if $\mu(\Phi^{-1}(\Omega_0)\triangle \Omega_0)=0$. 
We say that $\Phi$ is \emph{$\mu$-almost bijective on a set $\Omega_0\in \Sigma$} if the map $\Delta\mapsto \Phi^{-1}(\Delta)\cap \Omega_0$ induces an automorphism of $\Sigma_{\Omega_0}/\sim$ where  $\Sigma_{\Omega_0}:=\{\Delta\subseteq \Omega_0: \Delta\in \Sigma\}$. 
\begin{ex}\label{ex:map_freeness_etc}
Let $\Phi:V\to V$ be a surjective map on a set $V$ equipped with the counting measure $\mu$. Then $\Phi$ is metrically free if and only if $\Phi$ has no periodic points, so its graph is a direct sum of leafless and rootless directed trees  \cite{Stochel}. 
A set $\Omega_0\subseteq V$ is $\mu$-almost $\Phi$-invariant if and only if it is  $\Phi$-invariant, i.e. $\Phi^{-1}(\Omega_0)=\Omega_0$. 
The map  $\Phi$ is $\mu$-almost bijective on a set $\Omega_0\subseteq V$ if and only if 
$\Omega_0$ is forward $\Phi$-invariant and $\Phi:\Omega_0\to \Omega_0$ is a bijection. 
The map $\Phi$ is always $\mu$-almost bijective  on some non-empty set. 
Indeed, for any $v\in V$ we may find a sequence $\{v_n\}\in V$ such that $v=\Phi(v_1)$ and 
$v_n=\Phi(v_{n+1})$, for all $n>0$, and then  $\Phi$ restricted to $\Omega_0:=\{v_n: n>0\} \cup \{\Phi^n(v): n 
\geq 0\}$ is bijective. 
However, there might be no  $\Phi$-invariant sets on which $\Phi$ is bijective. 
In Example \ref{ex:contrexample} there are exactly two forward $\Phi$-invariant  sets on which $\Phi$ is  bijective, but none of these sets is (backward) $\Phi$-invariant.    
\end{ex}
\begin{cor}\label{cor:measure_dyna_spectrum}
Assume that $(Th)(\omega)=\rho(\omega) h(\Phi(\omega))$ defines an isometry $T\in \B(L^2_\mu(\Omega))$, 
see Lemma \ref{lem:wso_measurable_isometry}. Assume also that $\rho \neq 0$ $\mu$-almost everywhere, and that $\Phi:\Omega \to \Omega$ is 
 non-singular and metrically free.
Then  
\begin{enumerate}
\item\label{enu:measure_dyna_spectrum1} the spectrum of every weighted composition operator $aT$, $a\in L^\infty_\mu(\Omega)$,
is invariant under rotation around zero;

\item \label{enu:measure_dyna_spectrum2} If the measure $\mu$ is atomless then $\sigma(aT)=\sigma_{ess}(aT)$ for every  $a\in L^\infty_\mu(\Omega)$.

\item \label{enu:measure_dyna_spectrum3} If  $N$ is the largest number such that 
$\Omega=\Omega_0 \cup \bigcup_{i=1}^N \Omega_i$ decomposes into disjoint $\mu$-almost $\Phi$-invariant sets of positive measure, such that 
$\Phi$ is $\mu$-almost bijective on $\Omega_i$, for $i=1,...,N$, then for every $a\in L^\infty_\mu(\Omega)$ which is non-zero $\mu$-almost everywhere $\sigma(aT)$ has 
at most $N+1$ connected components.

\end{enumerate}
\end{cor}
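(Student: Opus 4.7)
The three parts all follow by translating the measure-theoretic data to the $C^*$-algebraic setting of the earlier theorems. I would first identify $A = L^\infty_\mu(\Omega) \cong C(X)$ where $X$ is the Gelfand (Stonean) spectrum of $A$, whose clopen subsets are in bijection with $\Sigma$ modulo $\mu$-null sets. By Proposition~\ref{prop:abstract_wso_from_measure_system}, axioms (A1), (A2), (A3) hold (using $\rho\neq 0$ a.e.), so the $aT$, $a\in A$, are well presented abstract weighted shifts associated with the transfer operator $L(a)=T^*aT$. The endomorphism $\alpha(a)=a\circ\Phi$ of $A$ corresponds to a continuous surjection $\varphi:X\to X$, and by Lemma~\ref{lem:characterisation_topol_freeness} the hypothesis that $\Phi$ is metrically free is equivalent to topological freeness of $\varphi$. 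Under Stone duality, a clopen $U\subseteq X$ is $\varphi$-invariant iff the associated measurable set $\Omega_U\in\Sigma$ is $\mu$-almost $\Phi$-invariant, and $\varphi|_U$ is a homeomorphism iff $\Phi$ is $\mu$-almost bijective on $\Omega_U$.

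Part (i) is then immediate from the first half of Corollary~\ref{cor:Spectral_consequences}.

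For part (ii), the remaining hypothesis of Corollary~\ref{cor:Spectral_consequences} to verify is $A\cap\K(L^2_\mu(\Omega))=\{0\}$. Given a non-zero multiplication operator $M_a\in A$, choose $\varepsilon>0$ with $E:=\{|a|>\varepsilon\}$ of positive measure; since $\mu$ is atomless, $E$ can be split into infinitely many disjoint measurable subsets of positive measure, producing an orthonormal sequence $\{e_n\}$ with $\|M_a e_n\|\geq\varepsilon$, so $M_a$ is not compact. Then Corollary~\ref{cor:Spectral_consequences} yields $\sigma(aT)=\sigma_{ess}(aT)$.

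For part (iii), the hypothesis $a\neq 0$ $\mu$-a.e.\ translates to $\mathds{1}_{X\setminus a^{-1}(0)}=1$ in $A$, so Theorem~\ref{thm:main_result1} applies and produces a decomposition $X=X_0\sqcup\bigsqcup_{r\in R}X_r$ into disjoint closed $\varphi$-invariant sets where each $X_r$ ($r\in R$) is clopen and $\varphi|_{X_r}$ is a homeomorphism, and $\sigma(aT)$ is the (possibly empty) central disk $\{|z|\leq r_0\}$ together with the $|R|$ pairwise disjoint annuli coming from $R$, hence at most $|R|+1$ connected components in total. Translating the clopen sets $X_r$ to measurable sets $\Omega_r$ via the Stone duality gives a decomposition $\Omega=\Omega_0\cup\bigsqcup_{r\in R}\Omega_r$ into disjoint $\mu$-almost $\Phi$-invariant sets of positive measure on which (for $r\in R$) $\Phi$ is $\mu$-almost bijective. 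By maximality of $N$ this forces $|R|\leq N$, giving the bound $N+1$ on the number of connected components.

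The main obstacle I foresee is not substantive but bookkeeping: one must confirm carefully that, in the correspondence between clopen subsets of $X$ and $\Sigma/\!\sim$, the map $\varphi^{-1}$ on clopen subsets is exactly the map induced by $\Phi^{-1}$ on $\Sigma/\!\sim$ (so that topological $\varphi$-invariance matches $\mu$-almost $\Phi$-invariance), and that being a local/bijective homeomorphism on a clopen piece corresponds precisely to $\mu$-almost bijectivity of $\Phi$ on the measurable counterpart. Both follow once one checks that $\alpha$ acts on the Boolean algebra of projections of $L^\infty_\mu(\Omega)$ by $\mathds{1}_{\Omega'}\mapsto\mathds{1}_{\Phi^{-1}(\Omega')}$, which is immediate from $\alpha(a)=a\circ\Phi$.
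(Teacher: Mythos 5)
Your proposal is correct and follows essentially the same route as the paper's proof: Proposition~\ref{prop:abstract_wso_from_measure_system} and Lemma~\ref{lem:characterisation_topol_freeness} feed Corollary~\ref{cor:Spectral_consequences} for parts (i)--(ii) (where you additionally supply the atomless $\Rightarrow$ no non-zero compacts argument that the paper only asserts), and part (iii) is Theorem~\ref{thm:main_result1} combined with the Stone-duality dictionary between clopen subsets of $X$ and $\Sigma/\!\sim$, matching $\varphi$-invariance with $\mu$-almost $\Phi$-invariance and homeomorphic restriction with $\mu$-almost bijectivity. The only small imprecision is calling the sets $X_r$ of Theorem~\ref{thm:main_result1} clopen (the theorem gives closed sets), but in the finite-component situation needed for the bound $|R|\leq N$ the relevant sets do come from Riesz projectors and hence are clopen, so the translation you describe goes through exactly as in the paper.
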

\begin{proof}
By Proposition \ref{prop:abstract_wso_from_measure_system} the operators  $aT$, $a\in L^\infty_\mu(\Omega)$, 
are well presented abstract weighted shifts. By Lemma \ref{lem:characterisation_topol_freeness}  
the corresponding dual map $\varphi:X\to X$ is topologically free.
Hence \ref{enu:measure_dyna_spectrum1} and \ref{enu:measure_dyna_spectrum2} follow from Corollary \ref{cor:Spectral_consequences}, because the algebra $L^\infty_\mu(\Omega)\cong A\subseteq \B(H)$ of multiplication operators contains no non-zero compact operator 
if and only if $\mu$ is atomless. 

 Item \ref{enu:measure_dyna_spectrum3} follows from Theorem \ref{thm:main_result1}. Indeed, in the present setting the condition $\mathds{1}_{X\setminus a^{-1}(0)}=1$
is equivalent to $a\in L^\infty_\mu(\Omega)$ being non-zero $\mu$-almost everywhere. 
Clopen subsets $X_0$ of $X$ are in an one-to-one correspondence with elements $[\Omega_0]$ in $\Sigma/\sim$.
Moreover, $X_0$ is $\varphi$-invariant if and only if $\Omega_0$ is $\mu$-almost $\Phi$-invariant,
and $\varphi$ restricts to a homeomorphism of $X_0$ if and only if $\Phi$ is $\mu$-almost bijective on $\Omega_0$.
\end{proof}

\begin{cor}\label{cor:abstract_wso_from_measure_system1}
Let $\Phi:\Omega\to \Omega$ be a metrically free map on a measure space $(\Omega,\Sigma,\mu)$ which can be decomposed 
 into disjoint sets 
$V_i\in \Sigma$, $i=1,...,n$, such that  $\Delta\mapsto \Phi^{-1}(\Delta)$ induces an isomorphism 
$\Sigma_{V_i}/\sim\cong \Sigma_{\Phi(\Omega_i)}/\sim$, for every $i=1,...,n$. Assume also that  
$\ess\inf(\frac{d\mu\circ \Phi}{d\mu})>0$ and  that $N$ is the largest number such that $\Omega=\Omega_0 \cup \bigcup_{i=1}^N \Omega_i$
decomposes into disjoint sets  such that $\Phi$ is $\mu$-almost bijective on each $\Omega_i$ and $\mu(\Omega_i)>0$, $i=1,...,N$. 
The spectrum of any operator
$$
(aT_{\Phi}h)(\omega)=a(\omega) h(\Phi(\omega)), \qquad  a\in L^\infty_\mu(\Omega),
$$
acting on $L^2_\mu(\Omega)$ is of the form 
$$
\sigma(aT_{\Phi})=\{z\in\C: |z|\leq r_0\}\cup \bigcup_{i=1}^N\{z\in \C: r_{i,-}\leq |z| \leq  r_{i,+}\}
$$
where the unions are not necessarily disjoint. Thus $\sigma(aT_{\Phi})$ has at most $N+1$ components, and 
if there are no sets of positive measure on which $\Phi$ is $\mu$-almost bijective,
then $\sigma(aT_{\Phi})$ is a disk. 
If  $\mu$ is atomless, then  $\sigma(aT_{\Phi})=\sigma_{ess}(aT_{\Phi})$. 
\end{cor}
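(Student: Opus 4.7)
The plan is to reduce the claim to the finite type case, Theorem~\ref{thm:main_result2}, by first repackaging $aT_\Phi$ as an abstract weighted shift and then exhibiting a quasi-basis for the resulting transfer operator.

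First, since $\ess\inf(\tfrac{d\mu\circ\Phi}{d\mu})>0$, Corollary~\ref{cor:abstract_wso_from_measure_system}(ii) guarantees that the isometry $T:=\sqrt{\tfrac{d\mu\circ\Phi}{d\mu}}\,T_\Phi$ and the algebra $A:=L^\infty_\mu(\Omega)$ of multiplication operators satisfy axioms (A1), (A2), (A3), and that every $aT_\Phi$, $a\in A$, coincides with some $bT$, $b\in A$. Thus the operators $aT_\Phi$ are well presented abstract weighted shifts associated with the transfer operator $L(\cdot):=T^*(\cdot)T$.

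Second, I would show that $L$ is of finite type. The hypothesised disjoint decomposition $\Omega=\bigsqcup_{i=1}^n V_i$ together with the $\sigma$-algebra isomorphisms $\Sigma_{V_i}/\!\sim\,\cong\Sigma_{\Phi(V_i)}/\!\sim$ translates, under the Gelfand--Stone duality $A\cong C(X)$, into a finite clopen partition $\{U_i\}_{i=1}^n$ of $X$ on which the dual map $\varphi:X\to X$ restricts to a homeomorphism $U_i\to\varphi(U_i)$ with $\varphi(U_i)$ clopen. Consequently $\varphi$ is a local homeomorphism, and Proposition~\ref{prop:finite_type_characterisation} supplies an explicit quasi-basis for $L$, so $L$ is of finite type with strictly positive cocycle~$\varrho$.

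Third, by Lemma~\ref{lem:characterisation_topol_freeness} metric freeness of $\Phi$ is equivalent to topological freeness of $\varphi$, so Theorem~\ref{thm:main_result2} applies and produces the displayed form of $\sigma(aT_\Phi)=\sigma(bT)$, with annuli indexed by a disjoint family of clopen forward $\varphi$-invariant sets $X_r\subseteq X$ on which $\varphi$ is a homeomorphism and $a$ does not vanish. Under Stone duality these $X_r$ correspond bijectively to $\sim$-classes of $\mu$-almost $\Phi$-invariant measurable sets of positive measure on which $\Phi$ is $\mu$-almost bijective; since the maximal number of disjoint such classes is~$N$, there are at most $N$ annuli and hence at most $N+1$ connected components. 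If no measurable set of positive measure carries a $\mu$-almost bijective restriction of $\Phi$, then no annuli appear and $\sigma(aT_\Phi)$ reduces to the disk $\{|z|\le r_0\}$. Finally, atomlessness of $\mu$ forces $A\cap\K(L^2_\mu(\Omega))=\{0\}$, whence $\sigma(aT_\Phi)=\sigma_{ess}(aT_\Phi)$ by Corollary~\ref{cor:Spectral_consequences}.

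The main obstacle I anticipate is the second step: passing from the measurable hypothesis on $\Phi$ to the topological local-homeomorphism property of $\varphi$ on the Stonean space $X$. One must carefully verify that the assumed $\sigma$-algebra isomorphism on each $V_i$ translates, via the projection-valued correspondence between $\Sigma/\!\sim$ and the clopen subsets of~$X$, into injectivity of $\varphi$ on each clopen $U_i$ together with openness of $\varphi(U_i)$, so that Proposition~\ref{prop:finite_type_characterisation} genuinely applies and delivers a quasi-basis.
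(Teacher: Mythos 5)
Your proposal is correct and follows the same skeleton as the paper's proof: pass to the well presented shifts $bT$ with $T=\sqrt{\tfrac{d\mu\circ \Phi}{d\mu}}\,T_{\Phi}$ via Corollary \ref{cor:abstract_wso_from_measure_system}, get topological freeness of $\varphi$ from metrical freeness via Lemma \ref{lem:characterisation_topol_freeness}, establish that the transfer operator is of finite type, and then invoke the last part of Theorem \ref{thm:main_result2} together with Corollary \ref{cor:Spectral_consequences} for the atomless claim. The one place where you diverge is exactly the step you flag as the obstacle: rather than translating the hypotheses on the $V_i$ into a topological statement (local homeomorphism of $\varphi$ on the Stonean spectrum plus strictly positive cocycle) and then invoking Proposition \ref{prop:finite_type_characterisation}, the paper sidesteps the Stone-duality translation entirely by exhibiting the quasi-basis directly in measure-theoretic terms, namely $u_i:=\bigl(\tfrac{d\mu\circ \Phi}{d\mu}\bigr)^{-1/2}\mathds{1}_{V_i}$, $i=1,\dots,n$ (the hypothesis $\ess\inf(\tfrac{d\mu\circ \Phi}{d\mu})>0$ guarantees $u_i\in L^\infty_\mu(\Omega)$, and the $\sigma$-algebra isomorphisms on the $V_i$ give the quasi-basis identity); this is shorter and avoids the delicate verification you anticipate, so you may prefer it. One small correction: the sets produced by Theorem \ref{thm:main_result2} are only \emph{forward} $\varphi$-invariant, so the corresponding sets $\Omega_r$ are sets on which $\Phi$ is $\mu$-almost bijective but need not be $\mu$-almost $\Phi$-invariant (cf. Example \ref{ex:contrexample}); since the definition of $N$ in the statement requires only almost bijectivity, your count of at most $N$ annuli still goes through, but you should drop the phrase ``$\mu$-almost $\Phi$-invariant'' from that correspondence.
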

\begin{proof} In view of Corollary \ref{cor:abstract_wso_from_measure_system},
 operators $aT_{\Phi}$, $a\in  L^\infty_\mu(\Omega)$,
can be viewed as well presented abstract weighted shifts $bT$, $b\in  L^\infty_\mu(\Omega)$,  where   $T:=\sqrt{\frac{d\mu\circ \Phi}{d\mu}}T_{\Phi}$ and
$b=a\cdot \left(\frac{d\mu\circ \Phi}{d\mu}\right)^{-1/2}$. 
As in the proof of Corollary \ref{cor:measure_dyna_spectrum}
we see that the  corresponding dual map $\varphi:X\to X$ is topologically free and $N$ is the largest number 
such that  $X=X_0\cup \bigcup_{i=1}^N X_{i}$ decomposes into disjoint clopen sets such that 
$\varphi:X_i\to X_i$ is a homeomorphism for each $i=1,...,n$ (it corresponds to the decomposition
$\Omega=\Omega_0 \cup \bigcup_{i=1}^N \Omega_i$). 
In addition, the assumption that $\Phi$ induces isomorphisms $\Sigma_{V_i}/\sim\cong \Sigma_{\Phi(V)}/\sim$
for the decomposition $\Omega=V_1\sqcup ... \sqcup V_n$ and that $\ess\inf(\frac{d\mu\circ \Phi}{d\mu})>0$ implies that the associated transfer operator 
$L(a)=T_\Phi^* a\frac{d\mu\circ \Phi}{d\mu} T$, $a\in  A\cong L^\infty_\mu(\Omega)$ is of finite type 
(with a quasi-basis given by $u_i:=\left(\frac{d\mu\circ \Phi}{d\mu}\right)^{-1/2} \mathds{1}_{\Omega_i} $, $i=1,...,n$).
Thus the assertion follows from the last part of Theorem \ref{thm:main_result2} (and Corollary \ref{cor:measure_dyna_spectrum}).
\end{proof}
\begin{ex}
Suppose that $\Phi(z)=z^n$, $n>1$, is the map defined on $\Omega=S^1=\{z\in \C:|z|=1\}$
and let $H=L^2_\mu(S^1)$ where  $\mu$ is the normalized Lebesgue measure on $S^1$. Consider the operators $aT\in \B(H)$, $a\in L^\infty_\mu(S^1)$, given by
$aTh (z)=a(z) h(\varphi(z))$, $h\in H$. By Corollary \ref{cor:abstract_wso_from_measure_system1} we have
$
\sigma(aT)=\sigma_{ess}(aT)=\left\{z\in\C: |z|\leq r(aT)\right\}.
$
 If $a\in C(S^1)$ we may deduce the same  from  Corollary \ref{cor:spectrum_expansive}. 
\end{ex}

\begin{ex}\label{ex:directed_trees}
Let $\Phi:V\to V$ be a surjective map on a countable set $V$, which is minimal in the sense that
there are no non-trivial $\Phi$-invariant subsets of $V$. Then $\Phi$ corresponds to a  
leafless and rootless directed tree $\mathfrak{T}$, see  \cite{Stochel}.
Assume that this tree has a finite number of branches.  Then all the bounded weighted shifts on $\mathfrak{T}$
are of the form $(aT_{\Phi}h)(v)=a(v) h(\Phi(v))$,  $a\in \ell^\infty(V)$, 
cf. Remark \ref{rem:weighted_shifts_on_trees}.  Assumptions of  Corollary \ref{cor:abstract_wso_from_measure_system1}
are satisfied with $N=1$, cf. Example \ref{ex:map_freeness_etc}. 
Thus for every weighted shift $S_\lambda$   on the directed tree $\mathfrak{T}$ with  bounded weights $\lambda=\{\lambda_v\}_{v\in V}$ 
the spectrum $\sigma(S_\lambda)$ has at most two connected components. In fact it may have two connected components by
Example \ref{ex:contrexample}. If all weights are non-zero, then  $\sigma(S_\lambda)$ is connected by Corollary \ref{cor:measure_dyna_spectrum}.
\end{ex}


\begin{thebibliography}{99}


\bibitem[Ana97]{Anantharaman-Delaroche}

C. Anantharaman-Delaroche,  \emph{Purely infinite $C^*$-algebras arising from dynamical systems}, Bull. Soc. Math. France \textbf{125} (1997), 199--225.




\bibitem[ABL11]{t-entropy}
A. Antonevich, V. I. Bakhtin, A. Lebedev,
\emph{On t-entropy and variational principle for the spectral radii of transfer and weighted shift operators}
Ergodic Theory Dynam. Systems \textbf{31} (2011), no. 4, 995-1042.



  \bibitem[AL94]{Anton_Lebed}   A. Antonevich, A. Lebedev, Functional differential equations: I. $C^*$-theory,
 Longman Scientific \& Technical,
Pitman Monographs and Surveys in Pure and Applied Mathematics 70,
 1994.
 
\bibitem[ABL98]{Anton_Belousov_Lebed2}
A. Antonevich, M. Belousov, A. Lebedev, 
Functional differential equations, Part II. C*-applications, P.1,2,
Pitman Monographs and Surveys in Pure and Applied Mathematics vol. 94, 95, Longman, 1998. 
\bibitem[Arv67]{Arveson}
W. B. Arveson, \emph{Operator algebras and measure preserving automorphisms} Acta Math.
 \textbf{118} (1967), 95--109.
\bibitem[AV78]{Arzu_Vershik1}
V. A. Arzumanian and A. M. Vershik, \emph{Factor representations of the crossed product of a commutative C*-algebra and a
semigroup of its endomorphisms}, Dokl. Akad. Nauk. SSSR, \textbf{238} (1978), 513--517. Translated in Soviet Math. Dokl., 19
(1978), No. 1.

\bibitem[AV84]{Arzu_Vershik2}  V. Arzumanian and A. Vershik, \emph{Star algebras associated with endomorphisms in operator
algebras and group representations}, Proc. Int. Conf., Vol. I Pitman Boston (1984), 17-27

\bibitem[BL17]{t-entropy2}
V. I. Bakhtin, A. Lebedev, \emph{A new definition of t-entropy for transfer operators}, 
Entropy \textbf{19} (2017), 573

\bibitem[BFK14]{BFK}	M.  A.  Bastos,  C.  A.  Fernandes,  Y.  I.  Karlovich,  \emph{A $C^*$-algebra of singular integral operators    with
shifts admitting distinct fixed points}, J. Math. Anal. Appl. \textbf{413} (2014), 502--524.

\bibitem[BR06]{br} N. Brownlowe, I. Raeburn \emph{Exel's crossed product and relative
Cuntz-Pimsner algebras},  Math. Proc.  Camb. Phil. Soc.  \textbf{141} (2006), 497--508. 


  \bibitem[Bow75]{Bowen} 
R.	Bowen, Equilibrium states and the ergodic theory of Anosov diffeomorphisms. Lecture Notes in Mathematics, vol.470. Berlin, Heidelberg, New York: Springer 1975.
	

\bibitem[CS09]{CS} T. Carlsen, S. Silvestrov, \emph{On the Exel Crossed Product of Topological Covering
Maps},  Acta Appl Math  \textbf{108} (2009), 573--583.

\bibitem[CKO19]{CKO} T. M.  Carlsen, B. Kwa\'sniewski, E. Ortega, \emph{Topological freenes for  $C^*$-correspondences},
Math. Anal. Appl. \textbf{473} (2019) 749--78.

\bibitem[Cob67]{Coburn}
L. Coburn,  \emph{The $C^*$-algebra of an isometry}, Bull. Amer. Math. Soc. \textbf{73}  (1967), 722--726.

\bibitem[Cun77]{Cu77} J. Cuntz,
\emph{Simple $C^*$-algebras generated by isometries}, Comm. Math. Phys. {\bf 57} (1977), 173--185.

\bibitem[CK80]{CK}
J.~Cuntz and W.~Krieger, \emph{A class of ${C}^*$-algebras and topological markov chains},
  Inventiones Math. \textbf{56 } (1980), 251--268.

\bibitem[Dea95]{Deaconu} V. Deaconu  \emph{Groupoids associated with endomorphisms}, Trans. Amer.
Math. Soc. \textbf{347} (1995), no. 5, 1779--1786


\bibitem[Exe03$_1$]{exel3} R. Exel,  \emph{A new look at the crossed-product of a $C^*$-algebra by an endomorphism}, Ergodic Theory Dynam. Systems,   \textbf{23 } (2003),  1733--1750. 

\bibitem[Exe03$_2$]{exel4} R. Exel,
 \emph{Crossed products by finite index endomorphisms and KMS states}, J. Funct. Anal. 199 (2003),
153--158.

\bibitem[ER07]{er} R. Exel, D. Royer, \emph{The crossed product by a partial endomorphism}. Bull. Braz. Math. Soc. \textbf{38} (2007), 219--261.

\bibitem[EV06]{exel_vershik}  R. Exel, A. Vershik, 
\emph{$C^*$-algebras of irreversible dynamical systems}, 
Canadian J. Math. {\bf 58} (2006), 39-63.

\bibitem[FJ01]{Fan_Jiang}
 A. H. Fan, Y. P. Jiang,  \emph{On Ruelle-Perron-Frobenius Operators. I. Ruelle's Theorem}, Commun.
Math. Phys. \textbf{223} (2001), 125--141. 

\bibitem[H05]{Hirshberg}
I. Hirshberg, \emph{Essential representations of $C^*$-correspondences}, Internat. J. Math. \textbf{1}6, no. 7 (2005), 765--775.


\bibitem[JJS12]{Stochel}
Z. J. Jab\l{}o\'nski,  I. B. Jung, J. Stochel, Weighted shifts on directed trees. Mem. Am. Math. Soc. \textbf{216}, 1017 (2012)
 
\bibitem[Kat04]{katsura}
T. Katsura, \emph{On $C^*$-algebras associated with $C^*$-correspondences}, J. Funct. Anal. \textbf{217}(2), (2004)  366--401.  


\bibitem[Kit79]{Kitover}
 A. K. Kitover, \emph{Spectrum of automorphisms with weight and the Kamowitz--Scheinberg theorem}, Funct. Anal. Appl., 13 (1) (1979), 57--58.

\bibitem[Kwa09]{kwa-phd} B. K. Kwa\'sniewski, Spectral analysis of operators generating irreversible dynamical systems,  Doctoral dissertation (in Polish), IM PAN, 2009, Warsaw.

\bibitem[KL12$_1$]{bialowieza} B. K. Kwa\'sniewski, \emph{Uniqueness property for $C^*$-algebras given by relations with circular symmetry}
Geometric Methods in Physics, XXX Workshop, Trends in Mathematics,  Birkhauser Verlag, Basel, 2012, pp. 303--312.
\bibitem[Kwa12$_2$]{kwa3} B. K. Kwa\'sniewski \emph{On transfer operators for $C^*$-dynamical systems},  Rocky J. Math. 42, No 3 (2012), 919--938.
\bibitem[Kwa14]{kwa_inter}  B. K. Kwa\'{s}niewski,  \emph{Crossed products for interactions and graph algebras}
Integr. Equ. Oper. Theory 80 (2014), no. 3, 415--451.

\bibitem[Kwa17]{kwa_Exel}  B. K. Kwa\'{s}niewski,  \emph{Exel's crossed products and crossed products 
by completely positive maps}, 
Houston J. Math., 43 (2017), 509--567.

\bibitem[KL13]{kwa-leb0} B. K. Kwa\'sniewski, A. V. Lebedev, \emph{Crossed products by endomorphisms and reduction of relations in relative Cuntz-Pimsner algebras},  J. Funct. Anal. 264 (2013), no. 8, 1806-1847.

\bibitem[KL20]{kwa-leb} B. K. Kwa\'sniewski, A. V. Lebedev, \emph{Variational principles for spectral radius of
 weighted endomorphisms of $C(X,D)$},  Trans. Amer. Math. Soc. 373 (2020), no. 4, 2659--2698.

\bibitem[KM$_1$]{KwaMeyer0}  B. K. Kwa\'{s}niewski,  R. Meyer, 
\emph{Essential crossed products for inverse semigroup actions: simplicity and pure infiniteness}, 
preprint, arXiv:1906.06202


\bibitem[KM$_2$]{KwaMeyer}  B. K. Kwa\'{s}niewski,  R. Meyer, 
\emph{Noncommutative Cartan $C^*$-subalgebras}, 
Trans. Amer. Math. Soc. 373 (2020), no. 12, 8697-8724 
\bibitem[KR69]{KR}
H. B. Keynes, J. B. Robertson,
\emph{Generators for topological entropy and expansiveness}, Math. Systems Theory \textbf{3} (1969), 51-59.

\bibitem[LS88]{LatStep}
Ju. D. Latushkin, A. M. Stepin, \emph{Weighted shift operators on a topological Markov chain}, Funct. Anal. Its Appl., 	
  {\bf 22}(4) (1988), 330-331.

\bibitem[Leb79]{Lebedev}
A. V. Lebedev, \emph{The invertibility of elements in the $C^*$-algebras generated by dynamical systems}, 
Russian Math. Surveys \textbf{34}:4 (1979), 174--175.\footnote{English translation from Uspekhi Mat. Nauk \textbf{34} (1979), 199--200}




\bibitem[LM98]{Lebedev_Maslak} 
A. Lebedev, O. Maslak, \emph{The spectral radius of a weighted shift operator, variational principles, entropy
and topological pressure}, Spectral and evolutionary problems. Proceedings of the Eighth Crimean Autumn
Mathematical School Symposium (Simferopol, 1998). Tavria Publishers, Moscow, pp. 26--34.


\bibitem[MS10]{MS} J. H. Mai, W. H. Sun, \emph{Positively expansive homeomorphisms on compact metric spaces}
Acta Math. Hungar., \textbf{126} (4) (2010), 366-368.




\bibitem[O'Don75]{O'Donov}
D.\,P.\ O'Donovan \emph{Weighted shifts and covariance algebras},
Trans. Amer. Math. Soc., \textbf{208} (1975), p. 1--25.
 

\bibitem[PU10]{Przytycki} 
F. Przytycki, M. Urba\'nski, Conformal Fractals: Ergodic Theory Methods, London Mathematical Society Lecture Note Series \textbf{371}, Cambridge University Press, 2010.

\bibitem[Red82]{Reddy}
W. L. Reddy,  \emph{Expanding maps on compact metric spaces}, Topology Appl. \textbf{13} (1982), no. 3, 327--334.


\bibitem[Ren80]{Renault} J. Renault,   A groupoid approach to $C^*$-algebras, Lecture Notes in Math. \textbf{793}, Springer-Verlag, 1980


\bibitem[Ren00]{Renault_Cuntz_like}
J. Renault, \emph{Cuntz-like Algebras}, Proceedings of the 17th International
Conference on Operator Theory (Timisoara 98), The Theta Fondation (2000), 371--386.

\bibitem[Ren08]{Re} J.\,Renault,  \emph{Cartan subalgebras in {$C^\ast$}-algebras}, Irish Math. Soc. Bull. \textbf{61}(2008), 29--63.


\bibitem[Rid73]{Ridge} W. C. Ridge, \emph{Spectrum of a composition operator}, Proc. Amer. Math. Soc. \textbf{37} (1973), 121--127.


\bibitem[RS90]{Riesz}  
F. Riesz, B. Sz.-Nagy, Functional analysis,  Dover Publications, 1990. 

\bibitem[Rue78]{Ruelle0}
D. Ruelle, Thermodynamic formalism. Encyclopedia of Math. and its Appl., vol.5, Reading, Mass: Addison-Wesley 1978.


\bibitem[Rue89]{Ruelle} D. Ruelle, \emph{The thermodynamic formalism for expanding maps}, Comm. Math. Phys. \textbf{125} (1989), 
239-262.




\bibitem[Wal78]{Walt1} P.\ Walters, \emph{Invariant measures and equilibrium states for some mappings which expand distances},
Trans. Am. Math. Soc. \textbf{236} (1978), 121--153. 



\bibitem[Wal82]{Walt2}
P.\ Walters, An Introduction to  Ergodic Theory. Springer-Verlag, 1982.

\bibitem[Wat90]{Watatani}
Y. Watatani, Index for $C^*$-subalgebras, Memoirs Amer. Math. Soc. \textbf{424}, 1990.  

\end{thebibliography}
\end{document}